\documentclass[11pt,b5paper,notitlepage]{article}
\usepackage[b5paper, margin={0.5in,0.65in}]{geometry}
\usepackage{amsmath,amscd,amssymb,amsthm,mathrsfs,amsfonts,layout,indentfirst,graphicx,caption,mathabx, stmaryrd,appendix,calc,imakeidx,upgreek} 
\usepackage[dvipsnames]{xcolor}
\usepackage{palatino}  
\usepackage{slashed} 
\usepackage{mathrsfs} 
\usepackage{extarrows} 
\usepackage{enumitem} 
\usepackage{verbatim}   

\usepackage{fancyhdr} 

\usepackage{relsize} 

\usepackage{simpler-wick}

\usepackage{tcolorbox}
\tcbuselibrary{theorems}


\usepackage{pdfrender}


\usepackage{tikz}
\usetikzlibrary{fadings}
\usetikzlibrary{patterns}
\usetikzlibrary{shadows.blur}
\usetikzlibrary{shapes}

\usepackage{tikz-cd}
\usepackage[nottoc]{tocbibind}   

\makeindex

\usepackage{lipsum}  
\let\OLDthebibliography\thebibliography
\renewcommand\thebibliography[1]{
	\OLDthebibliography{#1}
	\setlength{\parskip}{0pt}
	\setlength{\itemsep}{2pt} 
}

\allowdisplaybreaks  
\usepackage{latexsym}
\usepackage{chngcntr}
\usepackage[colorlinks,linkcolor=blue,anchorcolor=blue, linktocpage,
]{hyperref}
\hypersetup{ urlcolor=cyan,
	citecolor=[rgb]{0,0.5,0}}

\setcounter{tocdepth}{1}	 

\counterwithin{figure}{section}

\pagestyle{plain}

\captionsetup[figure]
{
	labelsep=none	
}

\theoremstyle{definition}
\newtheorem{df}{Definition}[section]
\newtheorem{eg}[df]{Example}

\newtheorem{rem}[df]{Remark}
\newtheorem{obs}[df]{Observation}
\newtheorem{ass}[df]{Assumption}
\newtheorem{cv}[df]{Convention}

\newtheorem{diss}[df]{Disscusion}
\theoremstyle{plain}
\newtheorem{thm}[df]{Theorem}

\newtheorem{pp}[df]{Proposition}
\newtheorem{co}[df]{Corollary}
\newtheorem{lm}[df]{Lemma}




\usepackage{calligra}
\DeclareMathOperator{\shom}{\mathscr{H}\text{\kern -3pt {\calligra\large om}}\,}
\DeclareMathOperator{\sext}{\mathscr{E}\text{\kern -3pt {\calligra\large xt}}\,}
\DeclareMathOperator{\Rel}{\mathscr{R}\text{\kern -3pt {\calligra\large el}~}\,}
\DeclareMathOperator{\sann}{\mathscr{A}\text{\kern -3pt {\calligra\large nn}}\,}
\DeclareMathOperator{\send}{\mathscr{E}\text{\kern -3pt {\calligra\large nd}}\,}
\DeclareMathOperator{\stor}{\mathscr{T}\text{\kern -3pt {\calligra\large or}}\,}

\usepackage{aurical}
\DeclareMathOperator{\VVir}{\text{\Fontlukas V}\text{\kern -0pt {\Fontlukas\large ir}}\,}

\newcommand{\fk}{\mathfrak}
\newcommand{\mc}{\mathcal}
\newcommand{\wtd}{\widetilde}

\newcommand{\ovl}{\overline}

\newcommand{\tr}{\mathrm{t}} 

\newcommand{\End}{\mathrm{End}} 
\newcommand{\id}{\mathbf{1}}
\newcommand{\Hom}{\mathrm{Hom}}

\newcommand{\Res}{\mathrm{Res}}

\newcommand{\uni}{\mathrm{u}}

\newcommand{\Span}{\mathrm{Span}}

\newcommand{\bk}[1]{\langle {#1}\rangle}

\newcommand{\bigbk}[1]{\big\langle {#1}\big\rangle}

\newcommand{\scr}{\mathscr}

\newcommand{\im}{\mathbf{i}}
\newcommand{\Co}{\complement}

\newcommand{\sgm}{\varsigma}

\newcommand{\mbf}{\mathbf}

\newcommand{\blt}{\bullet}
\newcommand{\Vbb}{\mathbb V}
\newcommand{\Ubb}{\mathbb U}
\newcommand{\Xbb}{\mathbb X}

\newcommand{\Abb}{\mathbb A}
\newcommand{\Wbb}{\mathbb W}
\newcommand{\Mbb}{\mathbb M}
\newcommand{\Gbb}{\mathbb G}
\newcommand{\Cbb}{\mathbb C}
\newcommand{\Nbb}{\mathbb N}
\newcommand{\Zbb}{\mathbb Z}
\newcommand{\Pbb}{\mathbb P}
\newcommand{\Rbb}{\mathbb R}
\newcommand{\Ebb}{\mathbb E}
\newcommand{\Dbb}{\mathbb D}

\newcommand{\Ybb}{\mathbb Y}
\newcommand{\cbf}{\mathbf c}

\newcommand{\Ker}{\mathrm{Ker}}

\newcommand{\Sp}{\mathrm{Sp}}

\newcommand{\vbf}{\mathbf v}

\newcommand{\wbf}{\mathbf w}

\newcommand{\Sbb}{{\mathbb S}}

\newcommand{\Imag}{\mathrm{Im}}
\newcommand{\cl}{\mathrm{cl}}

\newcommand{\Mod}{\mathrm{Mod}}

\newcommand{\Aut}{\mathrm{Aut}}

\newcommand{\Ibf}{\mathbf I}
\newcommand{\Std}{\widetilde{\mc S}}

\usepackage{tipa} 

\usepackage{tipx}

\numberwithin{equation}{section}

\title{Geometric Positivity of the Fusion Products of Unitary Vertex Operator Algebra Modules}
\author{{\sc Bin Gui}
}
\date{}
\begin{document}\sloppy 
	\pagenumbering{arabic}
	\setcounter{section}{-1}


	\maketitle

\begin{abstract}
A unitary and strongly rational vertex operator algebra (VOA) $\Vbb$ is called strongly unitary if all irreducible $\Vbb$-modules are unitarizable. A strongly unitary VOA $\Vbb$ is called completely unitary if for each unitary $\Vbb$-modules $\Wbb_1,\Wbb_2$ the canonical non-degenerate Hermitian form on the fusion product $\Wbb_1\boxtimes\Wbb_2$ is positive. It is known that if $\Vbb$ is completely unitary, then the  modular category $\Mod^\uni(\Vbb)$ of unitary $\Vbb$-modules is unitary \cite{Gui19b}, and all simple VOA extensions of $\Vbb$ are automatically unitary and moreover completely unitary \cite{Gui22,CGGH23}.

In this paper, we give a geometric characterization of the positivity of the Hermitian product on $\Wbb_1\boxtimes\Wbb_2$, which helps us prove that the positivity is always true when $\Wbb_1\boxtimes\Wbb_2$ is an irreducible and unitarizable $\Vbb$-module. We give several applications: (1) We show that if $\Vbb$ is a unitary (strongly rational) holomorphic VOA with a finite cyclic unitary automorphism group $G$, and if $\Vbb^G$ is strongly unitary, then $\Vbb^G$ is completely unitary. This result applies to the cyclic permutation orbifolds of unitary holomophic VOAs. (2) We show that if $\Vbb$ is unitary and strongly rational, and if $\Ubb$ is a simple current extension which is unitarizable as a $\Vbb$-module, then $\Ubb$ is a unitary VOA.
\end{abstract}

\tableofcontents



\section{Introduction}

Let $\Vbb$ be a vertex operator algebra (VOA). Suppose that $\Vbb$ is strongly rational, i.e. CFT-type, self-dual, $C_2$-cofinite, and rational. Then the category of $\Vbb$-modules $\Mod(\Vbb)$ is a modular tensor category \cite{Hua08}. Suppose that $\Vbb$ is also unitary \cite{DL14,CKLW18}. Then it is conjectured that all irreducible $\Vbb$-modules are unitarizable, and that the modular tensor category $\Mod^\uni(\Vbb)$ of unitary $\Vbb$-modules is unitary, which means among other things that the associativity isomorphisms and the braiding isomorphisms are unitary. As pointed out in \cite{Kir98}, understanding the unitarity of $\Mod^\uni(\Vbb)$ is closely related to the problem of proving the (projective) unitarity of the connections on conformal block bundles over the moduli spaces of compact Riemann surfaces. The study of this problem has attracted algebraic geometers due to its connection with Hodge theory \cite{Ram09,Loo09,Bel12,Loo21}. (See also \cite{BDH17} for an approach to the unitarity of conformal blocks connections from the operator algebraic viewpoint.)

It should be emphasized that the unitary structure on $\Mod^\uni(\Vbb)$ or on the conformal block bundles should not be an arbitrary one. In other words, one should not be content with proving the existence of a unitary structure. One should also show that this unitary structure is canonical, and that its definition is general and is not based on examples.

In fact, when all $\Vbb$-modules are unitarizable (i.e. when $\Vbb$ is \textbf{strongly unitary}), a canonical non-degenerate Hermitian structure on $\Mod^\uni(\Vbb)$ was introduced by the author in \cite{Gui19b}. This is the correct Hermitian structure in the following sense: suppose that one can show that this Hermitian structure is positive (i.e. unitary, in this case we call $\Vbb$ \textbf{completely unitary}), then there is a 1-1 correspondence between simple unitary VOA extensions of $\Vbb$ and normalized haploid commutative $C^*$-Frobenius algebras with trivial twists in the unitary modular tensor category $\Mod^\uni(\Vbb)$ \cite{Gui22}. (As one would expect, this correspondence could be generalized to non-local unitary extensions of $\Vbb$ appearing e.g. in \emph{unitary} boundary conformal field theory, and to \emph{unitary}  full conformal field theory.) Based on this fact, Carpi-Gaudio-Giorgetti-Hillier proved in \cite[Thm. 4.7]{CGGH23} the remarkable result that \emph{every simple VOA extension of $\Vbb$ has a unique unitary VOA structure extending that of $\Vbb$}. For instance, all unitary affine VOAs are completely unitary \cite{Gui19b,Gui19c,Ten19c}. So their simple VOA extensions are unitary.

To summarize, the correct Hermitian structure or even unitary structure on $\Mod^\uni(\Vbb)$ should be the one that is compatible with the unitary structures of VOA extensions (and more generally, non-local extensions) of $\Vbb$.

Let us be more explicit about this canonical Hermitian structure. Let $\Vbb$ be strongly unitary. For each unitary $\Vbb$-modules $\Wbb_1,\Wbb_2$, since the fusion product $\Wbb_1\boxtimes\Wbb_2$ \cite{HL95a,HL95b,HL95c,Hua95} is unitarizable, we can choose non-degenerate Hermitian products on the multiplicity spaces (which are dual spaces of intertwining operators of $\Vbb$) in the irreducible decomposition of $\Wbb_1\boxtimes\Wbb_2$. If these Hermitian products are positive, then $\Wbb_1\boxtimes\Wbb_2$ is a unitary $\Vbb$-module, and we say that the fusion of $\Wbb_1$ and $\Wbb_2$ is \textbf{algebraically positive} (cf. Def. \ref{lb84}). This gives a fusion bifunctor $\boxtimes$ of $\Mod^\uni(\Vbb)$ (but not just of the category $\Mod(\Vbb)$ of $\Vbb$-modules). With this bifunctor, the modular tensor category $\Mod^\uni(\Vbb)$ is unitary \cite{Gui19b}. If the fusion products of all unitary $\Vbb$-modules are algebraically positive, then we say that $\Vbb$ is \textbf{completely unitary}.

Many unitary and strongly rational VOAs have already been proved to be completely unitary. Unitary holomorphic VOAs (e.g. moonshine VOA) are clearly completely unitary since they have only one irreducible. The following are completely unitary due to \cite{Gui19b,Gui19c,Ten19c,Gui22,Gui20,CGGH23}:
\begin{gather}\label{eq99}
\begin{gathered}
\text{Even lattice VOAs, unitary affine VOAs}\\
\text{Type $A,D,E$ discrete series $W$-algebras}\\
\text{(including unitary ``minimal model" Virasoro VOAs)}\\
\text{Parafermion VOAs with positive integer levels}\\
\text{Their tensor products, VOA extensions, and unitary strongly-rational coset VOAs}
\end{gathered}
\end{gather}
In this list, only even lattice VOAs can be proved completely unitary using purely VOA methods, since the braiding and fusion relations for their intertwining operators are simple (cf. \cite{DL12} or \cite[Sec. A]{Gui19c}). The proofs for the other VOAs in this list rely directly or indirectly on methods from operator algebras and especially Jones' subfactor theory \cite{Jon83}.

This paper has three goals. The first one is to study complete unitarity for an important class of strongly unitary VOAs not in the above list: orbifold VOAs. Our ultimate goal is to prove that if $\Ubb$ is in the above list, if $G$ is a finite unitary automorphism group of $\Ubb$, then $\Ubb^G$ is completely unitary. In this paper, we make the first step toward this goal by proving that $\Ubb^G$ is completely unitary if $\Ubb$ is unitary and holomorphic, $G$ is cyclic, and $\Ubb^G$ is strongly unitary. In this case, although all irreducible modules of $\Ubb^G$ are simple currents, the proof of complete unitarity is not easy and requires a completely new way of thinking: we need a geometric understanding of algebraic positivity and the canonical Hermitian structure on $\Mod^\uni(\Vbb)$.

This leads to the second goal of our paper: we define a notion of \textbf{geometric positivity} for the fusion product $\Wbb_1\boxtimes\Wbb_2$ (cf. Def. \ref{lb39}) and show that it is equivalent to the algebraic positivity mentioned above  (cf. Thm. \ref{lb50} and Cor. \ref{lb72}). Recall that the algebraic positivity denotes the positivity of certain Hermitian products on the multiplicity spaces in the irreducible decomposition of $\Wbb_1\boxtimes\Wbb_2$. These Hermitian products are defined using certain fusion matrices for intertwining operators (called transport matrices in \cite{Gui19b}), see \eqref{eq69}. Advantages of this definition are that it can be easily adapted to the tensor category framework, and that it can be related to Connes fusion product for bimodules of von Neumann algebras \cite{Con94,Sau83}. (Indeed, for many of the VOAs in the above list, the proof of complete unitarity follows the line laid out by Wassermann \cite{Was98} by comparing VOA fusion with Connes fusion.) An obvious drawback, however, is that this definition does not even heuristically give a geometric explanation (in the spirit of Segal CFT \cite{Seg04}) of why that Hermitian form is expected to be positive.

To be sure, the fusion relations for intertwining operators, also called operator product expansions in physics literature, have a clear geometric picture in terms of sewing and factorization of Riemann spheres \cite{HL95a,HL95b,HL95c,Hua95}. But this geometric picture is \emph{incompatible with the complex-conjugate structures of Riemann surfaces}. For instance, the iterate of intertwining operators $\mc Y(\mc Y(w_1,z_1-z_2)w_2,z_2)$ (where $0<|z_1-z_2|<|z_2|$) corresponds to the sewing of a $3$-pointed sphere $\fk P_1=(\Pbb^1;0,z_1-z_2,\infty;\zeta,\zeta-z_1+z_2,1/\zeta)$ (where $0,z_1-z_2,\infty$ are marked points, the last three terms are local coordinates, $\zeta$ is the standard coordinate of $\Cbb$) with another one $\fk P_2=(\Pbb^1;0,z_2,\infty;\zeta,\zeta-z_2,1/\zeta)$ along the points $\infty\in\fk P_1$ and $z_2\in\fk P_2$ using their chosen local coordinates. (See Def. \ref{lb8} for details.) Neither $\fk P_1$ nor $\fk P_2$ is symmetric with respect to the reflection $z\in\Pbb^1\mapsto 1/\ovl z$. 

Instead of using the usual fusion relations for intertwining operators, we consider the following construction. Let $\zeta$ be the standard coordinate of $\Cbb$. Choose unitary $\Vbb$-modules $\Wbb_1,\Wbb_2$. Let $\Wbb_i'$ be the contragredient of $\Wbb_i$. Then there is an antiunitary map  $\Co:\Wbb_i\rightarrow\Wbb_i'$, $\Co w_i=\bk{\cdot|w_i}$. Define linear map 
\begin{gather*}
\upphi_q:\Wbb_1'\otimes\Wbb_1\otimes\Wbb_2\otimes\Wbb_2'\rightarrow\Cbb[[q]]\\
\Co m\otimes w\otimes \wtd w\otimes\Co\wtd m\mapsto \wick{\big\langle Y(2^{L_0}e^{-\im L_1}q^{L_0} \c2-,\im)\Co m\big|\Co w\big\rangle\cdot\big\langle Y(2^{L_0}e^{-\im L_1} \c2-,\im)\wtd w\big|\wtd m  \big\rangle  }
\end{gather*}
where the contraction is over a homogeneous orthonormal basis of $\Vbb$. This series of $q$ actually converges absolutely on $|q|<1$. When $0<q<1$, this expression is the conformal block (i.e. chiral correlation function) associated to $\fk P_1\#_q\fk P_2$, the sewing of
\begin{align*}
\fk P_1=\fk P_2=\Big(\Pbb^1;0,\infty,\im;\zeta,1/\zeta,\varpi=\frac{\im(\zeta-\im)}{\zeta+\im}\Big)
\end{align*}
along $\im\in \fk P_1$ and $\im\in\fk P_2$ by removing small discs and gluing small annuli around these two points by identifying $x\in\fk P_1$ and $y\in\fk P_2$ via the relation $\varpi(x)\cdot\varpi(y)=q$. (See again Def. \ref{lb8} for details of the sewing construction.) $2^{L_0}e^{-\im L_1}$ is the change-of-coordinate operator from the standard one $\zeta-\im$ to $\varpi$. $\fk P_1$ and $\fk P_2$ are symmetric with respect to the conjugate structure $z\mapsto 1/\ovl z$. They are examples of \textbf{positive trinions}, cf. Def. \ref{lb85}. Then the linear functional
\begin{gather*}
\upomega_{\Wbb_2,\fk P_2}:\Wbb_2\otimes\Wbb_2'\otimes \Vbb\mapsto \Cbb\\
\wtd w\otimes\Co\wtd m\otimes v\mapsto \bigbk{Y(2^{L_0}e^{-\im L_1}v,\im)\wtd w|\wtd m}
\end{gather*}
is a conformal block associated to $\fk P_2$, called \textbf{basic conformal block}, cf. Thm. \ref{lb28}-(a).

The deepest part of this paper is the theorem that if $\Vbb$ is a unitary and strongly-rational VOA, if $\Wbb_1,\Wbb_2$ are unitary $\Vbb$-modules, and if $\Wbb_1\boxtimes\Wbb_2$ is a unitarizable $\Vbb$-module, then the algebraic positivity is equivalent to the geometric positivity of the fusion product $\Wbb_1\boxtimes\Wbb_2$  (cf. Thm. \ref{lb50} and Cor. \ref{lb72}). By saying that $\Wbb_1\boxtimes\Wbb_2$ is \textbf{geometrically positive}, we mean that for some $0<q<1$
\begin{gather}\label{eq93}
\begin{gathered}
\sum_{k,l=1}^n\upphi_q\big(\Co w_l\otimes  w_k\otimes  \wtd w_k \otimes \Co\wtd w_l\big)\geq0\\
(\forall n\in\Zbb_+, w_1,\dots,w_n\in\Wbb_1,\wtd w_1,\dots,\wtd w_n\in\Wbb_2) 
\end{gathered}
\end{gather}
See Def. \ref{lb39}. Indeed, it can be shown that if \eqref{eq93} holds for some $0<q<1$, then it holds for all $0<q<1$. See Lem. \ref{lb40}.

Now assume that $\Wbb_1,\Wbb_2$ are simple currents. Then one can show that the geometric positivity (and hence the algebraic positivity) of $\Wbb_1,\Wbb_2$ is always true (cf. Thm. \ref{lb60}). The idea is simple: suppose that $\Wbb_1\boxtimes\Wbb_2$ is not algebraically positive. Then it must be algebraically (and hence geometrically) negative because $\Wbb_1\boxtimes\Wbb_2$ is an irreducible $\Vbb$-module. So for each non-zero $w\in\Wbb_1,\wtd w\in\Wbb_2$ and each $0<q<1$ we have
\begin{align*}
\upphi_q(\Co w\otimes w\otimes\wtd w\otimes\Co \wtd w)\leq 0.
\end{align*}
Let $q\rightarrow0$. Then the LHS above converges to $\bk{w|w}\cdot \bk{\wtd w|\wtd w}>0$ by the definition of $\upphi_q$, which is impossible. This argument has a clear geometric picture: when $q\rightarrow0$, the sewn sphere $\fk P_1\#_q\fk P_2$ ``converges" to the one-point union of $\fk P_1$ and $\fk P_2$ along $\im\in\fk P_1$ and $\im\in\fk P_2$. And the geometric positivity is easy to prove on this ``degenerate Riemann surface" (called \emph{nodal curve} by algebraic geometers). This picture of ``going to the boundary of moduli space" is well-known to algebraic geometers.

This result has some immediate consequences. Let $\Vbb=\Ubb^G$ where $\Ubb$ is a unitary (strongly-rational and) holomorphic VOA and $G$ is a finite cyclic unitary automorphism group of $\Ubb$. Assume that all $\Vbb$-modules are strongly unitary. Then, since all irreducible $\Vbb$-modules are simple currents (by \cite{vEMS20}), we conclude that $\Vbb$ is completely unitary. Therefore, by \cite{CGGH23}, all simple VOA extensions of $\Vbb$ are unitary (cf. Thm. \ref{lb61}). Indeed, one can prove the unitarity of simple current extensions without assuming that the larger VOA is holomorphic: Let $\Vbb$ be a unitary strongly-rational VOA, and let $\Ubb$ be a simple current extension of $\Vbb$. Then the irreducible unitary $\Vbb$-submodules of $\Ubb$ are closed under $\boxtimes$ and taking contragredient (up to isomorphisms), and hence their direct sums form a fusion category $\mc C$ which is unitary by the geometric and algebraic positivity of fusion of objects in $\mc C$. By \cite{CGGH23}, every Haploid rigid algebra in $\mc C$ is unitary. So $\Ubb$ is a unitary VOA extension of $\Vbb$. See Thm. \ref{lb80}.

As mentioned earlier, the theorem that algebraic positivity is equivalent to geometric positivity constitutes the deepest part of this paper. The difficulty of its proof lies in the fact that the local coordinates chosen for the sphere when dealing with algebraic positivity (i.e. the standard coordinate $\zeta$ of $\Cbb$, its translation $\zeta-z$, and $1/\zeta$) are very different from those when dealing with geometric positivity (e.g. $\varpi=\frac{\im(\zeta-\im)}{\zeta+\im}$).   It turns out that the language of intertwining operators as developed in \cite{FHL93} is not very convenient for proving this theorem. We need a more geometric language, namely, the language of \textbf{conformal blocks} for VOAs as developed in \cite{Zhu94,FB04}. 

Intertwining operators are special cases of conformal blocks. An intertwining operator at $z\in\Cbb^\times$ is a conformal block associated to the $3$-pointed sphere $(\Pbb^1;0,z,\infty;\zeta,\zeta-z,1/\zeta)$ with local coordinates $\zeta,\zeta-z,1/\zeta$. On the other hand, any conformal block associated to a $3$-pointed sphere with arbitrary local coordiantes can be expressed in terms of intertwining operators and Huang's change-of-coordinate operators \cite{Hua97}. (See Subsec. \ref{lb86} for details.) Thus, at least for $3$-pointed spheres, intertwining operators and conformal blocks are equivalent. The main difference between these two languages is the difference of pictures: Intertwining operators are thought first of all as \emph{operators}. Thus, as generalizations of vertex operators, they can be added and multiplied to form a ``generalized algebra". In this picture, people often ask what are the correct \emph{algebraic relations} between an intertwining operator and an intertwining operator, a vertex operator, or (exponentials of) some Virasoro operators. On the other hand, conformal blocks are thought of as morphisms in the cobordism category of chiral conformal field theory in the sense of Segal \cite{Seg04}. In this picture, one often thinks about the extent to which the underlying geometry of Riemann surfaces determines the conformal blocks. Thus, algebraic relations in the previous picture is replaced by uniqueness properties in this picture. Such uniqueness arguments are used extensively in this paper and especially in the proof of the equivalence of the two positivities (see e.g. Prop. \ref{lb32} or Lem. \ref{lb56}), and is much simpler than dealing with the algebraic relations of intertwining operators for the purpose of this paper.

This brings us to the third goal of this paper: we want to introduce the mathematical theory of conformal blocks to those working on vertex operator algebras, in particular to those studying problems related to the tensor categories of VOA modules. (This is also part of the goal of the lecture notes \cite{Gui23b}, and we refer the readers to \cite{Gui23b} for a more detailed explanation of the motivations of conformal blocks from the perspective of Segal CFT.) To date, conformal blocks still do not seem to be very popular among vertex algebraists. Possible reasons are that in the literature conformal blocks are usually written in the difficult language of algebraic geometry, and that interest in conformal blocks has long been closely linked to problems rooted in the discipline of algebraic geometry. In my opinion, it is certainly not true that in order to understand conformal blocks one must first comprehend the theory of schemes, stacks, and $D$-modules. As we will see in this paper and in \cite{Gui23b}, those with some basic knowledge in complex differential manifolds and holomorphic vector bundles can understand many of the key ideas in conformal blocks. Especially, in Sec. 2, we introduce the notion of conformal blocks by comparing it with familiar concepts such as intertwining operators and Li's vacuum-like vectors \cite{Li94}; products and iterates of intertwining operators are typical examples of sewing conformal blocks, as recalled in Sec. 3. We hope that this paper will convince more people that conformal blocks are a powerful tool in the study of VOAs and their representation theory, and that conformal blocks are not something to be feared.

We close this introduction by discussing some possible future directions. As mentioned earlier, it is proved in this paper (Thm. \ref{lb61}) that $\Vbb^G$ is completely unitary if $\Vbb$ is unitary and holomorphic, $G$ is cyclic, and all $\Vbb^G$-modules are unitarizable. It is expected that this theorem can be generalized to an arbitrary finite unitary automorphism group $G$ by generalizing Thm. \ref{lb60} to twisted simple currents, i.e. by proving that if $\Wbb_1,\Wbb_2$ are $G$-twisted unitary $\Vbb$-modules and if $\Wbb_1\boxtimes\Wbb_2$ is an irreducible unitarizable $G$-twisted $\Vbb$-module, then the fusion $\Wbb_1\boxtimes\Wbb_2$ is algebraically positive. To prove such a result one must first develop suitable results about conformal blocks for twisted modules.

Furthermore, we wish to weaken the assumption that $\Vbb$ is holomorphic, and prove that for every $\Vbb$ in the list \eqref{eq99} and for every unitary finite automorphism group $G$, if $\Vbb^G$ is $C_2$-cofinite (and hence completely rational by \cite{McR21})\footnote{It is expected that $\Vbb^G$ is automatically strongly rational. This is a theorem when $G$ is solvable \cite{Miy15,CM16}.}, then $\Vbb^G$ is also completely unitary. Moreover, we wish to prove as in \cite{Gui20} that $\Mod^\uni(\Vbb^G)$ is unitarily equivalent to the braided tensor category of semisimple modules of $\mc A_{\Vbb^G}$ (where $\mc A_{\Vbb^G}$ is the conformal net associated to $\Vbb^G$ via the construction of Carpi-Kawahigashi-Longo-Weiner \cite{CKLW18} or Tener \cite{Ten19a}) via the $*$-functor of ``strong-integrability" by Carpi-Weiner-Xu \cite{CWX} or the one by Tener \cite{Ten19b}. Of course, this is a difficult task. We hope that the methods and the results in \cite{Gui19c,Ten19c,Gui20,CGGH23,CT23} are helpful for the study of this problem.

Thus, the present paper can be viewed as the first of our project to study the complete unitarity of orbifold VOAs and to compare their representation categories with those of conformal nets.

\subsection*{Acknowledgment}

I would like to thank Angela Gibney for inspiring me to think about the geometric interpretation (in terms of conformal blocks) of the Hermitian/inner products on the spaces of intertwining operators defined in \cite{Gui19b}. I want to thank Florencia Orosz Hunziker for inviting me to give a talk at the \textit{Rocky Mountain Representation Theory Seminar} in the spring of 2021. I came up with the definition of geometric positivity (Def. \ref{lb39}) when preparing for this talk.  I also want to express my gratitude to Robert McRae and Yilong Wang for answering my questions about orbifold VOAs and their representation categories. I thank Ching Hung Lam for answering my questions about unitary twisted modules of lattice VOAs.

The author was supported by BMSTC and ACZSP (Grant no. Z221100002722017).

\subsection*{Notations}
$\Nbb=\{0,1,2,\dots\}$, $\Zbb_+=\{1,2,3,\dots\}$. $\Cbb=\Cbb\setminus\{0\}$. $\Pbb^1=\Cbb\cup\{\infty\}$.

Unless otherwise stated, all neighborhoods are open. The closure of a subset $A$ in a topological space is denoted by $A^\cl$. \index{A@$A^\cl=\{\text{closure of }A\}$}

If $0\leq r<R\leq+\infty$ then \index{D@$\Dbb_r=\{z\in\Cbb:|z|<r\}$, $\Dbb_r^\times=\Dbb_r\setminus\{0\}$} \index{D@$\ovl\Dbb_r=\{z\in\Cbb:|z|\leq z\}$} \index{Ar@$\Abb_r=\{z\in\Cbb:r<|z|<R\}$}
\begin{gather}\label{eq14}
\begin{gathered}
\Dbb_r=\{z\in\Cbb:|z|<r\}\qquad \ovl\Dbb_r=\Dbb_r^{\cl}=\{z\in\Cbb:|z|\leq r\}\\
\Dbb_r^\times=\Dbb_r\setminus\{0\}\qquad \Abb_{r,R}=\{z\in\Cbb:r<|z|<R\}
\end{gathered}
\end{gather}

All vector spaces are over $\Cbb$. If $W$ is a vector space and $z$ is a formal variable, we let
\begin{gather*}
\Wbb[z^{\pm1}]=\Wbb[z,z^{-1}]=\Big\{\sum_{n\in\Zbb\cap[-N,N]}w_nz^n:N\in\Zbb_+,w_n\in W~(\text{for all } n)\Big\}\\
W[[z]]=\Big\{\sum_{n\in\Nbb}w_nz^n:w_n\in W~(\text{for all } n) \Big\}\\
W((z))=\Big\{\sum_{n\in m+\Nbb}w_nz^n:m\in\Zbb, w_n\in W~(\text{for all } n)\Big\}
\end{gather*}

Riemann surfaces are not assumed to be connected. If $C$ is a Riemann surface, then $\scr O(C)$ denotes the space of holomorphic functions $f:C\rightarrow\Cbb$. We let $\scr O_C$ \index{OC@$\scr O(C),\scr O_C$} denote the trivial rank $1$ holomorphic vector bundle on $C$. If $\scr E$ is a holomorphic vector bundle on $C$, then we write
\begin{align}
\scr E(C)\equiv H^0(C,\scr E):=\{\text{The space of global holomorphic sections of $\scr E$ on $C$}\}.
\end{align}
According to this notation, $\scr O_C(C)=\scr O(C)$. More generally, if $\{x_1,x_2,\dots\}$ is a discrete subset of $C$, we define \index{HC@$H^0(C,\scr E),H^0(C,\scr E(\star x_\blt))$}
\begin{align}\label{eq4}
\begin{aligned}
&H^0(C,\scr E(\star x_\blt))=H^0(C,\scr E(\star x_1+\cdots+\star x_N))\\
=&\{\text{The space of global meromorphic sections of $\scr E$ whose poles are inside $\{x_1,x_2,\dots\}$}\}.
\end{aligned}
\end{align}
If $W$ is a vector space, then $W\otimes_\Cbb\scr O_C$ (or simply $W\otimes\scr O_C$) denotes the trivial vector bundle whose fibers are equivalent to $W$. If $\scr F$ is another holomorphic vector bundle, then
\begin{align*}
\scr E\otimes_{\scr O_C}\scr F\equiv\scr E\otimes\scr F
\end{align*}
denotes their tensor product. (Namely, the fibers (resp. transition functions) of $\scr E\otimes\scr F$ are equivalent to the tensor product over $\Cbb$ of those of $\scr E$ and $\scr F$.)

Unless otherwise stated, a vector bundle on a Riemann surface $C$ means a holomorphic vector bundle.

We let $\omega_C$ denote the holomorphic cotangent bundle. \index{zz@$\omega_C$} So if $U\subset C$ is open and $\eta\in\scr O(U)$ is a holomorphic coordinate, then elements of $\omega_C(U)$ are of the form $fd\eta$ where $f\in\scr O(U)$. If $\mu$ is another coordinate, then $fd\eta=f\partial_\mu\eta\cdot d\eta$. According to the above notations,
\begin{align}\label{eq11}
\begin{aligned}
H^0(C,\scr E\otimes\omega_C(\star x_\blt))=\{&\text{The space of global meromorphic $1$-forms on $C$}\\
&\text{with coefficients in $\scr E$ and poles in $\{x_1,x_2,\dots\}$}\}
\end{aligned}
\end{align}
whose elements are locally of the form $\sgm d\eta$ where $\eta$ is a holomorphic coordinate and $\sgm$ is a meromorphic section of $\scr E$.

\section{VOA modules and change of coordinates}\label{lb15}

\subsection{VOAs and their modules}

Throughout this article, a vertex operator algebra (VOA) $\Vbb=\bigoplus_{n\in\Nbb}\Vbb(n)$ is assumed to be $\Nbb$-graded  with conformal vector $\cbf$ and vacuum vector $\id$ satisfying $\dim\Vbb(n)<+\infty$ for all $n$. If $v\in\Vbb$, we write the vertex operator as \index{Y@$Y(v)_n$}
\begin{align*}
Y(v,z)=\sum_{n\in\Zbb} Y(v)_n z^{-n-1}
\end{align*}
$L_n:=Y(\cbf)_{n+1}$ denotes the Virasoro operators. Recall that $\Vbb$ is called of \textbf{CFT-type} if $\Vbb(0)=\Cbb\id$. Recall that $\Vbb$ is called \textbf{$C_2$-cofinite} if $\Span_\Cbb\{Y(u)_{-2}v:u,v \}$ has finite codimension in $\Vbb$. 

A $\Vbb$-module $\Wbb$ in our article always means an \textbf{ordinary $\Vbb$-module}. As above, we write the vertex operators as
\begin{align*}
Y_\Wbb(v,z)=\sum_{n\in\Zbb}Y_\Wbb(v)_n z^{-n-1}
\end{align*} 
and abbreviate $Y_\Wbb$ to $Y$ when the context is clear. We also write $Y_\Wbb(\cbf)_{n+1}$ as $L_n$. By saying that $\Wbb$ is an ordinary $\Vbb$-module, we mean that $\Wbb$ is a weak $\Vbb$-module in the sense of \cite[Sec. 2]{DLM97}, and that $L_0$ is diagonal on $\Wbb$ such that the $L_0$-eigenspace decomposition of $\Wbb$ takes the form \index{Wn@$\Wbb_{(s)},\Wbb(n)$}
\begin{align}
\Wbb=\bigoplus_{s\in\Nbb+E}\Wbb_{(s)}\qquad(\dim\Wbb_{(s)}<+\infty) \label{eq1}
\end{align}
where $E$ is a \emph{finite} subset of $\Cbb$ such that any two elements of $E$ do not differ by an integer.
\begin{df}
We say that $\Wbb$ is \textbf{$L_0$-simple} if $E$ consists of a single element. For instance, if $\Wbb$ is \textbf{simple} (also called \textbf{irreducible}, which means that $0$ and $\Wbb$ are the only subspaces of $\Wbb$ invariant under the action of $\Vbb$), then $\Wbb$ is $L_0$-simple.
\end{df}

\begin{df}
We say that a $\Vbb$-module $\Wbb$ is \textbf{finitely generated} if it is generated by finitely many elements $w_1,\dots,w_n$ (i.e., the smallest subspace invariant under $Y(v)_n$ (for all $v\in\Vbb,n\in\Zbb$) which contains $w_1,\dots,w_n$ is $\Wbb$). We say that $\Wbb$ is \textbf{semisimple} if it is a \emph{finite} direct sum of irreducible $\Vbb$-modules. 
\end{df}

\begin{rem}\label{lb83}
Let $\Wbb$ be a $\Vbb$-module. If $\Wbb$ is irreducible, then $\End_\Vbb(\Wbb)=1$ by a Schur's lemma type argument (i.e., if $A\in\End_\Vbb(\Wbb)$, then for some $s$ such that $\Wbb_{(s)}\neq 0$, $A|_{\Wbb_{(s)}}$ must have an eigenvalue $\lambda$. Then $\Ker(A-\lambda)$ must be $\Wbb$). Conversely, if $\End_\Vbb(\Wbb)=1$ and if $\Wbb$ is completely reducible, then  $\Wbb$ is clearly irreducible.
\end{rem}

\begin{rem}\label{lb10}
Semisimple $\Vbb$-modules are clearly finitely-generated since irreducible $\Vbb$-modules are so. Also, by \cite[Cor 3.16]{Hua09}, if $\Vbb$ is $C_2$-cofinite then any $\Vbb$-module (as defined in this article) is finitely-generated.
\end{rem}

\begin{rem}
Each $\Vbb$-module $\Wbb$ is a direct sum of $L_0$-simple modules. (Indeed, if we assume \eqref{eq1}, then $\Wbb$ has decomposition $\Wbb=\bigoplus_{\lambda\in E}\Wbb_\lambda$ where each $\Wbb_\lambda=\bigoplus_{n\in\Nbb}\Wbb_{(n+\lambda)}$ is a submodule.) Therefore, one can often assume for simplicity that $\Wbb$ is $L_0$-simple.
\end{rem}

The above $E$ is uniquely determined if we assume moreover that
\begin{align*}
\dim \Wbb_{(\lambda)}>0\qquad(\forall\lambda\in E)
\end{align*}
Define a diagonal operator $\wtd L_0$ \index{L0@$\wtd L_0$} (the \textbf{normalized conformal Hamiltonian}) as
\begin{align*}
\wtd L_0\big|_{\Wbb_{(n+\lambda)}}=n\qquad(\forall \lambda\in E)
\end{align*}
Then $\wtd L_0-L_0$ commutes with every $Y_\Wbb(v)_n$. Set $\Wbb(n)=\Wbb_{(n+\lambda)}$ \index{Wn@$\Wbb_s,\Wbb(n)$} if $\lambda\in E$. Then $\wtd L_0$ gives an $\Nbb$-grading
\begin{align*}
\Wbb=\bigoplus_{n\in\Nbb}\Wbb(n).
\end{align*}

\begin{rem}
By the above convention, $\wtd L_0$ and $L_0$ agree on $\Vbb$. Namely,
\begin{align*}
\Vbb(n)=\Vbb_{(n)}.
\end{align*}
More generally, if $\Wbb$ is $L_0$-simple, then $\wtd L_0-L_0$ is a constant.
\end{rem}

It is then clear that $\wtd L_0$ gives $\Wbb$ an admissible $\Vbb$-module structure, which means that for each $v\in\Vbb,n\in\Zbb$ we have
\begin{align*}
[\wtd L_0,Y(v)_n]=Y(L_0 v)_n-(n+1)Y(v)_n.
\end{align*}
because a similar relation holds for $L_0$ instead of $\wtd L_0$, and because $\wtd L_0-L_0$ commutes with $Y(v)_n$. Since clearly
\begin{align*}
\dim\Wbb(n)<+\infty\qquad(\forall n\in\Nbb),
\end{align*}
$\Wbb$ is better called a \textbf{finitely admissible $\Vbb$-module}, as in \cite{Gui23a}.

Recall that the \textbf{contragredient module $\Wbb'$} \index{W@$\Wbb'$} of a $\Vbb$-module $\Wbb$ is described as follows. As a vector space,
\begin{align*}
\Wbb'=\bigoplus_{n\in\Nbb}\Wbb(n)^*=\bigoplus_{s\in\Cbb}\Wbb_{(s)}^*
\end{align*}
where $\Wbb(n)^*$ and $\Wbb_{(s)}^*$ are respectively the dual spaces of $\Wbb(n)$ and $\Wbb_{(s)}$. For each $v\in \Vbb$, set \index{Y@$Y'(v,z)=Y(e^{zL_1}(-z^{-2})^{L_0}v,z^{-1})$, $Y'(v)_n$}
\begin{align}
Y_\Wbb'(v,z)=Y_\Wbb(\mc U(\vartheta_z)v,z^{-1})=Y_{\Wbb}(e^{zL_1}(-z^{-2})^{L_0}v,z^{-1}).\label{eq10}
\end{align}
(See Exp. \ref{lb3} for the reason of setting $\mc U(\vartheta_z)=e^{zL_1}(-z^{-2})^{L_0}$.) Then for each $w\in\Wbb,w'\in\Wbb',v\in\Vbb$, we have (in $\Cbb[z^{\pm1}]$)
\begin{align}
\bk{w,Y_{\Wbb'}(v,z)w'}=\bk{Y_{\Wbb}'(v,z)w,w'} \label{eq6}
\end{align}
where $\bk{\cdot,\cdot}$ stands for the standard pairing between $\Wbb$ and $\Wbb'$. Clearly $\Wbb''=\Wbb$. Setting
\begin{align*}
Y_\Wbb'(v)_n=\Res_{z=0}Y'(v,z)z^ndz
\end{align*}
then \eqref{eq6} means that $Y_{\Wbb'}(v)_n$ is the transpose of $Y'_\Wbb(v)_n$:
\begin{align}\label{eq13}
\bk{w,Y_{\Wbb'}(v)_nw'}=\bk{Y_{\Wbb}'(v)_nw,w'}.
\end{align}

\begin{rem}
Clearly, a $\Vbb$-module $\Wbb$ is irreducible if and only if its contragredient $\Wbb'$ is irreducible. Indeed, if $\Wbb$ has a proper submodule $\Mbb$, then $\{w'\in\Wbb':\bk{w',\Mbb}=0\}$ is a proper submodule.
\end{rem}
\begin{df}
We call $\Vbb$ \textbf{self-dual} and write $\Vbb\simeq\Vbb'$ if $\Vbb$ is isomorphic to $\Vbb'$ as a $\Vbb$-module. We call $\Vbb$ a \textbf{simple VOA} if it is irreducible as a $\Vbb$-module. 
\end{df}

\begin{rem}
If $\Vbb$ is self-dual, then $\dim\End_\Vbb(\Vbb)\leq\dim\Vbb(0)$ by Exp. \ref{lb74}. Thus $\Vbb$ is simple if $\Vbb$ is of CFT-type.
\end{rem}

The algebraic completion $\ovl\Wbb$ \index{W@$\ovl \Wbb=(\Wbb')^*$, the algebraic completion} is defined to be
\begin{align*}
\ovl \Wbb=(\Wbb')^*=\coprod_{n\in\Nbb} \Wbb(n)=\coprod_{s\in\Cbb}\Wbb_{(s)}.
\end{align*}
The action of $Y_\Wbb(v)_n$ on $\Wbb$ extends naturally to that of $\ovl\Wbb$.

\subsection{Change of coordinates}\label{lb86}

The advantage of $\wtd L_0$ over $L_0$ is that the change of coordinate operator defined by $\wtd L_0$ is a group homomorphism. The group we consider here is $\Gbb$, the subset of
\begin{align*}
\scr O_{\Cbb,0}=\Big\{f(z)=\sum_{n\in\Nbb}a_nz^n\in\Cbb[[z]]:\sum_n |a_n|r^n<+\infty\text{ for some }r>0\Big\}
\end{align*}
defined by \index{G@$\Gbb$}
\begin{align*}
\Gbb=\{\rho\in\scr O_{\Cbb,0}:\rho(0)=0,\rho'(0)\neq0\}=\Big\{\sum_n a_nz^n\in\scr O_{\Cbb,0}: a_0=0,a_1\neq 0 \Big\}.
\end{align*}
The group multiplication of $\rho_1,\rho_2\in\Gbb$ is their composition $\rho_1\circ\rho_2$.

The change of coordinate operators were introduced by Huang in \cite{Hua97}. They are defined as follows. For each $\rho\in\Gbb$, we can find $c_1,c_2,\dots\in\Cbb$  such that
\begin{align*}
\rho(z)=\rho'(0)\cdot \exp\Big(\sum_{n>0}c_nz^{n+1}\partial_z \Big)z=\rho'(0)\cdot\sum_{k\in\Nbb} \frac 1{k!}\Big(\sum_{n>0}c_nz^{n+1}\partial_z \Big)^k(z)
\end{align*}
Then we set \index{U@$\mc U(\rho),\mc U_0(\rho)$}
\begin{subequations}\label{eq94}
\begin{gather}
\mc U(\rho)=\rho'(0)^{\wtd L_0}\exp\Big(\sum_{n>0}c_n L_n\Big)\\
\mc U_0(\rho)=\rho'(0)^{L_0}\exp\Big(\sum_{n>0}c_n L_n\Big)
\end{gather}
\end{subequations}
Note that $\mc U_0(\rho)$ depends on the choice of the \textbf{argument} $\arg \rho'(0)$, but $\mc U(\rho)$ does not. Also, if $\Wbb$ is $L_0$-simple, $\mc U_0(\rho)$ equals a non-zero constant times $\mc U(\rho)$. And $\mc U(\rho)$ agrees with $\mc U_0(\rho)$ on $\Vbb$.

\begin{rem}
Although the definitions of $\mc U(\rho)$ and $\mc U_0(\rho)$ involve infinite sums, there is actually no convergence issue. Consider \index{W@$\Wbb^{\leq n}$}
\begin{align*}
\Wbb^{\leq n}=\bigoplus_{k\leq n}\Wbb(k)
\end{align*}
which is finite dimensional. Note that
\begin{align}
[\wtd L_0,L_n]=[L_0,L_n]=-nL_n. \label{eq2}
\end{align}
If $n>0$, then each $L_n$ lowers the $\wtd L_0$-weights by at least $1$, and so $L_n|_{\Wbb^{\leq n}}$ is a nilpotent operator. (The fact that $\dim\Wbb^{\leq n}<+\infty$ is not really needed here, but it certainly makes one feel safe because one can do finite-dimensional linear algebra.) So for each $w\in\Wbb^{\leq n}$, $\mc U(\rho)w$ is a finite sum. The same can be said about $\mc U_0(\rho)w$ since, when $\Wbb$ is $L_0$-semisimple, we have $\mc U_0(\rho)w=\lambda\mc U(\rho)w$ for some $\lambda\neq0$.
\end{rem}

\begin{eg}\label{lb2}
Let $\tau\in\Cbb$. Since $(z^2\partial_z)^n(z)=n!z^{n+1}$, one has
\begin{align*}
\frac{z}{1-\tau z}=\exp(\tau z^2\partial_z)z.
\end{align*}
Therefore, if $\rho(z)=z/(1-\tau z)$, then $\mc U(\rho)=\mc U_0(\rho)=e^{\tau L_1}$. 

In general, if $\rho\in\Gbb$ is a M\"obius transformation, then one can find $\lambda\neq 0$ and $\tau$ such that $\rho(z)=\frac{\lambda z}{1-\tau z}$. Then the expression $\mc U_0(\rho)=\lambda^{L_0}e^{\tau L_1}$ includes only $L_0, L_1$ among all Virasoro operators.
\end{eg}

The following two fundamental facts are due to \cite{Hua97}. (See also \cite[Chapter 6]{FB04})  They were originally stated in the special case that $\Wbb=\Vbb$, but the proofs also apply to the general case. (See for instance \cite[Sec. 10]{Gui23b}.)

\begin{thm}[{\cite[Sec. 4.2]{Hua97}}]  \label{lb1}
$\mc U$ is a representation of $\Gbb$ on $\Wbb$. Namely, $\mc U(\id)=\id$ (here $\id(z)=z$), and $\mc U(\alpha\circ\beta)=\mc U(\alpha)\circ\mc U(\beta)$ if $\alpha,\beta\in\Gbb$. In particular, $\mc U(\alpha)$ has inverse $\mc U(\alpha^{-1})$.
\end{thm}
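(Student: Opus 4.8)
The plan is to deduce the homomorphism property $\mc U(\alpha\circ\beta)=\mc U(\alpha)\mc U(\beta)$ from the standard fact that a Lie algebra homomorphism integrates to a Lie \emph{group} homomorphism, after cutting $\Wbb$ into finite-dimensional graded pieces. The essential algebraic input — the reason the theorem holds — is that the operators $\wtd L_0,L_1,L_2,\dots$ span a Lie algebra carrying \emph{no} central extension: by \eqref{eq2} one has $[\wtd L_0,L_n]=-nL_n$, and for $m,n>0$ the Virasoro relation gives $[L_m,L_n]=(m-n)L_{m+n}$, the central term $\frac{c}{12}(m^3-m)\delta_{m+n,0}$ being forced to vanish since $m+n>0$. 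Thus the assignment sending the formal vector field $e_n:=z^{n+1}\partial_z$ to $L_n$ (for $n\geq 1$) and $e_0:=z\partial_z$ to $\wtd L_0$ respects brackets, and everything reduces to packaging this correctly.

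First I would make the finite-dimensional reduction. Since $L_n$ lowers the $\wtd L_0$-grading by $n$, we have $L_n\Wbb^{\leq N}\subseteq\Wbb^{\leq N}$ for all $n\geq 0$ and $L_n|_{\Wbb^{\leq N}}=0$ for $n>N$; hence on the finite-dimensional space $\Wbb^{\leq N}$ only $\wtd L_0,L_1,\dots,L_N$ act nontrivially, and by the relations above they span a finite-dimensional Lie algebra $\fk g_N\subseteq\End(\Wbb^{\leq N})$. By \eqref{eq94} the operator $\mc U(\rho)|_{\Wbb^{\leq N}}$ depends only on $\rho'(0)$ and on $c_1,\dots,c_N$, i.e.\ only on the jet of $\rho$ of order $\leq N+1$; so $\mc U|_{\Wbb^{\leq N}}$ factors through the truncated coordinate-change group $G_N:=\Gbb/\{\rho:\rho(z)=z+O(z^{N+2})\}$, a connected complex Lie group whose Lie algebra $\fk d_N$ is spanned by the images of $e_0,\dots,e_N$. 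A direct computation gives the vector-field bracket $[e_m,e_n]_{\mathrm{vf}}=(n-m)e_{m+n}$; since the bracket on $\fk d_N$ induced by the group law (composition of series) is the \emph{negative} of the naive vector-field bracket — the standard sign for a transformation group — one gets $[e_m,e_n]_{G_N}=(m-n)e_{m+n}$, matching $[L_m,L_n]=(m-n)L_{m+n}$. Hence $e_n\mapsto L_n$, $e_0\mapsto\wtd L_0$ is a genuine Lie algebra homomorphism $\fk d_N\to\fk g_N$.

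Next I would integrate this, using the semidirect decomposition $\Gbb=\Gbb_+\rtimes\Cbb^\times$, where $\Gbb_+=\{\rho:\rho'(0)=1\}$ is normal and $\Cbb^\times\cong\{m_a:z\mapsto az\}$. On the unipotent part (which truncates to a simply connected unipotent subgroup of $G_N$) the Lie algebra homomorphism exponentiates uniquely to a group homomorphism $\Psi$, and it agrees with $\mc U$: writing $\psi=\exp(\sum_{n>0}c_ne_n)z$, which is exactly Huang's normal form, one has $\Psi(\psi)=\exp(\sum_{n>0}c_nL_n)=\mc U(\psi)$ by \eqref{eq94}. On the scaling part, $\mc U(m_a)=a^{\wtd L_0}$ is single-valued and multiplicative precisely because $\wtd L_0$ has \emph{integer} spectrum on $\Wbb^{\leq N}$ — this is exactly where $\wtd L_0$ must replace $L_0$, since integrality kills the $\pi_1(\Cbb^\times)=\Zbb$ monodromy that would obstruct descent from the universal cover. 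One then checks the compatibility $\mc U(m_a\circ\psi\circ m_a^{-1})=a^{\wtd L_0}\mc U(\psi)a^{-\wtd L_0}$, which follows by matching the conjugation action of $m_a$ on vector fields with that governed by $[\wtd L_0,L_n]=-nL_n$. Since every $\rho$ factors as $m_{\rho'(0)}\circ\psi_\rho$ and $\mc U(\rho)=\rho'(0)^{\wtd L_0}\exp(\sum c_nL_n)=\mc U(m_{\rho'(0)})\mc U(\psi_\rho)$, these three facts assemble into the homomorphism property on each $G_N$; as the constructions are compatible with $\Wbb^{\leq N}\hookrightarrow\Wbb^{\leq N+1}$, letting $N\to\infty$ gives the result on $\Wbb$. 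The claims $\mc U(\id)=\id$ and $\mc U(\alpha^{-1})=\mc U(\alpha)^{-1}$ are then immediate.

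The main obstacle is not a single hard estimate but the bookkeeping in the last step: showing that Huang's \emph{explicit} normal-form definition \eqref{eq94} coincides with the \emph{abstract} homomorphism integrating $\fk d_N\to\fk g_N$. This means reconciling the composition-of-series group law with operator products, correctly tracking the transformation-group sign in the bracket, and securing single-valuedness across the non-simply-connected scaling direction via the integrality of $\wtd L_0$. An equivalent and more self-contained packaging avoids exponentials altogether: fixing $\beta$ and a smooth path $\gamma(t)$ in $G_N$ from $\id$ to $\alpha$, one shows that $t\mapsto\mc U(\gamma(t)\circ\beta)$ and $t\mapsto\mc U(\gamma(t))\,\mc U(\beta)$ solve the same linear ODE on the finite-dimensional space $\End(\Wbb^{\leq N})$ with the same value $\mc U(\beta)$ at $t=0$, whence uniqueness forces equality at $t=1$. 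The input making the two logarithmic derivatives agree is again the bracket-compatibility of $e_n\mapsto L_n$, so the two routes carry identical mathematical content.
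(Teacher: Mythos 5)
Your proposal is correct and takes essentially the same route as the paper's (which defers to Huang \cite{Hua97} with a sketch): the paper likewise splits $\Gbb$ into the unipotent subgroup $\Gbb_+$ — handled by the Campbell–Hausdorff theorem, which is exactly the tool underlying your "integrate the Lie algebra homomorphism on the truncated unipotent jet group" step — and the scaling direction, handled via the relation \eqref{eq80}, which is precisely your conjugation compatibility $\mc U(m_a\circ\psi\circ m_a^{-1})=a^{\wtd L_0}\mc U(\psi)a^{-\wtd L_0}$ together with multiplicativity of $a\mapsto a^{\wtd L_0}$ on the integer spectrum of $\wtd L_0$. Your finite-dimensional reduction to $\Wbb^{\leq N}$, the sign bookkeeping for the transformation-group bracket, and the alternative ODE packaging are sound refinements of detail, not a different method.
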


The main idea of the proof that $\mc U$ preserves multiplication is simple: If $\alpha'(0)=\beta'(0)=1$, then one applies Campbell-Hausdorff Theorem to the subgroup \index{G+@$\Gbb_+$}
\begin{align*}
\Gbb_+=\{\rho\in\Rbb:\rho'(0)=1\}
\end{align*}
(whose Lie algebra is $\Span_\Cbb\{l_n=z^{n+1}\partial_z,n>0\}$). To prove the general case, one appeals to
\begin{align}
\exp\Big(\sum_{n\geq 1}c_nL_n \Big)\lambda^{\wtd L_0}=\lambda^{\wtd L_0}\exp\Big(\sum_{n\geq 1}c_n\lambda^n L_n \Big)\label{eq80}
\end{align}
which follows from
\begin{align}
L_n\lambda^{\wtd L_0}=\lambda^{\wtd L_0+n}L_n,
\end{align}
and hence follows from \eqref{eq2}.

Let $\alpha\in\Gbb$. Suppose that $U\subset\Cbb$ is a neighborhood of $0$ on which the power series $\alpha(t)$ converges absolutely, and $\alpha'$ vanish nowhere on $U$. Then for each $z\in U$, we can define an element of $\Gbb$:
\begin{align}
\varrho(\alpha|\id)_z(t)=\alpha(z+t)-\alpha(z)  \label{eq3}
\end{align}
If we write $\varrho(\alpha|\id)_z(t)=\sum_n a_n(z)t^n$ then each $a_n$ is holomorphic on $U$. Therefore, for each $w\in\Wbb^{\leq n}$, the expression $\mc U(\varrho(\alpha|\id)_z)w$ is a holomorphic $\Wbb^{\leq n}$-valued function of $z$.

The meaning of the symbol $\varrho(\alpha|\id)$ will be revealed in the next section.

\begin{thm}[\cite{Hua97}] \label{lb5}
Let $\Wbb$ be a $\Vbb$-module. Let $\alpha\in\Gbb$. Then for each $w\in\Wbb,w'\in\Wbb',v\in\Vbb$, the following relation holds at the level of $\Cbb((z))$:
\begin{align*}
\bk{\mc U(\alpha)Y_\Wbb(v,z)w,w'}=\left\langle Y_\Wbb\big(\mc U(\varrho(\alpha|\id)_z)v,\alpha(z)\big)\mc U(\alpha)w,w' \right\rangle
\end{align*}
\end{thm}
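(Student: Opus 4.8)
The plan is to prove the equivalent operator form $\mc U(\alpha)Y_\Wbb(v,z)=Y_\Wbb\big(\mc U(\varrho(\alpha|\id)_z)v,\alpha(z)\big)\mc U(\alpha)$, from which the stated matrix-coefficient identity follows by pairing with $w'$. First I would check that both sides are honestly defined elements of $\Cbb((z))$ after pairing: for homogeneous $w,w'$ the left side is $\mc U(\alpha)$ applied to the Laurent series $Y(v,z)w$, while on the right $\mc U(\varrho(\alpha|\id)_z)v$ is a finite $\Vbb$-valued expression whose coefficients are holomorphic in $z$ (as noted after \eqref{eq3}), and the substitution $y\mapsto\alpha(z)$ into $Y(\cdot,y)$ is a well-defined continuous ring map $\Cbb((y))\to\Cbb((z))$ precisely because $\alpha'(0)\neq0$. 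Working with homogeneous $w,w'$ and restricting $\mc U$ to the finite-dimensional pieces $\Wbb^{\leq n}$, each coefficient of $z^p$ on either side is a finite sum, so the whole identity may be verified coefficient-by-coefficient.

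The structural backbone is a reduction to generators of $\Gbb$. The key observation is the cocycle identity $\varrho(\alpha\circ\beta|\id)_z=\varrho(\alpha|\id)_{\beta(z)}\circ\varrho(\beta|\id)_z$, a one-line computation from the definition $\varrho(\alpha|\id)_z(t)=\alpha(z+t)-\alpha(z)$. Combined with the homomorphism property $\mc U(\alpha\circ\beta)=\mc U(\alpha)\mc U(\beta)$ (Thm. \ref{lb1}), this shows that the set of $\alpha\in\Gbb$ for which the identity holds is closed under composition: assuming it for $\alpha$ and $\beta$, one rewrites $\mc U(\alpha\circ\beta)Y(v,z)=\mc U(\alpha)\mc U(\beta)Y(v,z)$, applies the identity for $\beta$, then applies the identity for $\alpha$ after the substitution $y=\beta(z)$ (legitimate by the previous paragraph), and finally recombines using the cocycle identity and $\mc U$ being a homomorphism. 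Since $\Gbb$ is a connected topological group, it is generated by any neighborhood of $\id$; hence it suffices to treat the two families of generators, the scalings and the one-parameter subgroups through $\id$.

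For a scaling $\alpha(z)=\lambda z$ one has $\mc U(\alpha)=\lambda^{\wtd L_0}$ and $\varrho(\alpha|\id)_z(t)=\lambda t$, so the claim becomes $\lambda^{\wtd L_0}Y(v,z)=Y(\lambda^{L_0}v,\lambda z)\lambda^{\wtd L_0}$, which follows directly from the admissible-module relation $[\wtd L_0,Y(v)_n]=Y(L_0v)_n-(n+1)Y(v)_n$ established earlier. For a one-parameter subgroup $\alpha_s=\exp(sX)$ with $X=\sum_{n>0}c_nz^{n+1}\partial_z$, the definition \eqref{eq94} gives $\mc U(\alpha_s)=\exp(sL_X)$ with $L_X=\sum_{n>0}c_nL_n$, so $F(s):=\mc U(\alpha_s)Y(v,z)$ solves the linear ODE $F'(s)=L_XF(s)$ with $F(0)=Y(v,z)$. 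I would then show that $G(s):=Y(\mc U(\varrho(\alpha_s|\id)_z)v,\alpha_s(z))\mc U(\alpha_s)$ solves the same ODE with the same initial value: differentiating $G$ produces three contributions, from the coordinate change $\varrho(\alpha_s|\id)_z$, from the insertion point $\alpha_s(z)$, and from $\mc U(\alpha_s)$, and the requirement $G'=L_XG$ unwinds to exactly the Virasoro commutator formula $[L_n,Y(v,z)]=\sum_{m\geq-1}\binom{n+1}{m+1}z^{n-m}Y(L_mv,z)$ summed against the $c_n$. Uniqueness of solutions, valid coefficient-by-coefficient because everything lives in finite-dimensional graded pieces, then forces $F=G$.

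The main obstacle is not any single algebraic identity but the bookkeeping that keeps all manipulations rigorous at the level of $\Cbb((z))$: making precise the substitution $y\mapsto\alpha(z)$ and the infinite exponential defining $\mc U(\varrho(\alpha|\id)_z)$, and verifying that the composition used in the reduction step is compatible with re-expanding Laurent series in $\beta(z)$ as Laurent series in $z$. Once this is set up, the only genuinely computational input is the Virasoro commutator formula, and the remainder is the group-theoretic reduction together with the ODE-uniqueness argument on the finite-dimensional pieces $\Wbb^{\leq n}$.
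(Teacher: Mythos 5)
The paper does not actually prove Theorem \ref{lb5}: it is quoted from \cite{Hua97} (with the remark that the proof for $\Wbb=\Vbb$ extends to arbitrary modules, see \cite[Sec.~10]{Gui23b} or \cite[Chapter 6]{FB04}), and the paper only uses its consequence Prop.~\ref{lb6}. So there is no in-paper argument to compare against; measured against the cited sources, your proposal follows the standard route and its skeleton is sound. The cocycle identity $\varrho(\alpha\circ\beta|\id)_z=\varrho(\alpha|\id)_{\beta(z)}\circ\varrho(\beta|\id)_z$ together with Thm.~\ref{lb1} does show that the set of $\alpha$ satisfying the identity is closed under composition; the scaling case is exactly the admissibility relation $[\wtd L_0,Y(v)_n]=Y(L_0v)_n-(n+1)Y(v)_n$; and the unipotent case reduces, via an ODE/uniqueness argument on the finite-dimensional pieces $\Wbb^{\leq n}$ (where every $s$-dependence is polynomial, so the argument is legitimate coefficient-by-coefficient), to the commutator formula $[L_n,Y(v,z)]=\sum_{m\geq-1}\binom{n+1}{m+1}z^{n-m}Y(L_mv,z)$, which holds for weak modules.

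Two points need repair or expansion. First, the appeal to ``$\Gbb$ is a connected topological group, hence generated by any neighborhood of $\id$'' is both unrigorous (no topology on $\Gbb$ is specified, and connectedness and the topological-group axioms would themselves require proof) and unnecessary: the paper's definition \eqref{eq94} of $\mc U$ already writes every $\alpha\in\Gbb$ as the scaling by $\alpha'(0)$ composed with a single exponential $\exp\big(\sum_{n>0}c_nz^{n+1}\partial_z\big)$, so your two cases plus closure under composition cover all of $\Gbb$ directly. Second, and more substantively, the phrase ``differentiating $G$ produces three contributions'' conceals the one genuinely nontrivial computation, namely $\frac{d}{ds}\,\mc U(\varrho(\alpha_s|\id)_z)v$. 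The family $s\mapsto\varrho(\alpha_s|\id)_z$ is \emph{not} a one-parameter subgroup, so this derivative does not follow from \eqref{eq94} alone; one must use your own cocycle identity to write $\varrho(\alpha_{s+\epsilon}|\id)_z=\varrho(\alpha_\epsilon|\id)_{\alpha_s(z)}\circ\varrho(\alpha_s|\id)_z$, apply Thm.~\ref{lb1}, and compute the differential of $\mc U$ at $\id$ along $\epsilon\mapsto\varrho(\alpha_\epsilon|\id)_y$, whose generator is $\sum_{m\geq0}\big(\sum_n c_n\binom{n+1}{m+1}y^{n-m}\big)t^{m+1}\partial_t$. Only with this in hand does $G'=L_XG$ reduce to the commutator formula: the $m=-1$ terms match the insertion-point contribution via the flow equation $\frac{d}{ds}\alpha_s(z)=\sum_n c_n\alpha_s(z)^{n+1}$, and the $m\geq0$ terms match the coordinate-change contribution. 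This is all fillable with the tools you already set up, but as written the key reduction is asserted rather than proved, and it is precisely where the content of Huang's theorem lives.
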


Instead of using this formula directly, we will only use its consequence, Prop. \ref{lb6}.

Finally, we show that $\mc U_0$ is a group representation of ``the universal cover of $\Gbb$".

\begin{co}\label{lb29}
We have $\mc U_0(\id)=\id$ if the argument of $\id'(z)=1$ is $0$. Let $\alpha,\beta\in\Gbb$. (Note that $(\alpha\circ\beta)'(0)=\alpha'(0)\beta'(0)$.) Then $\mc U_0(\alpha\circ\beta)=\mc U_0(\alpha)\mc U_0(\beta)$ if the arguments are chosen such that
\begin{align*}
\arg\big((\alpha\circ\beta)'(0)\big)=\arg\alpha'(0)+\arg\beta'(0).
\end{align*}
\end{co}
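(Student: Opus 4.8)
The plan is to reduce everything to Theorem \ref{lb1}, which already tells us that $\mc U$ is a genuine representation of $\Gbb$, and to track exactly how $\mc U_0$ differs from $\mc U$. Set $D=L_0-\wtd L_0$. By the construction of $\wtd L_0$ this operator acts as the scalar $\lambda$ on each block $\Wbb_\lambda=\bigoplus_n\Wbb_{(n+\lambda)}$, so $D$ is diagonal; and since $\wtd L_0-L_0$ commutes with every $Y_\Wbb(v)_n$, in particular $D$ commutes with all the Virasoro operators $L_n=Y(\cbf)_{n+1}$ and with $\wtd L_0$ itself.

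First I would record the factorization of $\mc U_0$ through $\mc U$. For $\rho\in\Gbb$ fix the argument $\arg\rho'(0)$ and set $\ell(\rho)=\log|\rho'(0)|+\im\arg\rho'(0)$, so that $\rho'(0)^{L_0}=e^{\ell(\rho)L_0}$. Because $\wtd L_0$ has integer eigenvalues, $\rho'(0)^{\wtd L_0}=e^{\ell(\rho)\wtd L_0}$ is independent of the branch (consistent with the stated fact that $\mc U(\rho)$ carries no argument ambiguity). Since $L_0-\wtd L_0=D$ commutes with $\wtd L_0$, the exponential of their sum splits, giving $\rho'(0)^{L_0}=e^{\ell(\rho)D}\,\rho'(0)^{\wtd L_0}$ and hence
\[
\mc U_0(\rho)=e^{\ell(\rho)D}\,\mc U(\rho).
\]

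Next I would exploit that $D$ commutes with $\mc U(\rho)$ for every $\rho$: indeed $\mc U(\rho)=\rho'(0)^{\wtd L_0}\exp\big(\sum_{n>0}c_nL_n\big)$ is built out of $\wtd L_0$ and the $L_n$, all of which commute with $D$. Therefore
\[
\mc U_0(\alpha)\mc U_0(\beta)=e^{\ell(\alpha)D}\mc U(\alpha)e^{\ell(\beta)D}\mc U(\beta)=e^{(\ell(\alpha)+\ell(\beta))D}\mc U(\alpha)\mc U(\beta)=e^{(\ell(\alpha)+\ell(\beta))D}\mc U(\alpha\circ\beta),
\]
the last equality by Theorem \ref{lb1}. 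Comparing with $\mc U_0(\alpha\circ\beta)=e^{\ell(\alpha\circ\beta)D}\mc U(\alpha\circ\beta)$, the claim reduces to the scalar-exponent identity $e^{(\ell(\alpha)+\ell(\beta))D}=e^{\ell(\alpha\circ\beta)D}$. Since $(\alpha\circ\beta)'(0)=\alpha'(0)\beta'(0)$, the moduli give $\log|(\alpha\circ\beta)'(0)|=\log|\alpha'(0)|+\log|\beta'(0)|$, and the hypothesis $\arg((\alpha\circ\beta)'(0))=\arg\alpha'(0)+\arg\beta'(0)$ makes the imaginary parts agree as well; thus $\ell(\alpha\circ\beta)=\ell(\alpha)+\ell(\beta)$ as honest complex numbers and the two exponentials coincide. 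The identity statement $\mc U_0(\id)=\id$ is the same computation: with $\arg\id'(0)=\arg 1=0$ one has $\ell(\id)=0$ and $\mc U(\id)=\id$ by Theorem \ref{lb1}, whence $\mc U_0(\id)=e^{0\cdot D}\id=\id$.

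The one genuinely delicate point — and the reason the hypothesis on arguments is present at all — is that $D=L_0-\wtd L_0$ typically has non-integer eigenvalues $\lambda\in E$, so $e^{cD}$ depends on $c$ itself and not merely on $c$ modulo $2\pi\im$. Consequently one cannot get away with $\ell(\alpha)+\ell(\beta)\equiv\ell(\alpha\circ\beta)\pmod{2\pi\im}$; one needs the arguments to add \emph{exactly}, which is precisely the normalization imposed in the statement. Everything else is bookkeeping with commuting diagonal operators, and is harmless because on each finite-dimensional piece $\Wbb^{\leq n}$ the relevant exponential series terminate or converge as explained in the preceding remarks.
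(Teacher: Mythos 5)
Your proof is correct and is essentially the paper's own argument in ``global'' form: the paper first reduces to the case where $\Wbb$ is $L_0$-simple, on which your operator $D=L_0-\wtd L_0$ is just the scalar $\lambda$ and your factorization $\mc U_0(\rho)=e^{\ell(\rho)D}\mc U(\rho)$ becomes $\mc U_0(\rho)=\rho'(0)^{\lambda}\mc U(\rho)$. Both proofs then conclude from Theorem \ref{lb1} together with the exact (not merely mod $2\pi\im$) additivity $\ell(\alpha\circ\beta)=\ell(\alpha)+\ell(\beta)$ forced by the hypothesis on arguments, i.e.\ the identity $((\alpha\circ\beta)'(0))^{\lambda}=\alpha'(0)^{\lambda}\beta'(0)^{\lambda}$.
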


\begin{proof}
It suffices to assume that $\Wbb$ is $L_0$-simple. Then on $\Wbb$ we have $L_0=\wtd L_0+\lambda$ for some $\lambda\in\Cbb$. Then we have $((\alpha\circ\beta)'(0))^\lambda=\alpha'(0)^\lambda\beta'(0)^\lambda$, and hence, by Thm. \ref{lb1},
\begin{align*}
\mc U_0(\alpha\circ\beta)=((\alpha\circ\beta)'(0))^\lambda\mc U(\alpha\circ\beta)=\alpha'(0)^\lambda\beta'(0)^\lambda\cdot\mc U(\alpha)\mc U(\beta)=\mc U_0(\alpha)\mc U_0(\beta).
\end{align*} 
\end{proof}

\section{Introduction to conformal blocks}

In this section, we give a brief introduction to conformal blocks and their major properties. We discuss conformal blocks on arbitrary compact Riemann surfaces in general, but our main interest is in the genus-0 case. (So, for the sake of simplicity, the readers may assume that all the compact Riemann surfaces mentioned in this article are the sphere $\Pbb^1$ and their disjoint unions.)  In this case, conformal blocks can be explicitly written down in terms of VOA intertwining operators (as developed in \cite{FHL93}).

We refer the readers to \cite{FB04} for a detailed account of the theory of VOA conformal blocks. \cite{FB04} uses the language of algebraic geometry. For a complex analytic approach, see \cite{Gui23a} or \cite{Gui23b} (which contains more motivational explanations). Our presentation in this section is elementary:  readers with some basic knowledge of (complex) differential manifolds and vector bundles can read this section without difficulty. No knowledge of algebraic geometry or advanced complex geometry is required.

\subsection{VOA bundles}

Let $C$ be a Riemann surface, and let $\Vbb$ be a VOA. Our first step is to define a holomorphic vector bundle $\scr V_C^{\leq n}$ (for each $n\in\Nbb$) associated to $C$ and $\Vbb$ in terms of its transition functions. Cf. \cite{FB04}. We follow the approach of \cite[Sec. 2]{Gui23a} and \cite[Sec. 11]{Gui23b}.

Let $U\subset C$ be open, and choose $\eta\in\scr O(U)$ to be \textbf{univalent}, namely, $\eta:U\rightarrow \Cbb$ is a holomorphic injective map, and hence a biholomorphism from $U$ to $\eta(U)$. (More generally, one can choose $\eta\in\scr O(U)$ to be locally univalent, which is equivalent to that $d\eta$ is nowhere zero on $U$.) Choose another univalent $\mu\in\scr O(U)$. Then for each $p\in U$, we can define an element $\varrho(\eta|\mu)_p\in\Gbb$ \index{zz@$\varrho(\eta\lvert\mu)$} by
\begin{align}
\eta-\eta(p)=\varrho(\eta|\mu)_p\circ\big(\mu-\mu(p)\big).
\end{align}
Thus, if $U$ is an open subset of $\Cbb$ and $\mu$ is the standard coordinate $\id:z\mapsto z$, then the meaning of $\varrho(\eta|\mu)_p$ agrees with that of $\varrho(\alpha|\id)_z$ defined in \eqref{eq3}. Moreover, if we vary $p$ on $U$, then the coefficients of the power series $\eta(\eta|\mu)_p(t)$ of $t$ varies holomorphically. Therefore, when acting on $\Wbb^{\leq n}$,
\begin{align*}
p\in U\mapsto\mc U(\varrho(\eta|\mu))_p\big|_{\Wbb^{\leq n}}\qquad\text{is a holomorphic $\End_\Cbb(\Wbb^{\leq n})$-valued function} 
\end{align*}
We simply write this fact as
\begin{align*}
\mc U(\varrho(\eta|\mu))\in\End(\Wbb^{\leq n})\otimes_\Cbb\scr O(U).
\end{align*}

If $\nu\in\scr O(U)$ is univalent, then one easily checks that $\varrho(\eta|\mu)_p\circ\varrho(\mu|\nu)_p=\varrho(\eta|\nu)_p$. Thus, from Thm. \ref{lb1} we see that the following cocycle relation holds:
\begin{align*}
\mc U(\varrho(\eta|\mu)_p)\circ \mc U(\varrho(\mu|\nu)_p)=\mc U(\varrho(\eta|\nu)_p).
\end{align*}
Thus we may define a holomorphic vector bundle $\scr V^{\leq n}_C$ on $C$ whose fibers are isomorphic to $\Vbb^{\leq n}$, and whose transition functions are given by $\mc U(\varrho(\eta|\mu))$. 

More precisely, $\scr V^{\leq n}_C$ is described as follows. For each open $U\subset C$ and each univalent $\eta\in\scr O(U)$, there is a trivialization, i.e., an equivalence of vector bundles \index{U@$\mc U_\varrho(\eta)$}
\begin{align*}
\mc U_\varrho(\eta):\scr V_U^{\leq n}\xrightarrow{\simeq} \Vbb^{\leq n}\otimes_\Cbb\scr O_U
\end{align*}
whose restriction to each open subset $V\subset U$ equals $\mc U_\varrho(\eta|_U)$. If $\mu\in\scr O(U)$ is also univalent, then
\begin{align}
\mc U_\varrho(\eta)\mc U_\varrho(\mu)^{-1}=\mc U(\varrho(\eta|\mu)): \Vbb^{\leq n}\otimes\scr O_U\xrightarrow{\simeq}\Vbb^{\leq n}\otimes\scr O_U
\end{align}
is the transition function. This defines the \textbf{VOA bundle} (also called \textbf{sheaf of VOA}) $\scr V_C^{\leq n}$ \index{VC@$\scr V^{\leq n}_C,\scr V_C$}. If $m\leq n$, the inclusion map $\Vbb^{\leq m}\hookrightarrow\Vbb^{\leq n}$ makes $\scr V_C^{\leq m}$ naturally a subbundle of $\scr V_C^n$. Thus we can take the union $\scr V_C:=\bigcup_{n\in\Nbb}\scr V^{\leq n}_C$, or more precisely, the direct limit,
\begin{align*}
\scr V_C:=\varinjlim_{n\in\Nbb}\scr V^{\leq n}_C,
\end{align*}
also called a VOA bundle.

To get an idea of what's really going on in the above definition, let's consider VOA bundles on the sphere $\Pbb^1$, our main interest in this article.

\begin{eg}\label{lb3}
Let $\zeta:z\mapsto z$ be the standard coordinate of $\Pbb^1$. Then $\Pbb^1$ can be covered by two charts: $\Pbb^1=U\cup V$ where $U=\Cbb,V=\Pbb^1\setminus\{0\}$, and $\zeta\in\scr O(U),1/\zeta\in\scr O(V)$ are univalent. For each $\gamma\in\Cbb^\times=U\cap V$, we compute
\begin{align}
\varrho(1/\zeta|\zeta)_\gamma=\varrho(\zeta|1/\zeta)_{1/\gamma}=\vartheta_\gamma
\end{align}
where \index{zz@$\vartheta_\gamma(z)=(\gamma+z)^{-1}-\gamma^{-1}$}
\begin{align}
\vartheta_\gamma(z):=\frac{1}{\gamma+z}-\frac 1\gamma.
\end{align}
For each $\tau\in\Cbb$, let $\alpha_\tau(z)=\frac{z}{1-\tau z}$. Then $\vartheta_\gamma(z)=\alpha_\gamma(-\gamma^{-2}z)=-\gamma^{-2}\alpha_{-\gamma^{-1}}(z)$. Then by Exp. \ref{lb2} and Thm. \ref{lb1}, we have 
\begin{align}
\mc U_0(\varrho(1/\zeta|\zeta))_\gamma=\mc U_0(\vartheta_\gamma)=e^{\gamma L_1}(-\gamma^{-2})^{L_0}=(-\gamma^{-2})^{L_0}e^{-\gamma^{-1}L_1}
\end{align}
which equals $\mc U(\varrho(1/\zeta|\zeta))$ on $\Vbb$.

$\mc U(\varrho(1/\zeta|\zeta))$ gives the transition function of $\scr V_{\Pbb^1}^{\leq n}$. More precisely, we have trivializations of (holomorphic) vector bundles $\mc U_\varrho(\zeta):\scr V_U^{\leq n}\xrightarrow{\simeq}\Vbb^{\leq n}\otimes\scr O_U$ and $\mc U_\varrho(1/\zeta):\scr V_V^{\leq n}\xrightarrow{\simeq}\Vbb^{\leq n}\otimes\scr O_V$ related by the transition function
\begin{gather*}
\mc U_\varrho(1/\zeta)\mc U_\varrho(\zeta):\Vbb^{\leq n}\otimes\scr O_{\Cbb^\times}\xrightarrow{\simeq}\Vbb^{\leq n}\otimes\scr O_{\Cbb^\times}\\
\gamma\in\Cbb^\times\mapsto e^{\gamma L_1}(-\gamma^{-2})^{L_0}\in\End(\Vbb^{\leq n})
\end{gather*}
\end{eg}

\subsection{Definition of conformal blocks}

\begin{df}
Let $n\in\Zbb_+$, and let $C$ be a compact Riemann surface. By an \textbf{$N$-pointed compact Riemann surface}, we mean the data
\begin{align*}
\fk X=(C;x_\blt)=(C;x_1,\dots,x_N)
\end{align*}
where $x_1,\dots,x_N$ (often abbreviated to $x_\blt$) are distinct points of $C$ (called \textbf{punctures} or \textbf{marked points}).

If, in addition to the above data $(C;x_\blt)$, we associate to each marked point $x_i$ a  (holomorphic) univalent function on a neighborhood $U_i$ of $x_i$ satisfying $\eta_i(x_i)=0$ (we call such $\eta_i$ a \textbf{local coordinate at $x_i$}), we call the data
\begin{align*}
\fk X=(C;x_\blt;\eta_\blt)=(C;x_1,\dots,x_N;\eta_1,\dots,\eta_N)
\end{align*}
an \textbf{$N$-pointed compact Riemann surface with (local) coordinates}.

If $C=\Pbb^1$ (or more generally, if $C$ is a disjoint union of $\Pbb^1$), and if the local coordinate $\eta_i$ extends to a bihholomorphism $\eta_i:\Pbb^1\rightarrow\Pbb^1$, we call $\eta_i$ a \textbf{M\"obius (local) coordinate} at $x_i$. If $x_i\neq\infty$, then $\eta_i(z)=\frac{\lambda(z-x_i)}{1-\tau(z-x_i)}$ for some $\gamma,\tau\in\Cbb$ and $\gamma\neq 0$.  \hfill\qedsymbol
\end{df}

\begin{ass}\label{lb7}
Unless otherwise stated, we assume that in an $N$-pointed compact Riemann surface $(C;x_\blt)$,  each connected component of $C$ contains at least one marked point.
\end{ass}

\begin{df}
If $\fk X=(C;x_\blt;\eta_\blt)$ and $\fk Y=(C';y_\blt;\mu_\blt)$ are $N$-pointed compact Riemann surfaces with local coordinates, we say that $\fk X$ is \textbf{equivalent} to $\fk Y$ if there is a biholomorphism $\varphi:C\rightarrow C'$ such that for each $1\leq i\leq N$ we have that $\varphi(x_i)=y_i$ and that $\eta_i=\mu_i\circ\varphi$ on a neighborhood of $x_i$.
\end{df}

Let $\fk X=(C;x_\blt;\eta_\blt)$ be an $N$-pointed compact Riemann surface with local coordinates. We associate to each marked point $x_i$ a $\Vbb$-module $\Wbb_i$. We write
\begin{align*}
\Wbb_\blt=\Wbb_1\otimes\cdots\otimes \Wbb_N
\end{align*}
for simplicity. Also, $\mbf w\in\Wbb_\blt$ or $w\in\Wbb_\blt$ means an element of $\Wbb_\blt$, but \index{W@$\Wbb_\blt=\Wbb_1\otimes\cdots\otimes\Wbb_N,w_\blt=w_1\otimes\cdots\otimes w_N$}
\begin{align*}
w_\blt\in\Wbb_\blt\text{ means an element of $\Wbb_\blt$ of the form }w_1\otimes\cdots\otimes w_N.
\end{align*}
A conformal block is then an element of the dual space $\Wbb_\blt^*=(\Wbb_1\otimes\cdots\otimes\Wbb_N)^*$ invariant under the action of
\begin{align*}
H^0(C,\scr V_C\otimes\omega_C(\star x_\blt)):=\bigcup_{n\in\Nbb} H^0(C,\scr V_C^{\leq n}\otimes\omega_C(\star x_\blt))
\end{align*}
(recall \eqref{eq11} for the meaning of the notation). The action is described as follows.

Choose any $\sigma\in H^0(C,\scr V_C{^{\leq n}}\otimes\omega_C(\star x_\blt))$. Choose a neighborhood $U_i$ of $x_i$ on which $\eta_i$ is defined (and univalent), and assume that $x_j\notin U_i$ if $i\neq j$. We first define the action of $\sigma$ on $\Wbb_i$. For that purpose, it suffices to assume that $\sigma\in H^0(U,\scr V_U^{\leq n}\otimes\omega_C(\star x_i))$.
\begin{df}
Under the trivialization map $\mc U_\varrho(\eta_i)$, $\sigma$ becomes $\mc U_\varrho(\eta_i)\sigma\in H^0(U,\Vbb^{\leq n}\otimes\scr O_U(\star x_i))$, which can therefore be written as a finite sum
\begin{align*}
\mc U_\varrho(\eta_i)\sigma=\sum_k v_kf_kd\eta_i
\end{align*}
for some $v_k\in\Vbb^{\leq n}$ and $f_k\in H^0(U,\scr O_U(\star x_i))$. (Namely, $f_k$ is a meromorphic function on $U$ with possible poles at $x_i$.) Take the Laurent series expansion
\begin{align*}
f_k=\sum_{n\in\Zbb} f_{k,n}\cdot \eta_i^n\qquad(f_{k,n}\in\Cbb)
\end{align*}
Note that $f_{k,n}=0$ for sufficiently negative $n$. Then the linear action of $\sigma$ on any $w_i\in\Wbb_i$ is defined to be
\begin{align}
&\sigma\cdot w_i=\Res_{\eta_i=0}~Y(\mc U_\varrho(\eta_i)\sigma,\eta_i)w_i  \nonumber\\
=&\sum_k \Res_{\eta_i=0}~f_k\cdot Y(v_k,\eta_i)w_id\eta_i  \nonumber\\
=&\sum_k\sum_{n\in\Zbb} \Res_{z=0}~f_{k,n}z^nY(v_k,z)w_idz  \nonumber\\
=&\sum_k\sum_{n\in\Zbb} f_{k,n}Y(v_k)_nw_i   \label{eq5}
\end{align}
\end{df}

\begin{df}
The linear action of $H^0(C,\scr V_C\otimes\omega_C(\star x_\blt))$ on $\Wbb_\blt=\Wbb_1\otimes\cdots\otimes\Wbb_N$ is defined as follows. For each $\sigma\in H^0(C,\scr V_C\otimes\omega_C(\star x_\blt))$, choose $n\in\Nbb$ such that $\sigma\in H^0(C,\scr V_C^{\leq n}\otimes\omega_C(\star x_\blt))$. By linearity, it suffices to define the action of $\sigma$ on any $w_\blt=w_1\otimes\cdots\otimes w_N$. This is defined by be
\begin{align}
\sigma\cdot w_\blt=\sum_{i=1}^N w_1\otimes\cdots\otimes w_{i-1}\otimes(\sigma\cdot w_i)\otimes w_{i+1}\otimes\cdots\otimes w_N \label{eq9}
\end{align}
where $\sigma\cdot w_i$ is defined by \eqref{eq5}.
\end{df}

\begin{df}
Let $\fk X=(C;x_\blt;\eta_\blt)$ be an $N$-pointed compact Riemann surfaces with local coordinates, and associate a $\Vbb$-module $\Wbb_i$ to each marked point $x_\blt$. A linear functional $\upphi:\Wbb_\blt\rightarrow\Cbb$ vanishing on
\begin{align*}
H^0(C,\scr V_C\otimes\omega_C(\star x_\blt))\Wbb_\blt=\Span_\Cbb\big\{\sigma\cdot w_\blt:\sigma\in H^0(C,\scr V_C^{\leq n}\otimes\omega_C(\star x_\blt)),n\in\Nbb,w_\blt\in\Wbb_\blt\big\}
\end{align*}
is called a \textbf{conformal block} associated to $\fk X,\Wbb_\blt$. The space of all such elements, namely \index{TX@$\scr T_{\fk X}^*(\Wbb_\blt)$, the space of conformal blocks}
\begin{align*}
\scr T^*_{\fk X}(\Wbb_\blt):=\big(\Wbb_\blt/H^0(C,\scr V_C\otimes\omega_C(\star x_\blt))\Wbb_\blt\big)^*
\end{align*}
is called the \textbf{space of conformal blocks} associated to $\fk X$ and $\Wbb_\blt$.
\end{df}

\subsection{Examples in genus $0$}

\begin{eg}\label{lb4}
Let $\zeta$ be the standard coordinate of $\Pbb^1$. Let $N\in\Nbb$, and consider the following $(N+1)$-pointed compact Riemann surface with M\"obius coordinates
\begin{align}
\fk X=(\Pbb^1;z_1,\dots,z_N,\infty;\zeta-z_1,\dots,\zeta-z_N,1/\zeta).  \label{eq18}
\end{align}
Using the trivialization $\mc U_\varrho(\zeta):\scr V_{\Cbb}^{\leq n}\rightarrow \Vbb^{\leq n}\otimes\scr O_\Cbb$ and regarding $v\in\Vbb^{\leq n}$ as a constant section of $\Vbb^{\leq n}\otimes\scr O_\Cbb$, it is not hard to see that
\begin{align*}
&H^0(\Pbb,\scr V_{\Pbb^1}\otimes\omega_C(\star z_\blt+\star\infty))\\
=&\Span_\Cbb\{f\cdot\mc U_\varrho(\zeta)^{-1}vd\zeta:f\in H^0(\Pbb^1,\scr O_{\Pbb^1}(\star z_\blt+\star\infty)),v\in\Vbb \}.
\end{align*}
It is also clear that
\begin{align}
H^0(\Pbb^1,\scr O_{\Pbb^1}(\star z_\blt+\star\infty))=\Cbb[\zeta,(\zeta-z_1)^{-1},\dots,(\zeta-z_N)^{-1}]
\end{align}

Take power series expansions
\begin{subequations}\label{eq12}
\begin{gather}
f=\sum_{n\in\Zbb} f_{i,n} (z-z_i)^n\qquad\text{when $|z-z_i|$ is small}  \label{eq96}\\
f=\sum_{n\in\Zbb}f_{\infty,n}z^n\qquad\text{when $|z|$ is large} \label{eq97}
\end{gather}
\end{subequations}
Associate $\Wbb_i$ to $x_i$ and $\Wbb_\infty$ to $\infty$. Then the actions of
\begin{align*}
\sigma=f\cdot\mc U_\varrho(\zeta)^{-1}vd\zeta
\end{align*}
on $w_i\in\Wbb_i$ and on $w_\infty\in\Wbb_\infty$ are
\begin{subequations}
\begin{gather}
\sigma\cdot w_i=\sum_{n\in \Zbb}f_{i,n}Y(v)_nw_i \label{eq7} \\
\sigma\cdot w_\infty=-\sum_{n\in\Zbb} f_{\infty,n}Y'(v)_n w_\infty \label{eq8}
\end{gather}
\end{subequations}
(We will explain \eqref{eq8} below.) Then $\sigma(w_1\otimes\cdots\otimes w_N\otimes w_\infty)$ is defined as in \eqref{eq9}. $\scr T_{\fk X}^*(\Wbb_\blt\otimes\Wbb_\infty)$ is the set of linear functionals on $\Wbb_\blt\otimes\Wbb_\infty$ vanishing on all such $\sigma\cdot(w_\blt\otimes w_N)$. \hfill\qedsymbol
\end{eg}

\begin{proof}[Proof of \eqref{eq8}]
Since we are computing the action on $\Wbb_\infty$, we need to use the local coordinate $1/\zeta$ at $\infty$. Then, by Exp. \ref{lb3},
\begin{align*}
\mc U_\varrho(1/\zeta)_z\sigma(z)=\mc U_\varrho(1/\zeta)_z f(z)\mc U_\varrho(\zeta)^{-1}_zvd\zeta=f(z)\mc U(\varrho(1/\zeta|\zeta)_z)vdz=f(z)\mc U(\vartheta_z)vdz,
\end{align*}
which equals $f(z)e^{zL_1}(-z^{-2})^{L_0}vdz$. Recall the definition of $Y'$ in \eqref{eq10}. We compute
\begin{align*}
&\sigma\cdot w_\infty=\Res_{1/\zeta=0}~Y(\mc U_\varrho(1/\zeta)\sigma,1/\zeta)w_\infty \\
=&\Res_{1/z=0}~Y(f(z)\mc U(\vartheta_z)v,1/z)w_\infty dz=\Res_{1/z=0}~f(z)Y'(v,z)w_\infty dz
\end{align*}
Note that $\Res_{z=0}g(z)dz=g_{-1}=-\Res_{1/z=0}g(z)dz$ for all $g=\sum_{n\in\Zbb}g_nz^n\in\Cbb[[z^{\pm1}]]$. So
\begin{align*}
\sigma\cdot w_\infty=-\sum_{n\in\Zbb}f_{\infty,n}z^nY'(v,z)w_\infty dz=-\sum_{n\in\Zbb} f_{\infty,n}Y'(v)_n w_\infty.
\end{align*}
\end{proof}

\begin{rem}\label{lb14}
In Exp. \ref{lb4}, we often regard a conformal block $\upphi\in\scr T_{\fk X}^*(\Wbb_\blt\otimes\Wbb_\infty)$ as a linear map $\mc Y_\upphi:\Wbb_1\otimes\cdots\otimes\Wbb_N\rightarrow\Wbb_\infty^*$. (Note that $\Wbb_\infty^*$ equals the algebraic completion of $\Wbb'$.) Then the condition that $\upphi$ vanishes on all $\sigma\cdot(w_\blt\otimes w_N)$ is equivalent to saying that for each $v\in\Vbb$ and $f\in \Cbb[\zeta,(\zeta-z_1)^{-1},\dots,(\zeta-z_N)^{-1}]$, in terms of the expansions \eqref{eq12}, the following \textbf{Jacobi identity} holds in $\Wbb_\infty^*=\ovl{\Wbb_\infty'}$:
\begin{align}\label{eq17}
\begin{aligned}
&\sum_{n\in\Zbb}f_{\infty,n}Y_{\Wbb_\infty'}(v)_n\mc Y_{\upphi}(w_1\otimes\cdots\otimes w_N)\\
=&\sum_{i=1}^N\sum_{n\in\Zbb} f_{i,n} \mc Y_\upphi(w_1\otimes\cdots\otimes Y_{\Wbb_i}(v)_n w_i\otimes\cdots w_N).
\end{aligned}
\end{align}
In the special case that $N=2,z_1=0,z_2=z$, we write
\begin{align*}
\upphi(w_1\otimes w_2\otimes w_\infty)=\bk{\mc Y_\upphi(w_2,z)w_1,w_\infty}
\end{align*}
and call $\mc Y_\upphi(\cdot,z):\Wbb_2\otimes\Wbb_1\rightarrow\Wbb_\infty^*$ a \textbf{type $\Wbb_\infty'\choose \Wbb_2\Wbb_1$ intertwining operator  at $z$}.
\end{rem}

\begin{eg}\label{lb30}
Let $\fk X=(\Pbb^1;0,\infty;\zeta,1/\zeta)$ where $\zeta$ is the standard coordinate of $\Cbb$. Let $\Wbb_1,\Wbb_2$ be $\Vbb$-modules. If $A\in\Hom_\Vbb(\Wbb_1,\Wbb_2)$, then the linear functional
\begin{align*}
\Wbb_1\otimes\Wbb_2'\rightarrow\Cbb\qquad w_1\otimes w'_2\mapsto\bk{Aw_1,w'_2}
\end{align*}
is a conformal block associated to $\fk X$ by  Jacobi identity \eqref{eq17}. Conversely, if $\upphi\in\scr T_{\fk X}^*(\Wbb_1\otimes\Wbb'_2)$, we let $A:\Wbb_1\rightarrow \ovl{\Wbb_2}$ be the linear map such that $\upphi(w_1\otimes w'_2)=\bk{Aw_1,w'_2}$ holds for all $w_1\in\Wbb_1,w'_2\in\Wbb'_2$. Then Jacobi identity \eqref{eq17} implies that $A$ intertwines the action of any vertex operator $Y(v)_n$ where $v\in\Vbb,n\in\Zbb$. In particular, $[L_0,A]=0$. This implies that $A$ sends $\Wbb_1$ into $\Wbb_2$, and that $A\in\Hom_\Vbb(\Wbb_1,\Wbb_2)$. We conclude that there is a canonical isomorphism
\begin{align*}
\Hom_\Vbb(\Wbb_1,\Wbb_2)\simeq \scr T_{(\Pbb^1;0,\infty;\zeta,1/\zeta)}^*(\Wbb_1\otimes\Wbb'_2)
\end{align*}
\end{eg}

\begin{eg}\label{lb31}
Let $z\in\Cbb^\times$ and $\fk P=(\Pbb^1;0,z,\infty;\zeta,\zeta-z,1/\zeta)$. Let $\Wbb_1,\Wbb_2$ be $\Vbb$-modules. If $A\in\Hom_\Vbb(\Wbb_1,\Wbb_2)$, then the linear functional
\begin{align}
\Wbb_1\otimes\Vbb\otimes\Wbb'_2\rightarrow\Cbb   \qquad   w_1\otimes v\otimes w'_2\mapsto \bk{AY(v,z)w_1,w'_2}   \label{eq49}
\end{align}
is a conformal block associated to $\fk P$ by Jacobi identity \eqref{eq17}. Conversely, if $\upphi:\Wbb_1\otimes\Vbb\otimes\Wbb'_2\rightarrow\Cbb$ is a conformal block associated to $\fk P$. Then by Jacobi identity \eqref{eq17} or by Prop. \ref{lb20}, $\upphi$ is uniquely determined by its restriction to $\Wbb_1\otimes\id\otimes\Wbb'_2$ where $\id\in\Vbb$ is the vacuum vector. Using \eqref{eq17}, one checks easily that $w_1\otimes w'_2\in\Wbb_1\otimes\Wbb'_2\mapsto \upphi(w_1\otimes\id\otimes w'_2)$ is a conformal block associated to $(\Pbb^1;0,\infty;\zeta,1/\zeta)$, which is of the form $w_1\otimes w'_2\mapsto \bk{Aw_1,w'_2}$ due to Exp. \ref{lb30}. So by the uniqueness already mentioned, $\upphi$ must be of the form \eqref{eq49}. We conclude that there is a canonical isomorphism
\begin{align*}
\Hom_\Vbb(\Wbb_1,\Wbb_2)\simeq \scr T_{(\Pbb^1;0,z,\infty;\zeta,\zeta-z,1/\zeta)}^*(\Wbb_1\otimes\Vbb\otimes\Wbb'_2)
\end{align*}
\end{eg}

\begin{eg}[{\cite[Prop. 3.3]{Li94}}]\label{lb73}
Let $\Wbb$ be a $\Vbb$-module. Recall that $\Wbb_{(s)}$ is the $L_0$-weight $s$ subspace. Then Exp. \ref{lb4} shows $H^0(\Pbb,\scr V_{\Pbb^1}\otimes\omega_C(\star\infty))=\{\mc U_\varrho(\zeta)^{-1}v\zeta^nd\zeta:n\in\Nbb,v\in\Vbb\}$ and hence
\begin{align}
\scr T_{(\Pbb^1;\infty;1/\zeta)}(\Wbb)=\frac\Wbb{Y'(\Vbb)_{\geq0}\Wbb}:=\frac{\Wbb}{\{Y_\Wbb'(v)_nw:n\in\Nbb,v\in\Vbb,w\in\Wbb\}}
\end{align}
by \eqref{eq97}. We claim that $\Wbb_{(0)}\hookrightarrow\Wbb$ descends to a linear isomorphism
\begin{align}
\frac{\Wbb_{(0)}}{L_1\cdot \Wbb_{(1)}}\xlongrightarrow{\simeq} \frac\Wbb{Y'(\Vbb)_{\geq0}\Wbb}  \label{eq98}
\end{align}
Therefore the canonical surjective map $\Wbb^*\rightarrow\Wbb_{(0)}^*$ restricts to a linear isomorphism
\begin{align}
\scr T^*_{(\Pbb^1;\infty;1/\zeta)}(\Wbb)\xlongrightarrow{\simeq}\left(\frac{\Wbb_{(0)}}{L_1\cdot \Wbb_{(1)}} \right)^*
\end{align}
\end{eg}

\begin{proof}[Proof of \eqref{eq98}]
Let $\Xbb=Y'(\Vbb)_{\geq0}\Wbb$. Since $Y'(\cbf)_1$ equals $L_0$ acting on $\Wbb$, $\Xbb$ contains $L_0\Wbb=\bigoplus_{s\in\Cbb\setminus\{0\}}\Wbb_{(s)}$. Since $Y(\cbf)_0=L_{-1}$ and hence $Y'(\cbf)_0=L_1$, we have $L_1\cdot\Wbb_{(1)}\subset\Xbb$. So \eqref{eq98} is well-defined and surjective. To show that \eqref{eq98} is injective, it suffices to choose any $s\in\Cbb,n\in\Nbb$ and homogeneous $v\in\Vbb,w\in\Wbb$ satisfying $Y'(v)_nw\in\Wbb_{(0)}$ and show that $Y'(v)_nw\in L_1\cdot\Wbb$. By translation property, $[L_{-1},Y_{\Wbb'}(v)_n]=-nY_{\Wbb'}(v)_{n-1}$, whose transpose implies $Y'_\Wbb(v)_n=(n+1)^{-1}[L_1,Y'_\Wbb(v)_{n+1}]$. So
\begin{align*}
Y'(v)_n w+L_1\Wbb\subset \Cbb\cdot Y'(v)_{n+1}L_1 w+L_1\Wbb\subset\cdots\subset \Cbb\cdot Y'(v)_{n+k}L_1^k w+L_1\Wbb
\end{align*}
where the RHS is $0$ if $k$ is larger than the $L_0$-weight of $w$.
\end{proof}

\begin{eg}[{\cite[Prop. 3.4]{Li94}}]\label{lb74}
Let $\Wbb$ be a $\Vbb$-module. If $\upphi\in \left(\Wbb_{(0)}/L_1\Wbb_{(1)}\right)^* $, then by Exp. \ref{lb73}, for each $v\in\Vbb,w\in\Wbb$, $\upphi(Y'(v,z)w)$ belongs to $\Cbb[[z]]$ since $\upphi$ vanishes on $Y'(\Vbb)_{\geq 0}\Wbb$. Using Jacobi identity to expand $Y(Y(u)_nv)_{-1}$ and taking transpose, one sees that
\begin{align}
\Psi(\upphi):\Vbb\otimes\Wbb\rightarrow\Cbb\qquad v\otimes w\mapsto \lim_{z\rightarrow 0}\upphi(Y'(v,z)w)=\upphi(Y'(v)_{-1}w)
\end{align} 
belongs to $\scr T^*_{(\Pbb^1;0,\infty;\zeta,1/\zeta)}(\Vbb\otimes\Wbb)$, i.e. it gives a $\Vbb$-module morphism $\Vbb\rightarrow\Wbb'$. This gives a linear map
\begin{align}
\Psi:\left(\Wbb_{(0)}/L_1\Wbb_{(1)}\right)^*\xlongrightarrow{\simeq}\scr T^*_{(\Pbb^1;0,\infty;\zeta,1/\zeta)}(\Vbb\otimes\Wbb)\simeq \Hom_\Vbb(\Vbb,\Wbb')
\end{align}
which is bijective. It is injective because $\Psi(\upphi)(\id\otimes w)=\upphi(w)$. It is surjective because each $A\in\Hom_\Vbb(\Vbb,\Wbb')$ equals $\Psi(\upphi)$ where $\upphi:\Wbb\rightarrow\Cbb,w\mapsto\bk{A\id,w}$. Its inverse is
\begin{align}
\Hom_\Vbb(\Vbb,\Wbb')\xlongrightarrow{\simeq}\left(\Wbb_{(0)}/L_1\Wbb_{(1)}\right)^*\qquad\qquad A\mapsto A\id
\end{align}
\end{eg}

\begin{rem}
The above examples suggest that adding a distinct marked point to a pointed surface $\fk X$ and associating the vacuum module $\Vbb$ to that point does not essentially change the space of conformal blocks. (For instance, if $z\in\Cbb^\times$, we have a canonical isomorphism $(\Wbb_{(0)}/L_1\Wbb_{(1)})^*\simeq \scr T^*_{(\Pbb^1;0,z,\infty;\zeta,\zeta-z,1/\zeta)}(\Vbb\otimes\Vbb\otimes\Wbb)$.) This property is true in general and called \textbf{propagation of conformal blocks}. See \cite[Thm. 6.1]{Zhu94} or \cite[Thm. 10.3.1]{FB04} or \cite[Cor. 7.5]{Gui21} for rigorous statements and proofs. See also \cite{Gui23b} Sec. 16 (especially Thm. 16.7) for intuitive explanations.
\end{rem}

We give an interesting application of Exp. \ref{lb74}.

\begin{df}
A $\Vbb$-module $\Wbb$ is called \textbf{M\"obius unitarizable} if $\Wbb$ has an inner product $\bk{\cdot|\cdot}$ satisfying that $\bk{L_nw_1|w_2}=\bk{w_1|L_{-n}w_2}$ for all $w_1,w_2\in\Wbb$ and $n\in\{-1,0,1\}$. Choosing $n=0$, we see that the $L_0$-grading of $\Wbb$ is orthogonal under the inner product.
\end{df}

\begin{rem}
Suppose that a $\Vbb$-module $\Wbb$ is M\"obius unitary under an inner product $\bk{\cdot|\cdot}$. Let $\Co:\Wbb\rightarrow\Wbb'$ be the antilinear isomorphism such that $\bk{w_1,\Co w_2}=\bk{w_1|w_2}$ if $w_1,w_2\in\Wbb$. Define the inner product on $\Wbb'$ such that $\Co$ is antiunitary. Then $\Wbb'$ is M\"obius unitary under this inner product: for each $n=0,\pm1$, $\bk{w_1,L_n\Co w_2}=\bk{L_{-n}w_1,\Co w_2}=\bk{L_{-n}w_1|w_2}=\bk{w_1|L_nw_2}=\bk{w_1,\Co L_nw_2}$ and so $L_n\Co w=\Co L_n w$ for all $w\in\Wbb$. Thus $\bk{L_n\Co w_1|\Co w_2}=\bk{\Co L_n w_1|\Co w_2}=\bk{w_2| L_n w_1}=\bk{L_{-n}w_2|w_1}=\bk{\Co w_1|\Co L_{-n}w_2}=\bk{\Co w_1|L_{-n}\Co w_2}$. 
\end{rem}

\begin{thm}\label{lb75}
Let $\Wbb$ be a M\"obius unitarizable $\Vbb$-module. Let $\Sp(L_0)$ be the set of eigenvalues of $L_0$ on $\Wbb$. Then $\Sp(L_0)\subset[0,+\infty)$ and $L_1\Wbb_{(1)}=0$. Moreover, we have a linear isomorphism 
\begin{align*}
\Hom_\Vbb(\Vbb,\Wbb)\xlongrightarrow{\simeq}\Wbb_{(0)}\qquad A\mapsto A\id
\end{align*} 
\end{thm}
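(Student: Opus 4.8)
The plan is to exploit the fact that $L_{-1},L_0,L_1$ span a copy of $\mathfrak{sl}_2$ on $\Wbb$: the central term in $[L_1,L_{-1}]=2L_0$ vanishes, and the Möbius relations $\bk{L_nw_1|w_2}=\bk{w_1|L_{-n}w_2}$ for $n=0,\pm1$ say exactly that $L_0^*=L_0$ and $(L_{-1})^*=L_1$, making $\Wbb$ a lowest-weight unitary $\mathfrak{sl}_2$-module. From $n=0$ all eigenvalues of $L_0$ are real, and by the grading \eqref{eq1} (finitely many cosets mod $\Zbb$, each bounded below) the spectrum $\Sp(L_0)$ is real and bounded below, hence attains a minimum $s_0$. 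I would first show $s_0\geq0$: choosing $0\neq w\in\Wbb_{(s_0)}$, minimality forces $L_1w\in\Wbb_{(s_0-1)}=0$, so that
\begin{align*}
0\leq\bk{L_{-1}w|L_{-1}w}=\bk{w|L_1L_{-1}w}=\bk{w|(2L_0+L_{-1}L_1)w}=2s_0\bk{w|w},
\end{align*}
forcing $s_0\geq0$. This gives $\Sp(L_0)\subset[0,+\infty)$, and in particular $\Wbb_{(-1)}=0$.

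Next I would prove $L_1\Wbb_{(1)}=0$ via the orthogonal decomposition $\Wbb_{(1)}=L_{-1}\Wbb_{(0)}\oplus(L_{-1}\Wbb_{(0)})^\perp$, the complement taken inside the finite-dimensional space $\Wbb_{(1)}$. On the first summand, for $u\in\Wbb_{(0)}$ one has $L_1u\in\Wbb_{(-1)}=0$ and $L_0u=0$, so $L_1L_{-1}u=2L_0u+L_{-1}L_1u=0$. On the second summand, if $w\in\Wbb_{(1)}$ is orthogonal to $L_{-1}\Wbb_{(0)}$ then for all $u\in\Wbb_{(0)}$ we get $\bk{L_1w|u}=\bk{w|L_{-1}u}=0$; since $L_1w\in\Wbb_{(0)}$, taking $u=L_1w$ gives $L_1w=0$. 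Hence $L_1$ annihilates all of $\Wbb_{(1)}$.

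For the final isomorphism I would not argue directly but instead apply Exp. \ref{lb74} to the contragredient module. By the Remark preceding the theorem, $\Wbb'$ is again Möbius unitarizable, so the two facts just proved apply to $\Wbb'$ as well; in particular $L_1\Wbb'_{(1)}=0$. Applying Exp. \ref{lb74} with $\Wbb$ replaced by $\Wbb'$ and using $\Wbb''=\Wbb$ yields a linear isomorphism
\begin{align*}
\Hom_\Vbb(\Vbb,\Wbb)\xlongrightarrow{\simeq}\big(\Wbb'_{(0)}/L_1\Wbb'_{(1)}\big)^*,\qquad A\mapsto A\id.
\end{align*}
Since $L_1\Wbb'_{(1)}=0$, the codomain is $(\Wbb'_{(0)})^*$, and as $\Wbb'_{(0)}=\Wbb_{(0)}^*$ is finite-dimensional this is canonically $\Wbb_{(0)}$, with $A\id$ corresponding to $A\id\in\Wbb_{(0)}$ under the canonical pairing between $\Wbb$ and $\Wbb'$. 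This produces the stated isomorphism $A\mapsto A\id$.

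I expect the main obstacle to be the bookkeeping in the last step — correctly invoking Exp. \ref{lb74} on $\Wbb'$ and tracking the chain of canonical identifications $(\Wbb'_{(0)}/L_1\Wbb'_{(1)})^*=(\Wbb'_{(0)})^*=(\Wbb_{(0)}^*)^*=\Wbb_{(0)}$ so that the resulting map is literally $A\mapsto A\id$ — rather than the first two steps, which are routine lowest-weight $\mathfrak{sl}_2$ computations. A purely internal alternative to the last step would be to show that every $w_0\in\Wbb_{(0)}$ is a vacuum-like vector in the sense of Li and invoke the bijection between vacuum-like vectors and $\Hom_\Vbb(\Vbb,\Wbb)$; but the contragredient route via Exp. \ref{lb74} is shorter given what is already available.
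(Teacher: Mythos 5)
Your proof is correct and takes essentially the same route as the paper: both parts of the spectral claim are routine lowest-weight $\mathfrak{sl}_2$/unitarity computations (the paper climbs with $L_1$ from an arbitrary eigenvector and shows $L_{-1}\Wbb_{(0)}=0$ directly, while you use the minimal eigenvalue and an orthogonal decomposition of $\Wbb_{(1)}$ --- trivial variants of the same idea), and the final isomorphism is obtained exactly as in the paper, by applying Exp. \ref{lb74} to the contragredient module $\Wbb'$, which is M\"obius unitarizable by the Remark preceding the theorem. No gaps.
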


\begin{proof}
That $\Sp(L_0)\subset[0,+\infty)$ is well-known: Suppose $w\in\Wbb_{(s)}$ and $w\neq0$. Since, by assumption, the eigenvalues of $L_0$ have lower-bounded real parts, we can find $k\in\Nbb$ such that $\wtd w:=L_1^kw\neq 0$ but $L_1^{k+1}w=0$. Since $\bk{L_{-1}\wtd w|L_{-1}\wtd w}=\bk{L_1L_{-1}\wtd w|\wtd w}=\bk{[L_1,L_{-1}]\wtd w|\wtd w}=\bk{2L_0\wtd w|\wtd w}=2s\bk{\wtd w|\wtd w}$, we have $s\geq 0$.

Choose $w_1\in\Wbb_{(1)}$ and $w_0\in \Wbb_{(0)}$. We need to show $\bk{L_1 w_1|w_0}$ is $0$. It suffices to show $L_{-1}w_0=0$. Since $\Wbb_{-s}=0$, similar to the previous argument we have $\bk{L_{-1}w_0|L_{-1}w_0}=\bk{2L_0 w_0|w_0}=0$. This finishes the proof that $L_1\Wbb_{(1)}=0$.

By Exp. \ref{lb74}, we have a linear isomorphism $\Hom_\Vbb(\Vbb,\Wbb')\xlongrightarrow{\simeq}\Wbb_{(0)}^*$, $A\mapsto A\id$. The same is true about $\Hom_\Vbb(\Vbb,\Wbb)$.
\end{proof}

See Thm. \ref{lb76} and Prop. \ref{lb79} for applications of Thm. \ref{lb75}.

\subsection{Basic properties of conformal blocks}

The following fact is fundamental. Recall Rem. \ref{lb10}.

\begin{thm}\label{lb11}
Assume that  $\Vbb$ is $C_2$-cofinite. Then the space of conformal block $\scr T^*_{\fk X}(\Wbb_\blt)$ is finite-dimensional.
\end{thm}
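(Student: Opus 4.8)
The plan is to prove the equivalent statement that the space of \textbf{coinvariants}
\[
\scr T_{\fk X}(\Wbb_\blt) := \Wbb_\blt\big/H^0(C,\scr V_C\otimes\omega_C(\star x_\blt))\Wbb_\blt
\]
is finite-dimensional; finite-dimensionality of its dual $\scr T^*_{\fk X}(\Wbb_\blt)$ is then immediate. Since the main interest is genus $0$, I would carry out the argument first for $C=\Pbb^1$ (and finite disjoint unions thereof), where Exp. \ref{lb4} gives a completely explicit description of the acting gauge space: it is spanned by sections $f\cdot\mc U_\varrho(\zeta)^{-1}v\,d\zeta$ with $v\in\Vbb$ and $f$ a rational function with poles confined to the marked points, whose action couples all punctures through the Laurent coefficients of $f$ exactly as in \eqref{eq7}--\eqref{eq9}.

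The main tool is the $C_2$-cofiniteness of \emph{modules}. By Rem. \ref{lb10}, since $\Vbb$ is $C_2$-cofinite, every $\Vbb$-module is finitely generated; I would then invoke the standard consequence (Gaberdiel--Neitzke, Buhl, Miyamoto) that every finitely generated module $\Wbb$ over a $C_2$-cofinite VOA is itself $C_2$-cofinite: writing $C_2(\Wbb)=\Span_\Cbb\{Y_\Wbb(v)_{-2}w\}$, the quotient $\Wbb/C_2(\Wbb)$ is finite-dimensional, and moreover $\Wbb$ admits a PBW-type spanning set $Y(a_1)_{-n_1}\cdots Y(a_r)_{-n_r}w^{(j)}$ with the $a_i$ and $w^{(j)}$ drawn from fixed finite sets and with the mode orders $n_1,\dots,n_r$ subject to a boundedness constraint. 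This finite list of leading data is what ultimately forces finiteness.

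The geometric heart of the argument is a filtration-and-recursion step. I would filter $\Wbb_\blt$ by total $\wtd L_0$-weight and pass to the associated graded of the coinvariants. Using a gauge section whose coefficient function $f$ carries a high-order pole at a single marked point $x_i$ and is controlled elsewhere, the gauge relation \eqref{eq9} expresses a top-order mode $Y(v)_{-n}w_i$ (the very modes that build $C_2(\Wbb_i)$ and its iterates) as a sum of (i) strictly lower-weight contributions at $x_i$ and (ii) contributions supported at the other punctures. Geometrically this is just the vanishing of the total residue of a global meromorphic $1$-form, which is precisely the content of membership in $H^0(C,\scr V_C\otimes\omega_C(\star x_\blt))$; it converts a local mode identity into a relation coupling all tensor factors. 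Iterating, one shows that above a fixed total weight every class is a gauge boundary plus lower-weight classes, so only finitely many graded pieces of the coinvariants survive; since every weight space of $\Wbb_\blt$ is finite-dimensional, the coinvariants are finite-dimensional.

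The main obstacle is making this recursion terminate, i.e. establishing the uniform weight bound of the previous step. One must choose the rational functions (and, in positive genus, the sections of $\omega_C(\star x_\blt)$ furnished by Riemann--Roch) so that the \emph{leading} term of the gauge relation at $x_i$ is exactly the high mode to be eliminated, while every remaining term---the subleading self-interactions and the couplings to the other punctures---lies strictly lower in the filtration. Only then do the finite-dimensionality of $\Wbb_i/C_2(\Wbb_i)$ and the bounded spanning set close the induction. In genus $0$ the explicit function theory of Exp. \ref{lb4} makes this bookkeeping transparent; the general compact case requires in addition the standard Riemann--Roch estimates for $\dim H^0(C,\omega_C(\star x_\blt))$ together with a treatment of the moduli directions carried by $\omega_C$, but since the paper's applications are all in genus $0$ I would phrase the core argument there and merely indicate the extension.
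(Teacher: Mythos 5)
Your proposal is essentially correct in strategy, but it is worth noting that the paper does not prove Theorem \ref{lb11} at all: its ``proof'' is a pointer to the literature, namely \cite{AN03} (which assumes the $C_2$-cofinite VOA is quasi-primary generated, automatic in the unitary case), \cite[Prop. 5.1.1]{DGT23} for the general case, and \cite[Thm. 7.4]{Gui23a}. What you have sketched is, in substance, the argument that those cited works actually carry out — coinvariants, finite generation (Rem. \ref{lb10}) plus the Gaberdiel--Neitzke/Buhl/Miyamoto theorem to get $C_2$-cofiniteness of each $\Wbb_i$, then a descending recursion in total $\wtd L_0$-weight driven by sections with a pole at one puncture — so your route is not really different from the paper's; it is the proof behind the paper's citations, spelled out instead of cited. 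Two remarks on the step you flag as the main obstacle, which as written you describe but do not verify. First, in genus $0$ it closes with the most naive choice of section: writing a high-weight $w_i=\sum_\alpha Y(v_\alpha)_{-2}u_\alpha\in C_2(\Wbb_i)$ and taking $f=(\zeta-z_i)^{-2}$, the term to be eliminated has total weight $W$, the contribution at another finite puncture $x_j$ involves $Y(v_\alpha)_m w_j$ with $m\geq 0$ and has total weight $\big(W-\wt(v_\alpha)-1\big)+\big(\wt(v_\alpha)-m-1\big)=W-m-2\leq W-2$, and the contribution at $\infty$ has total weight $\leq W-2-2\wt(v_\alpha)$; so although the mode $Y(v_\alpha)_m$ can raise the weight at $x_j$ by up to $\wt(v_\alpha)-1$, this is more than compensated by replacing $w_i$ with $u_\alpha$, the total weight strictly drops, and the induction terminates. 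In particular, Buhl's PBW-type spanning set is not needed in genus $0$: $C_2$-cofiniteness of the modules suffices. Second, in higher genus the same bookkeeping forces the pole order of the section at $x_i$ to strictly dominate its pole orders at the other punctures, and producing such sections of $\scr V_C^{\leq n}\otimes\omega_C$ is exactly where Riemann--Roch (with $H^1$-vanishing for large polar divisors) enters in \cite{AN03} and \cite{DGT23}. What the citation buys the paper is precisely not having to do this bookkeeping; what your version buys is a self-contained genus-$0$ proof, which is the only case the paper actually uses.
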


\begin{proof}
This is due to \cite{AN03} under the assumption that the $C_2$-cofinite VOA $\Vbb$ is also quasi-primary generated (which automatically holds when $\Vbb$ is unitary). The general case was proved in \cite[Prop. 5.1.1]{DGT23}. See also \cite[Thm. 7.4]{Gui23a}.
\end{proof}

The following uniqueness result will be helpful.
\begin{pp}\label{lb20}
Assume that $C$ is connected and $N\geq 2$. Assume that $\Wbb_N$ is generated by a subset $\Ebb$. Assume that $\upphi,\uppsi\in\scr T_{\fk X}^*(\Wbb_\blt)$ satisfy that
\begin{align*}
\upphi(w_1\otimes\cdots\otimes w_{N-1}\otimes w_N)=\uppsi(w_1\otimes\cdots\otimes w_{N-1}\otimes w_N)
\end{align*}
for all $w_1\in\Wbb_1,\dots w_{N-1}\in\Wbb_{N-1}$ and $w_N\in\Ebb$. Then $\upphi=\uppsi$.
\end{pp}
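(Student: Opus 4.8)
The plan is to set $\chi=\upphi-\uppsi$, which again lies in $\scr T^*_{\fk X}(\Wbb_\blt)$ since this space is a dual (hence linear) space, and to prove $\chi=0$. Consider the subspace
\[
S=\{w_N\in\Wbb_N:\chi(w_1\otimes\cdots\otimes w_{N-1}\otimes w_N)=0\ \text{ for all }w_i\in\Wbb_i\ (i<N)\}.
\]
It is visibly a linear subspace, and by hypothesis $\Ebb\subseteq S$. Since $\Wbb_N$ is, by assumption, the smallest subspace invariant under all the modes $Y(v)_n$ ($v\in\Vbb$, $n\in\Zbb$) that contains $\Ebb$, it will suffice to show that $S$ is itself invariant under every $Y(v)_n$; then $S\supseteq\Wbb_N$, so $S=\Wbb_N$ and $\chi=0$, i.e. $\upphi=\uppsi$.

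The engine of the invariance is the conformal-block vanishing $\chi(\sigma\cdot w_\blt)=0$ combined with the Leibniz-type action \eqref{eq9}. Fixing $v,n$ and $w_N\in S$, I would choose a global section $\sigma\in H^0(C,\scr V_C^{\leq k}\otimes\omega_C(\star x_\blt))$ (with $v\in\Vbb^{\leq k}$) whose expansion at $x_N$ in the trivialization $\mc U_\varrho(\eta_N)$ is $v\,\eta_N^n\,d\eta_N$ plus terms of order $>n$ in $\eta_N$. Then \eqref{eq5} gives
\[
\sigma\cdot w_N=Y(v)_nw_N+\sum_{m>n}c_m\,Y(v)_m w_N,
\]
a \emph{finite} sum because $Y(v)_mw_N=0$ for $m\gg0$ by the truncation axiom. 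Each $\sigma\cdot w_i$ ($i<N$) is likewise a genuine element of $\Wbb_i$. Separating the $i=N$ summand of \eqref{eq9} from the rest, the relation $\chi(\sigma\cdot w_\blt)=0$ rearranges to
\begin{align*}
\chi(w_1\otimes\cdots\otimes w_{N-1}\otimes Y(v)_nw_N) ={}& -\sum_{m>n}c_m\,\chi(w_1\otimes\cdots\otimes w_{N-1}\otimes Y(v)_mw_N)\\
&-\sum_{i<N}\chi\big(w_1\otimes\cdots\otimes(\sigma\cdot w_i)\otimes\cdots\otimes w_N\big).
\end{align*}
The last sum vanishes since $w_N\in S$. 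The first sum is handled by a downward induction on $n$: for $m$ large $Y(v)_mw_N=0$, and assuming $Y(v)_mw_N\in S$ for all $m>n$ kills it, whence $Y(v)_nw_N\in S$. This proves invariance of $S$ for every $n\in\Zbb$ and completes the argument.

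The single nontrivial input — and what I expect to be the main obstacle — is the existence of the section $\sigma$ that realizes precisely the mode $Y(v)_n$ at $x_N$ with no lower-order contribution, that is, a global meromorphic section with leading symbol $v$ vanishing to order exactly $n$ at $x_N$ and poles confined to $x_\blt$. In the genus-$0$ case, which is our main interest, this is completely explicit via Exp. \ref{lb4}: every global section is $f\cdot\mc U_\varrho(\zeta)^{-1}v\,d\zeta$ with $f\in\Cbb[\zeta,(\zeta-z_1)^{-1},\dots]$, so one simply takes $f$ with $\eta_N$-expansion $\eta_N^n+O(\eta_N^{n+1})$ at $x_N$, routing any unavoidable pole to one of the other marked points (there is at least one, as $N\geq2$). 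For a general connected compact $C$ the same is achieved by Riemann--Roch / the strong residue theorem: allowing sufficiently high pole order at a second marked point makes the jet-evaluation map at $x_N$ surjective, so a meromorphic section with the prescribed leading behavior exists, and one checks under the VOA-bundle trivialization that its local expansion is as claimed. This geometric existence statement is the only place where anything beyond the formal manipulation of modes is needed.
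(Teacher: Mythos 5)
Your proposal is correct in its main structure, and it is in fact more self-contained than what the paper does: the paper simply cites \cite[Prop. 7.2]{Gui21} for the general case and only sketches the one special case it needs ($C=\Pbb^1$, $N=3$). There, after moving $x_N$ to $\infty$ and normalizing all coordinates to \eqref{eq18} via Prop. \ref{lb6}, the result is read off from the Jacobi identity \eqref{eq17} with $f(z)=z^n$: because $z^n$ has a one-term expansion at $\infty$, the identity converts the hypothesis directly into $\chi(w_1\otimes\cdots\otimes w_{N-1}\otimes Y'_{\Wbb_N}(v)_n e)=0$ for all $e\in\Ebb$, with no correction terms and hence no induction; since the operators $Y'_{\Wbb_N}(v)_n$ and $Y_{\Wbb_N}(v)_n$ are finite linear combinations of one another, your subspace $S$ is then a submodule containing $\Ebb$ and one is done. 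Your argument implements the same mechanism (the invariance $\chi(\sigma\cdot w_\blt)=0$ propagates vanishing from $\Ebb$ to the submodule it generates) but keeps the action at $x_N$, accepts correction terms coming from general coordinates, and kills them by downward induction, with Riemann--Roch supplying the needed sections; this is exactly where connectedness and $N\geq 2$ enter, as you say. So your route proves the general statement that the paper outsources to a citation, at the cost of the jet-surjectivity input; the paper's route is shorter but only covers the normalized genus-zero situation.

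One step needs repair. Under the trivialization $\mc U_\varrho(\eta_N)$, a section such as $f\cdot\mc U_\varrho(\zeta)^{-1}v\,d\zeta$ does not expand as $v$ times a scalar power series: the transition function $\mc U(\varrho(\eta_N|\zeta))_p$ varies with the point $p$, so the expansion at $x_N$ has the form $v\,\eta_N^n\,d\eta_N+\sum_{m>n}u_m\,\eta_N^m\,d\eta_N$ with vectors $u_m\in\Vbb^{\leq k}$ that are in general \emph{not} multiples of $v$ (they are when $\eta_N$ is a translate of $\zeta$, but not for a general local coordinate). Accordingly $\sigma\cdot w_N=Y(v)_nw_N+\sum_{m>n}Y(u_m)_mw_N$, and your downward induction must carry the stronger hypothesis that $Y(u)_mw_N\in S$ for every $u\in\Vbb^{\leq k}$ and every $m>n$, not just for the fixed $v$. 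This still works: $\Vbb^{\leq k}$ is finite-dimensional, so $Y(u)_mw_N=0$ holds for all $u\in\Vbb^{\leq k}$ once $m$ is large, giving the base case, and the inductive step is unchanged. Alternatively, first change coordinates by Prop. \ref{lb6} to the standard ones \eqref{eq18}; this is harmless because the coordinate-change operators $\mc U(\alpha)^{\pm 1}$ are built from $L_0,L_1,L_2,\dots$, hence map $\Ebb$ to another generating subset of $\Wbb_N$, and in standard coordinates the correction terms disappear entirely. With either fix your proof is complete.
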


\begin{proof}
This was proved in \cite[Prop. 7.2]{Gui21} in a more general setting. In this article, we only need this result in the special case that $C=\Pbb^1$ (and $N=3$). In this case, we can assume that the marked point $x_N$ for $\Wbb_N$ is $\infty$, and  change the local coordinates  (cf. Prop. \ref{lb6}) to \eqref{eq18}. Then this result is an easy consequence of the Jacobi identity \eqref{eq17} (in which we choose $f(z)=z^n$).
\end{proof}

We present two methods for constructing new conformal blocks from old ones. The first one is by changing local coordinates. The second one will be discussed in the next section.

\begin{pp}\label{lb6}
Let $\fk X=(C;x_\blt;\eta_\blt)$ and $\fk Y=(C;x_\blt;\mu_\blt)$ be two $N$-pointed compact Riemann surfaces with coordinates, where the underlying Riemann surface $C$ and the marked points $x_\blt$ are the same. Associate a $\Vbb$-module $\Wbb_i$ to each marked point $x_i$. Then we have an isomorphism of vector spaces
\begin{gather}\label{eq62}
\begin{gathered}
\scr T_{\fk X}^*(\Wbb_\blt)\rightarrow\scr T_{\fk Y}^*(\Wbb_\blt)\\
\upphi\mapsto \upphi\circ\big(\mc U(\eta_1\circ\mu_1^{-1})\otimes\cdots \otimes\mc U(\eta_N\circ\mu_N^{-1})\big)
\end{gathered}
\end{gather}
The same conclusion holds if we replace each $\mc U(\eta_i\circ\mu_i^{-1})$ with $\mc U_0(\eta_i\circ\mu_i^{-1})$.
\end{pp}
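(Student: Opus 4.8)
The plan is to exploit the fact that the space $H^0(C,\scr V_C\otimes\omega_C(\star x_\blt))$ whose vanishing defines a conformal block depends only on the pointed surface $(C;x_\blt)$ and \emph{not} on the choice of local coordinates; the coordinates enter only through the formula \eqref{eq5} for how a section $\sigma$ acts on each $\Wbb_i$. Thus $\scr T^*_{\fk X}(\Wbb_\blt)$ and $\scr T^*_{\fk Y}(\Wbb_\blt)$ are subspaces of the \emph{same} dual space $\Wbb_\blt^*$, cut out by vanishing on the images of one and the same space $H^0(C,\scr V_C\otimes\omega_C(\star x_\blt))$ under two different actions. Writing $g_i=\eta_i\circ\mu_i^{-1}\in\Gbb$, the whole proposition reduces to comparing, at each marked point, the action of $\sigma$ computed in the coordinate $\eta_i$ with that computed in $\mu_i$.

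Concretely, denote by $\sigma\cdot_{\eta_i}w_i$ and $\sigma\cdot_{\mu_i}w_i$ the local action \eqref{eq5} computed with $\eta_i$ and with $\mu_i$ respectively; each depends only on the germ of $\sigma$ at $x_i$. The heart of the matter is the \textbf{local identity}
\[ \mc U(g_i)\big(\sigma\cdot_{\mu_i}w_i\big)=\sigma\cdot_{\eta_i}\big(\mc U(g_i)w_i\big). \]
To prove it I would work in the coordinate $z=\mu_i$, so that $\eta_i=g_i(z)$, and write $\mc U_\varrho(\mu_i)\sigma=v(z)\,dz$ with $v$ a $\Vbb$-valued meromorphic function having a pole only at $x_i$. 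A comparison with \eqref{eq3} shows $\varrho(\eta_i|\mu_i)_p=\varrho(g_i|\id)_{\mu_i(p)}$, so the transition relation $\mc U_\varrho(\eta_i)=\mc U(\varrho(\eta_i|\mu_i))\mc U_\varrho(\mu_i)$ of the VOA bundle gives $\mc U_\varrho(\eta_i)\sigma=\mc U(\varrho(g_i|\id)_z)v(z)\,dz$. Applying $\mc U(g_i)$ to the definition \eqref{eq5} of $\sigma\cdot_{\mu_i}w_i$ and invoking Huang's change-of-coordinate formula (Thm. \ref{lb5}, with $\alpha=g_i$ and first argument $v(z)$) yields
\[ \mc U(g_i)\big(\sigma\cdot_{\mu_i}w_i\big)=\Res_{z=0}Y\big(\mc U(\varrho(g_i|\id)_z)v(z),g_i(z)\big)\mc U(g_i)w_i\,dz. \]
Writing $\mc U_\varrho(\eta_i)\sigma=\tilde v(\eta_i)\,d\eta_i$, the identification $\mc U_\varrho(\eta_i)\sigma=\mc U(\varrho(g_i|\id)_z)v(z)\,dz$ together with $\eta_i=g_i(z)$ shows that the $\ovl{\Wbb_i}$-valued $1$-form under this residue equals $Y(\tilde v(\eta_i),\eta_i)\mc U(g_i)w_i\,d\eta_i$, whose $\Res_{\eta_i=0}$ is $\sigma\cdot_{\eta_i}(\mc U(g_i)w_i)$. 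Since the residue of a meromorphic $1$-form is independent of the coordinate, the two residues agree, establishing the local identity.

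With the local identity in hand the global statement is assembled from \eqref{eq9}. Set $\uppsi=\upphi\circ(\mc U(g_1)\otimes\cdots\otimes\mc U(g_N))$. For any $\sigma$ and any $w_\blt$, applying $\bigotimes_i\mc U(g_i)$ termwise to the $\fk Y$-action $\sigma\cdot w_\blt$ and using the local identity in each summand turns it into the $\fk X$-action of $\sigma$ on $\bigotimes_i\mc U(g_i)w_i$; hence $\uppsi(\sigma\cdot w_\blt)=\upphi\big(\sigma\cdot(\bigotimes_i\mc U(g_i)w_i)\big)=0$ because $\upphi\in\scr T^*_{\fk X}(\Wbb_\blt)$. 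Thus $\uppsi\in\scr T^*_{\fk Y}(\Wbb_\blt)$ and \eqref{eq62} is well defined. It is an isomorphism because it is precomposition with the invertible operator $\bigotimes_i\mc U(g_i)$ on $\Wbb_\blt$, whose inverse is $\bigotimes_i\mc U(g_i^{-1})$ by Thm. \ref{lb1}; running the same argument with $\fk X$ and $\fk Y$ interchanged (and $g_i$ replaced by $g_i^{-1}$) produces the two-sided inverse.

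For the $\mc U_0$ version I would reduce to the case that each $\Wbb_i$ is $L_0$-simple, which is harmless since both the actions and the conformal-block spaces decompose along the $L_0$-simple summands of the $\Wbb_i$. On an $L_0$-simple module $\mc U_0(g_i)=\lambda_i\mc U(g_i)$ for a nonzero scalar $\lambda_i$, so the $\mc U_0$-map differs from the $\mc U$-map only by the overall nonzero factor $\prod_i\lambda_i$ and is again an isomorphism onto $\scr T^*_{\fk Y}(\Wbb_\blt)$. The step I expect to be the main obstacle is the residue change-of-variable inside the local identity: one must make sense of the substitution $Y(\,\cdot\,,g_i(z))$ of a power series into the formal variable and verify that $\Res$ genuinely commutes with this coordinate change. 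I would handle this by reading Thm. \ref{lb5} as an identity in $\Cbb((z))$ after pairing with an arbitrary $w'\in\Wbb_i'$, thereby reducing the claim to the classical coordinate-invariance of the residue of a meromorphic $1$-form.
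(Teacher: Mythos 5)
Your proposal is correct and follows exactly the route the paper indicates: the paper gives no self-contained argument but cites \cite{FB04} and \cite{Gui23a} and states that the result "is proved using Huang's change of coordinate Thm.~\ref{lb5}", which is precisely the engine of your argument. Your local identity $\mc U(g_i)\big(\sigma\cdot_{\mu_i}w_i\big)=\sigma\cdot_{\eta_i}\big(\mc U(g_i)w_i\big)$, obtained from Thm.~\ref{lb5} via the relation $\varrho(\eta_i|\mu_i)_p=\varrho(g_i|\id)_{\mu_i(p)}$ and the coordinate-invariance of residues of meromorphic $1$-forms, together with the termwise assembly over marked points and the $L_0$-simple reduction for the $\mc U_0$ version, is a sound and complete fleshing-out of that cited argument.
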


Note that each $\eta_i\circ\mu_i$ is an element of $\Gbb$, and $\mc U(\eta_i\circ\mu_i^{-1})$ is an invertible linear map on $\Wbb_i$.

\begin{proof}
This theorem is due to \cite[Thm. 6.5.4]{FB04}. (See also \cite[Thm. 3.2]{Gui23a}.) It is proved using Huang's change of coordinate Thm. \ref{lb5}.
\end{proof}

As a consequence, the dimension of $\scr T_{\fk X}^*(\Wbb_\blt)$ is independent of the local coordinates.

\begin{eg}\label{lb24}
Let $\tau\in\Cbb$, and choose a $\Vbb$-module $\Wbb$. Assume $\fk X=(\Pbb^1;\tau,\infty;\zeta-\tau,1/\zeta)$ where $\zeta$ is the standard coordinate of $\Cbb$. Associate $\Wbb,\Wbb'$ to the marked points $\tau,\infty$ respectively.  Then $\fk X$ is equivalent to $\fk Y=(\Pbb^1;0,\infty;\zeta,1/(\zeta+\tau))$. Let $\fk P=(\Pbb^1;0,\infty;\zeta,1/\zeta)$. Then
\begin{align*}
\Wbb\otimes\Wbb'\rightarrow\Cbb\qquad w\otimes w'\mapsto\bk{w,w'}
\end{align*}
is a conformal block associated to $\fk P$ and $\Wbb,\Wbb'$. (This follows e.g. from Rem. \ref{lb14}.) Thus, by Prop. \ref{lb6} and Exp. \ref{lb2}, 
\begin{gather*}
\Wbb\otimes\Wbb'\rightarrow\Cbb\qquad w\otimes w'\mapsto\bk{w,e^{\tau L_1}w'}=\bk{e^{\tau L_{-1}}w,w'}
\end{gather*}
is a conformal block associated to $\fk X$ and $\Wbb,\Wbb'$.
\end{eg}

\begin{eg}\label{lb48}
Let $\zeta$ be the standard coordinate of $\Cbb$. For each $z\in\Cbb^\times$, consider $\fk P_z=\{\Pbb^1;0,z,\infty;\zeta,\zeta-z,1/\zeta\}$. Associate $\Vbb$-modules $\Wbb_2,\Wbb_1$ and $\Wbb_3'$ (contragredient to $\Wbb_3$) to the marked points $0,z,\infty$ respectively.  According to Rem. \ref{lb14}, an element of $\scr T_{\fk P_1}^*(\Wbb_2,\Wbb_1,\Wbb_3')$ is equivalently a type $\Wbb_3\choose\Wbb_1\Wbb_2$ intertwining operator of $\Vbb$ at $1$. And any such element $\mc Y(\cdot,1)$ of $\scr T_{\fk P_1}^*(\Wbb_2,\Wbb_1,\Wbb_3')$, which is a linear functional
\begin{align*}
w_2\otimes w_1\otimes w_3'\in\Wbb_2\otimes\Wbb_1\otimes\Wbb_3'\qquad\mapsto \qquad \bk{\mc Y(w_1,1)w_2,w_3'},
\end{align*}
can be viewed as a linear map $\Wbb_2\otimes\Wbb_1\rightarrow\ovl\Wbb_3$. 

We let the argument of the $1$ in $\mc Y(\cdot,1)$ be $0$. Choose $\arg z$ for $z$. Since the map $\gamma\in\Pbb^1\mapsto z^{-1}\gamma$ gives an equivalence
\begin{align*}
\fk P_z\simeq \big(\Pbb^1;0,1,\infty;z\zeta,z(\zeta-1),1/(z\zeta)\big)
\end{align*}
according to Prop. \ref{lb6}, 
\begin{gather*}
\mc Y(\cdot,z):\Wbb_1\otimes\Wbb_2\rightarrow \ovl\Wbb_3 \\
\mc Y(w_1,z)w_2=z^{L_0}\mc Y(z^{-L_0}w_1,1)z^{-L_0}w_2 
\end{gather*}
is a type $\Wbb_3\choose\Wbb_1\Wbb_2$ intertwining operator at $z$. We view $\mc Y(\cdot,z)$ as a $\Hom_\Cbb(\Wbb_1\otimes\Wbb_2,\ovl\Wbb_3)$-valued multivalued holomorphic function of $z\in\Cbb^\times$, single valued on $\log z$. Such a function $\mc Y$ is called a \textbf{type $\Wbb_3\choose\Wbb_1\Wbb_2$ intertwining operator} of $\Vbb$. The vector space of all such functions is denoted by \index{I@$\mc I{\Wbb_3\choose\Wbb_1\Wbb_2}$}
\begin{align*}
\mc I{\Wbb_3\choose\Wbb_1\Wbb_2}
\end{align*}
and clearly has the same dimension as $\scr T_{\fk P_1}^*(\Wbb_2,\Wbb_1,\Wbb_3')$. This dimension is known as the \textbf{fusion rule} among $\Wbb_1,\Wbb_2,\Wbb_3$.  \hfill\qedsymbol
\end{eg}

\section{Sewing conformal blocks}\label{lb16}

The second way of constructing new conformal blocks is by sewing. We first review the geometric construction of sewing Riemann surfaces. There are two types of sewing: sewing two connected surfaces (i.e. taking the connected sum), and sewing a connected surface with itself (e.g. sewing a cylinder to get a torus). Both types can be described by the following procedure. We consider simultaneous sewing along several pairs of points. Let
\begin{align}\label{eq16}
&\fk X=(C;x_\blt;x_\star';x_\star'';\eta_\blt;\xi_\star;\varpi_\star)\nonumber\\
=&(C;x_1,\dots,x_N;x'_1,\dots,x_M';x''_1,\dots,x''_M;\eta_1,\dots,\eta_N;\xi_1,\dots,\xi_M;\varpi_1,\dots,\varpi_M)
\end{align}
be an $(N+2M)$-pointed compact Riemann surface with marked points $x_\blt,x_\star',x_\star''$, and $\eta_i$ (resp. $\xi_j$, $\varpi_j$) is a local coordinate of $C$ at $x_i$ (resp. $x_j'$, $x_j''$). Recall that $C$ is not assumed to be connected.

\begin{ass}\label{lb9}
In the sewing procedure, we assume that each connected component of $C$ contains at least one of $x_1,\dots,x_N$ (and not only one of $x_\blt,x'_\star,x''_\star$, as it would be required by Assumption \ref{lb7}). Moreover, we choose neighborhoods $W'_j$ of each $x'_j$ and $W_j''$ of $x''$ such that $\xi_j\in\scr O(W_j'),\varpi_j\in\scr O(W_j'')$ are univalent, and that any two members of
\begin{align*}
\{x_1\},\dots,\{x_N\},W_1',\dots,W_M', W_1'',\dots,W_M''
\end{align*}
do not intersect. (In particular, $x_j'$ (resp. $x_j''$) is the only one of the $N+2M$ marked points that belongs to $W_j'$ (resp. $W_j''$).)
\end{ass}

Recall the notations in \eqref{eq14}.

\begin{df}\label{lb8}
For each $1\leq j\leq M$, choose $r_j,\rho_j>0$ such that
\begin{align}
\xi_j(W_j')\supset\Dbb_{r_j},\qquad \varpi_j(W_j'')\supset\Dbb_{\rho_j}. \label{eq50}
\end{align}
Choose any $q_j\in\Dbb^\times_{r_j\rho_j}$. Write $q_\blt=(q_1,\dots,q_M)$. Define
\begin{gather*}
F_j=\xi^{-1}_j(\ovl\Dbb_{|q_j|/\rho_j})\cup\varpi_j^{-1}(\ovl\Dbb_{|q_j|/r_j})\\
\mc S_{q_\blt} C=\mc S_{q_1,\dots,q_M}=\Big(C\Big\backslash \bigcup_{1\leq j\leq M}F_j\Big)\Big/\thicksim
\end{gather*}
where $\sim$ is defined by identifying the subsets $\xi_j^{-1}(\Abb_{|q_j|/\rho_j,r_j})$ and $\varpi_j^{-1}(\Abb_{|q_j|/r_j,\rho_j})$ (for all $j$) via the rule
\begin{gather}\label{eq15}
\begin{gathered}
y'\in\xi_j^{-1}(\Abb_{|q_j|/\rho_j,r_j})\qquad\thicksim\qquad y''\in\varpi_j^{-1}(\Abb_{|q_j|/r_j,\rho_j})\\
\Updownarrow\\
\xi_j(y')\cdot \varpi_j(y'')=q_j
\end{gathered}
\end{gather}
The equivalence relation $\thicksim$ above is holomorphic. Therefore $\mc S_{q_\blt} C$ is a Riemann surface, which is not hard to see to be compact. We call $\mc S_{q_\blt} C$ the \textbf{sewing of $C$ with parameters $q_\blt$} (along the pairs of points $(x_1',x_1''),\dots,(x_M',x_M'')$). The marked points $x_\blt$ and their local coordinates $\eta_\blt$ become naturally marked points and local coordinates of $\mc S_qC$. We denote \index{Sq@$\mc S_{q_\blt}C,\mc S_{q_\blt}\fk X$}
\begin{align*}
\mc S_{q_\blt}\fk X=(\mc S_{q_\blt}C;x_\blt;\eta_\blt)
\end{align*}
and call it the \textbf{sewing of $\fk X$ with parameters $q_\blt$} (along $(x_\blt',x_\blt'')$ and with respect to the coordinates $\xi_\blt,\varpi_\blt$). Clearly $\mc S_{q_\blt}\fk X$ satisfies Asmp. \ref{lb7}.
\end{df}

Note that $\mc S_{q_\blt}C$ depends only on $q_\blt$,  the points $x_\star',x_\star''$, and their local coordinates $\xi_\star,\varpi_\star$. It does not depend on $x_\blt,\eta_\blt$ or the particular choice of $r_\star,\rho_\star$.

\begin{rem}
The above definition can be described in terms of conformal welding: For each $j$, choose any $\alpha_j>0$ and $\beta_j=|q_j|/\alpha_j$ such that $r_j<\alpha_j<|q_j|/\rho_j$ and $\rho_j<\beta_j<|q_j|/r_j$. Remove from $C$ the open disk centered at $x_j'$ with radius $\alpha_j$, i.e., remove $\xi_j^{-1}(\Dbb_{\alpha_j})$.  Similarly, we remove $\varpi_j^{-1}(\Dbb_{\beta_j})$, the open disk centered at $x_j''$ with radius $\beta_j$. Then we get a compact Riemann surface $\Sigma$ with boundaries $\Gamma_1',\Gamma_1'',\dots,\Gamma_M',\Gamma_M''$ where $\Gamma_j'=\xi_j^{-1}(\alpha_j\Sbb^1)$ and $\Gamma_j''=\varpi_j^{-1}(\beta_j\Sbb^1)$. Gluing each $\Gamma_j'$ with $\Gamma_j''$ via the diffeomorphism $\Gamma_j'\rightarrow \Gamma_j'',y'\mapsto y''$ (where $y'$ and $y''$ are related by $\xi_j(y')\varpi_j(y'')=q_j$), we get a surface with a canonical complex structure determined by that of $\Sigma$. (See \cite[Sec. 2.2.4]{Ten17} and the reference therein for the description of this complex structure.) This surface is $S_{q_\blt}C$. 
\end{rem}

To each marked point $x_i,x_j',x_j''$ of $\fk X$ (cf. \eqref{eq16}) we associate $\Vbb$-modules $\Wbb_i,\Mbb_j,\Mbb_j'$ respectively, where $\Mbb_j'$ is the contragredient module of $\Mbb_j$.  
\begin{df}
Choose $\upphi\in\scr T_{\fk X}^*(\Wbb_\blt\otimes\Mbb_\blt\otimes\Mbb_\blt')$, namely, the linear functional 
\begin{align*}
\upphi:\Wbb_1\otimes\cdots\otimes\Wbb_N\otimes\Mbb_1\otimes\Mbb_1'\otimes\cdots\otimes\Mbb_M\otimes\Mbb_M'\rightarrow\Cbb
\end{align*}
is a conformal block. (We have changed the order of tensor product in order to write $\Mbb_i$ and $\Mbb_i'$ together.) Let
\begin{align*}
\Ibf_j(n)=\text{the identity operator of $\Mbb_j(n)$, considered as an element of $\Mbb_j(n)\otimes \Mbb_j(n)^*$}
\end{align*}
Define a linear map \index{Sq@$\Std_{q_\blt}\upphi$}
\begin{gather*}
\Std_{q_\blt}\upphi:\Wbb_1\otimes\cdots\otimes\Wbb_N\rightarrow \Cbb[[q_1,\dots,q_N]]\\
\Std_{q_\blt}\upphi(w_\blt)=\sum_{n_1,\dots,n_M\in\Nbb}\upphi\big(w_\blt\otimes\Ibf_1(n_1)\otimes\cdots\otimes\Ibf_M(n_M)\big)q_1^{n_1}\cdots q_M^{n_M}
\end{gather*}
called the \textbf{(normalized) sewing of $\upphi$}.
\end{df}

\begin{rem}
Recall that $\Wbb_j(n)$ is the weight-$n$ eigenspace of the normalized Hamiltonian $\wtd L_0$. So, if we let $\Ibf_j$ be the identity operator of $\Mbb_j$, then we can informally write
\begin{align*}
\Std_{q_\blt}\upphi(w_\blt)=\upphi\big(w_\blt\otimes q_1^{\wtd L_0}\Ibf_1\otimes\cdots\otimes q_M^{\wtd L_0}\Ibf_M\big).
\end{align*}
This justifies our usage of the symbol $\Std$. In a similar manner, we can define
\begin{align}
\mc S_{q_\blt}\upphi(w_\blt)=\upphi\big(w_\blt\otimes q_1^{L_0}\Ibf_1\otimes\cdots\otimes q_M^{L_0}\Ibf_M\big) \label{eq19}
\end{align}
in a rigorous way. But this expression is not in $\Cbb[[q_1,\dots,q_M]]$. It is a formal series of $q_1,\dots,q_M$ whose powers are complex numbers but not necessarily natural numbers. If each $\Mbb_j$ is $L_0$-simple and $L_0=\lambda_j+\wtd L_0$ on $\Mbb_j$ (and hence on its contragredient module $\Mbb_j'$), then
\begin{align*}
\mc S_{q_\blt}\upphi(w_\blt)=q_1^{\lambda_1}\cdots q_M^{\lambda_M}\Std_{q_\blt}\upphi(w_\blt).
\end{align*}
We call $\mc S_{q_\blt}\upphi$ the \textbf{(standard) sewing of $\upphi$}. The two types of sewing clearly agree when $q_j=1$ and $\arg q_j=0$. 
\end{rem}

We are still under Asmp. \ref{lb9}. Write \index{D@$\Dbb_{r_\blt\rho_\blt},\Dbb_{r_\blt\rho_\blt}^\times$}
\begin{align*}
\Dbb_{r_\blt\rho_\blt}\coloneq \Dbb_{r_1\rho_1}\times\cdots\times \Dbb_{r_M\rho_M}\qquad \Dbb_{r_\blt\rho_\blt}^\times\coloneq \Dbb_{r_1\rho_1}^\times\times\cdots\times \Dbb_{r_M\rho_M}^\times
\end{align*}

\begin{thm}\label{lb12}
Choose any $\upphi\in\scr T_{\fk X}^*(\Wbb_\blt\otimes\Mbb_\blt\otimes\Mbb_\blt')$.
\begin{enumerate}
\item Suppose that for each $w_\blt\in\Wbb_\blt$, $\Std_{q_\blt}\upphi(w_\blt)$ converges absolutely on $q_\blt\in \Dbb_{r_\blt\rho_\blt}$. Then for each $q_\blt\in\Dbb_{r_\blt\rho_\blt}^\times$, the linear functional $\Std_{q_\blt}\upphi:\Wbb_\blt\rightarrow\Cbb$ is a conformal block associated to $\mc S_{q_\blt}\fk X$. Namely,
\begin{align*}
\Std_{q_\blt}\upphi\in \scr T_{\mc S_{q_\blt}\fk X}^*(\Wbb_\blt).
\end{align*}
\item Assume that $\Vbb$ is $C_2$-cofinite. Then for each $w_\blt\in\Wbb_\blt$, the series $\Std_{q_\blt}\upphi(w_\blt)$ converges absolutely on $\Dbb_{r_\blt\rho_\blt}$.
\end{enumerate}
\end{thm}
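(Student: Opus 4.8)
The plan is to prove the two parts by different means: part (1) asserts that, granted absolute convergence, the sewn functional inherits the conformal-block property, while part (2) supplies that convergence from $C_2$-cofiniteness. The geometric mechanism behind (1) is a local analysis at each node. A global section on $\mc S_{q_\blt}C$ pulls back to a single-valued holomorphic section on the glued neck of $C$, and its Laurent data across the node — together with the gluing relation $\xi_j\varpi_j=q_j$ — is exactly what produces the weights $q_j^{n_j}$ and the pairing realized by the resolution of the identity $\Ibf_j(n_j)$ in the definition of $\Std_{q_\blt}\upphi$.

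For part (1), fix $q_\blt\in\Dbb_{r_\blt\rho_\blt}^\times$; I must show that $\Std_{q_\blt}\upphi$ annihilates $\sigma\cdot w_\blt$ for every $\sigma\in H^0(\mc S_{q_\blt}C,\scr V_{\mc S_{q_\blt}C}\otimes\omega(\star x_\blt))$. Pulling $\sigma$ back to $C$ yields a section holomorphic on each glued neck with poles only at $x_\blt$, whose expansion at a node in the coordinate $\xi_j$ matches its expansion in $\varpi_j$ through $\xi_j\varpi_j=q_j$ (together with the VOA-bundle transition $\mc U$ of this coordinate change, of the $\vartheta$-type computed in Exp. \ref{lb3}). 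The heart of the argument is the \emph{invariance of the propagator}: the insertion $q_j^{\wtd L_0}\Ibf_j$, which pairs $\Mbb_j$ at $x_j'$ with $\Mbb_j'$ at $x_j''$, is invariant under the diagonal action of the modes of any section holomorphic on neck $j$, because $\Ibf_j$ is the identity operator and the actions on $\Mbb_j$ and on $\Mbb_j'$ are mutually transpose via \eqref{eq13}, while the matching $\xi_j\varpi_j=q_j$ supplies the weight $q_j^{n_j}$ that converts a shift of the grading $n_j$ into a power of $q_j$. Granted this invariance, the action of $\sigma$ on $\Std_{q_\blt}\upphi$ through the $x_\blt$-legs transfers, node by node, to the action of genuine global sections of $\scr V_C\otimes\omega_C(\star x_\blt+\star x_\star'+\star x_\star'')$ on $\upphi$, which vanish by the conformal-block property of $\upphi$ on $C$. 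Absolute convergence is precisely what legitimizes this term-by-term transfer and the interchange of the grading sum with the residue operations defining the action in \eqref{eq5}.

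For part (2), the series $\Std_{q_\blt}\upphi(w_\blt)\in\Cbb[[q_\blt]]$ is defined formally, and the task is to bound the growth of the coefficients $\upphi(w_\blt\otimes\Ibf_1(n_1)\otimes\cdots\otimes\Ibf_M(n_M))$ so as to obtain absolute convergence on $\Dbb_{r_\blt\rho_\blt}$. I would use $C_2$-cofiniteness in the now-standard way: it makes the family of conformal blocks over the space of sewing parameters coherent and, by fiberwise finite-dimensionality (Thm. \ref{lb11}), of finite rank, so that $\Std_{q_\blt}\upphi(w_\blt)$ is a formal horizontal section of a connection with regular singularities along each divisor $\{q_j=0\}$; equivalently, the $C_2$-spanning property yields a finite-order differential system satisfied by the generating series. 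A Frobenius/indicial analysis at $q_j=0$ then forces the formal solution to converge, and a weight estimate identifies the radius in the $j$-th variable with $r_j\rho_j$ — the constraint already built into the annular gluing of Def. \ref{lb8}. This is the analytic content developed in \cite{DGT23,Gui23a}, building on \cite{AN03,Hua09}.

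I expect part (2), the convergence, to be the main obstacle, since it is the step that genuinely requires $C_2$-cofiniteness and the hardest estimate — uniform polynomial control of the matrix coefficients $\upphi(w_\blt\otimes\Ibf_\blt(n_\blt))$ as $n_\blt\to\infty$, equivalently the regularity of the singular point $q_j=0$. By contrast, part (1) is conceptually a residue/contour argument packaged as the propagator invariance above: once absolute convergence permits rearranging the defining series, the only real subtlety is verifying that the neck Laurent data, transported by $\xi_j\varpi_j=q_j$ and the operator $\mc U$, reproduce both the correct weight $q_j^{n_j}$ and the contragredient pairing encoded by $\Ibf_j(n_j)$.
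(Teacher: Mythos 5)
Your proposal is correct in approach, but note that the paper itself does not prove Theorem \ref{lb12} at all: its ``proof'' is a citation to \cite[Thm. 11.3, 11.4, 13.1]{Gui23a} and, for the genus-$0$ M\"obius case relevant to this article, to \cite{Hua05}. What you have written is essentially a reconstruction of the arguments in those references. For part (1), your ``invariance of the propagator'' is exactly the core identity used there: the insertion $q_j^{\wtd L_0}\Ibf_j$ is annihilated by the combined action, at the two sides of the node, of any section holomorphic on the neck, because the action at $x_j''$, rewritten in the coordinate $\varpi_j=q_j/\xi_j$ via the $\mc U(\vartheta)$-type transition of Exp. \ref{lb3}, is (up to sign and the $q_j$-power bookkeeping) the transpose of the action at $x_j'$, i.e.\ precisely the contragredient relation \eqref{eq10}/\eqref{eq13}. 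A complete write-up would still have to state and prove this contraction identity carefully and justify the interchange of the grading sums with the residue operations --- this series bookkeeping, not the identity itself, is where most of the length of the proof in \cite{Gui23a} lies, since the pullback of $\sigma$ is \emph{not} a meromorphic section on $C$ with poles only at marked points, so the conformal-block property of $\upphi$ cannot be invoked directly. For part (2), your plan (a Fuchsian system in the $q_j$'s derived from $C_2$-cofiniteness, then a Frobenius argument forcing convergence of the formal solution) is the method of \cite{Hua05}, \cite{Gui23a} and \cite{DGT23}; one correction of emphasis is that the radius $r_j\rho_j$ is not obtained from a ``weight estimate'' but from the fact that the differential system is holomorphic and nonsingular on all of $\Dbb^\times_{r_\blt\rho_\blt}$ (where the sewn family is defined), so the solution, which has integer powers and hence no monodromy, extends over the whole punctured polydisc and the regular singularity at $q_j=0$ gives convergence of the full series.
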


By \eqref{eq18} (when each $\Mbb_j$ is $L_0$-simple), this theorem clearly holds if the normalized sewing is replaced by the standard sewing $\mc S_{q_\blt}$.

\begin{proof}
Part 1 was proved in \cite[Thm. 11.3-1]{Gui23a}. Part 2 was proved in \cite[Thm. 11.4 or 13.1]{Gui23a}. In this article, we are mainly interested in the case that both $\fk X$ and $\mc S_{q_\blt}\fk X$ are of genus 0, and that all the local coordinates are M\"obius. In this case, the theorem is due to \cite{Hua05}.
\end{proof}

\begin{co}\label{lb25}
Assume that $\Vbb$ is $C_2$-cofinite. Let $\fk Y=(C;x_\blt;\eta_\blt)$ be an $N$-pointed compact Riemann surface with local coordinates. Associate a $\Vbb$-module $\Wbb_i$ to each $x_i$. Choose a neighborhood $U_i$ of $x_i$ on which $\eta_i$ is defined and univalent, and choose $r_i>0$ such that
\begin{align*}
\eta_i(U_i)\supset\Dbb_{r_i}
\end{align*}
Assume that $U_1,\dots,U_N$ are mutually disjoint. Then for each $\upphi\in\scr T_{\fk Y}^*(\Wbb_\blt)$ and $w_\blt\in\Wbb_\blt$, the series
\begin{align}
\uppsi_{\tau_\blt}(w_\blt)=\upphi(e^{\tau_1 L_{-1}}w_1\otimes\cdots\otimes e^{\tau_N L_{-1}}w_N) \label{eq63}
\end{align}
of $\tau_\blt=(\tau_1,\dots,\tau_N)$ converges absolutely on $\Dbb_{r_\blt}=\Dbb_{r_1}\times\cdots\times\Dbb_{r_N}$. Moreover, for each $\tau_\blt\in\Dbb_{r_\blt}$, $\uppsi_{\tau_\blt}$ is a conformal block associated to $\fk Y_{\tau_\blt}$ and $\Wbb_\blt$ where
\begin{align*}
\fk Y_{\tau_\blt}=\big(C;\eta_1^{-1}(\tau_1),\dots,\eta_N^{-1}(\tau_N);\eta_1-\tau_1,\dots,\eta_N-\tau_N \big)
\end{align*}
\end{co}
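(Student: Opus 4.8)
The plan is to realize $\uppsi_{\tau_\blt}$ as a normalized sewing of conformal blocks and read off both assertions from Thm. \ref{lb12}. For each $i$ I introduce the auxiliary sphere $\fk P_i=(\Pbb^1;1,\infty;\zeta-1,1/\zeta)$, assigning $\Wbb_i$ to the free point $1$ and $\Wbb_i'$ to the point $\infty$. By Exp. \ref{lb24} (with translation parameter equal to $1$), the functional $w_i\otimes w_i'\mapsto\bk{e^{L_{-1}}w_i,w_i'}$ is a conformal block $\upphi_{\fk P_i}\in\scr T^*_{\fk P_i}(\Wbb_i\otimes\Wbb_i')$. On the disjoint union $\fk X=\fk Y\sqcup\fk P_1\sqcup\cdots\sqcup\fk P_N$ I take the product conformal block $\widehat\upphi=\upphi\otimes\upphi_{\fk P_1}\otimes\cdots\otimes\upphi_{\fk P_N}$, now viewing each $x_i\in C$ as a sewing point carrying $\Wbb_i$ (so that the given $\upphi$ is a legitimate tensor factor) to be glued to $\infty\in\fk P_i$ carrying $\Wbb_i'$, with gluing coordinates $\xi_i=\eta_i$ and $\varpi_i=1/\zeta$.

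First I would compute the sewing. Inserting the graded identity $\sum_{n}\Ibf_i(n)q_i^{n}$ at the $i$-th pair and using that the inserted vectors run over dual bases of $\Wbb_i$ and $\Wbb_i'$, the factor at $x_i$ collapses to $q_i^{\wtd L_0}e^{L_{-1}}w_i$, so that $\Std_{q_\blt}\widehat\upphi(w_\blt)=\upphi(\cdots q_i^{\wtd L_0}e^{L_{-1}}w_i\cdots)$; since $[\wtd L_0,L_{-1}]=L_{-1}$ yields $q_i^{\wtd L_0}e^{L_{-1}}=e^{q_iL_{-1}}q_i^{\wtd L_0}$, this is $\prod_i q_i^{m_i}\cdot\uppsi_{q_\blt}(w_\blt)$ for homogeneous $w_i$ of $\wtd L_0$-weight $m_i$. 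Next I would identify the sewn surface: the gluing rule $\eta_i=q_i\zeta$ carries the free point $\zeta=1$ to $\eta_i^{-1}(q_i)\in C$ and equips it with the local coordinate $q_i^{-1}(\eta_i-q_i)$, so $\mc S_{q_\blt}\fk X$ is biholomorphic to $C$ and equals $\fk Y_{q_\blt}$ after rescaling each coordinate by $q_i^{-1}$. Applying the change-of-coordinates map of Prop. \ref{lb6} (the rescaling operator being $q_i^{-\wtd L_0}$) turns the sewn block into $w_\blt\mapsto\Std_{q_\blt}\widehat\upphi(\otimes_i q_i^{-\wtd L_0}w_i)=\uppsi_{q_\blt}(w_\blt)$, the two monomial factors cancelling. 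Choosing $\rho_i=1$ (the largest radius at $\infty$ excluding the free point $\zeta=1$) and the given $r_i$, so that $\Dbb_{r_\blt\rho_\blt}=\Dbb_{r_\blt}$, Thm. \ref{lb12}(2) gives absolute convergence of $\Std_{q_\blt}\widehat\upphi$—hence of the series $\uppsi_{\tau_\blt}$, which differs only by the shift $q_i^{\mp m_i}$—on $\Dbb_{r_\blt}$, while Thm. \ref{lb12}(1) gives $\uppsi_{\tau_\blt}\in\scr T^*_{\fk Y_{\tau_\blt}}(\Wbb_\blt)$ for $\tau_\blt\in\Dbb_{r_\blt}^\times$; loci with some $\tau_i=0$ are handled by sewing only at the remaining indices, and $\tau_\blt=0$ gives back $\upphi$.

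The main obstacle is Asmp. \ref{lb9}: in $\fk X$ all the free points lie on the spheres $\fk P_i$, so a connected component of $C$ all of whose marked points are being moved contains no free point and the sewing theorem does not literally apply. I would circumvent this by first adjoining to each component of $C$ an auxiliary marked point carrying the vacuum $\Vbb$ with a fixed coordinate that is never moved; by propagation of conformal blocks (the canonical isomorphism recalled after Exp. \ref{lb74}) this alters neither $\scr T^*_{\fk Y}(\Wbb_\blt)$ nor $\scr T^*_{\fk Y_{\tau_\blt}}(\Wbb_\blt)$, and the translations touch only the $\Wbb_i$, so the identity $\uppsi_{\tau_\blt}(w_\blt)=\upphi(\otimes_i e^{\tau_iL_{-1}}w_i)$ is preserved; I then strip these vacuum points at the end. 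The remaining care is purely bookkeeping—keeping the $\wtd L_0$ (rather than $L_0$) normalization of the sewing consistent with the rescaling operator from Prop. \ref{lb6}, so that the two occurrences of $q_i^{\wtd L_0}$ indeed cancel and the identification $\uppsi_{\tau_\blt}=\Std_{\tau_\blt}\widehat\upphi$ (after change of coordinates) is exact.
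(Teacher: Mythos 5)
Your proof is correct and is essentially the paper's own argument: both realize the translated functional as a sewing of $\upphi$ against the pairing blocks $w_i\otimes w_i'\mapsto\bk{e^{\tau L_{-1}}w_i,w_i'}$ of Exp.\ \ref{lb24} placed on auxiliary spheres, and then quote Thm.\ \ref{lb12}. The only real difference is where the translation parameter enters. The paper puts the free marked point of the $i$-th sphere at $\tau_i$, i.e.\ $\fk P_i=(\Pbb^1;\tau_i,\infty;\zeta-\tau_i,1/\zeta)$, and sews with parameter $q_i=1$ (allowed since $r_i/|\tau_i|>1$); then the sewn surface is literally $\fk Y_{\tau_\blt}$, no rescaling is needed, and one finishes by observing that at $q=1$ the sewing series coincides term by term (for homogeneous $w_\blt$) with the power series in $\tau_\blt$. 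You instead freeze the sphere (free point at $1$) and let $\tau_i$ itself be the sewing modulus, so a single application of Thm.\ \ref{lb12}-(2) gives absolute convergence on all of $\Dbb_{r_\blt}$ at once; the price is the $q_i^{-\wtd L_0}$ change of coordinates via Prop.\ \ref{lb6} and the cancellation of the monomials $q_i^{\pm m_i}$, which you carry out correctly. Either bookkeeping works.

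Your worry about Asmp.\ \ref{lb9} is legitimate, and it is worth noting that the paper's own proof has exactly the same defect: in $\fk Z=\fk Y\sqcup\fk P_1\sqcup\cdots\sqcup\fk P_N$ every free marked point lies on a sphere, so each component of $C$ carries only sewing points, and Thm.\ \ref{lb12} as stated presupposes Asmp.\ \ref{lb9}. Your repair is sound: propagate $\upphi$ to a block on the surface augmented by one vacuum point per component of $C$ (placed outside the $U_i$, which is possible since a compact component cannot lie inside a single coordinate chart), sew the propagated block---now Asmp.\ \ref{lb9} holds---and restrict to $\id$ in the vacuum slots at the end. The restriction is again a conformal block by the creation property $Y(v)_n\id=0$ for $n\geq0$, and the restricted sewing series coincides term by term with the original one, so both the convergence and the conformal-block property transfer. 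In this respect your write-up is more careful than the paper's.
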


\begin{proof}
Choose $\tau_\blt\in\Dbb_{r_\blt}$. Let $\zeta$ be the standard coordinate of $\Cbb$. For each $i$, let $\fk P_i=(\Pbb^1;\tau_i,\infty;\zeta-\tau_i,1/\zeta)$. Associate $\Wbb_i$ to $\tau_i$ and $\Wbb_i'$ to $\infty$. Let $\fk Z=\fk Y\sqcup\fk P_1\sqcup\cdots\sqcup\fk P_N$. Then by Exp. \ref{lb24}, the linear functional
\begin{gather*}
\upomega:\Wbb_1\otimes\cdots\otimes\Wbb_N\otimes\Wbb_1\otimes\Wbb_1'\otimes\cdots\otimes\Wbb_N\otimes\Wbb_N'\rightarrow\Cbb
\end{gather*}
defined by
\begin{align*}
&\upomega(m_1\otimes\cdots\otimes m_N\otimes w_1\otimes w_1'\otimes\cdots\otimes w_N\otimes w_N')\\
=& \upphi(m_1\otimes\cdots\otimes m_N)\bk{w_1,e^{\tau_1L_1}w_1'}\cdots\bk{w_N,e^{\tau_NL_1}w_N'}
\end{align*}
is a conformal block associated to $\fk Z$. Now, we sew $\fk Z$ along the pairs $(x_1,\infty_1),\dots,(x_N,\infty_N)$ where $\infty_i$ denotes the $\infty$ of $\fk P_i$. By Thm. \ref{lb12}, $\mc S_{q_1,\dots,q_N}\upomega$ converges absolutely when $q_i\in\Dbb_{r_i/|\tau_i|}^\times$, and converges to a conformal block associated to $\mc S_{q_\blt}\fk Z$. But $\mc S_{1,\dots,1}\fk Z$ equals $\fk Y_{\tau_\blt}$, and $\mc S_{1,\dots,1}\upomega$ equals $\uppsi_{\tau_\blt}$ (and the absolute convergence of the former is equivalent to that of the latter) if we choose the argument of $1$ to be $0$ and assume for simplicity that $w_1,\dots,w_N$ are homogeneous. This finishes the proof. 
\end{proof}

Recall that if $\Vbb$ is $C_2$-cofinite, then $\Vbb$ has finitely many irreducible modules up to equivalence (cf. \cite{Zhu96,DLM00} or the end of \cite[Sec. 12]{Gui23a}), and the spaces of conformal blocks are finite-dimensional (Thm. \ref{lb11}).

\begin{thm}[\textbf{Sewing-factorization theorem}]\label{lb13}
Assume that $\Vbb$ is $C_2$-cofinite. Let $\mc E$ be a (necessarily finite) set of representatives of equivalence classes of irreducible $\Vbb$-modules. Choose any $q_\blt\in\Dbb_{r_\blt\rho_\blt}^\times$. Then the linear map
\begin{gather}
\begin{gathered}
\fk S_{q_\blt}:\bigoplus_{\Mbb_1,\dots\Mbb_M\in\mc E}\scr T_{\fk X}^*(\Wbb_\blt\otimes\Mbb_\blt\otimes\Mbb_\blt')\rightarrow\scr T_{\mc S_{q_\blt}\fk X}^*(\Wbb_\blt)\\
\bigoplus_\Mbb  \upphi_\Mbb\mapsto\sum_\Mbb \mc S_{q_\blt}\upphi_\Mbb
\end{gathered}
\end{gather}
(which is well defined by Thm. \ref{lb13}) is injective. If $\Vbb$ is also rational, then $\wtd{\fk S}_{q_\blt}$ is bijective. The same conclusions hold if $\mc S_{q_\blt}$ is replaced by the normalized sewing $\Std_{q_\blt}$ of conformal blocks.
\end{thm}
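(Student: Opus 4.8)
The plan is to prove injectivity from $C_2$-cofiniteness alone and then upgrade to bijectivity using rationality, working throughout with the normalized sewing $\Std_{q_\blt}$. Since each $\Mbb_j\in\mc E$ is irreducible, hence $L_0$-simple with a single lowest conformal weight $\lambda_{\Mbb_j}$, the remark following Def.~\ref{lb8} gives $\mc S_{q_\blt}\upphi_\Mbb=\bigl(\prod_j q_j^{\lambda_{\Mbb_j}}\bigr)\Std_{q_\blt}\upphi_\Mbb$, and by Thm.~\ref{lb12} each $\Std_{q_\blt}\upphi_\Mbb(w_\blt)$ is a genuine power series in $q_\blt$ converging on $\Dbb_{r_\blt\rho_\blt}$. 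Because $\prod_j q_j^{\lambda_{\Mbb_j}}$ is a nonzero scalar at the fixed $q_\blt\in\Dbb_{r_\blt\rho_\blt}^\times$, the maps $\fk S_{q_\blt}$ and $\wtd{\fk S}_{q_\blt}$ differ only by rescaling each summand by a nonzero constant, so they have the same kernel and image; it suffices to treat one of them.

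For injectivity, suppose $\sum_\Mbb \mc S_{q_\blt}\upphi_\Mbb=0$ in $\scr T^*_{\mc S_{q_\blt}\fk X}(\Wbb_\blt)$. First I would observe that the left-hand side, being a sum of blocks produced by sewing, varies flatly over the (connected) sewing family; equivalently, since $q_j\partial_{q_j}$ realizes $L_0$ at the $j$-th node and the block spaces are finite-dimensional (Thm.~\ref{lb11}), each $\Std_{q_\blt}\upphi_\Mbb(w_\blt)$ solves a regular-singular system in $q_\blt$ with indicial exponents $\lambda_{\Mbb_j}+\Nbb$. Hence vanishing at the single point $q_\blt$ forces the entire formal expansion to vanish. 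I would then extract coefficients: grouping the modules by their weight tuples modulo $\Zbb^M$ separates summands supported on disjoint cosets of exponents, and within a coset the coefficient of the lowest power $\prod_j q_j^{s_j}$ is a sum of conformal blocks on the nodal degeneration of $\fk X$ with graded pieces $\Mbb_\blt(n_\blt)$ inserted. Blocks attached to \emph{distinct} irreducibles $\Mbb_\blt$ are linearly independent, so a triangular Frobenius-type induction on total degree forces every coefficient $\upphi_\Mbb(w_\blt\otimes\Ibf_\blt(n_\blt))$ to vanish. It then remains to show the single-module statement: if $\upphi\in\scr T^*_{\fk X}(\Wbb_\blt\otimes\Mbb_\blt\otimes\Mbb_\blt')$ satisfies $\upphi(w_\blt\otimes\Ibf_\blt(n_\blt))=0$ for all $n_\blt,w_\blt$, then $\upphi=0$. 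Here I would use that each irreducible $\Mbb_j$ is generated by its lowest-weight space under the modes $Y(v)_k$, and that the Ward identities defining a conformal block (via meromorphic sections with poles only at the node pair $(x_j',x_j'')$) transport such modes across the node onto the contragredient leg $\Mbb_j'$; together with the uniqueness principle Prop.~\ref{lb20} this expresses every matrix element $\upphi(w_\blt\otimes m_\blt\otimes m_\blt')$ through the diagonal data, which vanishes.

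For bijectivity under rationality, injectivity already yields $\sum_{\Mbb_1,\dots,\Mbb_M\in\mc E}\dim\scr T^*_{\fk X}(\Wbb_\blt\otimes\Mbb_\blt\otimes\Mbb_\blt')\le\dim\scr T^*_{\mc S_{q_\blt}\fk X}(\Wbb_\blt)$, so it suffices to prove the reverse inequality. I would argue that complete reducibility of $\Vbb$-modules makes the rank of the conformal-block bundle constant across the sewing family, with no jump at the nodal fibre $q_\blt=0$, whose block space is exactly the domain: semisimplicity guarantees that only the irreducibles of $\mc E$, each paired with its contragredient, pass through each neck, and that no logarithmic corrections arise. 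Concretely, given $\uppsi\in\scr T^*_{\mc S_{q_\blt}\fk X}(\Wbb_\blt)$ one expands it in the annular neck around each node and uses semisimplicity to read off, for each channel $\Mbb_\blt$, a block $\upphi_\Mbb$ on $\fk X$ with $\sum_\Mbb\mc S_{q_\blt}\upphi_\Mbb=\uppsi$. This gives equality of dimensions, and with injectivity, bijectivity; the $\wtd{\fk S}_{q_\blt}$ assertions follow by the rescaling noted above.

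The main obstacle is the surjectivity step: one must rule out conformal blocks on the \emph{smooth} sewn surface that fail to extend to the nodal boundary, i.e.\ establish constancy of the rank across the degeneration, which is precisely where rationality is indispensable. This is the technical heart underlying Thm.~\ref{lb12} (proved in \cite{Gui23a} in general and in \cite{Hua05} in the genus-$0$ M\"obius case of primary interest here), and the node-crossing step in the single-module injectivity is the second most delicate point.
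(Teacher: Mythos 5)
Your proposal has the right overall shape---Frobenius expansion in $q_\blt$, separation of exponents, a nodal analysis for injectivity, and rank-constancy for surjectivity---but two of its load-bearing steps are asserted rather than proved, and one of them is circular. In the injectivity argument, after grouping the channels $\Mbb_\blt$ by weight tuples modulo $\Zbb^M$ you cannot finish by exponent comparison: two non-isomorphic irreducibles can perfectly well have conformal weights congruent modulo $\Zbb$, so within a single coset your claim that ``blocks attached to distinct irreducibles $\Mbb_\blt$ are linearly independent'' is exactly the injectivity of the nodal factorization map that you are in the middle of proving; invoking it makes the ``triangular Frobenius-type induction'' circular. Separating non-isomorphic channels (which ultimately rests on semisimplicity and a Schur-type argument) together with your ``single-module statement'' is precisely the content of \cite[Thm. 2.1]{Gui23a}, and your Ward-identity sketch for the node-crossing step is a direction, not an argument. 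In addition, your preliminary step---that vanishing at the single point $q_\blt$ kills the entire formal expansion---needs all the sewn blocks to be solutions of one first-order regular-singular holonomic system (equivalently, flat sections of a connection on the block bundle over the sewing family). That is a substantial theorem in its own right; it is not contained in Thm. \ref{lb11} or in the statement of Thm. \ref{lb12}, which you cite for it (Thm. \ref{lb12} asserts convergence and nothing about differential equations).

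The surjectivity paragraph asserts the conclusion rather than proving it: ``complete reducibility makes the rank constant across the degeneration, with no jump at the nodal fibre'' \emph{is} the factorization theorem, which this paper explicitly imports from \cite{DGT23} in general and from \cite{HL95a,HL95b,HL95c,Hua95,Hua05,NT05} in genus $0$; it is not ``the technical heart underlying Thm. \ref{lb12}'', and convergence of sewing gives no control whatsoever on the codomain of $\fk S_{q_\blt}$. Expanding a block $\uppsi$ on the sewn surface in the annular necks and ``reading off the channels'' requires proving that the resulting datum decomposes as a finite sum of $\Mbb\otimes\Mbb'$ over irreducibles inserted at the node---this is where rationality enters and where essentially all of the work lies (Huang's associativity theory in genus $0$, the algebro-geometric argument of \cite{DGT23} in general). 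For comparison, the paper's own proof makes no attempt to reprove any of this: it reduces to $M=1$ by induction on the number of sewn pairs and then cites \cite[Thm. 2.1]{Gui23a} for injectivity and the factorization theorem of \cite{DGT23} for the dimension count giving bijectivity. Your outline matches the broad strategy of those references, but as written it closes neither of the two essential gaps.
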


\begin{proof}
By induction on $M$ (i.e. by sewing the surface along a pair of points $x_j',x_j''$ each time) we can assume that $M=1$. Then the theorem is due to \cite[Thm. 2.1]{Gui23a}. Note that in \cite{Gui23a} the surjectivity of $\fk S_{q_\blt}$ (when $\Vbb$ is also rational) was proved by showing that the domain and the codomain of $\fk S_{q_\blt}$ have the same dimension. This result is called the \textbf{factorization of conformal blocks} and was proved in \cite{DGT23}. Again, in our article, we will use Thm. \ref{lb13} mainly in the special case that the surfaces (before and after sewing) are of genus 0 and that the local coordinates are M\"obius. In this case, Thm. \ref{lb13} was essentially established in a series of works \cite{HL95a,HL95b,HL95c,Hua95,Hua05}.\footnote{In these works, the theorem was presented in the form of ``braiding/fusion relations of intertwining operators", e.g. \eqref{eq69}. These relations are slightly weaker than the genus-0 M\"obius version of Thm. \ref{lb13}. But the methods of proving braiding/fusion relations actually imply Thm. \ref{lb13}. In this article, we will mainly use the fusion relations of intertwining operators instead of using Thm. \ref{lb13} directly. The main reason is that we shall generalize the results of this paper to twisted modules of $\Vbb$. For twisted modules and their conformal blocks, the factorization theorem has not yet been proved in general.} The genus-0  factorization property was also proved in \cite{NT05}.
\end{proof}

\section{Unitary VOA modules and conjugate conformal blocks}

In this section, we assume that $\Vbb$ is unitary as defined in \cite{DL14,CKLW18}.

\subsection{Unitary VOAs and unitary modules}

\begin{df}
We say that the VOA $\Vbb$ is \textbf{unitary}, if $\Vbb$ is equipped with an inner product $\bk{\cdot|\cdot}$ (antilinear on the second variable) satisfying $\bk{\id|\id}=1$, and if an antiunitary map $\Theta:\Vbb\rightarrow\Vbb$ (called the \textbf{PCT operator}) satisfying the following conditions is chosen: 
\begin{enumerate}[label=(\arabic*)]
\item For each $u,v\in\Vbb$,
\begin{align*}
\Theta Y(u,z)v=Y(\Theta u,\ovl z)\Theta v.
\end{align*}
In  particular, we have $\Theta L_n=L_n\Theta$, and hence $\Theta\Vbb(n)=\Vbb(n)$ for each $n\in \Nbb$. 
\item For each $u,v_1,v_2\in\Vbb$,
\begin{align*}
\bk{Y(u,z)v_1|v_2}=\bk{v_1|Y'(\Theta u,\ovl z)v_2}
\end{align*}
In particular, we have $\bk{L_n v_1|v_2}=\bk{v_1|L_{-n}v_2}$. By letting $v_2=\id$, we see that $\Theta$ is uniquely determined by the inner product.
\end{enumerate}
\end{df}

It follows automatically that $\Theta^2=1$, and that $\Theta$ preserves the conformal vector:
\begin{align*}
\Theta\cbf=\cbf.
\end{align*}
See \cite{DL14,CKLW18} for details. Moreover, a unitary VOA is self-dual: the isomorphism $\Vbb\rightarrow\Vbb'$ is given by the non-degenerate pairing $\Vbb\otimes\Vbb\rightarrow\Cbb,u\otimes v\mapsto\bk{u|\Theta v}$.

The definition of unitary $\Vbb$-modules is straightforward:

\begin{df}\label{lb78}
A $\Vbb$-module $\Wbb$ is called \textbf{unitary} if it is equipped with an inner product $\bk{\cdot|\cdot}$ (antilinear on the second variable) such that for each $u\in\Vbb,w_1,w_2\in\Wbb$,
\begin{align}
\bk{Y(u,z)w_1|w_2}=\bk{w_1|Y'(\Theta u,\ovl z)w_2}.  \label{eq21}
\end{align}
In particular, we have $\bk{L_nw_1|w_2}=\bk{w_1|L_{-n}w_2}$. We let \index{HW@$\mc H_\Wbb$, the Hilbert space completion of $\Wbb$}
\begin{align*}
\mc H_\Wbb=\text{the Hilbert space completion of $\Wbb$ under $\bk{\cdot|\cdot}$}.
\end{align*}
\end{df}

\begin{rem}
It is clear that $\Wbb_{(s)}$ is orthogonal to $\Wbb_{(t)}$ if $s\neq t$. From this, and from the construction of $\wtd L_0$ in Sec. \ref{lb15}, it follows that $\Wbb(m)$ is orthogonal to $\Wbb(n)$ if $m\neq n$. We conclude that $\bk{\cdot|\cdot}$ is \textbf{compatible with the $L_0$-grading} and the $\wtd L_0$-grading.
\end{rem}

\begin{rem}\label{lb62}
Assume that the $\Vbb$-module is equipped with an inner product $\bk{\cdot|\cdot}$ compatible with the $L_0$-grading or the $\wtd L_0$-grading. Then there is a unique inner product on $\Wbb'$ compatible with its $L_0$-grading such that the map
\begin{gather*}
\Co_\Wbb:\Wbb\rightarrow\Wbb'\qquad w\mapsto \bk{\cdot|w}\\
\text{( so }\bk{w_1|w_2}=\bk{w_1,\Co_\Wbb w_2}   \text{ if $w_1,w_2\in\Wbb$ )}
\end{gather*}
(abbreviated to $\Co$ when the context is clear) is antiunitary. Then, under the natural identification $\Wbb''=\Wbb$ we have
\begin{align*}
\Co_\Wbb^{-1}=\Co_{\Wbb'}
\end{align*}
$\Co_\Wbb$ extends naturally to a linear map between the algebraic completions
\begin{align*}
\Co_\Wbb:\ovl\Wbb\rightarrow \ovl{\Wbb'}=\Wbb^*
\end{align*}
Then $\Wbb$ is unitary if and only if for each $u\in\Vbb,w\in\Wbb$,
\begin{align}\label{eq20}
Y_\Wbb(u,z)w=\Co Y_{\Wbb'}(\Theta u,\ovl z)\Co w
\end{align}
From this we see that the inner product on $\Wbb'$ makes $\Wbb'$ a unitary $\Vbb$-module. 
\end{rem}

\begin{proof}[Proof of \eqref{eq21}$\Leftrightarrow$\eqref{eq20}]
We have
\begin{align*}
\bk{Y(u,z)w_1|w_2}=\bk{Y(u,z)w_1,\Co w_2}=\bk{w_1,Y'(u,z)\Co w_2}=\bk{w_1|\Co Y'(u,z)\Co w_2}
\end{align*}
which equals $\bk{w_1|Y'(\Theta u,\ovl z)w_2}$ iff \eqref{eq20} is always true.
\end{proof}

Recall that if $\Vbb$ is unitary, then by \cite[Prop. 5.3]{CKLW18} (or by Prop. \ref{lb79} with $\Ubb=\Vbb$), $\Vbb$ is simple iff $\Vbb$ is of CFT-type. The following fact is known to experts though we are unable to locate it in the literature. So we provide a proof below.

\begin{thm}\label{lb76}
Let $\Vbb$ be a unitary VOA, and let $\Wbb$ be a unitary $\Vbb$-module. Let $\Sp(L_0)$ be the set of eigenvalues of $L_0$ on $\Wbb$. Then $\Sp(L_0)\subset[0,+\infty)$. 

Assume moreover that $\Vbb$ and $\Wbb$ are irreducible $\Vbb$-modules. Then $\Sp(L_0)\subset(0,+\infty)$ if and only if $\Wbb\nsimeq \Vbb$.
\end{thm}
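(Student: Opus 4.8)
The plan is to read off the first assertion directly from Thm.~\ref{lb75}, and then to deduce the dichotomy in the irreducible case from the isomorphism $\Hom_\Vbb(\Vbb,\Wbb)\simeq\Wbb_{(0)}$ recorded there, combined with a Schur-type argument. First I would observe that a unitary $\Vbb$-module is in particular M\"obius unitarizable: the defining relation $\bk{L_nw_1|w_2}=\bk{w_1|L_{-n}w_2}$ of Def.~\ref{lb78} holds for all $n$, hence in particular for $n\in\{-1,0,1\}$. Thus Thm.~\ref{lb75} applies to $\Wbb$ and yields at once $\Sp(L_0)\subset[0,+\infty)$, which is the first claim, together with the linear isomorphism
\begin{align*}
\Hom_\Vbb(\Vbb,\Wbb)\xlongrightarrow{\simeq}\Wbb_{(0)}\qquad A\mapsto A\id,
\end{align*}
which is the tool I would use for the second part.

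For the second statement, assume $\Vbb$ and $\Wbb$ are irreducible. Since the first part already gives $\Sp(L_0)\subset[0,+\infty)$, the condition $\Sp(L_0)\subset(0,+\infty)$ is equivalent to $0\notin\Sp(L_0)$, i.e. to $\Wbb_{(0)}=0$. By the isomorphism above this is equivalent to $\Hom_\Vbb(\Vbb,\Wbb)=0$. It then remains to show that, for irreducible $\Vbb$ and $\Wbb$, one has $\Hom_\Vbb(\Vbb,\Wbb)\neq 0$ if and only if $\Wbb\simeq\Vbb$: any nonzero $A\in\Hom_\Vbb(\Vbb,\Wbb)$ has kernel a proper submodule of the irreducible $\Vbb$, so $\Ker A=0$, and image a nonzero submodule of the irreducible $\Wbb$, so $A(\Vbb)=\Wbb$; hence $A$ is an isomorphism, while conversely an isomorphism $\Vbb\simeq\Wbb$ furnishes a nonzero element of $\Hom_\Vbb(\Vbb,\Wbb)$. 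Chaining the three equivalences gives $\Sp(L_0)\subset(0,+\infty)\iff\Wbb\nsimeq\Vbb$.

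I expect no serious obstacle here, since essentially all the analytic content is already packaged in Thm.~\ref{lb75}; the proof is a short formal deduction. The only point I would be careful to state explicitly is that the Schur argument uses the irreducibility of $\Vbb$ as a $\Vbb$-module (equivalently, that the unitary VOA $\Vbb$ is simple, hence of CFT-type, so that $\Vbb_{(0)}=\Cbb\id$), which is exactly the hypothesis supplied in the second half of the statement.
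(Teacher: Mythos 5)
Your proof is correct and takes essentially the same approach as the paper, whose entire proof of Thm.~\ref{lb76} is the single line ``Immediate from Thm.~\ref{lb75}.'' Your write-up simply makes that deduction explicit: unitary implies M\"obius unitarizable, the isomorphism $\Hom_\Vbb(\Vbb,\Wbb)\simeq\Wbb_{(0)}$ turns the condition $0\notin\Sp(L_0)$ into $\Hom_\Vbb(\Vbb,\Wbb)=0$, and the Schur-type argument (using irreducibility of both $\Vbb$ and $\Wbb$) identifies nonvanishing of this Hom space with $\Wbb\simeq\Vbb$.
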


\begin{proof}
This follows immediately from Thm. \ref{lb75}.
\end{proof}

\subsection{Conjugate conformal blocks}

\begin{df}
Let $C$ be a Riemann surface. Define a new complex structure on $C$ to be the unique one such that a function $f:U\rightarrow\Cbb$ (where $U\subset\Cbb$ is open) is holomorphic under this complex structure if and only if its complex conjugate $\ovl f:U\rightarrow\Cbb,x\mapsto \ovl{f(x)}$ is holomorphic under the original complex structure. Then $C$, together with the new complex structure, is called the \textbf{(complex) conjugate Riemann surface} of $C$ (the original complex structure) and is denoted by $C^*$. Clearly $C^{**}=C$.
\end{df}

\begin{rem}
It is more convenient to regard $C$ and $C^*$ as two different sets. We let $x^*$ denote the point of $C^*$ that corresponds to the point $x\in C$. Then \index{C@$C^*$, conjugate Riemann surface}
\begin{align*}
*:C\rightarrow C^*
\end{align*}
is an anti-equivalence of Riemann surfaces, and
\begin{align*}
C^{**}=C\qquad *\circ*=\id_C:C\rightarrow C
\end{align*}
\end{rem}

\begin{df}
If $U\subset C$ is open and $f\in\scr O(U)$, we define $f^*\in\scr O(U^*)$ \index{f@$f^*\in\scr O(U^*)$, the conjugate of $f\in\scr O(U)$} (where $U^*$ is considered as an open subset of $C^*$) to be
\begin{align*}
f^*(x^*)=\ovl{f(x)}\qquad(\forall x\in U).
\end{align*}
\end{df}

\begin{rem}
Assume that $\eta\in\scr O(U)$ is univalent. Then for each $f\in\scr O(U)$,
\begin{align}
\partial_{\eta^*}f^*=(\partial_\eta f)^*
\end{align}
Using this relation, one easily computes the transition functions for the holomorphic tangent and cotangent bundles of $C^*$ in terms of those of $C$. Also, if $f$ is a meromorphic function on $U$, and if $x\in U$ is such that $\eta(x)=0$, then
\begin{align}\label{eq22}
\Res_{x^*} f^*d\eta^*=\ovl{\Res_{x}fd\eta} 
\end{align}
To see these relations (which can be checked locally), it suffices to assume that $U,U^*\subset\Cbb$, that $*:U\rightarrow U^*,z\mapsto \ovl z$, and that $\eta$ is the standard coordinate of $\Cbb$. Then $\eta^*=\eta$, and if $f=\sum_n a_nz^n$ then $f^*=\sum_n \ovl{a_n}z^n$.
\end{rem}

Let $\fk X=(C;x_1,\dots,x_N;\eta_1,\dots,\eta_N)$ be an $N$-pointed compact Riemann surface with local coordinates. Associate a unitary $\Vbb$-module $\Wbb_i$ to each marked point $x_i$ of $\fk X$.

\begin{df}\label{lb22}
The \textbf{complex conjugate} of $\fk X$ is defined to be \index{X@$\fk X^*$, the conjugate of $\fk X$}
\begin{align*}
\fk X^*=(C^*;x_\blt^*;\eta_\blt^*)=(C^*;x_1^*,\dots,x_N^*;\eta_1^*,\dots,\eta_N^*)
\end{align*}
Clearly $\fk X^{**}=\fk X$.
\end{df}

\begin{df}\label{lb23}
For each $\upphi\in\scr T_{\fk X}^*(\Wbb_\blt)$, define
\begin{gather*}
\upphi^*:\Wbb_\blt'=\Wbb_1'\otimes\cdots\otimes\Wbb_N'\rightarrow\Cbb\\
\upphi^*(\Co w_1\otimes\cdots\otimes\Co w_N)=\ovl{\upphi(w_1\otimes\cdots\otimes w_N)}
\end{gather*} 
(where $w_1\in\Wbb_1,\dots,w_N\in\Wbb_N$). Then
\begin{align}
\upphi^*\in\scr T_{\fk X^*}^*(\Wbb_\blt')  \label{eq23}
\end{align}
We call $\upphi^*$ the \textbf{conjugate conformal block} \index{zz@$\upphi^*$, the complex conjugate of the conformal block $\upphi$} of $\upphi$.
\end{df}

\begin{proof}[Proof of \eqref{eq23}]
There is an antilinear bijection 
\begin{gather*}
*:H^0(C,\scr V_C\otimes\omega_C(\star x_\blt))\rightarrow H^0(C^*,\scr V_{C^*}\otimes\omega_{C^*}(\star x_\blt^*)),\qquad \sigma\mapsto \sigma^*
\end{gather*}
defined as follows. Choose an open subset $U\subset C$ and a univalent $\eta\in\scr O(U)$, and write $\mc U_\varrho(\eta)\sigma$ on $U$ as a finite sum
\begin{align*}
\mc U_\varrho(\eta)\sigma|_U=\sum_k v_kf_kd\eta \qquad(v_k\in\Vbb,f_k\in\scr O(U))
\end{align*}
Then
\begin{align*}
\sigma^*|_{U^*}=\sum_k\Theta v_k\cdot f_k^*d\eta^*
\end{align*}
One checks easily that this is well-defined and independent of the choice of $\eta$. Choose any $w_\blt\in\Wbb_\blt$ and let $\Co w_\blt=\Co w_1\otimes\cdots\otimes\Co w_N$. Then
\begin{align*}
&\upphi^*(\sigma^*\cdot \Co w_\blt)=\sum_{i,k}\Res_{\eta_i^*=0}~f_k^*\upphi^*(\Co w_1\otimes\cdots\otimes Y(\Theta v_k,\eta_i^*)\Co w_i\otimes\cdots\otimes \Co w_N)d\eta_i^* \\
\xlongequal{\eqref{eq20}}&\sum_{i,k}\Res_{\eta_i^*=0}~f_k^*\upphi^*(\Co w_1\otimes\cdots \otimes \Co Y(v_k,\ovl{\eta_i^*}) w_i\otimes\cdots\otimes \Co w_N)d\eta_i^*\\
=&\sum_{i,k}\Res_{\eta_i^*=0}~f_k^*\ovl{\upphi(w_1\otimes\cdots \otimes Y(v_k,\ovl{\eta_i^*}) w_i\otimes\cdots\otimes w_N)}d\eta_i^*\\
\xlongequal{\eqref{eq22}}&\sum_{i,k}\ovl{\Res_{\eta_i=0}~f_k\upphi(w_1\otimes\cdots \otimes Y(v_k,\eta_i) w_i\otimes\cdots\otimes w_N)d \eta_i}\\
=&\ovl{\upphi(\sigma\cdot w_\blt)}
\end{align*}
So $\upphi\in\scr T_{\fk X}^*(\Wbb_\blt)$ if and only if $\upphi^*\in\scr T_{\fk X^*}^*(\Wbb_\blt')$.
\end{proof}

\section{Reflection positive conformal blocks}

In this section, we assume that $\Vbb$ is unitary and $C_2$-cofinite. We only consider unitary modules.

Consider an $(N+1)$-pointed compact Riemann surface with local coordinates
\begin{align}
\fk X=(C;x_\blt,x';\eta_\blt,\xi)=(C;x_1,\dots,x_N,x';\eta_1,\dots,\eta_N,\xi)  \label{eq25}
\end{align}
such that each connected component of $C$ contains one of $x_1,\dots,x_N$. We assume that $\xi$ is univalent on a neighborhood $W'$ of $x'$ that does not contain $x_1,\dots,x_N$. Choose $r>0$ such that
\begin{align*}
\xi(W')\supset\Dbb_r
\end{align*}
Associate unitary $\Vbb$-modules $\Wbb_1,\dots,\Wbb_N,\Mbb$ to $x_1,\dots,x_N,x'$ respectively. Associate their contragredient modules $\Wbb_1',\dots,\Wbb_N',\Mbb'$ respectively to the marked points $x_1^*,\dots,x_N^*,x'^*$ of $\fk X^*$.

\subsection{Sewing $\fk X$ and $\fk X^*$, and reflection positive conformal blocks}

Consider the disjoint union
\begin{align}\label{eq52}
\fk X\sqcup\fk X^*=(C\sqcup C^*;x_\blt,x',x_\blt^*,x'^*;\eta_\blt,\xi,\eta_\blt^*,\xi^*)
\end{align}
Assume $0<q<r^2$, and define the connected sum \index{XX@$\fk X\#_q\fk X^*=\mc S_q(\fk X\sqcup\fk X^*)$}  along the pair of points $x',x'^*$:
\begin{align}\label{eq53}
\fk X\#_q\fk X^*\coloneq\mc S_q(\fk X\sqcup\fk X^*)
\end{align}
which has marked points $x_\blt,x_\blt^*$ and local coordinates $\eta_\blt,\eta_\blt^*$. 

\begin{df}\label{lb33}
Let $\Upomega\in\scr T_{\fk X\#_q\fk X^*}^*(\Wbb_\blt\otimes\Wbb_\blt')$. We say that $\Upomega$ is \textbf{reflection positive}, if for each vector $\wbf\in\Wbb_\blt$ (not necessarily of the form $w_\blt=w_1\otimes\cdots\otimes w_N$) we have
\begin{align}
\Upomega(\wbf\otimes \Co\wbf)\geq0  \label{eq95}
\end{align}
Here, $\Co:\Wbb_\blt\rightarrow\Wbb_\blt$ is the antiunitary map determined by
\begin{align*}
\Co(w_1\otimes\cdots\otimes w_N)=\Co w_1\otimes\cdots\otimes\Co w_N\qquad(\forall w_1\in\Wbb_1,\dots,w_N\in\Wbb_N)
\end{align*}

If assumption \eqref{eq95} is weakened to
\begin{align}
\Upomega(\wbf\otimes \Co\wbf)\in\Rbb\qquad\forall \wbf\in\Wbb_\blt
\end{align} 
or equivalently
\begin{align}
\ovl{\Upomega(\wbf_1\otimes\Co\wbf_2)}=\Upomega(\wbf_2\otimes\Co\wbf_1)\qquad\forall \wbf_1,\wbf_2\in\Wbb_\blt
\end{align}
we call $\Upomega$ \textbf{self-conjugate}. (This is a special case of Def. \ref{lb67}.) \hfill\qedsymbol
\end{df}

Choose $\upphi,\uppsi\in\scr T_{\fk X}^*(\Wbb_\blt\otimes\Mbb)$. Then $\upphi\otimes\uppsi^*:\Wbb_\blt\otimes\Mbb\otimes\Wbb_\blt'\otimes\Mbb'\rightarrow\Cbb$ is clearly a conformal block:
\begin{align*}
\upphi\otimes\uppsi^*\in\scr T_{\fk X\sqcup\fk X^*}^*(\Wbb_\blt\otimes\Mbb\otimes\Wbb_\blt'\otimes\Mbb')
\end{align*}
The corresponding sewing of $\upphi\otimes\uppsi^*$, written as \index{zz@$\upphi\#_q\uppsi^*=\mc S_q(\upphi\otimes\uppsi^*)$}
\begin{align*}
\upphi\#_q\uppsi^*\coloneq \mc S_q(\upphi\otimes\uppsi^*):\Wbb_\blt\otimes\Wbb_\blt'\rightarrow\Cbb
\end{align*}
satisfies that for any $\wbf_1,\wbf_2\in\Wbb_\blt$,
\begin{align}\label{eq24}
&\upphi\#_q\uppsi^*(\wbf_1\otimes \Co\wbf_2)=\sum_{s\geq 0}\sum_{\alpha\in\fk A_s} q^s\upphi(\wbf_1\otimes e_{\alpha,s})\uppsi^*(\Co\wbf_2\otimes \Co e_{\alpha,s})\nonumber\\
=&\sum_{s\geq 0}\sum_{\alpha\in\fk A_s} q^s\upphi(\wbf_1\otimes e_{\alpha,s})\ovl{\uppsi(\wbf_2\otimes e_{\alpha,s})}
\end{align}
where $\{e_{\alpha,s}\}_{\alpha\in\fk A_s}$ is an orthonormal basis of the $L_0$-eigenspace $\Wbb_{(s)}$, and we choose $\arg q=0$. Let us express \eqref{eq24} in terms of the inner product of two vectors in a Hilbert space:

\begin{pp}\label{lb17}
Assume $0<q<r^2,\arg q=0$ and $\upphi\in\scr T_{\fk X}^*(\Wbb_\blt\otimes\Mbb)$. Regard $\upphi$ as a linear map \index{T@$T_\upphi$}
\begin{align*}
T_\upphi:\Wbb_\blt\rightarrow \ovl{\Mbb'}
\end{align*}
Then for each $\wbf\in\Wbb_\blt$, $\sqrt q^{L_0}T_\upphi(\wbf)$ belongs to the Hilbert space completion $\mc H_{\Mbb'}$. Moreover, if $\wbf_1,\wbf_2\in\Wbb_\blt$, and if $\uppsi\in\scr T_{\fk X}^*(\Wbb_\blt\otimes\Mbb)$, then
\begin{gather}
\bigbk{\sqrt q^{L_0}T_\upphi(\wbf_1)\big|\sqrt q^{L_0}T_\uppsi(\wbf_2) }=\upphi\#_q\uppsi^*(\wbf_1\otimes\Co\wbf_2)
\end{gather}
\end{pp}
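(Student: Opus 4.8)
The plan is to unpack both sides of the claimed identity in terms of the orthonormal basis $\{e_{\alpha,s}\}_{\alpha\in\fk A_s}$ of $\Mbb_{(s)}$ that appears in the sewing formula \eqref{eq24}, and to extract the only nontrivial ingredient—convergence—from the sewing-convergence theorem Thm.~\ref{lb12}.

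First I would make $T_\upphi$ explicit on $L_0$-eigenspaces. By definition $\upphi(\wbf\otimes m)=\bk{m,T_\upphi(\wbf)}$ for $m\in\Mbb$, and since $\{\Co e_{\alpha,s}\}_{\alpha}$ is dual to $\{e_{\alpha,s}\}_\alpha$ under the pairing (because $\bk{e_{\beta,s},\Co e_{\alpha,s}}=\bk{e_{\beta,s}|e_{\alpha,s}}=\delta_{\alpha\beta}$), the $L_0$-weight-$s$ component of $T_\upphi(\wbf)\in\ovl{\Mbb'}$ is
\[
(T_\upphi\wbf)_s=\sum_{\alpha\in\fk A_s}\upphi(\wbf\otimes e_{\alpha,s})\,\Co e_{\alpha,s}.
\]
As $\Co:\Mbb\to\Mbb'$ is antiunitary, $\{\Co e_{\alpha,s}\}_\alpha$ is an orthonormal basis of $\Mbb'_{(s)}$, whence $\|(T_\upphi\wbf)_s\|^2=\sum_{\alpha}|\upphi(\wbf\otimes e_{\alpha,s})|^2$. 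The operator $\sqrt q^{\,L_0}$ scales this weight-$s$ component by $q^{s/2}$.

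Next I would settle the membership claim. The squared norm of the candidate vector equals
\[
\sum_{s\geq0}q^s\|(T_\upphi\wbf)_s\|^2=\sum_{s\geq0}q^s\sum_{\alpha}|\upphi(\wbf\otimes e_{\alpha,s})|^2,
\]
which is exactly the diagonal case $\upphi\#_q\upphi^*(\wbf\otimes\Co\wbf)$ of \eqref{eq24}. This is a sewing of $\upphi\otimes\upphi^*$ along $(x',x'^*)$; here both sewing radii are $r$, so the relevant radius is $r\cdot r=r^2$, and since $\Vbb$ is $C_2$-cofinite and $0<q<r^2$, Thm.~\ref{lb12}(2) guarantees absolute convergence on $\Dbb_{r^2}$. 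All summands being nonnegative, the sum is finite, so $\sqrt q^{\,L_0}T_\upphi(\wbf)\in\mc H_{\Mbb'}$.

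Finally, for the inner-product formula, once both $\sqrt q^{\,L_0}T_\upphi(\wbf_1)$ and $\sqrt q^{\,L_0}T_\uppsi(\wbf_2)$ lie in $\mc H_{\Mbb'}$, their inner product is computed through the orthogonal $L_0$-eigenspace decomposition,
\[
\bigbk{\sqrt q^{\,L_0}T_\upphi(\wbf_1)\big|\sqrt q^{\,L_0}T_\uppsi(\wbf_2)}=\sum_{s\geq0}q^s\bigbk{(T_\upphi\wbf_1)_s\big|(T_\uppsi\wbf_2)_s},
\]
the series being absolutely convergent by Cauchy--Schwarz from the two diagonal cases already treated. Expanding both components and again invoking antiunitarity of $\Co$ (so $\bk{\Co e_{\alpha,s}|\Co e_{\beta,s}}=\bk{e_{\beta,s}|e_{\alpha,s}}=\delta_{\alpha\beta}$) collapses the double sum over $\alpha,\beta$ to $\sum_\alpha\upphi(\wbf_1\otimes e_{\alpha,s})\ovl{\uppsi(\wbf_2\otimes e_{\alpha,s})}$, reproducing exactly the right-hand side of \eqref{eq24}, i.e.\ $\upphi\#_q\uppsi^*(\wbf_1\otimes\Co\wbf_2)$. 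I expect the only genuine obstacle to be the membership/convergence step; but this is entirely subsumed by Thm.~\ref{lb12}(2), so the actual work reduces to correctly matching the sewing parameters ($r_j=\rho_j=r$, radius $r^2$) and checking that $0<q<r^2$ lies in the convergence domain.
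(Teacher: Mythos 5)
Your proposal is correct and follows essentially the same route as the paper's own proof: expand $\sqrt q^{L_0}T_\upphi(\wbf)$ in the orthonormal basis $\{\Co e_{\alpha,s}\}$, identify the square sum of coefficients with $\upphi\#_q\upphi^*(\wbf\otimes\Co\wbf)$ via \eqref{eq24} and invoke Thm.~\ref{lb12} for its convergence, then compute the inner product componentwise to recover \eqref{eq24} for the pair $(\upphi,\uppsi)$. Your extra care in matching the sewing radii ($r_j=\rho_j=r$, hence domain $\Dbb_{r^2}$) and the Cauchy--Schwarz remark are fine but not needed beyond what the paper already does.
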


\begin{proof}
We use the notations in \eqref{eq24}. Then
\begin{align*}
\sqrt q^{L_0}T_\upphi(\wbf)=\sum_{s\geq 0}\sum_{\alpha\in\fk A_s}q^{s/2}\upphi(\wbf\otimes e_{\alpha,s})\Co e_{\alpha,s}
\end{align*}
To show that $\sqrt q^{L_0}T_\upphi(\wbf)\in\mc H_{\Mbb'}$, we need to check that the square sum of the above coefficients is finite: By \eqref{eq24},
\begin{align*}
\sum_{s,\alpha}\big|q^{s/2}\upphi(\wbf\otimes e_{\alpha,s})\big|^2=\sum_{s,\alpha} q^s\upphi(\wbf\otimes e_{\alpha,s})\ovl{\upphi(\wbf\otimes e_{\alpha,s})}=\upphi\#_q\upphi^*(\wbf\otimes \Co\wbf)
\end{align*}
converges absolutely by Thm. \ref{lb12}. This proves $\sqrt q^{L_0}T_\upphi(\wbf)\in\mc H_{\Mbb'}$. We compute
\begin{align*}
&\bigbk{\sqrt q^{L_0}T_\upphi(\wbf_1)\big|\sqrt q^{L_0}T_\uppsi(\wbf_2) }  \nonumber\\
=&\sum_{s,t\geq0}\sum_{\alpha\in\fk A_s,\beta\in\fk A_t}\bigbk{q^{s/2}\upphi(\wbf_1\otimes e_{\alpha,s})\Co e_{\alpha,s} \big|q^{s/2}\uppsi(\wbf_2\otimes e_{\beta,t})\Co e_{\beta,t} }\nonumber\\
=&\sum_{s\geq 0}\sum_{\alpha\in\fk A_s} q^s\upphi(\wbf_1\otimes e_{\alpha,s})\ovl{\uppsi(\wbf_2\otimes e_{\alpha,s})}
\end{align*}
which equals \eqref{eq24}.
\end{proof}

\begin{co}
Assume $0<q<r^2,\arg q=0$ and $\upphi\in\scr T_{\fk X}^*(\Wbb_\blt\otimes\Mbb)$. Then $\upphi\#_q\upphi^*$ is reflection positive.
\end{co}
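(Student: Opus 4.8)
The plan is to deduce this immediately from Proposition \ref{lb17}, which has already carried out all the substantive work. First I would recall that reflection positivity, in the sense of Def. \ref{lb33}, requires exactly that $\upphi\#_q\upphi^*(\wbf\otimes\Co\wbf)\geq0$ for every $\wbf\in\Wbb_\blt$. So the entire task reduces to recognizing the diagonal sewing pairing as a squared Hilbert-space norm.

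The key step is to invoke the identity of Proposition \ref{lb17} in the special case $\uppsi=\upphi$ and $\wbf_1=\wbf_2=\wbf$, which yields
\begin{align*}
\upphi\#_q\upphi^*(\wbf\otimes\Co\wbf)=\bigbk{\sqrt q^{L_0}T_\upphi(\wbf)\big|\sqrt q^{L_0}T_\upphi(\wbf)}=\big\|\sqrt q^{L_0}T_\upphi(\wbf)\big\|^2.
\end{align*}
Since the same proposition guarantees that $\sqrt q^{L_0}T_\upphi(\wbf)$ is a genuine vector of the Hilbert space $\mc H_{\Mbb'}$, the right-hand side is the squared norm of that vector and is therefore $\geq0$. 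As $\wbf$ is arbitrary, this is precisely the condition in Def. \ref{lb33}, so $\upphi\#_q\upphi^*$ is reflection positive.

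I expect there to be no real obstacle remaining at this stage: the two facts that would otherwise need care — namely that $\upphi\#_q\upphi^*$ is an honest element of $\scr T_{\fk X\#_q\fk X^*}^*(\Wbb_\blt\otimes\Wbb_\blt')$ (so that Def. \ref{lb33} even applies), and that $\sqrt q^{L_0}T_\upphi(\wbf)$ lands in $\mc H_{\Mbb'}$ rather than merely in the algebraic completion — have already been secured. The former follows from Thm. \ref{lb12}(1) together with the absolute convergence furnished by $C_2$-cofiniteness in Thm. \ref{lb12}(2); the latter is exactly the content of Proposition \ref{lb17}. Thus the corollary is a purely formal consequence of writing the self-pairing of $\upphi$ under sewing as $\langle v\,|\,v\rangle$ for the single vector $v=\sqrt q^{L_0}T_\upphi(\wbf)$.
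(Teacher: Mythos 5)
Your proposal is correct and is essentially identical to the paper's own proof: the paper also applies Prop. \ref{lb17} with $\uppsi=\upphi$ and $\wbf_1=\wbf_2=\wbf$ to write $\upphi\#_q\upphi^*(\wbf\otimes\Co\wbf)=\lVert \sqrt q^{L_0}T_\upphi \wbf\rVert^2\geq 0$. Nothing further is needed.
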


\begin{proof}
Choose $\wbf\in\Wbb_\blt$. Then by Prop. \ref{lb17},
\begin{align*}
\upphi\#_q\upphi^*(\wbf\otimes\Co\wbf)=\big\lVert \sqrt q^{L_0}T_\upphi \wbf   \big\lVert ^2\geq 0.
\end{align*}
\end{proof}

\subsection{Reflection positivity is preserved by change-of-coordinates and translations}

Let us show that reflection positivity is preserved by suitable change of coordinates. Recall that $\fk X=(C;x_\blt,x';\eta_\blt,\xi)=\eqref{eq25}$. Choose a new local coordinate $\mu_i$ for each $x_i$. Let
\begin{align*}
\fk Y=(C;x_\blt,x';\mu_\blt,\xi)=(C;x_1,\dots,x_N,x';\mu_1,\dots,\mu_N,\xi)
\end{align*}
and let $\fk Y\#_q\fk Y^*$ also denote the sewing of $\fk Y$ and $\fk Y^*$ along $x',x'^*$ with respect to $\xi,\xi^*$ and parameter $q$. We assume $0<q<r^2$ and $\arg q=0$.

\begin{pp}\label{lb42}
Let $\Upomega\in\scr T_{\fk X\#_q\fk X^*}^*(\Wbb_\blt\otimes\Wbb_\blt')$ be reflection positive. Define a linear functional $\Upupsilon:\Wbb_\blt\otimes\Wbb_\blt'\rightarrow\Cbb$ by
\begin{align*}
\Upupsilon=\Upomega\circ\big(\mc U_0(\eta_1\circ\mu_1^{-1})\otimes\cdots\otimes\mc U_0(\eta_N\circ\mu_N^{-1})\otimes\mc U_0(\eta_1^*\circ(\mu_1^*)^{-1})\otimes\cdots\otimes\mc U_0(\eta_N^*\circ(\mu_N^*)^{-1}) \big)
\end{align*}
which is an element of $\scr T_{\fk Y\#_q\fk Y^*}^*(\Wbb_\blt\otimes\Wbb_\blt')$ (cf. Prop. \ref{lb6}). Assume that the arguments are chosen such that for each $i$,
\begin{align}
\arg (\eta_i\circ\mu_i^{-1})'(0)=-\arg(\eta_i^*\circ(\mu_i^*)^{-1})'(0).  \label{eq26}
\end{align}
Then $\Upupsilon$ is reflection positive.
\end{pp}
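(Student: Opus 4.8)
The plan is to reduce the reflection positivity of $\Upupsilon$ to that of $\Upomega$ by showing that the two families of change-of-coordinate operators in the definition of $\Upupsilon$ are intertwined by the antiunitary conjugation $\Co$. Write $\rho_i=\eta_i\circ\mu_i^{-1}\in\Gbb$, and set $A_1=\mc U_0(\rho_1)\otimes\cdots\otimes\mc U_0(\rho_N)$ acting on $\Wbb_\blt$ and $A_2=\mc U_0(\eta_1^*\circ(\mu_1^*)^{-1})\otimes\cdots\otimes\mc U_0(\eta_N^*\circ(\mu_N^*)^{-1})$ acting on $\Wbb_\blt'$, so that $\Upupsilon=\Upomega\circ(A_1\otimes A_2)$. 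Since each $\mc U_0(\rho_i)$ scales $L_0$-eigenspaces and lowers the conformal weight via $L_n$ ($n>0$), it maps $\Wbb_i$ into $\Wbb_i$, hence $A_1$ maps $\Wbb_\blt$ into $\Wbb_\blt$. Granting the operator identity $A_2\,\Co=\Co\,A_1$ (as maps $\Wbb_\blt\to\Wbb_\blt'$), for any $\wbf\in\Wbb_\blt$ we obtain
\[
\Upupsilon(\wbf\otimes\Co\wbf)=\Upomega(A_1\wbf\otimes A_2\Co\wbf)=\Upomega\big(A_1\wbf\otimes\Co(A_1\wbf)\big)\geq0,
\]
by the reflection positivity of $\Upomega$ applied to the vector $A_1\wbf\in\Wbb_\blt$, which is exactly the assertion.

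So everything reduces to the per-factor identity $\mc U_0(\eta_i^*\circ(\mu_i^*)^{-1})\,\Co_{\Wbb_i}=\Co_{\Wbb_i}\,\mc U_0(\rho_i)$. First I would identify the transition function on the conjugate side: unwinding $f^*(x^*)=\ovl{f(x)}$ gives $\eta_i^*\circ(\mu_i^*)^{-1}=\ovl{\rho_i}$, where $\ovl{\rho_i}(t):=\ovl{\rho_i(\ovl t)}$ is the series obtained from $\rho_i$ by conjugating all coefficients; in particular $\ovl{\rho_i}{}'(0)=\ovl{\rho_i'(0)}$. Since the coefficients of $\exp(\sum_{n>0}c_nz^{n+1}\partial_z)z$ are polynomials in the $c_n$ with rational coefficients, conjugation replaces each $c_n$ by $\ovl{c_n}$, so that $\mc U_0(\rho_i)=\rho_i'(0)^{L_0}\exp(\sum_{n>0}c_nL_n)$ has conjugate partner $\mc U_0(\ovl{\rho_i})=\ovl{\rho_i'(0)}^{\,L_0}\exp(\sum_{n>0}\ovl{c_n}L_n)$.

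The crucial input is that $\Co$ intertwines \emph{all} Virasoro modes: $L_n\Co=\Co L_n$ for every $n\in\Zbb$ (left side on $\Wbb_i'$, right side on $\Wbb_i$). This follows from unitarity: from \eqref{eq13} together with $L_1\cbf=0$, $L_0\cbf=2\cbf$ one gets the contragredient transpose relation $\bk{w_1,Y_{\Wbb'}(\cbf)_{n+1}w'}=\bk{L_{-n}w_1,w'}$, and combining it with the adjoint relation $L_{-n}^\dagger=L_n$ of Def.\ \ref{lb78} gives $\bk{w_1,L_n^{\Wbb'}\Co w_2}=\bk{L_{-n}w_1|w_2}=\bk{w_1|L_nw_2}=\bk{w_1,\Co L_nw_2}$ for all $w_1$, hence $L_n^{\Wbb'}\Co=\Co L_n$. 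I would then push $\Co$ through $\mc U_0(\rho_i)$: antilinearity conjugates the exponential coefficients, $\Co\exp(\sum_{n>0}c_nL_n)=\exp(\sum_{n>0}\ovl{c_n}L_n)\Co$, while on an $L_0$-eigenvector of (real, by Thm.\ \ref{lb76}) eigenvalue $s$ one has $\Co\,\rho_i'(0)^{L_0}=\Co\,e^{s\log\rho_i'(0)}=e^{s\,\ovl{\log\rho_i'(0)}}\Co=\ovl{\rho_i'(0)}^{\,L_0}\Co$, provided the branch of $\ovl{\rho_i'(0)}^{\,L_0}$ is taken with $\arg\ovl{\rho_i'(0)}=-\arg\rho_i'(0)$. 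Composing the two identities yields $\Co\,\mc U_0(\rho_i)=\mc U_0(\ovl{\rho_i})\,\Co$, i.e.\ the required $A_2\Co=\Co A_1$ after tensoring over $i$.

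The main obstacle is the bookkeeping of arguments in the last step. The operator $\rho_i'(0)^{L_0}$ is genuinely multivalued, and the equality $\Co\,\rho_i'(0)^{L_0}=\ovl{\rho_i'(0)}^{\,L_0}\Co$ holds only for the compatible branch choice $\arg\ovl{\rho_i'(0)}=-\arg\rho_i'(0)$, which is precisely hypothesis \eqref{eq26}. I would therefore track these branches carefully and verify that \eqref{eq26} is exactly the compatibility forced by the antilinearity of $\Co$ acting on the scalar $\rho_i'(0)^s$. The remaining ingredients — identifying $\eta_i^*\circ(\mu_i^*)^{-1}$ with $\ovl{\rho_i}$, the intertwining of $\Co$ with every $L_n$, and the invariance of $\Wbb_\blt$ under $A_1$ — are routine once the argument conventions are fixed.
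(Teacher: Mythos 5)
Your proposal is correct and takes essentially the same route as the paper's own proof: the paper likewise reduces everything to the single intertwining identity $\Co\,\mc U_0(\eta_i\circ\mu_i^{-1})=\mc U_0(\eta_i^*\circ(\mu_i^*)^{-1})\,\Co$ (its \eqref{eq47}), obtained by observing that passing to the conjugate coordinates conjugates the coefficients of the transition series and that the argument condition \eqref{eq26} yields $\Co\,a_0^{L_0}=\ovl{a_0}^{L_0}\,\Co$, after which reflection positivity of $\Upupsilon$ follows immediately from that of $\Upomega$. The extra details you supply (that $\Co$ commutes with every $L_n$ by unitarity, and the careful branch bookkeeping for the multivalued $\rho_i'(0)^{L_0}$) are exactly the steps the paper leaves implicit.
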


Note that
\begin{align}
(\eta_i\circ\mu_i^{-1})'(0)=\ovl{(\eta_i^*\circ(\mu_i^*)^{-1})'(0)}. \label{eq46}
\end{align}
(Indeed, write $\eta_i=\sum_{n\in\Zbb_+} a_n\mu_i^n$ where $a_n\in\Cbb$. Then $\eta_i^*=\sum_n \ovl{a_n}(\mu_i^*)^n$. Then the above relation says $a_1=\ovl{\ovl{a_1}}$.) Therefore, the arguments can be chosen to satisfy \eqref{eq26}.

\begin{proof}
The fact that
\begin{align*}
\eta_i=\sum_{n\in\Zbb_+} a_n\mu_i^n\qquad\Longrightarrow \qquad\eta_i^*=\sum_n \ovl{a_n}(\mu_i^*)^n
\end{align*}
implies that
\begin{align*}
\mc U_0(\eta_i\circ\mu_i^{-1})=a_0^{L_0}\exp\Big(\sum_{n\geq 1 }c_nL_n\Big)\quad\Rightarrow \quad \mc U_0(\eta_i^*\circ(\mu_i^*)^{-1})=\ovl{ a_0}^{L_0}\exp\Big(\sum_{n\geq 1 }\ovl{c_n}L_n\Big)
\end{align*}
Therefore, by \eqref{eq26}, we have $\Co\cdot a_0^{L_0}=\ovl{a_0}^{L_0}\cdot \Co$, and hence
\begin{align}\label{eq47}
\Co\cdot  \mc U_0(\eta_i\circ\mu_i^{-1})=\mc U_0(\eta_i^*\circ(\mu_i^*)^{-1})\cdot\Co
\end{align}
This relation, together with the definition of $\Upupsilon$, immediately shows that $\Upupsilon$ is reflection positive.
\end{proof}

Reflection positivity is also preserved by translations:

\begin{pp}\label{lb44}
For each $1\leq i\leq N$, choose a neighborhood $U_i$ of $x_i$ on which $\eta_i$ is defined and univalent, and choose $r_i>0$ such that
\begin{align*}
\eta_i(U_i)\supset\Dbb_{r_i}
\end{align*} 
Assume that $U_1,\dots,U_N$ are mutually disjoint. Choose a reflection positive $\Upomega\in\scr T_{\fk X\#_q\fk X^*}(\Wbb_\blt\otimes\Wbb_\blt')$. Choose $\tau_i\in\Dbb_{r_i}$ for each $i$. Let
\begin{gather*}
\fk X_{\tau_\blt}=(C;\eta_1^{-1}(\tau_1),\dots,\eta_N^{-1}(\tau_N);\eta_1-\tau_1,\dots,\eta_N-\tau_N)\\
\fk X_{\ovl{\tau_\blt}}^*=(\fk X_{\tau_\blt})^*=(C^*;(\eta_1^*)^{-1}(\ovl{\tau_1}),\dots,(\eta_N^*)^{-1}(\ovl{\tau_N});\eta_1^*-\ovl{\tau_1},\eta_N^*-\ovl{\tau_N})
\end{gather*}
Consider the linear functional $\Upomega_{\tau_\blt}:\Wbb_\blt\otimes\Wbb_\blt'\rightarrow\Cbb$ defined by
\begin{align}  \label{eq36}
\Upomega_{\tau_\blt}(w_\blt\otimes w'_\blt)=\Upomega\big(e^{\tau_1 L_{-1}}w_1\otimes\cdots\otimes e^{\tau_NL_{-1}}w_N\otimes e^{\ovl {\tau_1} L_{-1}}w_1'\otimes\cdots\otimes e^{\ovl {\tau_N}L_{-1}}w_N' \big)
\end{align}
which (by Cor. \ref{lb25}) converges absolutely (as a power series of $\tau_1,\dots,\tau_N,\ovl{\tau_1},\dots,\ovl{\tau_N}$) and
\begin{align*}
\Upomega_{\tau_\blt}\in\scr T_{\fk X_{\tau_\blt}\#_q \fk X^*_{\ovl{\tau_\blt}}}^*(\Wbb_\blt\otimes\Wbb_\blt')
\end{align*}
Then $\Upomega_{\tau_\blt}$ is reflection positive.
\end{pp}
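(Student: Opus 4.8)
The plan is to derive reflection positivity of $\Upomega_{\tau_\blt}$ from that of $\Upomega$, the only genuinely new input being that the conjugation $\Co$ intertwines the translation operators appearing in \eqref{eq36}. Fix $\wbf\in\Wbb_\blt$; by Def.~\ref{lb33} it suffices to prove $\Upomega_{\tau_\blt}(\wbf\otimes\Co\wbf)\geq 0$. First I would record the relation $\Co\,e^{\tau L_{-1}}=e^{\ovl\tau L_{-1}}\,\Co$ on each unitary module $\Wbb_i$: since a unitary module is M\"obius unitary, the remark preceding Thm.~\ref{lb75} gives $L_{-1}\Co=\Co L_{-1}$, and applying the antilinear map $\Co$ termwise to the exponential series converts $\tau$ into $\ovl\tau$. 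Writing $E_{\tau_\blt}=e^{\tau_1 L_{-1}}\otimes\cdots\otimes e^{\tau_N L_{-1}}$ for the operator on $\Wbb_\blt$ and $F_{\ovl{\tau_\blt}}=e^{\ovl{\tau_1} L_{-1}}\otimes\cdots\otimes e^{\ovl{\tau_N} L_{-1}}$ for its counterpart on $\Wbb_\blt'$, the relation becomes $\Co\,E_{\tau_\blt}=F_{\ovl{\tau_\blt}}\,\Co$ as maps $\Wbb_\blt\to\Wbb_\blt'$. Hence \eqref{eq36}, evaluated on $\wbf\otimes\Co\wbf$, reads (as a convergent series)
\begin{align*}
\Upomega_{\tau_\blt}(\wbf\otimes\Co\wbf)=\Upomega\big(E_{\tau_\blt}\wbf\otimes F_{\ovl{\tau_\blt}}\Co\wbf\big)=\Upomega\big(E_{\tau_\blt}\wbf\otimes\Co\,E_{\tau_\blt}\wbf\big).
\end{align*}
Were $E_{\tau_\blt}\wbf$ an honest vector of $\Wbb_\blt$ this would already be nonnegative; the difficulty is that it lies only in the algebraic completion, so a limiting argument is needed.

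For $k\in\Nbb$ let $E_{\tau_\blt}^{(k)}$ and $F_{\ovl{\tau_\blt}}^{(k)}$ be obtained from $E_{\tau_\blt}$ and $F_{\ovl{\tau_\blt}}$ by truncating every exponential series at order $k$, so that $\vbf_k:=E_{\tau_\blt}^{(k)}\wbf$ is a genuine finite vector of $\Wbb_\blt$. The same termwise computation shows that the truncations also intertwine $\Co$, namely $\Co\,E_{\tau_\blt}^{(k)}=F_{\ovl{\tau_\blt}}^{(k)}\,\Co$, so that $\Co\vbf_k=F_{\ovl{\tau_\blt}}^{(k)}\Co\wbf$. Applying the reflection positivity of $\Upomega$ to the finite vector $\vbf_k$ gives
\begin{align*}
0\leq\Upomega(\vbf_k\otimes\Co\vbf_k)=\Upomega\big(E_{\tau_\blt}^{(k)}\wbf\otimes F_{\ovl{\tau_\blt}}^{(k)}\Co\wbf\big)
\end{align*}
for every $k$, and the right-hand side is precisely the partial sum of the power series defining $\Upomega_{\tau_\blt}(\wbf\otimes\Co\wbf)$ taken over the box of multi-indices with all entries $\leq k$.

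It remains to let $k\to\infty$. Since $\Upomega_{\tau_\blt}$ is obtained by translating all $2N$ marked points of $\fk X\#_q\fk X^*$, Cor.~\ref{lb25} (applied with the $2N$ independent translation parameters $\tau_1,\dots,\tau_N,\ovl{\tau_1},\dots,\ovl{\tau_N}$, then specialized to the conjugate slice) shows that this power series converges \emph{absolutely}. Absolute convergence permits summation over the exhausting sequence of boxes above, so $\Upomega(\vbf_k\otimes\Co\vbf_k)\to\Upomega_{\tau_\blt}(\wbf\otimes\Co\wbf)$; as each term is nonnegative, so is the limit. I expect the main obstacle to be exactly this last interchange of the positivity estimate with the infinite summation, which is legitimate only because the sewing–translation series is absolutely rather than merely conditionally convergent — the content of Cor.~\ref{lb25}, ultimately Thm.~\ref{lb12}.
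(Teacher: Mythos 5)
Your proposal is correct and follows essentially the same route as the paper's proof: truncate the exponentials (your $E_{\tau_\blt}^{(k)}$ is the paper's $R_n$), apply reflection positivity of $\Upomega$ to the resulting finite vectors, and pass to the limit using the absolute convergence guaranteed by Cor. \ref{lb25}/Thm. \ref{lb12}. The only difference is cosmetic: you spell out the intertwining relation $\Co\, e^{\tau L_{-1}}=e^{\ovl\tau L_{-1}}\Co$ (via $L_{-1}\Co=\Co L_{-1}$ on unitary modules), which the paper uses implicitly when writing $\Upomega(R_n\wbf\otimes\Co R_n\wbf)$.
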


\begin{proof}
For each $n\in\Nbb$, define a linear map $R_n$ on $\Wbb_\blt$ by
\begin{align*}
R_n w_\blt=\sum_{k_1,\dots k_N=0}^n\frac{(\tau_1L_{-1})^{k_1}w_1}{k_1!}\otimes\cdots\otimes \frac{(\tau_NL_{-1})^{k_N}w_N}{k_N!}
\end{align*}
Choose any $\wbf\in\Wbb_\blt$. Then $\Upomega(R_n\wbf\otimes \Co R_n\wbf)\geq 0$ by the reflection positivity of $\Upomega$. Since \eqref{eq36} converges absolutely, we have
\begin{align*}
\Upomega_{\tau_\blt}(\wbf\otimes\Co\wbf)=\lim_{n\rightarrow\infty}\Upomega(R_n\wbf\otimes \Co R_n\wbf)
\end{align*}
which is $\geq 0$.
\end{proof}

\subsection{Relationship to the positivity of endomorphisms}

We still assume the setting at the beginning of this section. We assume $0<q<r^2$ and $\arg q=0$.

Let $A\in\End_\Vbb(\Mbb')$, namely, $A$ is a linear map on $\Mbb'$ intertwining the action of $\Vbb$. In particular, $[L_0,A]=0$ implies that $A$ preserves the $L_0$-grading of $\Mbb'$. Thus, we can extend $A$ to a linear map on the algebraic completion $\ovl{\Mbb'}=\Mbb^*$.

\begin{rem}\label{lb19}
Assume for simplicity that $\Mbb$ and (hence) $\Mbb'$ are semisimple. Then $A:\Mbb^*\rightarrow\Mbb^*$ restricts to a bounded linear map on the Hilbert space $\mc H_{\Mbb'}$. To see this, we write $\Mbb$ and $\Mbb'$ as finite orthogonal direct sums of irreducible unitary submodules
\begin{align} \label{eq28}
\Mbb=\bigoplus^\perp_n \Xbb_n\otimes_\Cbb \mc I_n\qquad \Mbb'=\bigoplus^\perp_n \Xbb_n'\otimes_\Cbb \mc I_n^*
\end{align}
Here $\Xbb_n$ is irreducible and unitary,  each $\mc I_n$ is a finite-dimensional Hilbert space, and $\Xbb_m\nsimeq\Xbb_n$ if $m\neq n$. Then $A$ on $\Mbb'$ is of the form
\begin{align}
A=\bigoplus_n \id\otimes A_n  \label{eq27}
\end{align}
where $A_n:\mc I_n^*\rightarrow \mc I_n^*$ is linear. So $A$ is bounded and extends uniquely to a bounded linear map on
\begin{align*}
\mc H_{\Mbb'}=\bigoplus_n\mc H_{\Xbb_n'}\otimes \mc I_n^*.
\end{align*}
\end{rem}

Assume $\Mbb$ to be semisimple. Choose $A\in\End_\Vbb(\Mbb')$ and $\upphi\in\scr T_{\fk X}^*(\Wbb_\blt\otimes\Mbb)$. Define $A\upphi\in\scr T_{\fk X}^*(\Wbb_\blt\otimes\Mbb)$ to be
\begin{align}
A\upphi=\upphi\circ(\id_{\Wbb_\blt}\otimes A^\tr). \label{eq61}
\end{align}
where $A^\tr:\Mbb\rightarrow\Mbb$ is the transpose of $A$. Then
\begin{align} \label{eq31}
T_{A\upphi}=A\cdot T_\upphi
\end{align} 
Let $0<q<r^2$ with $\arg q=0$. From Prop. \ref{lb17}, it is clear that if $A$ is \textbf{positive} (namely, if $\bk{A\mbf m'|\mbf m'}\geq 0$ for all $\mbf m'\in\Mbb'$, equivalently, for all $\mbf m'\in\mc H_{\Mbb'}$), then $A\upphi\#_q\upphi^*$ is reflection positive. We record this result:
\begin{align}\label{eq30}
A\geq 0\qquad\Longrightarrow\qquad A\upphi\#_q\upphi^*\text{ is reflection positive}
\end{align}
Our goal is to show the opposite direction under a natural density condition.

\begin{df}
Let $\Wbb$ be a $\Vbb$-module, and let $\Xbb$ be a subspace of the algebraic completion $\ovl\Wbb$. We say that $\Xbb$ is \textbf{$\sigma(\ovl\Wbb,\Wbb')$-dense} (or \textbf{dense in the $\sigma(\ovl\Wbb,\Wbb')$-topology}) if the only vector $w'\in\Wbb'$ that is orthogonal to $\Xbb$ (namely, $\bk{w',w}=0$ for all $w\in \Xbb$) is $0$.
\end{df}

Recall the notation in \eqref{eq25}. The following density result is close in spirit to \cite[Lem. 3.6]{Ten19c}.

\begin{pp}\label{lb18}
Let $\upphi\in\scr T_{\fk X}^*(\Wbb_\blt\otimes\Mbb)$, and assume that the subspace $T_\upphi(\Wbb_\blt)$ (the range of $T_\upphi$) of $\ovl{\Mbb'}$ is $\sigma(\ovl{\Mbb'},\Mbb)$-dense. Assume that $C=\Pbb^1$ and the local coordinate $\xi$ is M\"obius. Then $\sqrt q^{L_0}T_\upphi(\Wbb_\blt)$ is dense in $\mc H_{\Mbb'}$ (under the norm topology). 
\end{pp}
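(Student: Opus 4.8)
I would prove the equivalent statement that the orthogonal complement $K:=\big(\sqrt q^{\,L_0}T_\upphi(\Wbb_\blt)\big)^\perp$ in $\mc H_{\Mbb'}$ is $0$. Set $\scr R:=T_\upphi(\Wbb_\blt)\subseteq\ovl{\Mbb'}$ and $\scr S:=\sqrt q^{\,L_0}\scr R$, which lies in $\mc H_{\Mbb'}$ by Prop. \ref{lb17}. The first and easiest step uses the density hypothesis to rule out finite-energy vectors in $K$: for a finite vector $\zeta=\sum_{s,\alpha}\zeta_{\alpha,s}\Co e_{\alpha,s}\in\Mbb'$ (notation as in Prop. \ref{lb17}), self-adjointness of $\sqrt q^{\,L_0}$ turns $\bk{\sqrt q^{\,L_0}T_\upphi\wbf|\zeta}$ into the finite sum $\upphi(\wbf\otimes m_\zeta)=\bk{T_\upphi\wbf,m_\zeta}$, where $m_\zeta:=\sum_{s,\alpha}q^{s/2}\ovl{\zeta_{\alpha,s}}\,e_{\alpha,s}\in\Mbb$ and $m_\zeta=0\iff\zeta=0$. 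Hence $\zeta\in K$ gives $\bk{T_\upphi\wbf,m_\zeta}=0$ for all $\wbf$, and the $\sigma(\ovl{\Mbb'},\Mbb)$-density of $\scr R$ forces $m_\zeta=0$, i.e. $\zeta=0$. Thus $K\cap\Mbb'=0$, and this is the only place the density hypothesis is used.

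The remaining task is to show that $K\neq0$ would force a nonzero finite vector into $K$. For this I would first establish that $\scr R$ is invariant under the $\Vbb$-action on $\ovl{\Mbb'}$, in particular under $L_0$. This is where $C=\Pbb^1$ enters: for each $v\in\Vbb$ and $k\in\Zbb$ one constructs a global section $\sigma\in H^0(\Pbb^1,\scr V_{\Pbb^1}\otimes\omega_{\Pbb^1}(\star x_\blt+\star x'))$ whose action on the $\Mbb$-slot at $x'$ realizes the mode $Y_\Mbb(v)_k$, and the defining vanishing $\upphi(\sigma\cdot(w_\blt\otimes m))=0$ of the conformal block then expresses the transpose of this mode applied to $T_\upphi\wbf$ as a finite sum of $T_\upphi$ evaluated at the vectors $\sigma\cdot_i w_i\in\Wbb_i$, hence as an element of $\scr R$. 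As $v,k$ vary these transposed modes span the full $\Vbb$-action on $\ovl{\Mbb'}$, which contains $L_0$. Since $\sqrt q^{\,L_0}$ commutes with $L_0$, it follows that $L_0\scr S\subseteq\scr S$.

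Next I would promote this algebraic $L_0$-invariance of $\scr S$ to invariance of $\ovl{\scr S}$ under the unitary group $e^{\im tL_0}$. The key is that every $r=\sqrt q^{\,L_0}T_\upphi\wbf\in\scr S$ is an analytic vector for $L_0$: putting $a_s=\|(T_\upphi\wbf)_{(s)}\|^2$, the absolute convergence of $\Std_{q'}\upphi(\wbf)$ for all $q'<r^2$ (Thm. \ref{lb12}, via $C_2$-cofiniteness) gives $\sum_s q'^{\,s}a_s<\infty$, whence $\|L_0^k r\|^2=\sum_s s^{2k}q^s a_s\leq(Ck)^{2k}\sum_s q'^{\,s}a_s$ for $q<q'<r^2$, so $\sum_k\frac{t^k}{k!}\|L_0^k r\|<\infty$ for small $t$. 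Thus $e^{\im tL_0}r=\sum_k\frac{(\im t)^k}{k!}L_0^k r\in\ovl{\scr S}$ for small $t$ (using $L_0\scr S\subseteq\scr S$), and by the one-parameter group law for all $t$. Hence $\ovl{\scr S}$, and therefore $K$, is reducing for $L_0$. Since $L_0$ has pure point spectrum with finite-dimensional eigenspaces $\Mbb'_{(s)}$ (Thm. \ref{lb76} together with $\dim\Mbb'_{(s)}<+\infty$), $K=\ovl{\bigoplus_s(K\cap\Mbb'_{(s)})}$; were $K\neq0$, some $K\cap\Mbb'_{(s)}\neq0$ would supply a nonzero finite vector in $K$, contradicting the first step. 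Therefore $K=0$ and $\sqrt q^{\,L_0}T_\upphi(\Wbb_\blt)$ is dense.

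The hard part is the passage from the $\sigma$-weak density hypothesis to norm density, which is carried by the second and third steps: one must know that $\scr R$ is $\Vbb$-invariant (hence the genus-$0$ Ward identity and the availability of enough global sections on $\Pbb^1$) and that the elements of $\scr S$ are analytic for $L_0$, so that passing to the closure produces a rotation-invariant subspace. The analytic-vector estimate, in which the inequality $q<r^2$ is used essentially, and the careful handling of the unbounded operator $L_0$ when exponentiating, are the delicate points; the Möbius assumption on $\xi$ enters through Prop. \ref{lb17} and the convergence radius $r^2$.
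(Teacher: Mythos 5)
Your proposal is correct and takes essentially the same approach as the paper: $L_0$-invariance of $T_\upphi(\Wbb_\blt)$ via the genus-$0$ Jacobi identity (using $C=\Pbb^1$ and the M\"obius coordinate), analyticity in the $L_0$-direction furnished by $q<r^2$ and the sewing-convergence theorem, and spectral projection onto the finite-dimensional $L_0$-eigenspaces combined with the $\sigma(\ovl{\Mbb'},\Mbb)$-density hypothesis. The paper merely packages the last two ingredients differently --- a holomorphic function on a strip whose derivatives all vanish, and eigenprojections realized as Fourier integrals of $e^{\im t L_0}$, instead of your analytic-vector series and reducing-subspace decomposition --- so the two proofs coincide in substance.
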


Note that if $T_\upphi(\Wbb_\blt)$ is $\sigma(\ovl{\Mbb'},\Mbb)$-dense in $\ovl{\Mbb'}$, then so is $\sqrt q^{L_0}T_\upphi(\Wbb_\blt)$.

\begin{proof}
Let $\mc X=\sqrt q^{L_0}T_\upphi(\Wbb_\blt)$ which is a subspace of $\mc H_{\Mbb'}$. Let us show that $\mc X^{\cl}=\mc H_{\Mbb'}$. Choose any $\psi\in \mc H_{\Mbb'}$ orthogonal to $\mc X$. We need to show that $\psi=0$. Let $L_0$ act on $\mc H_{\Mbb'}$ as an unbounded self-adjoint operator. 

Step 1. We claim that for each $t\in\Rbb$, $e^{\im tL_0}\psi$ is orthogonal to $\mc X$. Suppose this is true. By a standard argument in spectral theory, $\int_\Rbb f(x)e^{\im xL_0}\psi dx$ is orthogonal to $\mc X$ for all $f\in C_c^\infty(\Rbb)$, and hence the image of $\psi$ under any spectral projection is orthogonal to $\mc X$. This implies $\psi=0$. 

Let us provide more details for the convenience of readers unfamiliar with spectral theory. For each eigenvalue $s\geq0$ of $L_0$, let $P_s$ be the projection of $\mc H_{\Mbb'}$ onto $\Mbb'_{(s)}$. Then it suffices to prove $P_s\psi=0$ for each $s$. Note that $P_s=g(L_0)$ where $g:\Rbb\rightarrow\Rbb$ is a smooth function with compact support such that $g(s)=1$ and $g(t)=0$ if $t\neq s$ is an eigenvalue of $L_0$.  Let $f:\Rbb\rightarrow\Cbb$ be the rapidly decreasing function such that $\int_\Rbb f(x)e^{\im xy}dx=g(y)$ (i.e. the inverse Fourier transform of $g$). Then (for instance) by evaluating with any eigenvalue of $L_0$, one checks
\begin{align*}
P_s\psi=\int_\Rbb f(x)e^{\im x L_0} \psi dx
\end{align*}
where the RHS is norm-convergent as a vector-valued improper Riemann integral. So $P_s\psi$ is orthogonal to $\mc X$. This proves that $P_s\psi=0$ because $\mc X$ is $\sigma(\ovl{\Mbb'},\Mbb)$-dense in $\ovl{\Mbb'}$.

Step 2. Let us prove the claim in step 1 that $e^{\im t L_0}\psi$ is orthogonal to $\mc X$, i.e. orthogonal to $\sqrt q^{L_0}T_\upphi(\wbf)$ for all $\wbf\in\Wbb_\blt$. Note that if $0<\lambda<r/\sqrt q$ (note that $r/\sqrt q>1$) and $\arg\lambda=0$, then $\lambda^{L_0}\sqrt q^{L_0}T_\upphi(\wbf)$ belongs to the Hilbert space $\mc H_{\Mbb'}$ by Prop. \ref{lb17}. Namely, $\sqrt q^{L_0}T_\upphi(\wbf)$ belongs to the domain of the self-adjoint operator $\lambda ^{L_0}$.  Therefore $\tau\mapsto e^{\im\tau L_0}\sqrt q^{L_0}T_\upphi(\wbf)$ is a holomorphic function on a strip $\{\tau\in\Cbb:-\varepsilon<\Imag(\tau)<\varepsilon\}$ for some $\varepsilon>0$ (cf. for instance \cite[Chapter VI, Lem. 2.3]{Tak}).

Suppose we can prove that $T_\upphi(\Wbb_\blt)$ is $L_0$-invariant. Then for each $n\in\Nbb$, $\psi$ is orthogonal to $\sqrt q^{L_0}L_0^nT_\upphi(\wbf)$. Therefore
\begin{align*}
\partial_\tau^n \bigbk{e^{\im\tau L_0}\sqrt q^{L_0}T_\upphi(\wbf)\big|\psi}=\im^n\bigbk{e^{\im\tau L_0}\sqrt q^{L_0}L_0^nT_\upphi(\wbf)\big|\psi}
\end{align*}
equals zero when $\tau=1$. Therefore, since the function $\bk{e^{\im\tau L_0}\sqrt q^{L_0}T_\upphi(\wbf)|\psi}$ of $\tau$ is holomorphic on the strip, this function is constant zero. So $\bk{\sqrt q^{L_0}T_\upphi(\wbf)|e^{-\im tL_0}\psi}=0$ for all $t\in\Rbb$. This finishes the proof.

Step 3. It remains to prove that $T_\upphi(\Wbb_\blt)$ is $L_0$-invariant. Here we use the fact that $C=\Pbb^1$ and that $\xi$ is M\"obius. After a biholomorphic transformation of $\fk X$, we may assume that $x'=\infty$ and $\xi=1/\zeta$ where $\zeta$ is the standard coordinate of $\Cbb$. Namely,
\begin{align*}
\fk X=(\Pbb^1;x_1,\dots,x_N,\infty;\eta_1,\dots,\eta_N,1/\zeta).
\end{align*}
By Prop. \ref{lb6}, we may change each local coordinate $\eta_i$ (where $1\leq i\leq N$) to $\zeta-x_i$. Then the invariance of $T_\upphi(\Wbb_\blt)$ under $L_0$ follows immediately from
\begin{align}\label{eq29}
L_0T_\upphi(w_1\otimes\cdots\otimes w_N)=\sum_{i=1}^N T_\upphi\big(w_1\otimes\cdots\otimes (L_0+x_i L_{-1})w_i\otimes\cdots\otimes w_N\big),
\end{align}
a special case of the Jacobi identity \eqref{eq17}.
\end{proof}

\begin{rem}
In Prop. \ref{lb18}, if we do not assume $C=\Pbb^1$ or that $\xi$ is M\"obius,  proving norm-density would be more difficult due to the absence of a relation like  \eqref{eq29}.
\end{rem}

\begin{thm}\label{lb45}
Let $\upphi\in\scr T_{\fk X}^*(\Wbb_\blt\otimes\Mbb)$, and assume that the subspace $T_\upphi(\Wbb_\blt)$ of $\ovl{\Mbb'}$ is $\sigma(\ovl{\Mbb'},\Mbb)$-dense. Assume that $C=\Pbb^1$ and that the local coordinate $\xi$ is M\"obius. Assume that $\Mbb$ is semisimple, and choose $A\in\End_\Vbb(\Mbb')$. Then the following are equivalent.
\begin{enumerate}[label=(\arabic*)]
\item $A\geq 0$, namely, $\bk{A\mbf m'|\mbf m'}\geq0$ for all $\mbf m'\in\Mbb'$ (equivalently, for all $\mbf m'\in\mc H_{\Mbb'}$).
\item $A\upphi\#\upphi^*$ is reflection positive.
\end{enumerate}
\end{thm}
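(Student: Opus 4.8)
Since the implication $(1)\Rightarrow(2)$ is already recorded in \eqref{eq30}, the plan is to prove only the converse $(2)\Rightarrow(1)$; the essential ingredients are the Hilbert-space formula of Prop. \ref{lb17}, the relation \eqref{eq31}, the norm-density of Prop. \ref{lb18}, and the boundedness of $A$ from Rem. \ref{lb19}.

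First I would rewrite the reflection-positivity pairing as an inner product involving $A$. Applying Prop. \ref{lb17} with the first conformal block replaced by $A\upphi$ and the second kept as $\upphi$, and then using $T_{A\upphi}=A\,T_\upphi$ from \eqref{eq31}, one obtains for every $\wbf\in\Wbb_\blt$
\begin{align*}
A\upphi\#_q\upphi^*(\wbf\otimes\Co\wbf)=\bigbk{\sqrt q^{L_0}A\,T_\upphi(\wbf)\big|\sqrt q^{L_0}T_\upphi(\wbf)}.
\end{align*}
Because $A\in\End_\Vbb(\Mbb')$ satisfies $[L_0,A]=0$, it preserves every $L_0$-eigenspace, hence commutes with $\sqrt q^{L_0}$ (which is a scalar on each eigenspace); moving $A$ to the front gives
\begin{align*}
A\upphi\#_q\upphi^*(\wbf\otimes\Co\wbf)=\bigbk{A\,\sqrt q^{L_0}T_\upphi(\wbf)\big|\sqrt q^{L_0}T_\upphi(\wbf)}.
\end{align*}

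With this identity the theorem becomes a statement purely about the bounded operator $A$ on $\mc H_{\Mbb'}$. Condition (2) says exactly that $\bk{A\vec\xi|\vec\xi}\geq0$ for every $\vec\xi$ in the range $\sqrt q^{L_0}T_\upphi(\Wbb_\blt)$, whereas condition (1) is the same inequality for all $\vec\xi\in\mc H_{\Mbb'}$. Thus the task reduces to propagating nonnegativity of the quadratic form $\vec\xi\mapsto\bk{A\vec\xi|\vec\xi}$ from this subspace to its closure.

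Finally I would invoke density and continuity. Under the standing hypotheses ($C=\Pbb^1$, $\xi$ M\"obius, and $T_\upphi(\Wbb_\blt)$ $\sigma(\ovl{\Mbb'},\Mbb)$-dense) Prop. \ref{lb18} shows $\sqrt q^{L_0}T_\upphi(\Wbb_\blt)$ is norm-dense in $\mc H_{\Mbb'}$; since $\Mbb$ is semisimple, Rem. \ref{lb19} makes $A$ bounded, so $\vec\xi\mapsto\bk{A\vec\xi|\vec\xi}$ is norm-continuous. A continuous function that is nonnegative on a dense subspace is nonnegative on the whole space, giving $A\geq0$. The only substantial input is Prop. \ref{lb18}; granting it, the remaining work is the short density-and-continuity argument, so the sole point requiring care is verifying that $A$ genuinely commutes with $\sqrt q^{L_0}$ at the level of the unbounded operator.
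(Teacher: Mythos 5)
Your proposal is correct and follows essentially the same route as the paper's proof: the identity $A\upphi\#_q\upphi^*(\wbf\otimes\Co\wbf)=\bk{A\sqrt q^{L_0}T_\upphi\wbf\,|\,\sqrt q^{L_0}T_\upphi\wbf}$ via Prop. \ref{lb17} and $T_{A\upphi}=AT_\upphi$, followed by norm-density (Prop. \ref{lb18}) and boundedness of $A$ (Rem. \ref{lb19}). The commutation of $A$ with $\sqrt q^{L_0}$ that you flag as the delicate point is in fact immediate at the level of the algebraic completion $\ovl{\Mbb'}$, since $A$ preserves each $L_0$-eigenspace and $\sqrt q^{L_0}$ acts there as the scalar $q^{s/2}$, which is all the argument requires.
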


Note that $A\geq 0$ is equivalent to that $A_n\geq 0$ for each $A_n$ in \eqref{eq27}.

\begin{proof}
(1)$\Rightarrow$(2) was explained in \eqref{eq27}. Assume (2), then by Prop. \ref{lb17} and $T_{A\upphi}=AT_\upphi$, for each $\wbf\in\Wbb_\blt$, we have
\begin{align*}
\bk{A\sqrt q^{L_0}T_\upphi \wbf|\sqrt q^{L_0}T_\upphi\wbf}=\bk{\sqrt q^{L_0}T_{A\upphi}\wbf|\sqrt q^{L_0}T_\upphi\wbf}=A\upphi\#_q\upphi^*(\wbf\otimes\Co\wbf),
\end{align*}
which is $\geq0$ by (2). Therefore $A\geq0$ because $A$ is bounded and $\sqrt q^{L_0}T_\upphi(\Wbb_\blt)$ is dense in $\mc H_{\Mbb'}$ (Prop. \ref{lb18}).
\end{proof}

\begin{lm}\label{lb70}
Under the assumptions of Thm. \ref{lb45}, we have $A=A^*$ if and only if $A\upphi\#\upphi^*$ is self-conjugate (cf. Def. \ref{lb33}).
\end{lm}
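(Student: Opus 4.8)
The plan is to reduce the self-conjugacy condition to a statement about $A$ versus its Hilbert-space adjoint $A^*$ on $\mc H_{\Mbb'}$, and then to promote that statement from the dense subspace $\sqrt q^{L_0}T_\upphi(\Wbb_\blt)$ to all of $\mc H_{\Mbb'}$ exactly as in the proof of Thm. \ref{lb45}. First I would record the explicit inner-product form of $A\upphi\#_q\upphi^*$. By \eqref{eq31} we have $T_{A\upphi}=AT_\upphi$, and since $A\in\End_\Vbb(\Mbb')$ commutes with $L_0$ it also commutes with $\sqrt q^{L_0}$. Feeding this into Prop. \ref{lb17} gives, for all $\wbf_1,\wbf_2\in\Wbb_\blt$,
\[
A\upphi\#_q\upphi^*(\wbf_1\otimes\Co\wbf_2)=\bigbk{\sqrt q^{L_0}T_{A\upphi}(\wbf_1)\big|\sqrt q^{L_0}T_\upphi(\wbf_2)}=\bigbk{A h_1\big|h_2},
\]
where I abbreviate $h_i=\sqrt q^{L_0}T_\upphi(\wbf_i)\in\mc H_{\Mbb'}$.

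Second, I would translate the self-conjugacy condition of Def. \ref{lb33}, namely $\ovl{\Upomega(\wbf_1\otimes\Co\wbf_2)}=\Upomega(\wbf_2\otimes\Co\wbf_1)$, into a relation between $A$ and $A^*$. Using the previous display together with $\ovl{\bigbk{A h_1|h_2}}=\bigbk{h_2|A h_1}=\bigbk{A^* h_2|h_1}$, self-conjugacy of $A\upphi\#_q\upphi^*$ becomes the equality
\[
\bigbk{A^* h_2\big|h_1}=\bigbk{A h_2\big|h_1}\qquad(\forall \wbf_1,\wbf_2\in\Wbb_\blt),
\]
i.e. $\bigbk{(A-A^*)h_2|h_1}=0$ for all $h_1,h_2$ in the subspace $\sqrt q^{L_0}T_\upphi(\Wbb_\blt)$.

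Finally, I would upgrade this from the dense subspace to the whole Hilbert space. Since $\Mbb$ is semisimple, $A$ (and hence $A-A^*$) is bounded by Rem. \ref{lb19}, and by Prop. \ref{lb18} the vectors $h_i$ range over a norm-dense subset of $\mc H_{\Mbb'}$. Therefore vanishing of $\bigbk{(A-A^*)h_2|h_1}$ on this dense set, combined with boundedness, forces $A-A^*=0$, i.e. $A=A^*$; the converse implication is immediate from the same identity. As in Thm. \ref{lb45}, the only substantive ingredient is this density–boundedness step (which is precisely where the hypotheses $C=\Pbb^1$ and $\xi$ Möbius enter, via Prop. \ref{lb18}); the rest is the formal manipulation of the inner product above.
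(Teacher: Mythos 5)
Your proof is correct and takes essentially the same route as the paper, whose proof of this lemma is literally ``same as the proof of Thm.~\ref{lb45}'': express $A\upphi\#_q\upphi^*(\wbf_1\otimes\Co\wbf_2)$ as $\bigbk{A\sqrt q^{L_0}T_\upphi(\wbf_1)\big|\sqrt q^{L_0}T_\upphi(\wbf_2)}$ via Prop.~\ref{lb17}, $T_{A\upphi}=AT_\upphi$, and $[L_0,A]=0$, then conclude from boundedness of $A$ (Rem.~\ref{lb19}) and norm-density of $\sqrt q^{L_0}T_\upphi(\Wbb_\blt)$ in $\mc H_{\Mbb'}$ (Prop.~\ref{lb18}). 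Your reduction of self-conjugacy to $\bigbk{(A-A^*)h_2\big|h_1}=0$ on the dense subspace, with the converse immediate from the same identity, is precisely the intended adaptation.
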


\begin{proof}
Same as the proof Thm. \ref{lb45}.
\end{proof}

The remaining task is to find a useful criterion on the $\sigma(\ovl{\Mbb'},\Mbb)$-density of $T_\upphi(\Wbb_\blt)$.

\begin{thm}\label{lb21}
Choose $\upphi\in\scr T_{\fk X}^*(\Wbb_\blt\otimes\Mbb)$. Assume that $\Mbb$ is semisimple, and let  $\Mbb=\bigoplus_n\Xbb_n\otimes_\Cbb\mc I_n$ be a (finite) orthogonal irreducible decomposition as in Rem. \ref{lb19}. For each $n$, define a linear map
\begin{gather}
\begin{gathered}
\Phi_n: \mc I_n\rightarrow \scr T_{\fk X}^*(\Wbb_\blt\otimes\Xbb_n)\\
\gamma\quad\mapsto\quad \Big(\wbf\otimes \fk x\in\Wbb_\blt\otimes\Xbb_n\mapsto \upphi\big(\wbf\otimes (\fk x\otimes\gamma)\big)   \Big)
\end{gathered}
\end{gather}
Then the following are equivalent.
\begin{enumerate}[label=(\arabic*)]
\item For each $n$, the linear map $\Phi_n$ is injective.
\item $T_\upphi(\Wbb_\blt)$ is $\sigma(\ovl{\Mbb'},\Mbb)$-dense in $\ovl{\Mbb'}$.
\end{enumerate}
\end{thm}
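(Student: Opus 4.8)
The plan is to reformulate both conditions in terms of the annihilator
\[
\mc N=\{\mbf m\in\Mbb:\upphi(\wbf\otimes\mbf m)=0\text{ for all }\wbf\in\Wbb_\blt\}=\ker\widehat\upphi,
\]
where $\widehat\upphi:\Mbb\to\Wbb_\blt^*$ sends $\mbf m$ to $\upphi(\cdot\otimes\mbf m)$. Since $\mc N$ is exactly the set of $\mbf m\in\Mbb$ orthogonal to the range $T_\upphi(\Wbb_\blt)\subseteq\ovl{\Mbb'}$, the definition of $\sigma(\ovl{\Mbb'},\Mbb)$-density says that condition (2) is literally the statement $\mc N=0$. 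On the other hand, unwinding the definition of $\Phi_n$ gives the tautology
\[
\ker\Phi_n=\{\gamma\in\mc I_n:\fk x\otimes\gamma\in\mc N\text{ for all }\fk x\in\Xbb_n\}=\{\gamma\in\mc I_n:\Xbb_n\otimes\gamma\subseteq\mc N\},
\]
so condition (1) says precisely that $\mc N$ contains no nonzero subspace of the form $\Xbb_n\otimes\gamma$. Thus the whole theorem reduces to one structural fact: \emph{$\mc N$ is a $\Vbb$-submodule of $\Mbb$}.

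Granting this, both implications are immediate. For (2)$\Rightarrow$(1) there is nothing to prove: if $\mc N=0$ then trivially it contains no nonzero $\Xbb_n\otimes\gamma$. For (1)$\Rightarrow$(2), suppose $\mc N\neq0$. Because $\Mbb=\bigoplus_n\Xbb_n\otimes_\Cbb\mc I_n$ is semisimple (Rem.~\ref{lb19}) and $\mc N$ is a submodule, $\mc N$ contains an irreducible submodule; this submodule is isomorphic to some $\Xbb_n$, hence lies in the $n$-th isotypic component $\Xbb_n\otimes\mc I_n$ and is of the form $\Xbb_n\otimes\Cbb\gamma$ for some $\gamma\neq0$ (irreducible submodules of $\Xbb_n\otimes\mc I_n$ correspond to lines in $\mc I_n$, since $\End_\Vbb(\Xbb_n)=\Cbb$). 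Then $\Xbb_n\otimes\gamma\subseteq\mc N$ with $\gamma\neq0$, contradicting (1). Hence $\mc N=0$.

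It remains to prove that $\mc N$ is a submodule, which is the heart of the matter. First I would show that $\mc N$ is stable under the localized action at $x'$ of every global section. Let $x'$ be the marked point carrying $\Mbb$, with coordinate $\xi$, and let $\sigma\in H^0(C,\scr V_C\otimes\omega_C(\star x_\blt+\star x'))$. Its action $\sigma\cdot_{x'}$ on the $\Mbb$-slot alone is the operator $\sum_{k,n}f_{k,n}Y_\Mbb(v_k)_n$ read off from the expansion of $\mc U_\varrho(\xi)\sigma$ at $x'$, cf.~\eqref{eq5}. The conformal-block invariance of $\upphi$ reads
\[
\sum_i\upphi\big(\cdots\otimes(\sigma\cdot_{x_i}w_i)\otimes\cdots\otimes\mbf m\big)+\upphi\big(\wbf\otimes(\sigma\cdot_{x'}\mbf m)\big)=0;
\]
if $\mbf m\in\mc N$, every term in the first sum is of the form $\upphi(\wbf'\otimes\mbf m)$ with $\wbf'\in\Wbb_\blt$ and hence vanishes, so $\upphi(\wbf\otimes(\sigma\cdot_{x'}\mbf m))=0$ for all $\wbf$, i.e. $\sigma\cdot_{x'}\mbf m\in\mc N$. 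Thus $\mc N$ is invariant under all operators $\sigma\cdot_{x'}$, and, applying them repeatedly, under the associative algebra they generate.

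The main obstacle is the final step: upgrading this invariance to invariance under each mode $Y_\Mbb(v)_n$. Here I would use that, since there is at least one auxiliary marked point $x_i$ at which arbitrarily high-order poles are allowed, every principal part $v\,\xi^{-m}\,d\xi$ (with $v\in\Vbb$, $m\geq1$) at $x'$ is realized by some global $\sigma$ (surjectivity onto principal parts, via Riemann--Roch and the strong residue theorem, cf.~\cite{FB04}); the resulting operator $\sigma\cdot_{x'}$ equals $Y_\Mbb(v)_{-m}$ plus a correction built from non-negative modes. Combining such sections through brackets and compositions, and tracking the $\wtd L_0$-weight filtration to strip off the corrections, one sees that the generated associative algebra contains every $Y_\Mbb(v)_n$, so that $\mc N$ is a genuine $\Vbb$-submodule. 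This generation statement, rather than any of the formal bookkeeping above, is where the real content lies; an alternative and perhaps cleaner route would be to invoke propagation of conformal blocks to insert a vacuum-labelled point near $x'$ and realize the modes as residues of the propagated block.
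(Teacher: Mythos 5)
Your formal reduction is sound and in fact coincides with the paper's own: your $\mc N$ is the paper's $\Ybb$, the translations of (1) and (2) into statements about $\mc N$ are correct, and the semisimplicity endgame (a nonzero $\mc N$ contains an irreducible submodule, which must be of the form $\Xbb_n\otimes\Cbb\gamma$ inside the $n$-th isotypic component) is exactly the paper's argument. The step showing that $\mc N$ is stable under the operators $\sigma\cdot_{x'}$ coming from global sections is also correct. The genuine gap is the final step, which you yourself flag: passing from invariance under the $\sigma\cdot_{x'}$ to invariance under every mode $Y_\Mbb(v)_n$. In the general setting of Thm. \ref{lb21} (arbitrary genus, arbitrary local coordinate $\xi$ at $x'$), a global section realizing the principal part $v\,\xi^{-m}\,d\xi$ acts at $x'$ as $Y(v)_{-m}$ plus an \emph{infinite} (locally finite) tail of non-negative modes of various vectors; to strip such tails off by weight considerations you would need to know beforehand that $\mc N$ is $\wtd L_0$-graded, i.e. invariant under $L_0$ --- but $L_0=Y(\cbf)_1$ is itself a non-negative mode and need not be the image of any global section (it essentially is only when $C=\Pbb^1$ and $\xi$ is M\"obius, via $f=\zeta^n$, which is precisely the special case covered by the paper's parenthetical remark). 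So the bracket-and-filtration argument is circular in general, and the assertion that the associative algebra generated by the $\sigma\cdot_{x'}$ contains every single mode is left entirely unsupported; it is at least as strong as the submodule statement you are trying to prove, and it is exactly the kind of hard content that \cite[Prop. 7.2]{Gui21} was written to supply.

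The paper's own proof never touches individual modes, and this is the idea you are missing. Let $\wtd{\mc N}$ be the $\Vbb$-submodule of $\Mbb$ generated by $\mc N$. The restriction $\upphi|_{\Wbb_\blt\otimes\wtd{\mc N}}$ is a conformal block associated to $\fk X$ with modules $\Wbb_\blt,\wtd{\mc N}$, it vanishes on $\Wbb_\blt\otimes\mc N$ by the definition of $\mc N$, and $\mc N$ generates $\wtd{\mc N}$; hence Prop. \ref{lb20} --- already stated and available in the paper --- forces $\upphi|_{\Wbb_\blt\otimes\wtd{\mc N}}=0$, i.e. $\wtd{\mc N}\subseteq\mc N$, so $\mc N=\wtd{\mc N}$ is a submodule, in two lines. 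Your alternative one-sentence suggestion (propagation of conformal blocks) is indeed the mechanism behind the proof of Prop. \ref{lb20} in \cite{Gui21}, but invoking it without carrying it out does not close the gap; the clean fix is simply to cite Prop. \ref{lb20} as the paper does and discard the principal-part machinery.
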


The proof we give below is similar to that of \cite[Prop. A.3]{Gui19a}. Indeed, Prop. A.3 is sufficient for the purpose of this article. Thm. \ref{lb21}, as a generalization of Prop. A.3, is stated and proved here for the sake of completeness.

\begin{proof}
Let $\Ybb$ be the subspace of vectors of $\Mbb$ orthogonal to $T_\upphi(\Wbb_\blt)$. If we let $\wtd\Ybb$ denote the $\Vbb$-submodule of $\Mbb$ generated $\Ybb$, then by Prop. \ref{lb20}, the restriction of $\upphi$ to $\Wbb_\blt\otimes\wtd\Ybb$ is zero. This implies that $\Ybb=\wtd\Ybb$. Therefore, $\Ybb$ is a (unitary) $\Vbb$-submodule of $\Mbb$. (Note that when $C=\Pbb^1$ and $\xi$ is M\"obius, similar to Step 3 of the proof of Prop. \ref{lb18}, one can use the Jacobi identity \eqref{eq17} to show that $\Ybb$ is $\Vbb$-invariant.)

Note that (2) holds iff $\Ybb=0$. Thus (2)$\Rightarrow$(1) is obvious: Suppose that (1) is not true. Choose $n$ and choose a non-zero $\gamma\in\mc I_n$ such that $\Phi_n(\gamma)=0$. Then $\Ybb$ contains $\Xbb_n\otimes\Cbb\gamma$, which is non-zero. So (2) is not true.

Now suppose that (2) is not true. Then $\Ybb$ is a non-zero (semisimple) $\Vbb$-submodule of $\Mbb$. Thus, according to the decomposition $\Mbb=\bigoplus_n\Xbb_n\otimes_\Cbb\mc I_n$, $\Ybb$ must contain an irreducible submodule $\Ybb_0$ isomorphic to $\Xbb_n$ for some $n$. The projection of $\Ybb_0$ onto $\Xbb_m\otimes\mc I_m$ must be zero if $m\neq n$ (since $\Xbb_m\nsimeq\Xbb_n$). So $\Ybb_0\subset \Xbb_n\otimes\mc I_n$. So $\Ybb_0=\Xbb_n\otimes\Cbb\gamma$ for some nonzero $\gamma\in\mc I_n$. Hence $\Phi_n(\gamma)=0$. So (1) does not hold.
\end{proof}

\begin{rem}\label{lb87}
Condition (1) in Thm. \ref{lb21} can be written more explicitly. Write
\begin{align*}
\Mbb=\bigoplus_n (\underbrace{\Xbb_n\oplus\cdots\oplus\Xbb_n}_{k_n\text{ pieces}})
\end{align*}
where each $\Xbb_n$ is irreducible, and $\Xbb_m\nsimeq\Xbb_n$ if $m\neq n$. For each $n$ and each $1\leq l\leq k_n$, let $\upphi_{n,l}\in\scr T_{\fk X}^*(\Wbb_\blt\otimes\Xbb_n)$ be the restriction of $\upphi$ to the $l$-th component of $\Xbb_n\oplus\cdots\oplus\Xbb_n$ in the above decomposition. Then condition (1) means that $\upphi_{n,1},\dots\upphi_{n,k_n}$ are linearly independent for each $n$.
\end{rem}

\section{Positive trinions and basic conformal blocks}

Let $\Vbb$ be unitary. Unless otherwise stated,  $\fk X=(C;x_\blt;\eta_\blt)$ denotes an $N$-pointed compact Riemann surface, and to each marked point $x_i$ a unitary $\Vbb$-module $\Wbb_i$ is associated.

\begin{cv}
A $3$-pointed sphere $\Pbb^1$ with local coordinates is called a \textbf{trinion}. Unless otherwise stated, when sewing two trinions $\fk P=(\Pbb^1;x_1,x_2,x_3;\eta_1,\eta_2,\eta_3)$ and $\fk Q=(\Pbb^1;y_1,y_2,y_3;\mu_1,\mu_2,\mu_3)$, we always assume that the sewing is along their last marked points $x_3,y_3$.
\end{cv}

\subsection{Self-conjugate Riemann surfaces and conformal blocks}

\begin{df}\label{lb51}
Suppose that there is a biholomorphism $\tau:C^*\rightarrow C$. Then $\fk X$ together with the anti-biholomorphism $\varstar=\tau\circ *:C\rightarrow C$ is called \textbf{self-conjugate} if there is a (necessarily unique) bijection $*:\{1,\dots,n\}\rightarrow\{1,\dots,n\}$ satisfying that for each $1\leq i\leq N$ and $x\in C$,
\begin{subequations}
\begin{gather}
\varstar\circ\varstar=\id_C\\
x_i^\varstar=x_{i^*}\label{eq100}\\
\eta_{i^*}(x^\varstar)=\ovl{\eta_i(x)}\label{eq32}
\end{gather}
Clearly $i^{**}=i$. As for the $\Vbb$-modules, we assume also that
\begin{gather}
\Wbb_{i^*}=\Wbb_i'\label{eq33}\\
\Wbb_i=\Wbb_i'=\Vbb\qquad(\text{if }i^*=i) \label{eq34}
\end{gather}
\end{subequations}
where, in \eqref{eq34}, we identify $\Vbb$ and $\Vbb'$ via the unitary isomorphism
\begin{align*}
\Co\Theta:\Vbb\rightarrow\Vbb'
\end{align*}
so that $\Co=\Theta$ on $\Vbb$.
\end{df}

\begin{cv}\label{lb26}
Suppose that $\fk X$ is viewed as a self-conjugate $N$-pointed surface. Then we identify $C^*$ with $C$ via $\tau$ so that $\varstar=*$. We call $*:C\rightarrow C$ the \textbf{involution} of $\fk X$. Then \eqref{eq32} means
\begin{align}
(\eta_i)^*=\eta_{i^*}
\end{align}
We thus have (recall Def. \ref{lb22})
\begin{align}
\fk X^*=(C;x_{1^*},\dots,x_{N^*};\eta_{1^*},\dots,\eta_{N^*})\label{eq35}
\end{align}
Therefore, $\fk X$ and $\fk X^*$ differ by a permutation of marked points.
\end{cv}

The advantage of identifying $\varstar$ with $*$ is indicated by the following example.

\begin{eg}
If we let $\fk X$ be \eqref{eq25} (which is $(N+1)$-pointed) and assume the setting of \eqref{eq52} and $\eqref{eq53}$, then $\fk X\#_q\fk X^*$ is naturally a self-conjugate $2N$-pointed compact Riemann surface with local coordinates: the involution $\varstar$ on the sewn Riemann surface $\mc S_q(C\sqcup C^*)$ is defined by sending any point $p\in C$ which is not discarded in the sewing process to $p^*$ in $C^*$.
\end{eg}

Recall Def. \ref{lb23} about the definition of conjugate conformal blocks. If $\upphi\in\scr T_{\fk X}^*(\Wbb_\blt)$, then $\upphi^*:\Wbb_{1^*}\otimes\cdots\otimes\Wbb_{N^*}\rightarrow\Cbb$ is its complex conjugate. Therefore $\upphi^*$, composed with the permutation $\Wbb_\blt\rightarrow\Wbb_{\blt^*}$ (sending $w_1\otimes\cdots\otimes w_N$ to $w_{1^*}\otimes\cdots\otimes w_{N^*}$), is clearly an element of $\scr T_{\fk X}^*(\Wbb_\blt)$. Whenever this conformal block equals $\upphi$, we say that $\upphi$ is \textbf{self-conjugate}. To be more explicit, we make the following definition:

\begin{df}\label{lb67}
Let $\fk X$ be self-conjugate. A conformal block $\upphi\in\scr T_{\fk X}^*(\Wbb_\blt)$ is called \textbf{self-conjugate} if for each $w_1\in\Wbb_1,\dots,w_N\in\Wbb_N$ we have
\begin{align}
\upphi(\Co w_{1^*}\otimes \cdots\otimes\Co w_{N^*})=\ovl{\upphi(w_1\otimes\cdots\otimes w_N)} 
\end{align}
\end{df}

\subsection{Positive trinions}

Recall that an anti-biholomorphism $\varstar:C\rightarrow C$ satisfying $\varstar\circ \varstar=\id_C$ is called an \textbf{involution  of $C$}. Let us classify involutions of $\Pbb^1$ having fixed points. (Note that the involution $z\mapsto -1/\ovl z$ does not have fixed points.) Choose $x_1,x_2,x_3\in\Pbb^1$ such that $x_1^\varstar=x_2$ and $x_3^*=x_3$. By a M\"obius transformation, it suffices to assume for instance $x_1=0,x_2=\infty,x_3=1$.

\begin{eg}\label{lb27}
Let $\varstar:\Pbb^1\rightarrow\Pbb^1$ be an involution having fixed points. Let $\theta\in\Rbb$ and $t\in\Rbb\cup\{\infty\}$.
\begin{enumerate}[label=(\alph*)]
\item If $0^\varstar=\infty$ and $(e^{\im\theta})^\varstar=e^{\im\theta}$, then $z^\varstar=1/\ovl z$ for all $z\in\Pbb^1$.
\item If $\im^\varstar=-\im$ and $t^\varstar=t$, then $z^\varstar=\ovl z$ for all $z\in\Pbb^1$.
\end{enumerate}
\end{eg}

Consequently, if $x_1,x_2,x_3\in\Pbb^1$ are chosen, then there exists a unique involution of $\Pbb^1$ fixing $x_3$ and exchanging $x_1,x_2$.

\begin{proof}
Choose an involution $\varstar$. Then the composition of $\varstar$ and $z\mapsto \ovl z$ is a M\"obius transformation of $\Pbb^1$. It is well-known that M\"obius transformations are uniquely determined by their values on three distinct points. So the involutions $\varstar$ satisfying (a) resp. (b)  are unique. And clearly $\varstar:z\mapsto 1/\ovl z$ (resp. $\varstar:z\mapsto \ovl z$) satisfies the requirements in (a) resp. (b). 
\end{proof}

Recall Conv. \ref{lb26}.

\begin{df}\label{lb85}
Let $\fk P=(\Pbb^1;x_1,x_2,x_3;\eta_1,\eta_2,\eta_3)$ be a self-conjugate trinion. Assume that $\eta_3$ is M\"obius. Assume that
\begin{align*}
x_1^*=x_2\qquad x_3^*=x_3
\end{align*}
(and hence the bijection $*$ on $\{1,2,3\}$ satisfies $1^*=2,3^*=3$, cf. \eqref{eq100}). Then we automatically have
\begin{align}
\Imag(\eta_3(x_1))\cdot\Imag(\eta_3(x_2))<0.\label{eq37}
\end{align}
We say that $\fk P$ is a \textbf{positive trinion} (or a positive pair-of-pants) if
\begin{align}\label{eq101}
\Imag(\eta_3(x_1))<0,\qquad\Imag(\eta_3(x_2))>0.
\end{align}
We say that $\fk P$ is a \textbf{standard positive trinion} if
\begin{align*}
\eta_3(x_1)\in\im\Rbb_{<0}\qquad \eta_3(x_2)\in\im\Rbb_{<0}
\end{align*}
\end{df}


\begin{rem}
We will define the geometric positivity of fusion products by sewing basic conformal blocks associated to two positive trinions, cf. Def. \ref{lb39}. This definition is unchanged if both positive trinions are replaced by negative trinions, i.e., those satisfying $\Imag(\eta_3(x_1))>0$ and $\Imag(\eta_3(x_2))<0$. However, if we define geometric positivity by sewing a positive trinion and a negative trinion, we need to modify Def. \ref{lb39} by replacing the contragredient module $\Wbb_1'$ in \eqref{eq102} with $\Wbb_1$.
\end{rem}

Relation \eqref{eq37} follows from the proof of the next property.

\begin{pp}\label{lb34}
Any positive trinion is equivalent to the positive trinion
\begin{align}
\fk P_-=\big(\Pbb^1;a+b\im,a-b\im,\infty;\eta,\eta^*,1/\zeta\big)  \label{eq38}
\end{align}
where $a,b\in\Rbb$ and $b>0$, $\zeta$ is the standard coordinate of $\Cbb$, $\eta$ is a local coordinate at $a+b\im$, and
\begin{align*}
\eta^*(z)=\ovl{\eta(\ovl z)}
\end{align*}
The involution of $\Pbb^1$ is $z^*=\ovl z$. Moreover, $\fk P_-$ is standard iff $a=0$.
\end{pp}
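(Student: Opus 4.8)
The plan is to produce a single Möbius transformation that normalizes the third marked point together with its coordinate, and then to read off the remaining data from self-conjugacy, invoking the uniqueness of involutions from Exp.~\ref{lb27}.

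First I would record the constraint that self-conjugacy places on $\eta_3$. Since $3^*=3$, Conv.~\ref{lb26} gives $(\eta_3)^*=\eta_3$, which by \eqref{eq32} means $\eta_3(x^\varstar)=\ovl{\eta_3(x)}$ for all $x$; taking $x=x_1$ and using $x_1^\varstar=x_2$ yields
\[
\eta_3(x_2)=\ovl{\eta_3(x_1)}.
\]
Because $\eta_3$ is a Möbius coordinate (hence injective on $\Pbb^1$) and $x_1\neq x_2$, the value $\eta_3(x_1)$ can be neither real nor $\infty$; this simultaneously establishes \eqref{eq37}, since $\Imag(\eta_3(x_2))=-\Imag(\eta_3(x_1))$ forces $\Imag(\eta_3(x_1))\cdot\Imag(\eta_3(x_2))=-\Imag(\eta_3(x_1))^2<0$.

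Next I would apply the biholomorphism $\varphi=1/\eta_3:\Pbb^1\to\Pbb^1$ (a Möbius map, as $\eta_3$ extends to one). It sends $x_3\mapsto\infty$, and by Prop.~\ref{lb6} the transported coordinate at $\infty$ is $\eta_3\circ\varphi^{-1}=1/\zeta$. Writing $y_i=\varphi(x_i)=1/\eta_3(x_i)$ and $\mu_i=\eta_i\circ\varphi^{-1}$, I obtain a trinion $\fk Q=(\Pbb^1;y_1,y_2,\infty;\mu_1,\mu_2,1/\zeta)$ equivalent to $\fk P$. From $\eta_3(x_2)=\ovl{\eta_3(x_1)}$ I get $y_2=\ovl{y_1}$, and from positivity $\Imag(\eta_3(x_1))<0$ I get $\Imag(y_1)>0$; hence $y_1=a+b\im$, $y_2=a-b\im$ with $a=\Real(y_1)$, $b=\Imag(y_1)>0$.

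The key bookkeeping step is to check that self-conjugacy is natural under $\varphi$. The involution of $\fk Q$ is $\varstar_{\fk Q}=\varphi\circ\varstar\circ\varphi^{-1}$; substituting $\mu_i=\eta_i\circ\varphi^{-1}$ into the left side of \eqref{eq32} and using the self-conjugacy of $\fk P$ shows $\mu_{i^*}(x^{\varstar_{\fk Q}})=\ovl{\mu_i(x)}$, so $\fk Q$ is again self-conjugate. Now $\varstar_{\fk Q}$ is an anti-biholomorphism of $\Pbb^1$ fixing $\infty$ and exchanging $y_1,y_2=\ovl{y_1}$, so by the uniqueness consequence of Exp.~\ref{lb27} it must be $z\mapsto\ovl z$. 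Feeding this back into \eqref{eq32} with $i=1$ gives $\mu_2(z)=\ovl{\mu_1(\ovl z)}$, i.e.\ $\mu_2=(\mu_1)^*$; setting $\eta:=\mu_1$ identifies $\fk Q$ with $\fk P_-$ of \eqref{eq38}. Finally, for the standardness clause I would note that $a=\Real(1/\eta_3(x_1))$ vanishes iff $\eta_3(x_1)\in\im\Rbb$, which combined with $\Imag(\eta_3(x_1))<0$ is exactly the condition of Def.~\ref{lb85}; thus $\fk P$ is standard iff $a=0$. I do not expect a deep obstacle here: the only points needing care are that the orientation ($\Imag<0$ at $x_1$) survives inversion as $\Imag>0$ at $y_1$ so that $b>0$, and the functoriality of the conjugate-coordinate relation under $\varphi$ together with the correct appeal to the uniqueness of the involution.
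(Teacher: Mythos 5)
Your proof is correct and follows essentially the same route as the paper's: a M\"obius normalization combined with the uniqueness of involutions from Exp.~\ref{lb27} and the self-conjugacy constraint \eqref{eq32} on the coordinates. The only difference is organizational: the paper first moves the marked points to $\im,-\im,\infty$ (so the involution is identified as $z\mapsto\ovl z$ at once), writes the third coordinate as $1/(a+bz)$ with $a,b\in\Rbb$ forced by $\xi^*=\xi$, and then applies the real affine map $z\mapsto a+bz$, whereas you straighten $\eta_3$ in a single step via $\varphi=1/\eta_3$ and read off the marked points, the involution, and the relation $\mu_2=\mu_1^*$ afterwards.
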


\begin{proof}
If $\fk X$ is a self-conjugate $3$-pointed sphere with local coordinates satisfying $x_1^*=x_2,x_3^*=x_3$, then $\fk X$ is equivalent to 
\begin{align*}
\fk X_1=(\Pbb^1;\im,-\im,\infty;\mu,\mu^*,\xi)
\end{align*}
where (by Exp. \ref{lb27}) the involution $*$ is given by $z^*=\ovl z$. So $\mu^*(\ovl z)=\ovl{\mu(z)}$. $\xi$ is a M\"obius local coordinate of $\infty$. So $\xi$ is of the form
\begin{align*}
\xi(z)=1/(a+bz)
\end{align*}
where $a,b\in\Cbb$ and $b\neq 0$. Since $\xi^*(z)=1/(\ovl a+\ovl bz)$ equals $\xi(z)$, we have $a,b\in\Rbb$. This proves \eqref{eq37}. Clearly $\fk X_1$ is equivalent to \eqref{eq38} via the biholomorphism $z\in\Pbb^1\mapsto a+bz\in\Pbb^1$. Since $\fk P$ is a positive trinion, we must have $b>0$.
\end{proof}

The subscript $-$ in $\fk P_-$ indicates the fact that the real line $\Rbb$ is the \textbf{equator} of $\fk P_-$ (namely, the set of points fixed by the involution). Likewise, the unit circle $\Sbb^1$ is the equator of $\fk P_\circ$ defined below:

\begin{pp}\label{lb35}
Any positive trinion is equivalent to the positive trinion
\begin{align}
\fk P_\circ=\Big(\Pbb^1;\gamma,1/\ovl\gamma,\im;\mu,\mu^*,\frac{\im(\zeta-\im)}{\zeta+\im}\Big) \label{eq39}
\end{align}
where $\gamma\in\Dbb_1$, $\zeta$ is the standard coordinate of $\Cbb$, $\mu$ is a local coordinate at $\gamma$, and 
\begin{align*}
\mu^*(z)=\ovl{\mu(1/\ovl z)}
\end{align*}
The involution of $\Pbb^1$ is $z^*=1/\ovl z$. Moreover, $\fk P_\circ$ is standard iff $\gamma\in\Dbb_1\cap\im\Rbb$.
\end{pp}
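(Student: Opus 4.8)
The plan is to reduce the whole statement to a single explicit M\"obius change of variables, in complete parallel with the proof of Prop.~\ref{lb34}, but taking the equator to be the unit circle $\Sbb^1$ rather than $\Rbb$. The central object is the M\"obius transformation $\varpi(z)=\frac{\im(z-\im)}{z+\im}$, and the key fact I would verify first is that it intertwines the involution $\iota:z\mapsto 1/\ovl z$ with complex conjugation $\kappa:z\mapsto\ovl z$; that is, $\varpi(1/\ovl z)=\ovl{\varpi(z)}$. Granting this one identity, $\varpi$ automatically carries the equator $\Sbb^1$ of $\iota$ onto the equator $\Rbb\cup\{\infty\}$ of $\kappa$, sends the $\iota$-fixed point $\im$ to $\varpi(\im)=0$, and (since $\varpi(0)=-\im$) maps $\Dbb_1$ biholomorphically onto the lower half-plane $\{\Imag<0\}$. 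Every later step is a formal consequence of this identity, so this is the computation I would pin down at the outset.

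Next I would build the equivalence. Since $\fk P=(\Pbb^1;x_1,x_2,x_3;\eta_1,\eta_2,\eta_3)$ is a positive, hence self-conjugate, trinion with $3^*=3$, relation \eqref{eq32} gives $\eta_3(x^\varstar)=\ovl{\eta_3(x)}$; thus the M\"obius coordinate $\eta_3$ already intertwines $\varstar$ with $\kappa$. Setting $\psi:=\varpi^{-1}\circ\eta_3$, a direct composition shows $\psi\circ\varstar=\iota\circ\psi$ and $\psi(x_3)=\varpi^{-1}(0)=\im$. Pushing the data of $\fk P$ forward along $\psi$ then produces an equivalent trinion whose involution is $\iota:z\mapsto 1/\ovl z$, whose third marked point is $\im$ with coordinate $\eta_3\circ\psi^{-1}=\varpi$, and whose first marked point is $\gamma:=\psi(x_1)$. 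Because $x_2=x_1^\varstar$, the second marked point is $\psi(x_1)^\iota=1/\ovl\gamma$, and the self-conjugacy relation \eqref{eq32} applied to the image forces the coordinate there to be $\mu^*(z)=\ovl{\mu(1/\ovl z)}$, where $\mu:=\eta_1\circ\psi^{-1}$. This is exactly the shape of $\fk P_\circ$ in \eqref{eq39}.

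It then remains to read off the two numerical constraints. Since $\eta_3=\varpi\circ\psi$, the positivity condition $\Imag(\eta_3(x_1))<0$ becomes $\Imag(\varpi(\gamma))<0$, which by the image computation of the first paragraph is equivalent to $\gamma\in\Dbb_1$; this is precisely where the positivity hypothesis enters, fixing the sheet on which $\gamma$ lies. For the last assertion, $\fk P$ is standard iff $\eta_3(x_1)$ and $\eta_3(x_2)$ lie on the imaginary axis, i.e.\ iff $\varpi(\gamma)\in\im\Rbb$. Since $\varpi^{-1}$ carries the target imaginary axis (the circle through $0=\varpi(\im)$ and $\infty=\varpi(-\im)$, orthogonal to $\Rbb$) back to the imaginary axis (the circle through $\im,-\im$ orthogonal to $\Sbb^1$), this holds iff $\gamma\in\Dbb_1\cap\im\Rbb$. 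The short explicit check $\varpi(\im t)=-\im\frac{1-t}{1+t}$ for $t\in(-1,1)$ then confirms that such $\gamma$ land in $\im\Rbb_{<0}$ and the corresponding $1/\ovl\gamma$ in $\im\Rbb_{>0}$, matching the standardness requirement.

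All computations are elementary M\"obius algebra, so I expect the only genuine obstacle to be bookkeeping: keeping straight how the local coordinates transform under the pushforward by $\psi$, and verifying that self-conjugacy \eqref{eq32} is preserved so that the coordinate at $1/\ovl\gamma$ is genuinely $\mu^*$ and not some unrelated coordinate. Once the single intertwining identity for $\varpi$ is established, everything else follows mechanically.
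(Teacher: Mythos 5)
Your proof is correct, but it is organized differently from the paper's. The paper proves Prop.~\ref{lb35} in one line by reducing it to Prop.~\ref{lb34}: since every positive trinion is already known to be equivalent to $\fk P_-=\eqref{eq38}$, it suffices to exhibit the single biholomorphism $z\mapsto \frac{z+\im}{\im(z-\im)}=\varpi^{-1}(z)$ carrying $\fk P_\circ$ to $\fk P_-$, and all verification (involutions, coordinates, positivity, standardness) is left implicit. You instead bypass Prop.~\ref{lb34} entirely and build the equivalence directly from an arbitrary positive trinion, via $\psi=\varpi^{-1}\circ\eta_3$, after isolating the intertwining identity $\varpi(1/\ovl z)=\ovl{\varpi(z)}$ as the engine of the argument. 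What your route buys: it is self-contained, it makes explicit exactly why the involution $z\mapsto 1/\ovl z$, the coordinate $\mu^*$ at $1/\ovl\gamma$, and the condition $\gamma\in\Dbb_1$ all come out as stated (in the paper these checks are genuinely needed but suppressed), and the identity $\varpi\circ(1/\ovl{\,\cdot\,})=\ovl{\varpi(\cdot)}$ you prove up front is precisely what the paper invokes later in Obs.~\ref{lb37}, so nothing is wasted. What the paper's route buys is brevity: given Prop.~\ref{lb34}, there is only one model-to-model map to write down. Two small remarks: (i) when you pass from the local relation \eqref{eq32} to the global statement $\eta_3\circ\varstar=\kappa\circ\eta_3$ on all of $\Pbb^1$, you should say a word (both sides are anti-biholomorphisms of $\Pbb^1$ agreeing on an open set, hence everywhere); this is the same kind of step the paper's proof of Prop.~\ref{lb34} takes when it deduces $a,b\in\Rbb$ from $\xi^*=\xi$; (ii) your reading of standardness as $\eta_3(x_1)\in\im\Rbb_{<0}$, $\eta_3(x_2)\in\im\Rbb_{>0}$ is the correct one — the displayed condition in Def.~\ref{lb85} contains a sign typo, since $\eta_3(x_2)\in\im\Rbb_{<0}$ would contradict positivity.
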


\begin{proof}
\eqref{eq39} is equivalent to \eqref{eq38} via the biholomorphism $z\in\Pbb^1\mapsto \frac{z+\im}{\im(z-\im)}\in\Pbb^1$.
\end{proof}

\subsection{Basic conformal blocks associated to positive trinions}

\begin{thm}\label{lb28}
Let $\Wbb$ be a unitary $\Vbb$-module. There exists, for each positive trinion
\begin{align}
\fk P=(\Pbb^1;x_1,x_2,x_3;\eta_1,\eta_2,\eta_3), \label{eq40}
\end{align}
a self-conjugate conformal block $\upomega_{\Wbb,\fk P}\in\scr T_{\fk P}^*(\Wbb\otimes\Wbb'\otimes\Vbb)$ depending only on the equivalence class of $\fk P$, such that the following properties are satisfied: 
\begin{enumerate}[label=(\alph*)]
\item Let $\zeta$ be the standard coordinate of $\Cbb$. If we choose positive trinion
\begin{align}\label{eq43}
\fk P_\circ=\Big(\Pbb^1;0,\infty,\im;\zeta,1/\zeta,\frac{\im(\zeta-\im)}{\zeta+\im}\Big)
\end{align}
then for each $w_1,w_2\in\Wbb,v\in\Vbb$,
\begin{align}\label{eq41}
\upomega_{\Wbb,\fk P_\circ}(w_1\otimes \Co w_2\otimes v)=\bigbk{Y\big(2^{L_0}e^{-\im L_1}v,\im\big)w_1\big|w_2}
\end{align}
\item If we choose positive trinions $\fk P=\eqref{eq40}$ and $\fk Q=(\Pbb^1;x_1,x_2,x_3;\mu_1,\mu_2,\mu_3)$, then
\begin{align}\label{eq44}
\upomega_{\Wbb,\fk Q}=\upomega_{\Wbb,\fk P}\circ\big(\mc U_0(\eta_1\circ\mu_1^{-1})\otimes \mc U_0(\eta_2\circ\mu_2^{-1})\otimes \mc U_0(\eta_3\circ\mu_3^{-1})\big)
\end{align}
where we have
\begin{gather}
(\eta_1\circ\mu_1^{-1})'(0)=\ovl{(\eta_2\circ\mu_2^{-1})'(0)}  \label{eq45}
\end{gather}  
and the arguments are chosen such that
\begin{gather}
\arg(\eta_1\circ\mu_1^{-1})'(0)=-\arg(\eta_2\circ\mu_2^{-1})'(0)  \label{eq48}
\end{gather}
\end{enumerate}
We call $\upomega_{\Wbb,\fk P}$  \index{zz@$\upomega_{\Wbb,\fk P}$, the basic conformal block} the \textbf{basic conformal block associated to $\Wbb$ and the positive trinion $\fk P$}.
\end{thm}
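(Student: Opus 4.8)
The plan is to build $\upomega_{\Wbb,\fk P}$ first on the single reference trinion $\fk P_\circ$ of \eqref{eq43} by transporting the module vertex operator, then to propagate it to every positive trinion using equivalences together with property (b), and finally to verify self-conjugacy on $\fk P_\circ$ and carry it along the propagation.

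First I would treat $\fk P_\circ=\eqref{eq43}$. By Exp. \ref{lb31} applied to $A=\id\in\End_\Vbb(\Wbb)$, the functional $w_1\otimes v\otimes w_2'\mapsto \bk{Y(v,\im)w_1,w_2'}$ is a conformal block on the standard-coordinate trinion $(\Pbb^1;0,\im,\infty;\zeta,\zeta-\im,1/\zeta)$. The coordinate at the point $\im$ of $\fk P_\circ$ is $\varpi=\frac{\im(\zeta-\im)}{\zeta+\im}$, and a short computation gives $(\zeta-\im)\circ\varpi^{-1}(s)=\frac{2s}{1+\im s}$, so by Exp. \ref{lb2} the change-of-coordinate operator is $\mc U_0((\zeta-\im)\circ\varpi^{-1})=2^{L_0}e^{-\im L_1}$ (with the natural argument $\arg 2=0$). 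Applying Prop. \ref{lb6} at the $\Vbb$-slot then shows that $w_1\otimes\Co w_2\otimes v\mapsto \bk{Y(2^{L_0}e^{-\im L_1}v,\im)w_1|w_2}$ is a conformal block on $\fk P_\circ$; I take this as the definition of $\upomega_{\Wbb,\fk P_\circ}$, which is exactly \eqref{eq41}, establishing (a).

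Next I would define $\upomega_{\Wbb,\fk P}$ for an arbitrary positive trinion $\fk P$. By Prop. \ref{lb35} every positive trinion is equivalent to one of the form \eqref{eq39}, and a further equivalence respecting the involution $z\mapsto 1/\ovl z$, followed by the coordinate change of (b), reduces it to $\fk P_\circ$; since an equivalence of pointed surfaces with coordinates induces an equality of conformal-block spaces and (by the uniqueness in Exp. \ref{lb27}) automatically intertwines the two involutions, this transports $\upomega_{\Wbb,\fk P_\circ}$ to a block for $\fk P$ depending only on the equivalence class. For two positive trinions sharing the same marked points I impose (b) as the defining relation via the operators $\mc U_0(\eta_i\circ\mu_i^{-1})$; the relation \eqref{eq46}, $(\eta_1\circ\mu_1^{-1})'(0)=\ovl{(\eta_2\circ\mu_2^{-1})'(0)}$, shows the argument normalization \eqref{eq48} is attainable, and the cocycle property of $\mc U_0$ (Cor. \ref{lb29}) guarantees the result is independent of the chosen reduction. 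Combining these moves yields a well-defined assignment $\fk P\mapsto \upomega_{\Wbb,\fk P}$ satisfying (b). For self-conjugacy I would first check it on $\fk P_\circ$ directly from \eqref{eq41}. Spelling out Def. \ref{lb67} for the involution $1^*=2,2^*=1,3^*=3$, self-conjugacy amounts to
\begin{align*}
\bk{Y(2^{L_0}e^{-\im L_1}\Theta v,\im)w_2|w_1}=\ovl{\bk{Y(2^{L_0}e^{-\im L_1}v,\im)w_1|w_2}}.
\end{align*}
The right side equals $\bk{w_2|Y(2^{L_0}e^{-\im L_1}v,\im)w_1}$; applying the unitarity axiom \eqref{eq21} to the left side, then $\Theta\, 2^{L_0}e^{-\im L_1}\Theta=2^{L_0}e^{\im L_1}$ (from $\Theta L_n=L_n\Theta$ and antilinearity), and finally $Y'(2^{L_0}e^{\im L_1}v,-\im)=Y(2^{L_0}e^{-\im L_1}v,\im)$ — which comes from the definition \eqref{eq10} of $Y'$ together with $e^{-\im L_1}2^{L_0}e^{\im L_1}=2^{L_0}e^{-\im L_1}$ — identifies the two sides. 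Self-conjugacy then propagates: it is preserved by equivalences (being geometric), and by the coordinate changes of (b) precisely because \eqref{eq48} forces $\Co\cdot\mc U_0(\eta_i\circ\mu_i^{-1})=\mc U_0(\eta_i^*\circ(\mu_i^*)^{-1})\cdot\Co$, as in \eqref{eq47} and Prop. \ref{lb42}.

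The main obstacle is the well-definedness of the assignment, namely the bookkeeping of the branch ambiguity in the operators $\mc U_0$: since $L_0$ has non-integer eigenvalues on a general unitary module, $\rho'(0)^{L_0}$ depends on $\arg\rho'(0)$, so one must verify that the two propagation moves are mutually consistent and that the normalization \eqref{eq48} is exactly the one making both (b) and self-conjugacy hold at once. The self-conjugate structure — the involution pairing $x_1$ with $x_2$ through conjugate coordinates, which yields \eqref{eq45}--\eqref{eq46} — is what makes the opposite-argument choice \eqref{eq48} both available and canonical, and this is the conceptual heart of the construction.
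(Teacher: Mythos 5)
Your proposal is correct and follows essentially the same route as the paper: define $\upomega_{\Wbb,\fk P_\circ}$ by \eqref{eq41} (via Prop. \ref{lb6}, Exp. \ref{lb2}, and the computation $(\zeta-\im)\circ\varpi^{-1}(z)=\tfrac{2z}{1+\im z}$), transport it to an arbitrary positive trinion through the unique M\"obius equivalence with $\fk P_\circ$ together with \eqref{eq44}, let \eqref{eq48} and Cor. \ref{lb29} handle the branch bookkeeping, and propagate self-conjugacy via \eqref{eq47}. The only divergence is the self-conjugacy check on $\fk P_\circ$: the paper observes that both sides of the desired identity are conformal blocks associated to $\fk P_\circ$ and invokes the uniqueness Prop. \ref{lb20} to reduce the verification to $v=\id$, where it collapses to $\ovl{\bk{w_2|w_1}}=\bk{w_1|w_2}$; you instead verify it for all $v$ directly from \eqref{eq21}, $\Theta L_n=L_n\Theta$, and the identity $Y'(2^{L_0}e^{\im L_1}v,-\im)=Y(2^{L_0}e^{-\im L_1}v,\im)$, which is a correct, if more computational, alternative to the uniqueness-style argument the paper deliberately showcases.
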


Basic conformal blocks are crucial to the definition of geometric positivity in Def. \ref{lb39}. The most important property about a basic conformal block $\upomega_{\Wbb,\fk P}$ is that when $\Wbb$ is irreducible, $\upomega_{\Wbb,\fk P}$ is the unique (up to positive scalar multiplications) conformal block associated to $\Wbb,\fk P$ sending each $w\otimes\Co w\otimes\id$ (where $w\in\Wbb\setminus\{0\}$) to a  positive number. This will be explained in Prop. \ref{lb32}.

\begin{proof}
By Rem. \ref{lb14}, the linear functional
\begin{gather*}
\Wbb\otimes\Wbb'\otimes \Vbb\mapsto \Cbb\\
w_1\otimes \Co w_2\otimes v\mapsto \bk{Y(v,\im)w_1|w_2}=\bk{Y(v,\im)w_1,\Co w_2}
\end{gather*}
is a conformal block associated to $(\Pbb^1;0,\infty,\im;\zeta,1/\zeta,\zeta-\im)$. By Prop. \ref{lb6}, Exp. \ref{lb2}, and the fact that $z-\im=\alpha\big(\frac{\im(z-\im)}{z+\im}\big)$ where $\alpha(z)=\frac {2z}{1+\im z}$, we see that the linear functional $\upomega_{\Wbb,\fk P_\circ}$ defined by \eqref{eq41} is a conformal block associated to $\fk P_\circ$ and $\Wbb,\Wbb',\Vbb$.

To show that $\upomega_{\Wbb,\fk P_\circ}$ is self-conjugate, we need to show that
\begin{align}
\ovl{\upomega_{\Wbb,\fk P_\circ}(w_2\otimes\Co w_1\otimes \Theta v)}=\upomega_{\Wbb,\fk P_\circ}(w_1\otimes \Co w_2\otimes v) \label{eq42}
\end{align}
for all $w_1,w_2\in\Wbb$ and $v\in\Vbb$. If we define a linear functional $\upomega':\Wbb\otimes\Wbb'\otimes\Vbb\rightarrow\Cbb$ using the LHS of \eqref{eq42}, then $\upomega'$ is a conformal block associated to $\fk P_\circ$ by Def. \ref{lb23}. Thus, in order to prove \eqref{eq42} (namely,  to prove $\upomega'=\upomega_{\Wbb,\fk P_\circ}$), by Prop. \ref{lb20}, it suffices to check 
\begin{align*}
\ovl{\upomega_{\Wbb,\fk P_\circ}(w_2\otimes\Co w_1\otimes \id)}=\upomega_{\Wbb,\fk P_\circ}(w_1\otimes \Co w_2\otimes \id)
\end{align*}
for all $w_1,w_2\in\Wbb$, where $\id\in\Vbb$ is the vacuum vector. To prove this relation , we use \eqref{eq41} to compute that
\begin{align*}
\ovl{\upomega_{\Wbb,\fk P_\circ}(w_2\otimes\Co w_1\otimes \id)}=\ovl{\bk{w_2|w_1}}=\bk{w_1|w_2}=\upomega_{\Wbb,\fk P_\circ}(w_1\otimes \Co w_2\otimes \id)
\end{align*}
We are done with the proof of part (a).

We now proceed to prove part (b). First, note that \eqref{eq45} can be proved in the same way as \eqref{eq46}. Now, to define $\upomega_{\Wbb,\fk Q}$ for an arbitrary positive trinion $\fk Q=(\Pbb^1;x_\blt;\mu_\blt)$, we find a (unique) M\"obius map sending $0,\infty,\im$ to $x_1,x_2,x_3$ respectively, and use this map to find local coordinates $\eta_1,\eta_2,\eta_3$ such that $\fk P_\circ=\eqref{eq43}$ is equivalent to $\fk P$. Then we define $\upomega_{\Wbb,\fk Q}$ using  \eqref{eq44} in which $\upomega_{\Wbb,\fk P}$ is replaced by $\upomega_{\Wbb,\fk P_\circ}$. Note that this definition is independent of the choice of arguments provided that \eqref{eq48} is satisfied!

To prove that $\upomega_{\Wbb,\fk Q}$ is self-conjugate, it suffices to show as above that
\begin{align*}
\ovl{\upomega_{\Wbb,\fk Q}(w_2\otimes\Co w_1\otimes \id)}=\upomega_{\Wbb,\fk Q}(w_1\otimes \Co w_2\otimes \id).
\end{align*}
Let $\alpha=\eta_1\circ\mu_1^{-1}$ and $\alpha^*=\eta_1^*\circ(\mu_1^*)^{-1}$. (Namely, $\alpha^*(z)=\ovl{\alpha(\ovl z)}$.) Then since $\eta_2=\eta_1^*,\mu_2=\mu_1^*$,  and since \eqref{eq48} holds, we have $\Co \mc U_0(\alpha)=\mc U_0 (\alpha^*)\Co$ as in \eqref{eq47}. Thus, by \eqref{eq42},
\begin{align*}
&\ovl{\upomega_{\Wbb,\fk Q}(w_2\otimes\Co w_1\otimes \id)}=\ovl{\upomega_{\Wbb,\fk P_\circ}\big(\mc U_0(\alpha)w_2\otimes \mc U_0(\alpha^*)\Co w_1\otimes \id\big)}\\
=&\ovl{\upomega_{\Wbb,\fk P_\circ}\big(\mc U_0(\alpha)w_2\otimes \Co\mc U_0(\alpha) w_1\otimes \id\big)}=\upomega_{\Wbb,\fk P_\circ}\big(\mc U_0(\alpha) w_1\otimes \Co\mc U_0(\alpha)w_2\otimes \id\big)\\
=&\upomega_{\Wbb,\fk P_\circ}\big(\mc U_0(\alpha) w_1\otimes \mc U_0(\alpha^*)\Co w_2\otimes \id\big)=\upomega_{\Wbb,\fk Q}(w_1\otimes \Co w_2\otimes \id).
\end{align*}
Thus, we have defined a self-conjugate conformal block $\upomega_{\Wbb,\fk Q}$ for each positive trinion $\fk Q$. 

Finally, for each positive trinions $\fk P,\fk Q$ as in part (b), relation \eqref{eq44} follows from \eqref{eq48} and Cor. \ref{lb29}. This finishes the proof of part (b).
\end{proof}

\subsection{Conformal blocks positively proportional to basic ones}

In this subsection, we let $\Wbb$ be a unitary \emph{simple} $\Vbb$-module. Let
\begin{align*}
\fk P=(\Pbb^1;x_1,x_2,x_3;\eta_1,\eta_2,\eta_3)
\end{align*}
be a positive trinion. By Exp. \ref{lb31}, we have
\begin{align*}
\dim \scr T_{\fk P}^*(\Wbb\otimes\Wbb'\otimes\Vbb)=1
\end{align*}
Therefore, every $\upphi\in\scr T_{\fk P}^*(\Wbb\otimes\Wbb'\otimes\Vbb)$ is proportional to the standard conformal block $\upomega_{\Wbb,\fk P}$.

\begin{df}
We say that $\upphi\in\scr T_{\fk P}^*(\Wbb\otimes\Wbb'\otimes\Vbb)$ is \textbf{positively proportional} (resp. \textbf{strictly positively proportional}) to $\upomega_{\Wbb,\fk P}$ if there exists $\lambda\geq0$ (resp. $\lambda>0$) such that $\upphi=\lambda\upomega_{\Wbb,\fk P}$.
\end{df}

\begin{pp}\label{lb32}
Let $\upphi\in\scr T_{\fk P}^*(\Wbb\otimes\Wbb'\otimes\Vbb)$, and choose a non-zero vector $w\in\Wbb$. Then the following are equivalent.
\begin{enumerate}[label=(\arabic*)]
\item $\upphi$ is positively proportional (resp. strictly positively proportional) to $\upomega_{\Wbb,\fk P}$.
\item $\upphi(w\otimes \Co w\otimes \id)$ is $\geq 0$ (resp. $>0$).
\end{enumerate}
\end{pp}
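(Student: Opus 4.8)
The plan is to exploit the one-dimensionality $\dim\scr T_{\fk P}^*(\Wbb\otimes\Wbb'\otimes\Vbb)=1$ (Exp.~\ref{lb31}), already noted above, to write $\upphi=\lambda\upomega_{\Wbb,\fk P}$ for a unique $\lambda\in\Cbb$, and then to reduce the entire equivalence to the single positivity statement
\begin{align*}
\upomega_{\Wbb,\fk P}(w\otimes\Co w\otimes\id)>0\qquad(w\neq0).
\end{align*}
Granting this, $\upphi(w\otimes\Co w\otimes\id)=\lambda\cdot\upomega_{\Wbb,\fk P}(w\otimes\Co w\otimes\id)$ shows that $\upphi(w\otimes\Co w\otimes\id)$ is $\geq0$ (resp. $>0$) if and only if $\lambda\geq0$ (resp. $\lambda>0$), i.e. if and only if $\upphi$ is positively (resp. strictly positively) proportional to $\upomega_{\Wbb,\fk P}$. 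Thus (1)$\Leftrightarrow$(2) follows at once, and the whole weight of the proof rests on the displayed inequality.

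To prove it, I would first reduce to the standard trinion. Since $\upomega_{\Wbb,\fk P}$ depends only on the equivalence class of $\fk P$ (Thm.~\ref{lb28}) and an equivalence carries the evaluation at $w\otimes\Co w\otimes\id$ by the identity map on $\Wbb\otimes\Wbb'\otimes\Vbb$ (no change-of-coordinate operator enters, because equivalence imposes $\eta_i=\mu_i\circ\varphi$), I may assume after a M\"obius transformation that the marked points are $0,\infty,\im$ with involution $z^*=1/\ovl z$ (Exp.~\ref{lb27}), so that $\fk P=(\Pbb^1;0,\infty,\im;\eta_1,\eta_2,\eta_3)$ with $\eta_2=\eta_1^*$ and $\eta_3$ M\"obius. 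Comparing this $\fk P$ with the standard positive trinion $\fk P_\circ=\eqref{eq43}$ through Thm.~\ref{lb28}(b), and writing $\alpha_1=\zeta\circ\eta_1^{-1}$, $\alpha_2=(1/\zeta)\circ\eta_2^{-1}$, $\alpha_3=\tfrac{\im(\zeta-\im)}{\zeta+\im}\circ\eta_3^{-1}$, I obtain
\begin{align*}
\upomega_{\Wbb,\fk P}(w\otimes\Co w\otimes\id)=\upomega_{\Wbb,\fk P_\circ}\big(\mc U_0(\alpha_1)w\otimes\mc U_0(\alpha_2)\Co w\otimes\mc U_0(\alpha_3)\id\big),
\end{align*}
in which the third operator acts trivially since $\mc U_0(\rho)\id=\id$ for every $\rho\in\Gbb$ (because $L_n\id=0$ for $n\geq0$ and $\id$ has $L_0$-weight $0$).

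The computation then closes via two inputs. First, formula \eqref{eq41} with $v=\id$, using $Y(\id,\im)=\id$ and $2^{L_0}e^{-\im L_1}\id=\id$, gives the explicit value $\upomega_{\Wbb,\fk P_\circ}(w_1\otimes\Co w_2\otimes\id)=\bk{w_1|w_2}$. Second, $\eta_2=\eta_1^*$ forces $\alpha_2=\alpha_1^*$, so the compatibility \eqref{eq47}—valid under the argument convention \eqref{eq48} already built into the definition of $\upomega_{\Wbb,\fk P}$—yields $\mc U_0(\alpha_2)\Co w=\Co\,\mc U_0(\alpha_1)w$. Substituting both facts,
\begin{align*}
\upomega_{\Wbb,\fk P}(w\otimes\Co w\otimes\id)=\bk{\mc U_0(\alpha_1)w\,|\,\mc U_0(\alpha_1)w}=\big\lVert\mc U_0(\alpha_1)w\big\rVert^2,
\end{align*}
which is $>0$ whenever $w\neq0$, as $\mc U_0(\alpha_1)$ is invertible (Thm.~\ref{lb1}, Cor.~\ref{lb29}).

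I expect the only genuinely delicate point to be the bookkeeping in the reduction: one must verify that the pulled-back trinion is again self-conjugate and positive with involution $z^*=1/\ovl z$ and $\eta_2=\eta_1^*$, and—most importantly—that the argument choices \eqref{eq48} used to \emph{define} $\upomega_{\Wbb,\fk P}$ are precisely those under which the conjugation identity \eqref{eq47} holds, so that the step $\mc U_0(\alpha_2)\Co w=\Co\,\mc U_0(\alpha_1)w$ is legitimate. The remaining ingredients, namely the value $\bk{w_1|w_2}$ of the standard block and the triviality of $\mc U_0$ on $\id$, are routine.
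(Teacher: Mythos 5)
Your proposal is correct and takes essentially the same route as the paper's own proof: reduce via $\dim\scr T_{\fk P}^*(\Wbb\otimes\Wbb'\otimes\Vbb)=1$ to proving $\upomega_{\Wbb,\fk P}(w\otimes\Co w\otimes\id)>0$, normalize $\fk P$ by a biholomorphism to $(\Pbb^1;0,\infty,\im;\eta,\eta^*,\xi)$, and then combine Thm.~\ref{lb28} (both \eqref{eq41} and \eqref{eq44}) with the conjugation identity \eqref{eq47} to identify that value with $\lVert\mc U_0(\alpha)w\rVert^2>0$. The points you flag as delicate (that $\alpha_2=\alpha_1^*$ and that the argument convention \eqref{eq48} underlying the definition of $\upomega_{\Wbb,\fk P}$ is exactly what makes \eqref{eq47} applicable) are precisely the ingredients the paper uses, so no gap remains.
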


\begin{proof}
We write $\upomega_{\Wbb,\fk P}$ as $\upomega$ for simplicity. Choose $\lambda\in\Cbb$ such that $\upphi=\lambda\upomega$. To prove the equivalence of (1) and (2), it suffices to show that $\upomega(w\otimes\Co w\otimes\id)>0$. By performing a biholomorphism, we may assume
\begin{align*}
\fk P=(\Pbb^1;0,\infty,\im;\eta,\eta^*,\xi)
\end{align*}
where $\eta^*(z)=\ovl{\eta(1/\ovl z)}$. Let $\alpha\in\Gbb$ be the inverse map of $\eta$, which is also a local coordinate at $0$. Then by Thm. \ref{lb28}, 
\begin{align*}
&\upomega(w\otimes\Co w\otimes\id)=\upomega_{\Wbb,\fk P_0}(\mc U_0(\alpha)w\otimes \mc U_0(\alpha^*)\Co w\otimes \id)\\
\xlongequal{\eqref{eq47}}&\upomega_{\Wbb,\fk P_0}(\mc U_0(\alpha)w\otimes \Co\mc U_0(\alpha) w\otimes \id)=\bk{\mc U_0(\alpha) w|\mc U_0(\alpha) w}
\end{align*}
which is $>0$ because $w\neq 0$ and $\mc U_0(\alpha)$ is invertible.
\end{proof}

\begin{co}\label{lb43}
Choose a neighborhood $U$ of $x_1$ on which $\eta_1$ is defined and univalent. Let $U^*=\{x^*:x\in U\}$, which is a neighborhood of $x_2=x_1^*$ on which $\eta_2=\eta_1^*$ is defined and univalent. Assume that $U\cap U^*=\emptyset$.  Choose $r>0$ such that
\begin{align*}
\eta_1^{-1}(U)\supset\Dbb_r
\end{align*}
and hence $\eta_2^{-1}(U^*)\supset\Dbb_r$. Choose $\tau\in\Dbb_r$. Define a linear map $\upphi:\Wbb\otimes\Wbb'\otimes\Vbb\rightarrow\Cbb$ by 
\begin{align*}
\upphi(w_1\otimes \Co w_2\otimes v)=\upomega_{\Wbb,\fk P}\big(e^{\tau L_{-1}}w_1\otimes e^{\ovl\tau L_{-1}}\Co w_2\otimes v \big)
\end{align*}
which (by Cor. \ref{lb25}) converges absolutely to an element of $\scr T_{\fk Q}^*(\Wbb\otimes\Wbb'\otimes\Vbb)$ where
\begin{align*}
\fk Q=(\Pbb^1;\eta_1^{-1}(\tau),\eta_2^{-1}(\ovl\tau),x_3;\eta_1-\tau,\eta_2-\ovl\tau,\eta_3)
\end{align*}
is clearly a positive trinion. Then $\upphi$ is strictly positively proportional to $\upomega_{\Wbb,\fk Q}$.
\end{co}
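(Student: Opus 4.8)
The plan is to derive this from the positivity criterion Prop.~\ref{lb32}, applied to the positive trinion $\fk Q$. By Cor.~\ref{lb25} we already know $\upphi\in\scr T_{\fk Q}^*(\Wbb\otimes\Wbb'\otimes\Vbb)$, and $\fk Q$ is manifestly a positive trinion, so Prop.~\ref{lb32} applies. Fixing any non-zero $w\in\Wbb$, it then suffices to prove
\[
\upphi(w\otimes\Co w\otimes\id)>0,
\]
which I would establish in two stages: first that the quantity is $\ge0$ (giving positive proportionality, $\upphi=\lambda\upomega_{\Wbb,\fk Q}$ with $\lambda\ge0$), and then that it is non-zero.

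For non-negativity, I first rewrite the translated insertions. Since $\Co$ is antiunitary and $\bk{L_{-1}w_1|w_2}=\bk{w_1|L_1w_2}$ forces $\Co L_{-1}=L_{-1}\Co$, antilinearity gives $e^{\ovl\tau L_{-1}}\Co w=\Co(e^{\tau L_{-1}}w)$. Writing $w'=e^{\tau L_{-1}}w\in\ovl\Wbb$, this turns the defining formula into $\upphi(w\otimes\Co w\otimes\id)=\upomega_{\Wbb,\fk P}(w'\otimes\Co w'\otimes\id)$. Now, as in the proof of Prop.~\ref{lb32}, after moving $\fk P$ to standard position there is an invertible $\mc U_0(\alpha)\in\End(\Wbb)$ with $\upomega_{\Wbb,\fk P}(u_1\otimes\Co u_2\otimes\id)=\bk{\mc U_0(\alpha)u_1|\mc U_0(\alpha)u_2}$ for $u_1,u_2\in\Wbb$; in particular this Hermitian form is positive. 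Applying it to the finite truncations $w'_n=\sum_{k\le n}\tfrac{\tau^k}{k!}L_{-1}^kw\in\Wbb$ yields $\upomega_{\Wbb,\fk P}(w'_n\otimes\Co w'_n\otimes\id)=\lVert\mc U_0(\alpha)w'_n\rVert^2\ge0$, and by the absolute convergence in Cor.~\ref{lb25} the left-hand side tends to $\upomega_{\Wbb,\fk P}(w'\otimes\Co w'\otimes\id)$. Hence $\upphi(w\otimes\Co w\otimes\id)\ge0$, so Prop.~\ref{lb32} gives $\upphi=\lambda\upomega_{\Wbb,\fk Q}$ with $\lambda\ge0$.

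The remaining task is to rule out $\lambda=0$, and this is the genuinely delicate point of the proof: because $e^{\tau L_{-1}}$ maps $\Wbb$ into the algebraic completion $\ovl\Wbb$ rather than into $\Wbb$, one cannot simply invert it to see that $\upphi\neq0$, and a naive ``lowest-weight component'' argument for $\mc U_0(\alpha)w'\neq0$ is obstructed by the infinitely many weight contributions produced by $\mc U_0(\alpha)$. Instead I would argue non-triviality geometrically via sewing. By the construction in Cor.~\ref{lb25}, $\upphi$ is the specialization at sewing parameter $1$ of the sewing of the conformal block $\upomega_{\Wbb,\fk P}\otimes\bk{\cdot,e^{\tau L_1}\cdot}\otimes\bk{\cdot,e^{\ovl\tau L_1}\cdot}$ on the disjoint union of $\fk P$ with two two-pointed spheres, sewn along the pairs at $x_1$ and $x_2$; this block is non-zero. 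Since $\Wbb$ is simple, the modules at the two nodes are irreducible, so this block lies in one summand of the domain of the sewing-factorization map, which is injective by Thm.~\ref{lb13} (at parameter $1$, which is admissible because $|\tau|<r$); injectivity therefore sends it to a non-zero conformal block, whence $\upphi\neq0$ and $\lambda\neq0$. Combining with $\lambda\ge0$ gives $\lambda>0$, so $\upphi(w\otimes\Co w\otimes\id)=\lambda\,\upomega_{\Wbb,\fk Q}(w\otimes\Co w\otimes\id)>0$, and Prop.~\ref{lb32} upgrades the conclusion to strict positive proportionality.
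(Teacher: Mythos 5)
Your proof is correct, and its first stage is exactly the paper's: the paper also truncates $e^{\tau L_{-1}}$ to $R_n w=\sum_{k\le n}\frac{(\tau L_{-1})^k}{k!}w\in\Wbb$, uses the positivity $\upomega_{\Wbb,\fk P}(R_nw\otimes\Co R_nw\otimes\id)\ge0$ coming from Prop.~\ref{lb32}, and passes to the limit via the absolute convergence of Cor.~\ref{lb25} to conclude $\upphi=\lambda\upomega_{\Wbb,\fk Q}$ with $\lambda\ge0$.

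Where you genuinely diverge is in ruling out $\lambda=0$, and there your route differs from the paper's. The paper stays elementary and analytic: assuming $\upphi=0$, it regards $f(\tau,z)=\upomega_{\Wbb,\fk P}\big(e^{\tau L_{-1}}w_1\otimes e^{zL_{-1}}\Co w_2\otimes v\big)$ as a holomorphic function on $\Dbb_r\times\Dbb_r$, observes that every partial derivative $\partial_\tau^m\partial_z^n f$ at the point $(\tau_0,\ovl{\tau_0})$ equals $\upphi(L_{-1}^m w_1\otimes L_{-1}^n\Co w_2\otimes v)=0$, concludes $f\equiv0$ by analyticity, and then evaluates at $(0,0)$ to force $\upomega_{\Wbb,\fk P}=0$, a contradiction. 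You instead exhibit $\upphi$ as the parameter-$1$ sewing of the nonzero block $\upomega_{\Wbb,\fk P}\otimes\bk{\cdot,e^{\tau L_1}\cdot}\otimes\bk{\cdot,e^{\ovl\tau L_1}\cdot}$ and invoke the injectivity half of the sewing-factorization Theorem~\ref{lb13}; the two supporting checks you make are the right ones and both hold: $\Wbb$ and $\Wbb'$ are irreducible by the subsection's standing assumption that $\Wbb$ is simple, so the block sits in a single summand of the factorization domain, and the relevant products of sewing radii equal $r/|\tau|>1$, so $q=1$ is admissible (this matches the radius $\Dbb_{r_i/|\tau_i|}^\times$ computed in the proof of Cor.~\ref{lb25}). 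Both arguments are valid; the trade-off is one of machinery. The paper's argument uses nothing beyond the holomorphicity of the translation series, which Cor.~\ref{lb25}/Thm.~\ref{lb12} already supply, so it is as light as the statement itself. Yours is conceptually quicker but rests on the factorization-injectivity theorem, a much deeper input whose hypotheses ($C_2$-cofiniteness and irreducible insertions) happen to be available here precisely because Cor.~\ref{lb43} presupposes Cor.~\ref{lb25} and simple $\Wbb$ — so no gap results, but the dependence is real.
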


\begin{proof}
Let $R_n=\sum_{k=0}^n\frac{(\tau L_{-1})^k}{k!}$. Then
\begin{align*}
&\upphi(w\otimes\Co w\otimes\id)=\upomega_{\Wbb,\fk P}(e^{\tau L_{-1}}w\otimes \Co e^{\tau L_{-1}}w\otimes \id)\\
=&\lim_{n\rightarrow \infty} \upomega_{\Wbb,\fk P}(R_n w\otimes \Co R_n w\otimes\id)
\end{align*}
which is $\geq 0$ because $\upomega_{\Wbb,\fk P}(R_n w\otimes \Co R_n w\otimes\id)\geq 0$ by Prop. \ref{lb32}. So $\upphi=\lambda\upomega_{\Wbb,\fk P}$ for some $\lambda\geq 0$ by Prop. \ref{lb32}. 

Let us show that $\lambda>0$. Suppose $\lambda=0$. Then $\upphi=0$. We write the original $\tau$ as $\tau_0$ and let $\tau$ denote a complex variable. Consider the holomorphic function
\begin{align*}
f(\tau,z)=\upomega_{\Wbb,\fk P}\big(e^{\tau L_{-1}}w_1\otimes e^{z L_{-1}}\Co w_2\otimes v \big)
\end{align*} 
on $\Dbb_r\times\Dbb_r$. Then for each $m,n\in\Nbb$,
\begin{align*}
&\partial_\tau^m\partial_z^n f(\tau,z)|_{\tau_0,\ovl{\tau_0}}=\upomega_{\Wbb,\fk P}\big(e^{\tau_0 L_{-1}}L_{-1}^mw_1\otimes e^{\ovl{\tau_0} L_{-1}}L_{-1}^n\Co w_2\otimes v \big)\\
=&\upphi(L_{-1}^mw_1\otimes L_{-1}^n\Co w_2\otimes v)=0.
\end{align*}
So $f= 0$. Then $f(0,0)=0$ implies $\upomega_{\Wbb,\fk P}=0$, impossible.
\end{proof}

\section{Geometric positivity of fusion products}

In this section, we let $\Vbb$ be a unitary $C_2$-cofinite VOA. We shall only consider semisimple unitary modules of $\Vbb$. Let $\zeta$ be the standard coordinate of $\Cbb$.

\subsection{Geometric positivity of fusion}\label{lb49}

Choose positive trinions
\begin{align*}
\fk P=(\Pbb^1;x_1,x_2,x_3;\eta_1,\eta_2,\eta_3)\qquad \fk Q=(\Pbb^1;y_1,y_2,y_3;\mu_1,\mu_2,\mu_3)
\end{align*}
Recall that $\eta_3,\mu_3$ are M\"obius. By Prop. \ref{lb34} (or by $\eta_3^*=\eta_3,\mu_3^*=\mu_3$),
\begin{align*}
\eta_3(x_1)=\ovl{\eta_3(x_2)}\qquad \mu_3(y_1)=\ovl{\mu_3(y_2)}.
\end{align*}

We want to sew $\fk P\sqcup\fk Q$ along $x_3,y_3$. So we need to choose $\delta,\rho>0$ such that Asmp. \ref{lb9} and \eqref{eq50} are satisfied. (We write the $r$ in \eqref{eq50} as $\delta$ here.) Therefore, we let $\delta,\rho\in(0,+\infty)$ be
\begin{align*}
\delta=|\eta_3(x_1)|\qquad \rho=|\mu_3(y_1)|
\end{align*}
Then for each $0<q<\delta\rho$, we can define the sewing
\begin{align*}
\fk P\#_q\fk Q=\mc S_q(\fk P\sqcup\fk Q)
\end{align*}
along $x_3,y_3$ with parameter $q$. Corresponding to this sewing, if $\upphi\in\scr T_{\fk P}^*(\Wbb_1'\otimes\Wbb_1\otimes\Vbb)$ and $\uppsi\in \scr T_{\fk Q}^*(\Wbb_2\otimes\Wbb_2'\otimes\Vbb)$ where $\Wbb_1,\Wbb_2$ are unitary $\Vbb$-modules, we can define sewing
\begin{align*}
\upphi\#_q\uppsi=\mc S_q(\upphi\otimes\uppsi)
\end{align*}
which, by Cor. \ref{lb25}, converges absolutely (as a power series of $q$) to an element 
\begin{align*}
\scr T_{\fk P\#_q\fk Q}^*(\Wbb_1'\otimes\Wbb_1\otimes\Wbb_2\otimes\Wbb_2')
\end{align*}
Note that this sewing is independent of the choice of $\arg q$ since $L_0$ has integral spectrum on $\Vbb$.

\begin{df}\label{lb39}
Choose unitary semisimple $\Vbb$-modules $\Wbb_1,\Wbb_2$. Associate $\Wbb_1',\Wbb_1,\Vbb$ to the marked points $x_1,x_2,x_3$ and $\Wbb_2,\Wbb_2',\Vbb$ to $y_1,y_2,y_3$ respectively. Choose $0<q<\delta\rho$ with $\arg q=0$. We say that $\Wbb_1$ and $\Wbb_2$ have \textbf{geometrically positive fusion (product)} (or that $\Wbb_1\boxtimes\Wbb_2$ is \textbf{geometrically positive}) if the conformal block
\begin{align*}
\upomega_{\Wbb_1',\fk P}\#_q\upomega_{\Wbb_2,\fk Q}:\Wbb_1'\otimes\Wbb_1\otimes\Wbb_2\otimes\Wbb_2'\rightarrow\Cbb
\end{align*}
associated to $\fk P\#_q\fk Q$ is \textbf{reflection positive} in the sense that for each $n\in\Zbb_+$ and each $w_1,\dots,w_n\in\Wbb_1$, $\wtd w_1,\dots,\wtd w_n\in\Wbb_2$, we have
\begin{align}\label{eq102}
\sum_{k,l=1}^n\upomega_{\Wbb_1',\fk P}\#_q\upomega_{\Wbb_2,\fk Q}\big(\Co w_l\otimes  w_k\otimes  \wtd w_k \otimes \Co\wtd w_l\big)\geq0
\end{align}
\end{df}

\begin{lm}\label{lb40}
The notion of geometric positivity of the fusion product of $\Wbb_1,\Wbb_2$ is independent of the choice of the positive trinions $\fk P,\fk Q$ and the sewing parameter $q$.
\end{lm}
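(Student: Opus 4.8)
The plan is to read Definition~\ref{lb39} as the positive semidefiniteness of a single Hermitian form and then to show that every auxiliary choice acts on this form by an \emph{invertible} congruence, so that positivity is not merely preserved but is equivalent before and after the change. First I would note that $\fk P\#_q\fk Q$ is a self-conjugate $4$-pointed sphere: the involution of $\fk P$ (which fixes $x_3$ and exchanges $x_1,x_2$) and that of $\fk Q$ glue across the node, because the gluing relation $\eta_3\cdot\mu_3=q$ with $q\in\Rbb_{>0}$ is preserved by the local conjugations $\eta_3\mapsto\ovl{\eta_3}$, $\mu_3\mapsto\ovl{\mu_3}$. With this structure, the inequality in Definition~\ref{lb39} is exactly the statement that the sesquilinear form on $\Wbb_1\otimes\Wbb_2$ obtained from $\upomega_{\Wbb_1',\fk P}\#_q\upomega_{\Wbb_2,\fk Q}$ (placing the ``$\xi$'' vectors at $x_2,y_1$ and the ``$\Co\xi$'' vectors at $x_1,y_2$) is positive semidefinite; geometric positivity is therefore a property of the congruence class of this form.

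Next I would check that each admissible change of data conjugates the form by an invertible operator. Changing the external coordinates $\eta_1,\mu_1$ (and automatically their conjugates $\eta_2=\eta_1^*$, $\mu_2=\mu_1^*$) multiplies the basic blocks by the operators $\mc U_0(\,\cdot\,)$ of Theorem~\ref{lb28}(b); the argument condition \eqref{eq48} yields the compatibility $\Co\,\mc U_0=\mc U_0^{\,*}\,\Co$ of \eqref{eq47}, so the proof of Proposition~\ref{lb42} applies verbatim to the self-conjugate surface $\fk P\#_q\fk Q$ and conjugates the form by an invertible $\mc U_0$. Moving a marked point is governed by Corollary~\ref{lb43} and Proposition~\ref{lb44}: the translated block is the old one precomposed with the invertible operator $e^{\tau L_{-1}}$, and Corollary~\ref{lb43} shows the resulting trinion block is \emph{strictly} positively proportional to the basic block of the translated positive trinion, so no sign is lost. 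Finally, a biholomorphic equivalence of $\fk P$ (resp. $\fk Q$) extends to an equivalence of $\fk P\#_q\fk Q$ and, by Theorem~\ref{lb28}, carries one basic block to the other, leaving the form unchanged up to relabeling. Since each move is invertible, I may use Propositions~\ref{lb34}--\ref{lb35} to reduce $\fk P,\fk Q$ to standard positive trinions $\fk P_\circ(\gamma),\fk Q_\circ(\gamma')$, eliminate the dependence on the external coordinates, and then eliminate the dependence on the positions $\gamma,\gamma'\in\Dbb_1$ by a chain of translations (Corollary~\ref{lb43}); at this stage only the sewing parameter $q$ survives.

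The remaining, and genuinely non-formal, point is independence of $q$, which I expect to be the main obstacle: at fixed trinions the surfaces $\fk P\#_q\fk Q$ for different $q$ are \emph{inequivalent} (their cross-ratio depends on $q$), so no equivalence or coordinate change connects them directly. I would resolve this by trading the parameter for the sewing coordinate. Replacing $\eta_3$ by $\lambda\eta_3$ with $\lambda\in\Rbb_{>0}$ keeps $\fk P$ a standard positive trinion and, by Theorem~\ref{lb28}(b), inserts $\lambda^{-L_0}$ at the node; summing over the $\Vbb$-channel gives
\[
\upomega_{\Wbb_1',\fk P_\lambda}\#_q\upomega_{\Wbb_2,\fk Q}=\upomega_{\Wbb_1',\fk P}\#_{q/\lambda}\upomega_{\Wbb_2,\fk Q},
\]
so rescaling $\eta_3$ at fixed $q$ produces the \emph{same form} as changing $q$ to $q/\lambda$. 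On the other hand, a M\"obius transformation fixing the node $x_3$ and scaling its derivative by $\lambda^{-1}$ commutes with the involution and converts $\fk P_\lambda$ into an honest standard trinion $\fk P_\circ(\gamma'')$ with a shifted position; this is an equivalence, hence positivity-preserving, and the shift $\gamma\mapsto\gamma''$ is in turn undone by translations. Composing these three positivity-preserving identifications shows that the form at parameter $q/\lambda$ is congruent to the form at parameter $q$ for every $\lambda>0$, which gives the $q$-independence and completes the proof. The technical care left is the absolute convergence and invertibility of the operators $e^{\tau L_{-1}}$ and the verification that the node-fixing M\"obius maps reach the required positions, both of which are controlled by Corollary~\ref{lb25} and the classification in Proposition~\ref{lb35}.
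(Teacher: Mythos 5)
Your toolkit is the correct one, and it is in fact the paper's own: your identity $\upomega_{\Wbb_1',\fk P_\lambda}\#_q\upomega_{\Wbb_2,\fk Q}=\upomega_{\Wbb_1',\fk P}\#_{q/\lambda}\upomega_{\Wbb_2,\fk Q}$ is exactly Remark \ref{lb36} (the paper rescales both node coordinates $\eta_3,\mu_3$ to reduce to $q=1$ at the very start, so the $q$-dependence you call "the main obstacle" is a one-line reduction there); changes of external coordinates are handled, there as here, by Theorem \ref{lb28}-(b) together with Proposition \ref{lb42}; and motions of the marked points by Corollary \ref{lb43} together with Proposition \ref{lb44}. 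So this is not a different route, but the paper's proof with its one genuinely non-formal step missing.

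The gap is the phrase "eliminate the dependence on the positions \dots by a chain of translations", which you use twice (once before the $q$-reduction, and once to undo the shift $\gamma\mapsto\gamma''$ after re-standardizing the node coordinate). Corollary \ref{lb43} and Proposition \ref{lb44} only move marked points inside small disks, subject to the disjointness requirements of Assumption \ref{lb9} and to the sewing constraint $0<q<\delta\rho$, where $\delta,\rho$ are the moduli of the node coordinates evaluated at the other marked points and hence vary as the points move. A chain of such local moves connects two configurations only if they lie in the same connected component of the admissible parameter space, and the constraints couple the two positions to each other and to $q$, so connectedness is not automatic and is not supplied by Corollary \ref{lb25} or Proposition \ref{lb35}, which you cite as the remaining "technical care". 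This is precisely what the paper proves: after standardization the admissible configurations form the set $O=\{(\theta_1,z_2)\in\Cbb^2:\Imag(\theta_1)<0,\ |z_2|<1,\ |\theta_1|<|\varpi(z_2)|\}$, and its connectedness is established by transporting it via $\id\times\varpi$ to $O'=\{(\theta_1,\theta_2)\in\Cbb^2:\Imag(\theta_1)<0,\ \Imag(\theta_2)<0,\ |\theta_1|<|\theta_2|\}$, which is visibly connected. Without this your chains of translations may simply fail to exist. A second, smaller omission: Corollary \ref{lb43} (and Proposition \ref{lb32} behind it) is stated for \emph{simple} unitary modules, since it rests on $\dim\scr T_{\fk P}^*(\Wbb\otimes\Wbb'\otimes\Vbb)=1$; so before invoking it you must first reduce to irreducible $\Wbb_1,\Wbb_2$, which is Remark \ref{lb41} and uses the block-diagonal form of the basic conformal blocks with respect to an orthogonal irreducible decomposition.
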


We will explain this lemma in the next subsection. We will also show that $\fk P\#_q\fk Q$ equals $\fk R\#_{q'}\fk R^*$ for some  trinion $\fk R$ and some $q'>0$, and that the reflection positivity defined above agrees with that defined in Def. \ref{lb33}. Let us first make some simplifications.

\begin{rem}\label{lb36}
Given $0<q<\delta\rho$, we can find some $q_1,q_2$ with $\arg q_1=\arg q_2=0$ such that
\begin{gather*}
q=q_1q_2\qquad 0<q_1<\delta\qquad 0<q_2<\rho
\end{gather*}
(e.g. $q_1=\sqrt{q\delta/\rho}$ and $q_2=\sqrt{q\rho/\delta}$). Let $\wtd\eta_3=\eta_3/q_1$ and $\wtd\mu_3=\mu_3/q_2$. Then
\begin{align*}
\wtd{\fk P}=(\Pbb^1;x_1,x_2,x_3;\eta_1,\eta_2,\wtd\eta_3)\qquad \wtd{\fk Q}=(\Pbb^1;y_1,y_2,y_3;\mu_1,\mu_2,\wtd\mu_3)
\end{align*}
are positive trinions, and we clearly have
\begin{gather*}
\wtd{\fk P}\#_1\wtd{\fk Q}=\fk P\#_q\fk Q\\
\upomega_{\Wbb_1',\wtd{\fk P}}=\upomega_{\Wbb_1',\fk P}\circ(\id\otimes\id\otimes q_1^{L_0})\qquad \upomega_{\Wbb_2,\wtd{\fk Q}}=\upomega_{\Wbb_2,\fk Q}\circ(\id\otimes\id\otimes q_2^{L_0})\\
\upomega_{\Wbb_1',\wtd{\fk P}}\#_1\upomega_{\Wbb_2,\wtd{\fk Q}}=\upomega_{\Wbb_1',\fk P}\#_q\upomega_{\Wbb_2,\fk Q}
\end{gather*}
Thus, in the study of geometric positivity of fusion, 
\begin{align*}
\text{we may assume that $q=1$ and $\arg q=0$ (and hence $\delta\rho>1$).}
\end{align*}
\end{rem}

\begin{rem}\label{lb41}
Choose orthogonal irreducible decompositions
\begin{align}
\Wbb_1=\bigoplus_i \Wbb_{1,i}\qquad \Wbb_2=\bigoplus_\alpha \Wbb_{2,\alpha} \label{eq75}
\end{align}
By the description of basic conformal blocks in Thm. \ref{lb28}, 
the restriction of $\upomega_{\Wbb'_1,\fk P}$ to $\Wbb_{1,i}'\otimes\Wbb_{1,j}\otimes\Vbb$ is zero if $i\neq j$, and similarly, the restriction of $\upomega_{\Wbb_2,\fk Q}$ to $\Wbb_{2,\alpha}\otimes\Wbb_{2,\beta}'\otimes\Vbb$ is zero $\alpha\neq\beta$. It follows that
\begin{gather*}
\upomega_{\Wbb_1',\fk P}\#_q\upomega_{\Wbb_2,\fk Q}\text{ is reflection positive}\\
\Updownarrow\\
\upomega_{\Wbb_{1,i}',\fk P}\#_q\upomega_{\Wbb_{2,\alpha},\fk Q}\text{ is reflection positive for each }i,\alpha
\end{gather*}
Thus, in the study of geometric positivity of fusion, we may assume that the unitary $\Vbb$-module $\Wbb_1,\Wbb_2$ are irreducible.
\end{rem}

\subsection{The canonical equivalence $\fk P\#_1\fk Q\simeq\fk R\#_1\fk R^*$}\label{lb46}

Let $\fk P,\fk Q$ be as in the previous subsection. By Prop. \ref{lb34} and \ref{lb35}, we can assume that
\begin{subequations}\label{eq58}
\begin{gather}
\fk P=\big(\Pbb^1;\ovl{\theta_1},\theta_1,\infty;\eta,\eta^*,1/\zeta\big)  \label{eq56}\\
\fk Q=\big(\Pbb^1;z_2,1/\ovl{z_2},\im;\mu,\mu^*,\varpi\big)  \label{eq57}
\end{gather}
\end{subequations}
where $\zeta$ is the standard coordinate of $\Cbb$, $\theta_1,z_2\in\Cbb$ satisfy
\begin{align}
\Imag(\theta_1)<0,\qquad |z_2|<1,  \label{eq51}
\end{align}
and also
\begin{gather*}
\eta^*(z)=\ovl{\eta(\ovl z)}\qquad \mu^*(z)=\ovl{\mu(1/\ovl z)}\\
\varpi=\frac{\im(\zeta-\im)}{\zeta+\im}
\end{gather*}
We assume the condition $\delta\rho>1$ in Rem. \ref{lb36}, which in the present setting reads
\begin{align}
|\theta_1|<|\varpi(z_2)|.  \label{eq59}
\end{align}

\begin{rem}
One easily calculates that the inverse of $\varpi$ equals the reciprocal of $\varpi$:
\begin{align*}
\varpi^{-1}=1/\varpi=\frac{\zeta+\im}{\im(\zeta-\im)}
\end{align*}
We set
\begin{align*}
z_1=\varpi^{-1}(\theta_1)=1/\varpi(\theta_1).
\end{align*}
Then \eqref{eq51} is equivalent to
\begin{align*}
|z_1|<1,\qquad |z_2|<1.
\end{align*}
We will use the symbol $z_1$ frequently in the following discussions.
\end{rem}

We make some crucial geometric observations:

\begin{obs}\label{lb37}
Take sewing $\fk P\#_1\fk Q$ along $\infty$ and $\im$. Namely (cf. Def. \ref{lb8}), we remove $\infty\in\fk P_1$ and $\im\in\fk Q$, and glue the remaining part using the relation that
\begin{gather*}
\gamma_1\in\fk P\setminus\{\infty\}\quad\thicksim\quad \gamma_2\in\fk Q\setminus\{\im\}\\
\Updownarrow\\
\gamma_1=\varpi(\gamma_2)
\end{gather*}
Then we have an equivalence
\begin{equation}\label{eq54}
\tcboxmath{\begin{tikzcd}
\fk P\#_1\fk Q=(\Pbb^1\#_1\Pbb^1;\ovl{\theta_1},\theta_1,z_2,1/\ovl{z_2};\eta,\eta^*,\mu,\mu^*) \arrow[d, "\simeq"] \\
\fk S\coloneq\big(\Pbb^1;1/\ovl{z_1},z_1,z_2,1/\ovl{z_2};\eta\circ\varpi,\eta^*\circ\varpi,\mu,\mu^*\big)    \arrow[u]                 
\end{tikzcd}}
\end{equation}
by making the identifications
\begin{gather*}
\gamma_1\in\fk P\setminus\{\infty\}\quad\simeq\quad \varpi^{-1}(\gamma_1)\in\fk S\\
\gamma_2\in\fk Q\setminus\{\im\}\quad\simeq\quad \gamma_2\in\fk S
\end{gather*}
Notice that
\begin{align*}
\eta^*\circ\varpi(z)=\ovl{\eta\circ\varpi(1/\ovl z)}
\end{align*}
since, by Exp. \ref{lb27}, the pullback of the involution $z\mapsto \ovl z$ along $\varpi$ is $z\mapsto 1/\ovl z$.
\end{obs}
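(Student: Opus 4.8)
The plan is to produce an explicit biholomorphism from the sewn surface $\fk P\#_1\fk Q$ onto a single copy of $\Pbb^1$ and then check that it carries the four marked points and their local coordinates onto those of $\fk S$. First I would unwind the sewing recipe of Def.~\ref{lb8} in this case: the local coordinate at $\infty\in\fk P$ is $1/\zeta$ and the one at $\im\in\fk Q$ is $\varpi$, so the gluing relation \eqref{eq15} with $q=1$ reads $(1/\zeta)(\gamma_1)\cdot\varpi(\gamma_2)=1$, i.e. $\gamma_1=\varpi(\gamma_2)$, exactly as asserted. I then take $\fk S$ to be $\Pbb^1$ and build the comparison map out of the two pieces in the statement, namely $\Phi_{\fk P}\colon\gamma_1\mapsto\varpi^{-1}(\gamma_1)$ on $\fk P\setminus\{\infty\}$ and $\Phi_{\fk Q}=\mathrm{id}$ on $\fk Q\setminus\{\im\}$.

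The first key step is to check these patch to a biholomorphism $\Phi\colon\fk P\#_1\fk Q\to\Pbb^1$. On the overlapping glued annulus the sewing identifies $\gamma_1$ with $\gamma_2$ precisely when $\gamma_1=\varpi(\gamma_2)$, and then $\Phi_{\fk P}(\gamma_1)=\varpi^{-1}(\gamma_1)=\gamma_2=\Phi_{\fk Q}(\gamma_2)$, so the two maps agree and define a single holomorphic map. Since $\Phi_{\fk P}=\varpi^{-1}$ is a M\"obius transformation and $\Phi_{\fk Q}$ is the identity, both are injective; the image of $\fk P\setminus\{\infty\}$ is a region $\{|\varpi|<M\}$ (note $\im=\varpi^{-1}(0)$ lies in it, filling the puncture left by removing $\im$ on the $\fk Q$-side) and the image of $\fk Q\setminus\{\im\}$ is a complementary region $\{|\varpi|>\varepsilon\}$, and the overlap condition $\delta\rho>1$ (equivalently \eqref{eq59}) forces $\varepsilon<M$ so that the two images cover $\Pbb^1$. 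Hence $\Phi$ is a holomorphic bijection of compact Riemann surfaces, so a biholomorphism.

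Next I would track marked points and coordinates. By the definition of equivalence, the $\fk S$-coordinate at $\Phi(x)$ is $\eta_i\circ\Phi^{-1}$; since $\Phi^{-1}=\varpi$ on the $\fk P$-part, this sends $\eta$ at $\ovl{\theta_1}$ to $\eta\circ\varpi$ at $\varpi^{-1}(\ovl{\theta_1})$ and $\eta^*$ at $\theta_1$ to $\eta^*\circ\varpi$ at $\varpi^{-1}(\theta_1)=z_1$, while $\Phi_{\fk Q}=\mathrm{id}$ leaves $z_2,1/\ovl{z_2}$ and their coordinates $\mu,\mu^*$ fixed. This matches $\fk S$ once I know $\varpi^{-1}(\ovl{\theta_1})=1/\ovl{z_1}$, and this is where the crux lies: a direct computation gives $\varpi(1/\ovl z)=\ovl{\varpi(z)}$ (both sides reduce to $\tfrac{1-\im\ovl z}{\ovl z-\im}$ after clearing denominators), i.e. $\varpi$ intertwines the involution $z\mapsto 1/\ovl z$ with $w\mapsto\ovl w$. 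Applying $\varpi^{-1}$ yields $\varpi^{-1}(\ovl w)=1/\ovl{\varpi^{-1}(w)}$, which at $w=\theta_1$ gives $\varpi^{-1}(\ovl{\theta_1})=1/\ovl{z_1}$; the same symmetry together with $\eta^*(w)=\ovl{\eta(\ovl w)}$ produces the final displayed identity $\eta^*\circ\varpi(z)=\ovl{\eta\circ\varpi(1/\ovl z)}$. I expect this conjugation identity for $\varpi$—and the care needed to apply it in the correct direction ($\varpi$ versus $\varpi^{-1}$)—to be the only genuinely delicate point; everything else is bookkeeping within the sewing construction.
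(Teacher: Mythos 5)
Your proposal is correct and matches the paper's own reasoning: the Observation is established exactly by patching $\varpi^{-1}$ on $\fk P\setminus\{\infty\}$ with the identity on $\fk Q\setminus\{\im\}$, which is compatible with the gluing relation $\gamma_1=\varpi(\gamma_2)$, and then tracking marked points and coordinates via $\varpi^{-1}(\ovl{\theta_1})=1/\ovl{z_1}$. The only (cosmetic) difference is that you verify the key symmetry $\varpi(1/\ovl z)=\ovl{\varpi(z)}$ by direct computation, whereas the paper deduces it from the uniqueness of involutions in Exp.~\ref{lb27}; both are equally valid.
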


\begin{rem}
The $\fk S$ in Obs. \ref{lb37} is a self-conjugate $4$-pointed sphere with local coordinates, if we define the involution $\fk S$ to be
\begin{align*}
*: z\in\fk S\mapsto 1/\ovl z
\end{align*}
This involution agrees with that of $\fk P$ and $\fk Q$. In general, if we sew a pair of self-conjugate pointed compact Riemann surfaces with local coordinates, we also get a self-conjugate one.
\end{rem}

\begin{obs}\label{lb38}
We continue the discussion in Obs. \ref{lb37}. Let
\begin{align}
\fk R=(\Pbb^1;z_1,z_2,\infty;\eta^*\circ\varpi,\mu,1/\zeta)  \label{eq72}
\end{align}
Then we can make an explicit realization of $\fk R^*$:
\begin{align*}
\fk R^*=(\Pbb^1;1/\ovl{z_1},1/\ovl{z_2},0;\eta\circ\varpi,\mu^*,\zeta)
\end{align*}
The sewing $\fk R\#_1\fk R^*$ is given by
\begin{gather*}
\gamma_1\in\fk R\setminus\{\infty\}\quad\thicksim\quad \gamma_2\in\fk R^*\setminus\{0\}\\
\Updownarrow\\
\gamma_1=\gamma_2
\end{gather*}
Then we have a canonical identification
\begin{align*}
\tcboxmath{\fk R\#_1\fk R^*\simeq\big(\Pbb^1;z_1,z_2,1/\ovl{z_1},1/\ovl{z_2};\eta^*\circ\varpi,\mu,\eta\circ\varpi,\mu^*\big)}
\end{align*}
We change the order of marked points by moving the third one $1/\ovl{z_1}$ to the third position. Then we obtain a canonical equivalence $\fk R\#_1\fk R^*\simeq\fk S$, which is also equivalent to $\fk P\#_1\fk Q$ through \eqref{eq54}.
\end{obs}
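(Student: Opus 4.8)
The plan is to establish the two boxed identifications of Obs.~\ref{lb38} by bookkeeping marked points and local coordinates, and then to chain the result with Obs.~\ref{lb37}. Everything rests on fixing a concrete realization of the abstract conjugate sphere $\Pbb^{1*}$ as $\Pbb^1$.

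First I would realize $\fk R^*$. Recalling Def.~\ref{lb22}, $\fk R^*=(\Pbb^{1*};z_1^*,z_2^*,\infty^*;(\eta^*\circ\varpi)^*,\mu^*,(1/\zeta)^*)$. I realize $\Pbb^{1*}$ as $\Pbb^1$ through the biholomorphism $j$ given in the standard coordinate by $z^*\mapsto 1/\ovl z$ (so $\infty^*\mapsto 0$); this is the realization for which the involution of the sewn surface is $z\mapsto 1/\ovl z$, matching $\fk S$. Under $j$ the marked points become $1/\ovl{z_1},1/\ovl{z_2},0$ as claimed. For the coordinates I use the rule that a conjugate function $f^*$, read through $j$, becomes $w\mapsto\ovl{f(1/\ovl w)}$, together with the three relations at hand: $\eta^*(z)=\ovl{\eta(\ovl z)}$, $\mu^*(z)=\ovl{\mu(1/\ovl z)}$, and $\varpi(1/\ovl z)=\ovl{\varpi(z)}$ (the last from Exp.~\ref{lb27}). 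A one-line computation shows the conjugate of $1/\zeta$ reads $\zeta$ and the conjugate of $\mu$ reads $\mu^*$; the only real calculation is the conjugate of $\eta^*\circ\varpi$, where $w\mapsto\ovl{(\eta^*\circ\varpi)(1/\ovl w)}$ collapses to $\eta\circ\varpi$ upon chaining $\varpi(1/\ovl w)=\ovl{\varpi(w)}$ with $\eta^*(\ovl z)=\ovl{\eta(z)}$. This confirms $\fk R^*=(\Pbb^1;1/\ovl{z_1},1/\ovl{z_2},0;\eta\circ\varpi,\mu^*,\zeta)$.

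Next I read off the sewing. Sewing along $\infty\in\fk R$ and $0\in\fk R^*$ with coordinates $1/\zeta$ and $\zeta$ at parameter $q=1$, the gluing rule \eqref{eq15} reads $(1/\gamma_1)\cdot\gamma_2=1$, i.e. $\gamma_1=\gamma_2$. Hence the two copies of $\Pbb^1$ are glued by the identity on their common standard coordinate, so the sewn surface is canonically $\Pbb^1$ itself (the $\fk R$-copy supplying a neighborhood of $0$, the $\fk R^*$-copy a neighborhood of $\infty$), and the four marked points $z_1,z_2$ (coordinates $\eta^*\circ\varpi,\mu$) and $1/\ovl{z_1},1/\ovl{z_2}$ (coordinates $\eta\circ\varpi,\mu^*$) sit on it; this is the first boxed identity. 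Here $|z_i|<1<|1/\ovl{z_i}|$ places the two pairs on opposite sides of the equator, so for suitable gluing radii no marked point is discarded, which is exactly the content of \eqref{eq59}. Permuting $1/\ovl{z_1}$ into the first slot turns this data into $\fk S$, and composing with the equivalence $\fk P\#_1\fk Q\simeq\fk S$ of \eqref{eq54} (Obs.~\ref{lb37}) yields $\fk R\#_1\fk R^*\simeq\fk P\#_1\fk Q$.

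I expect the coordinate bookkeeping of the first step to be the only delicate point, specifically the identity that the conjugate of $\eta^*\circ\varpi$ equals $\eta\circ\varpi$, where the conjugation of $\fk P$'s coordinate, the conjugation relation for the M\"obius map $\varpi$, and the realization-through-$j$ rule must be composed in the right order. Everything else is forced once $j:z^*\mapsto 1/\ovl z$ is fixed: the conceptual content is merely that sewing two spheres along $0\leftrightarrow\infty$ by the identity reproduces the sphere.
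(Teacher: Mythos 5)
Your proposal is correct and follows essentially the same route as the paper: the Observation is precisely this bookkeeping (realizing $\Pbb^{1*}$ as $\Pbb^1$ via $z^*\mapsto 1/\ovl z$, computing the conjugated coordinates — in particular $(\eta^*\circ\varpi)^*=\eta\circ\varpi$ via $\varpi(1/\ovl z)=\ovl{\varpi(z)}$ — reading off the gluing rule $\gamma_1=\gamma_2$ from $(1/\gamma_1)\cdot\gamma_2=1$, and recognizing the glued surface as $\Pbb^1$), which the paper leaves implicit. One minor slip: the condition guaranteeing no marked point of $\fk R\#_1\fk R^*$ is discarded is \eqref{eq51} (i.e. $|z_1|,|z_2|<1$), not \eqref{eq59}; the latter is what makes the sewing $\fk P\#_1\fk Q$ in Obs.~\ref{lb37} well-defined, so it enters only through the final identification with $\fk P\#_1\fk Q$.
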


We summarize the above observations by the formula
\begin{align}
\fk P\#_1\fk Q\simeq \fk R\#_1\fk R^*  \label{eq55}
\end{align}
We emphasize that in the above equivalence, we have changed the order of marked points. (In the case of positive trinions, the order cannot be altered. But in the current situation, the order is immaterial.)

\begin{rem}
Using Def. \ref{lb33}, we can define reflection positivity for elements of $\scr T_{\fk R\#_1\fk R^*}^*(\Wbb_1\otimes\Wbb_2\otimes\Wbb_1'\otimes\Wbb_2')$. This can be translated to the reflection positivity of elements of $\scr T_{\fk P\#_1\fk Q}^*(\Wbb_1'\otimes\Wbb_1\otimes\Wbb_2\otimes\Wbb_2')$  thanks to the equivalence \eqref{eq55}. It is clear that this notion of reflection positivity agrees with the one in Def. \ref{lb39}.
\end{rem}

\begin{proof}[\textbf{Proof of Lem. \ref{lb40}}]
By Rem. \ref{lb36} and \ref{lb41}, we may assume that $q=1,\arg q=0$ and that $\Wbb_1,\Wbb_2$ are irreducible. Choose $\fk P=\eqref{eq56}$ and $\fk Q=\eqref{eq57}$ and assume that $\upomega_{\Wbb_1',\fk P}\#_1\upomega_{\Wbb_2,\fk Q}$ is reflection positive. By Thm. \ref{lb28}-(b) and Prop. \ref{lb42}, if we change the local coordinates $\eta,\mu$ of $\fk P,\fk Q$, the reflection positivity is preserved.

Thus, we may assume that $\eta=\zeta-\ovl{\theta_1}$ and $\mu=\zeta-z_2$ in \eqref{eq58}. By Cor. \ref{lb43} and Prop. \ref{lb44}, the reflection positivity of $\upomega_{\Wbb_1',\fk P}\#_1\upomega_{\Wbb_2,\fk Q}$ is preserved if we choose $\tau_1,\tau_2\in\Dbb_\varepsilon$ (for some small $\varepsilon>0$) and replace $\theta_1$ by $\theta_1+\tau_1$ and replace $z_2$ by $z_2+\tau_2$. Namely, the reflection positivity is preserved by local translation. Thus, to prove that the reflection positivity is irrelevant to the choice of $\theta_1$ and $z_2$, it suffices to show that the set of $(\theta_1,z_2)$ satisfying \eqref{eq51} and \eqref{eq59} is connected, namely, that
\begin{align*}
O=\{(\theta_1,z_2)\in\Cbb^2:\Imag(\theta_1)<0,|z_2|<1,|\theta_1|<|\varpi(z_2)|\}
\end{align*}
is connected. But this follows from the fact that $\id\times\varpi$ sends $O$ biholomorphically to 
\begin{align*}
O'=\{(\theta_1,\theta_2)\in\Cbb^2;\Imag(\theta_1)<0,\Imag(\theta_2)<0,|\theta_1|<|\theta_2|\}
\end{align*}
which is clearly connected.
\end{proof}

\subsection{The canonical inner product on $\Wbb_1\boxtimes_{\fk R}\Wbb_2$}\label{lb47}

We assume that $\Vbb$ is unitary, $C_2$-cofinite, and rational. We assume that all irreducible $\Vbb$-modules are unitary. Let $\Wbb_1,\Wbb_2$ be unitary $\Vbb$-modules, which are automatically semisimple since $\Vbb$ is rational. 

In this subsection, we use the Sewing-factorization Thm. \ref{lb13} to define a canonical inner product on the $\Vbb$-module $\Wbb_1\boxtimes\Wbb_2$ when this fusion is geometrically positive. We will see in the next section that when $\Wbb_1,\Wbb_2$ are irreducible, this inner product equals, up to a positive scalar multiplication, the inner product defined  in \cite{Gui19b}. If the fusion of all unitary irreducible $\Vbb$-modules are geometrically positive, we will call $\Vbb$ a \textbf{completely unitary} VOA. In this case, the inner products on all $\Wbb_1\boxtimes\Wbb_2$ defined in this way make the braided tensor category of $\Vbb$-modules unitary (\cite[Thm. 7.8]{Gui19b}). Due to this fact, one is expected to show that the  connections on the bundles of conformal blocks (as defined in \cite[Chapter 17]{FB04}) is projectively unitary, \emph{and that the sewing map $\fk S_q$ in Thm. \ref{lb13} is also unitary}, as pointed out in \cite[Thm. 10.10]{Kir98}. This subsection is not necessarily needed in later proofs, and can be skipped on first reading.

For each equivalence class of irreducible unitary $\Vbb$-modules we choose a representative $\Mbb$, and let all these representatives form a (necessarily finite) set $\mc E$. If for each $\Mbb\in\mc E$ we choose a basis $(\upphi_{\Mbb,\alpha})_{\alpha\in\mc A_\Mbb}$ of the vector space $\scr T_{\fk R}^*(\Wbb_1\otimes\Wbb_2\otimes\Mbb')$ (which is finite-dimensional by Thm. \ref{lb11}), then by Thm. \ref{lb13},
\begin{align}
\{\upphi_{\Mbb,\alpha}\#_1\upphi_{\Mbb,\beta}^*:\Mbb\in\mc E,\alpha,\beta\in\mc A_{\Mbb}\}  \label{eq60}
\end{align}
form a basis of $\scr T_{\fk R\#_1\fk R^*}^*(\Wbb_1\otimes\Wbb_2\otimes\Wbb_1'\otimes\Wbb_2')$.

We define the tensor product module
\begin{align}\label{eq66}
\Wbb_1\boxtimes_{\fk R}\Wbb_2=\bigoplus_{\Mbb\in\mc E}\Mbb\otimes\scr T_{\fk R}(\Wbb_1\otimes\Wbb_2\otimes\Mbb')
\end{align}
where
\begin{align*}
\scr T_{\fk R}(\Wbb_1\otimes\Wbb_2\otimes\Mbb')\text{ is the dual space of }\scr T_{\fk R}^*(\Wbb_1\otimes\Wbb_2\otimes\Mbb')
\end{align*}
So we have
\begin{align*}
(\Wbb_1\boxtimes_{\fk R}\Wbb_2)'=\bigoplus_{\Mbb\in\mc E}\Mbb'\otimes\scr T_{\fk R}^*(\Wbb_1\otimes\Wbb_2\otimes\Mbb')
\end{align*}
Define the canonical conformal block $\Uppsi:\Wbb_1\otimes\Wbb_2\otimes (\Wbb_1\boxtimes_{\fk R}\Wbb_2)'\rightarrow\Cbb$ associated to $\fk R$ by
\begin{align}
\Uppsi\big(w_1\otimes w_2\otimes (m'\otimes\upphi)\big)=\upphi(w_1\otimes w_2\otimes m') \label{eq65}
\end{align}
where $w_1\in\Wbb_1,w_2\in\Wbb_2,m'\in\Mbb'$, and $\upphi\in\scr T_{\fk R}^*(\Wbb_1\otimes\Wbb_2\otimes\Mbb')$. 

Choose an inner product for $\scr T_{\fk R}^*(\Wbb_1\otimes\Wbb_2\otimes\Mbb')$ for each $\Mbb\in\mc E$, which gives a corresponding inner product of $\scr T_{\fk R}^*(\Wbb_1\otimes\Wbb_2\otimes\Mbb')$. Then $\Wbb_1\boxtimes_{\fk R}\Wbb_2$ is equipped with an inner product under which it is a unitary $\Vbb$-module. By the fact that \eqref{eq60} form a basis, each element of $\scr T_{\fk R\#_1\fk R^*}^*(\Wbb_1\otimes\Wbb_2\otimes\Wbb_1'\otimes\Wbb_2')$ can be written as $A\Uppsi\#_1\Uppsi^*$ for a unique $A\in\End_\Vbb(\Wbb_1\boxtimes_{\fk R}\Wbb_2)$. (Recall \eqref{eq61} for the notation.)

We choose $A$ to be the unique one such that
\begin{align}
\upomega_{\Wbb_1',\fk P}\#_1\upomega_{\Wbb_2,\fk Q}=A\Uppsi\#_1\Uppsi^*. \label{eq64}
\end{align}
More precisely, if we let $\{\upphi_{\Mbb,\alpha}\}_{\alpha\in\mc A_\Mbb}$ be an orthonormal basis of $\scr T_{\fk R}^*(\Wbb_1\otimes\Wbb_2\otimes\Mbb')$, write
\begin{align*}
A=\bigoplus_{\Mbb\in\mc E}\id_\Mbb\otimes A_\Mbb
\end{align*}
according to the decomposition \eqref{eq66} where $A_\Mbb\in\End \scr T_{\fk R}(\Wbb_1\otimes\Wbb_2\otimes\Mbb')$, and let $(A_{\Mbb,\alpha,\beta})$ be the matrix representation of $A_\Mbb$ with respect to the dual orthonormal basis of $\{\upphi_{\Mbb,\alpha}\}_{\alpha\in\mc A_\Mbb}$, then
\begin{align}
\upomega_{\Wbb_1',\fk P}\#_1\upomega_{\Wbb_2,\fk Q}=\sum_{\Mbb\in\mc E}\sum_{\alpha,\beta}A_{\Mbb,\alpha,\beta}\upphi_{\Mbb,\beta}\#_1\upphi_{\Mbb,\alpha}^*. \label{eq67}
\end{align}
Since $\upomega_{\Wbb_1',\fk P}$ and $\upomega_{\Wbb_2,\fk Q}$ are self-conjugate conformal blocks (Thm. \ref{lb28}),   it is not hard to see that  $\upomega_{\Wbb_1',\fk P}\#_1\upomega_{\Wbb_2,\fk Q}$ is self conjugate. Thus $A=A^*$ by Lem. \ref{lb70}.

\begin{rem}\label{lb57}
Choose some new positive trinions $\wtd{\fk P},\wtd {\fk Q}$ as in Subsec. \ref{lb46}, and choose $\wtd{\fk R}$ accordingly so that we have a canonical equivalence $\wtd{\fk P}\#_1\wtd {\fk Q}\simeq\wtd{\fk R}\#_1\wtd{\fk R}^*$. Then one can obtain  $\wtd{\fk R}$ from $\fk R$ making some changes of coordinates and moving the marked points. We fix such process of constructing $\wtd{\fk R}$ from $\fk R$. Then using formulas \eqref{eq62} and \eqref{eq63}, we can construct $\wtd\upphi\in\scr T_{\wtd{\fk R}}^*(\Wbb_1\otimes\Wbb_2\otimes\Mbb')$ from each $\upphi\in \scr T_{\fk R}^*(\Wbb_1\otimes\Wbb_2\otimes\Mbb')$. We thus obtain a linear isomorphism
\begin{gather*}
\scr T_{\fk R}^*(\Wbb_1\otimes\Wbb_2\otimes\Mbb')\rightarrow \scr T_{\wtd{\fk R}}^*(\Wbb_1\otimes\Wbb_2\otimes\Mbb'),\qquad\upphi\mapsto\wtd\upphi
\end{gather*}
Thus, we can define an inner product on $\scr T_{\wtd{\fk R}}^*(\Wbb_1\otimes\Wbb_2\otimes\Mbb')$ such that the above map is unitary. (Note that the above map is not unique since it depends on the process we choose to obtain $\wtd{\fk R}$ from $\fk R$.) This inner product is described by the fact that if $\{\upphi_{\Mbb,\alpha}\}_{\alpha\in\mc A_\Mbb}$ is an orthonormal basis of $\scr T_{\fk R}^*(\Wbb_1\otimes\Wbb_2\otimes\Mbb')$ then $\{\wtd\upphi_{\Mbb,\alpha}\}_{\alpha\in\mc A_\Mbb}$ is an orthonormal basis of $\scr T_{\wtd{\fk R}}^*(\Wbb_1\otimes\Wbb_2\otimes\Mbb')$. Thus, we obtain a unitary equivalence of modules
\begin{align*}
U:\Wbb_1\boxtimes_{\fk R}\Wbb_2\rightarrow\Wbb_1\boxtimes_{\wtd{\fk R}}\Wbb_2
\end{align*}

Let $\wtd\Psi:\Wbb_1\otimes\Wbb_2\otimes(\Wbb_1\boxtimes_{\wtd{\fk R}}\Wbb_2)'\rightarrow\Cbb$ be the canonical conformal block associated to $\wtd{\fk R}$ defined in a similar way as \eqref{eq65}. Choose a self-adjoint $\wtd A\in\End_\Vbb(\Wbb_1\boxtimes_{\wtd{\fk R}}\Wbb_2)$ such that
\begin{align}
\upomega_{\Wbb_1',\wtd{\fk P}}\#_1\upomega_{\Wbb_2,\wtd{\fk Q}}=\wtd A\wtd\Uppsi\#_1\wtd\Uppsi^*
\end{align}
By Thm. \ref{lb28}-(b) and Cor. \ref{lb43}, and by using the orthnormal bases $\{\upphi_{\Mbb,\alpha}\}_{\alpha\in\mc A_\Mbb}$ and $\{\wtd\upphi_{\Mbb,\alpha}\}_{\alpha\in\mc A_\Mbb}$ mentioned above,  we see that if $\Wbb_1,\Wbb_2$ are irreducible, then 
\begin{align*}
\wtd A=\lambda\cdot  UAU^{-1}
\end{align*}
for some $\lambda>0$. Thus, even if $\Wbb_1,\Wbb_2$ are not assumed irreducible, or if we choose a completely different inner product for $\scr T_{\wtd{\fk R}}^*(\Wbb_1\otimes\Wbb_2\otimes\Mbb')$ and define $\wtd\Uppsi$ and $\wtd A$ accordingly, the signature of each Hermitian matrix $\wtd A_\Mbb$ in the decomposition
\begin{align*}
\wtd A=\bigoplus_{\Mbb\in\mc E}\id_\Mbb\otimes \wtd A_\Mbb
\end{align*}
agrees with that of $A_\Mbb$. In particular, $A$ is invertible iff $\wtd A$ is so, and $A$ is positive iff $\wtd A$ is so. \hfill\qedsymbol
\end{rem}

According to the above long remark, if  $A$ is invertible and positive for one choice of $\fk P,\fk Q$, it is so for any other $\fk P,\fk Q$. We will see in the next section that $A$ is automatically invertible for some choice of $\fk P,\fk Q$. (This is a consequence of Huang's rigidity theorem (\cite{Hua08}) for the tensor category of $\Vbb$-modules.) 

By Thm. \ref{lb45} and \ref{lb21}, we know that $A\geq 0$ iff $\Wbb_1\boxtimes\Wbb_2$ is geometrically positive. Suppose $\Wbb_1\boxtimes\Wbb_2$ is geometrically positive. Then we can choose a unique inner product on $\scr T_{\fk R}^*(\Wbb_1\otimes\Wbb_2\otimes\Mbb')$ (for each $\Mbb\in\mc E$) such that $A$ becomes the identity operator. In this way, we get the correct inner product on $\Wbb_1\boxtimes_{\fk R}\Wbb_2$. Under this inner product, we have
\begin{align}
\upomega_{\Wbb_1',\fk P}\#_1\upomega_{\Wbb_2,\fk Q}=\Uppsi\#_1\Uppsi^*.  \label{eq68}
\end{align}

\begin{rem}
Recall the conformal block $\Uppsi\in\scr T_{\fk R}^*(\Wbb_1\otimes\Wbb_2\otimes(\Wbb_1\boxtimes_{\fk R}\Wbb_2)')$. As in Prop. \ref{lb17}, we consider the corresponding linear map into the algebraic completion:
\begin{align*}
T_\Uppsi:\Wbb_1\boxtimes\Wbb_2\rightarrow \ovl{\Wbb_1\boxtimes_{\fk R}\Wbb_2}
\end{align*}
By Prop. \ref{lb18} and Thm. \ref{lb21}, the range of $T_\Uppsi$ is norm-dense in the Hilbert space completion $\mc H_{\Wbb_1\boxtimes_{\fk R}\Wbb_2}$.  Thus, by \eqref{eq68} and Prop. \ref{lb17}, for each $n\in\Zbb_+$, $w_1,\dots,w_n\in\Wbb_1$, and $\wtd w_1,\dots,\wtd w_n\in\Wbb_2$, if we set
\begin{align*}
\wbf=\sum_{k=1}^n w_k\otimes \wtd w_k,\quad\mbf m=\sum_{l=1}^n m_l\otimes\wtd m_l\qquad\in \Wbb_1\otimes\Wbb_2 
\end{align*}
we have
\begin{align}
\bigbk{T_{\Uppsi}(\wbf)\big|T_{\Uppsi}(\mbf m) }=\sum_{k,l=1}^n\upomega_{\Wbb_1',\fk P}\#_1\upomega_{\Wbb_2,\fk Q}\big(\Co m_l\otimes  w_k\otimes  \wtd w_k \otimes \Co\wtd m_l\big)
\end{align}
This formula uniquely determines the inner product on $\Wbb_1\boxtimes_{\fk R}\Wbb_2$. It tells us, roughly speaking, how to put a (possibly degenerate) inner product on $\Wbb_1\otimes\Wbb_2$ so that its Hilbert space completion is $\mc H_{\Wbb_1\boxtimes_{\fk R}\Wbb_2}$.
\end{rem}

\section{Equivalence to the algebraic positivity of fusion products}\label{lb69}

In this section, we assume that $\Vbb$ is CFT-type, unitary, $C_2$-cofinite, and rational. As in Subsec. \ref{lb47}, for each equivalence class of irreducible $\Vbb$-modules we choose a representative $\Mbb$, and let all these representatives form a (necessarily finite) set $\mc E$.

As usual, we let $\zeta$ denote the standard coordinate of $\Cbb$.

\subsection{Various intertwining operators}

Recall the definition of intertwining operators in Exp. \ref{lb48}. Let $\Wbb_1,\Wbb_2,\Wbb_3$ be unitary $\Vbb$-modules. For each $\mc Y\in\mc I{\Wbb_3\choose\Wbb_1\Wbb_2}$, note that for each $z\in\Cbb^\times$ with chosen $\arg z$, $\mc Y(\cdot,z)$ is a linear map $\Wbb_1\otimes\Wbb_2\rightarrow\ovl{\Wbb_3}$. Define the \textbf{conjugate intertwining operator} of $\mc Y$ to be 
\begin{gather*}
\mc Y^*\in\mc I{\Wbb_3'\choose\Wbb_1'\Wbb_2'}\\
\mc Y^*(\Co w_1,\ovl z)\Co w_2=\Co\cdot \mc Y(w_1,z)w_2
\end{gather*}
where $w_1\in\Wbb_1,w_2\in\Wbb_2$, and we set
\begin{align}
\arg\ovl z=\arg z^{-1}=-\arg z  \label{eq92}
\end{align}
Thus, if $w_3\in\Wbb_3$ then
\begin{align}
\bk{\mc Y^*(\Co w_1,\ovl z)\Co w_2|\Co w_3}=\bk{w_3|\mc Y(w_1,z)w_2}\label{eq84}
\end{align}

Define the \textbf{contragredient intertwining operator of $\mc Y$} to be  \index{Y@$\mc Y^*,\mc Y^c$} 
\begin{gather}
\mc Y^c\in\mc I{\Wbb_3'\choose \Wbb_1\Wbb_2'}\nonumber \\
\bk{\mc Y(w_1,z)w_2,w_3'}=\bk{w_2,\mc Y^c(\mc U_0(\vartheta_z)w_1,z^{-1})w_3'} \label{eq85}
\end{gather}
for each $w_1\in\Wbb_1,w_2\in\Wbb_2,w_3\in\Wbb_3'$, where
\begin{align}
\mc U_0(\vartheta_z)=e^{zL_1}(-z^{-2})^{L_0}  \label{eq77}
\end{align}
in which we assume
\begin{align}
\arg (-z^{-2})=\pi-2\arg z  \label{eq78}
\end{align}


The \textbf{adjoint intertwining operator} of $\mc Y$, denoted by $\mc Y^\dagger\in\mc I{\Wbb_2\choose \Wbb_1'\Wbb_3}$, \index{Y@$\mc Y^\dagger$} is defined to be
\begin{align*}
\mc Y^\dagger=\mc Y^{c*}\qquad\in \mc I{\Wbb_2\choose \Wbb_1'\Wbb_3}
\end{align*}
We let $\Gamma_\Wbb$ \index{zz@$\Gamma_\Wbb$} be the unique element of $\mc I{\Vbb\choose \Wbb_1'\Wbb_1}$ (called the \textbf{annihilation operator} associated to $\Wbb$) satisfying (cf. \cite[(1.40)]{Gui19a})
\begin{align}
\bigbk{\Gamma_{\Wbb_1}(w_1',z)w_1,\id}=\bigbk{w_1,e^{z^{-1}L_1}\mc U_0(\vartheta_z)w_1' }. \label{eq83}
\end{align}
for all $w_1\in\Wbb_1,w_1'\in\Wbb_1'$. (Note that the uniqueness is due to Prop. \ref{lb20}.)

Finally, notice that the vertex operation $Y_{\Wbb_2}$ of $\Wbb_2$ is an element of $\mc I{\Wbb_2\choose \Vbb\Wbb_2}$.

\subsection{Algebraic positivity of fusion}

We continue the discussion from the last subsection. Let $\Wbb_1,\Wbb_2$ be unitary $\Vbb$-modules. Let
\begin{align*}
\Wbb_1\boxtimes\Wbb_2=\bigoplus_{\Mbb\in\mc E}\Mbb\otimes\mc I{\Mbb\choose\Wbb_1\Wbb_2}^*
\end{align*}
Assume that $\Wbb_1\boxtimes\Wbb_2$ is unitarizable. This means that $\Mbb$ is unitarizable whenever $\mc I{\Mbb\choose\Wbb_1\Wbb_2}$ is nonzero. Fix a unitary structure on each such $\Mbb$. Fix an arbitrary inner product on $\mc I{\Mbb\choose\Wbb_1\Wbb_2}$ and the corresponding one on $\mc I{\Mbb\choose\Wbb_1\Wbb_2}^*$. Then $\Wbb_1\boxtimes\Wbb_2$ is a unitary $\Vbb$-module. Define
\begin{gather*}
\Pi\in\mc I{\Wbb_1\boxtimes\Wbb_2\choose\Wbb_1~\Wbb_2}\\[0.5ex]
\bk{\Pi(w_1,z)w_2,m'\otimes\mc Y}=\bk{\mc Y(w_1,z)w_2,m'}
\end{gather*}
if $w_1\in\Wbb_1,w_2\in\Wbb_2,m'\in\Mbb'$, $\mc Y\in\mc I{\Mbb\choose\Wbb_1\Wbb_2}$, and $m'\otimes\nu$ is regarded as an element of
\begin{align*}
(\Wbb_1\boxtimes\Wbb_2)'=\bigoplus_{\Mbb\in\mc E}\Mbb'\otimes\mc I{\Mbb\choose\Wbb_1\Wbb_2}
\end{align*}

According to \cite[Chapter 6]{Gui19b}, there is a unique invertible bounded linear map $A\in\End_\Vbb(\Wbb_1\boxtimes\Wbb_2)$ such that for each $z_1,z_1'\in\Cbb^\times$ satisfying
\begin{subequations}
\begin{gather}
0<|z_1'-z_1|<|z_1|<|z_1'|   \label{eq70}\\ 
\arg (z_1'-z_1)=\arg z_1=\arg z_1'   \label{eq71}
\end{gather}
\end{subequations}
and for all $w_1\in\Wbb_1,w_1'\in\Wbb_1'$ we have the fusion relation
\begin{align}
\tcboxmath{Y_{\Wbb_2}\big(\Gamma_{\Wbb_1}(w_1',z_1'-z_1)w_1,z_1 \big)=\Pi^\dagger(w_1',z_1') A\Pi(w_1,z_1)}   \label{eq69}
\end{align}
More precisely, if for each unitary $\Vbb$-module $\Wbb$ and each $r\geq 0$ we let $P_r$ be the projection of $\ovl{\Wbb}$ onto its $L_0$-weight $r$ eigenspace $\Wbb_{(r)}$, then for each $w_2\in\Wbb_2,w_2'\in\Wbb_2'$, we have
\begin{align*}
\sum_{n\in\Nbb}\bigbk{Y_{\Wbb_2}\big(P_n\Gamma_{\Wbb_1}(w_1',z_1'-z_1)w_1,z_1 \big)w_2,w_2' }=\sum_{r\geq 0}\bigbk{\Pi^\dagger(w_1',z_1')P_r A\Pi(w_1,z_1)w_2,w_2'}
\end{align*}
where both sides converge absolutely. (Indeed, the absolute convergence is implied by Thm. \ref{lb12}.) We note that, as pointed out in \cite{Gui19b},  the invertibility of $A$ is due to the rigidity of the tensor category of $\Vbb$-modules proved in \cite{Hua08}.

\begin{df}\label{lb84}
We say that $\Wbb_1$ and $\Wbb_2$ have \textbf{algebraically positive fusion (product)} (or that $\Wbb_1\boxtimes\Wbb_2$ is \textbf{algebraically positive}) if the operator $A$ in \eqref{eq69} is positive, namely, $\bk{A\nu|\nu}\geq0$ for all $\nu\in\Wbb_1\boxtimes\Wbb_2$ (and hence for all $\nu\in\mc H_{\Wbb_1\boxtimes\Wbb_2}$).
\end{df}

\subsection{Main results}\label{lb71}

In the rest of this section, we assume the settings and use freely the notations in Subsec. \ref{lb46}. 

We choose the $z_1$  in \eqref{eq69} to be the same one in Subsec. \ref{lb46}, and choose $z_1'=1/\ovl{z_1}$. But here we shall only consider the special case that
\begin{align*}
z_1=s\im,\qquad z_1'=1/\ovl{z_1}=s^{-1}\im\qquad(0<s<1)
\end{align*}
Let $-t\im=\theta_1=\varpi(z_1)=\im(z_1-\im)/(z_1+\im)$. So $t>0$ satisfies
\begin{align*}
t=\frac{1-s}{1+s}\qquad s=\frac{1-t}{1+t}
\end{align*}

\begin{rem}\label{lb58}
Under the assumption $s<1$, \eqref{eq70} means $0<s^{-1}-s<s$, namely
\begin{align}
\frac{1}{\sqrt 2}<s<1\qquad\text{equivalently}\qquad 0<t<3-2\sqrt 2  \label{eq73}
\end{align}
\end{rem}

Recall $\displaystyle\varpi=\frac{\im(\zeta-\im)}{\zeta+\im}$ and $\varpi^{-1}=1/\varpi$. It is useful to note
\begin{align*}
\varpi\circ\varpi=\varpi^{-1}\circ\varpi^{-1}=1/\zeta
\end{align*}
where we recall that $\zeta$ is the standard coordinate of $\Cbb$. In \eqref{eq58}, we choose $z_2=0,\mu=\zeta$. Then  \eqref{eq59} is automatically satisfied since $|t|<1$. We choose $\eta$ to satisfy
\begin{align*}
\eta^*\circ\varpi=\zeta-s\im.
\end{align*}
(Note that $\eta^*\circ\varpi$ is the local coordinate of $\fk S$ at $z_1=s\im$ (cf. \eqref{eq54}). And $\eta^*(z)=\ovl{\eta(\ovl z)}$.) Namely $\eta^*=\varpi^{-1}-s\im$ and hence $\eta=\varpi+s\im$. To summarize, we assume that \eqref{eq58} is
\begin{subequations}  \label{eq74}
\begin{gather}
\fk P=\big(\Pbb^1;t\im,-t\im,\infty;\varpi+s\im,\varpi^{-1}-s\im,1/\zeta\big)  \\
\fk Q=\big(\Pbb^1;0,\infty,\im;\zeta,1/\zeta,\varpi\big)  
\end{gather}
\end{subequations}
Then  \eqref{eq54} says
\begin{align}
\fk P\#_1\fk Q\simeq\fk S=\big(\Pbb^1;s^{-1}\im,s\im,0,\infty;1/\zeta+s\im,\zeta-s\im,\zeta,1/\zeta \big)
\end{align}
And \eqref{eq72} becomes
\begin{align}
\fk R=\big(\Pbb^1;s\im,0,\infty; \zeta-s\im,\zeta,1/\zeta \big)
\end{align}

Define
\begin{gather}
\begin{gathered}
\Uppsi\in\scr T_{\fk R}^*(\Wbb_1\otimes\Wbb_2\otimes(\Wbb_1\boxtimes\Wbb_2)')\\
\Uppsi(w_1\otimes w_2\otimes \nu')=\bk{\Pi(w_1,s\im)w_2, \nu'}
\end{gathered}
\end{gather}
where $w_1\in\Wbb_1,w_2\in\Wbb_2$, $\nu'\in(\Wbb_1\boxtimes\Wbb_2)'$, and we choose
\begin{align*}
\arg(s\im)=\arg(s^{-1}\im)=\pi/2
\end{align*}

The proof of the following theorem is deferred to Sec. \ref{lb68}

\begin{thm}\label{lb50}
Assume that $\Wbb_1,\Wbb_2$ are unitary irreducible $\Vbb$-modules and $\Wbb_1\boxtimes\Wbb_2$ is a unitarizable $\Vbb$-module. Choose a unitary structure on $\Wbb_1\boxtimes\Wbb_2$. Choose $s$ satisfying \eqref{eq73}, and choose positive trinions $\fk P,\fk Q$ defined by \eqref{eq74}. Let $A$ be as in \eqref{eq69}. Then, in view of the canonical equivalence $\fk P\#_1\fk Q\simeq\fk R\#_1\fk R^*$ (cf. \eqref{eq55}), there exists $\lambda>0$ (possibly depending on $s$) such that
\begin{align}
\upomega_{\Wbb_1',\fk P}\#_1\upomega_{\Wbb_2,\fk Q}=\lambda\cdot A\Uppsi\#_1\Uppsi^*  \label{eq76}
\end{align}
More precisely, for each $w_1,\wtd w_1\in\Wbb_1$ and $w_2,\wtd w_2\in\Wbb_2$,
\begin{align}
\upomega_{\Wbb_1',\fk P}\#_1\upomega_{\Wbb_2,\fk Q}(\Co\wtd w_1\otimes w_1\otimes w_2\otimes\Co\wtd w_2)=\lambda\cdot A\Uppsi\#_1\Uppsi^*(w_1\otimes w_2\otimes\Co\wtd w_1\otimes\Co\wtd w_2 )  \label{eq79}
\end{align}
\end{thm}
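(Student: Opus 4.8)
The plan is to recognize the two sides of \eqref{eq76}, transported to the common surface $\fk R\#_1\fk R^*$ via the canonical equivalence \eqref{eq55}, as the two sides of the fusion relation \eqref{eq69}, each multiplied by a positive scalar. Since $\Wbb_1,\Wbb_2$ are irreducible, every three-pointed block entering the two sewings spans a one-dimensional space, so the whole argument reduces to matching generators up to positive constants and then quoting \eqref{eq69}. It suffices to prove \eqref{eq76} as an identity of conformal blocks and read off \eqref{eq79} as its evaluation on $w_1\otimes w_2\otimes\Co\wtd w_1\otimes\Co\wtd w_2$.

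First I would analyze the left, geometric side. The positive trinion $\fk Q$ in \eqref{eq74} is literally the $\fk P_\circ$ of \eqref{eq43}, so Thm. \ref{lb28}(a) gives $\upomega_{\Wbb_2,\fk Q}$ explicitly: it is the vertex-operator block $\bk{Y(2^{L_0}e^{-\im L_1}v,\im)w_2|\wtd w_2}$, i.e.\ the block attached to $Y_{\Wbb_2}\in\mc I{\Wbb_2\choose\Vbb\Wbb_2}$ with its $\Vbb$-leg precomposed by the coordinate operator $2^{L_0}e^{-\im L_1}$. For $\fk P$, the space $\scr T_{\fk P}^*(\Wbb_1'\otimes\Wbb_1\otimes\Vbb)$ is one-dimensional (its dimension is the fusion rule, cf. Exp. \ref{lb48}) and equals $\mc I{\Vbb\choose\Wbb_1'\Wbb_1}$, which is spanned by the block $\Phi_\Gamma$ attached to the annihilation operator $\Gamma_{\Wbb_1}$. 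Hence $\upomega_{\Wbb_1',\fk P}=c_1\Phi_\Gamma$, and the scalar $c_1$ is strictly positive: by Prop. \ref{lb32} the evaluation $\upomega_{\Wbb_1',\fk P}(\fk w\otimes\Co\fk w\otimes\id)$ is positive for $\fk w\neq0$, while the matching evaluation of $\Phi_\Gamma$ is read off from the defining formula \eqref{eq83} and the self-conjugacy of $\upomega_{\Wbb_1',\fk P}$ (Thm. \ref{lb28}). Carrying out the $\Vbb$-sewing $\#_1$ — a contraction over a homogeneous orthonormal basis of $\Vbb$ — then identifies $\upomega_{\Wbb_1',\fk P}\#_1\upomega_{\Wbb_2,\fk Q}$, up to a single positive constant $\lambda_1$, with the iterate $Y_{\Wbb_2}\big(\Gamma_{\Wbb_1}(w_1',z_1'-z_1)w_1,z_1\big)$; by Obs. \ref{lb37}--\ref{lb38} the $\Wbb_1$-punctures sit at $z_1=s\im$ and $z_1'=s^{-1}\im$, and the domain inequalities \eqref{eq70}--\eqref{eq71} required for this iterate are exactly \eqref{eq73} of Rem. \ref{lb58}.

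Next I would analyze the right side. By construction $\Uppsi$ is the block attached to $\Pi\in\mc I{\Wbb_1\boxtimes\Wbb_2\choose\Wbb_1\Wbb_2}$ evaluated at $s\im$, and its conjugate $\Uppsi^*$ on $\fk R^*$ corresponds, once the contragredient change of coordinate built into the sewing is accounted for (the coordinate $1/\zeta$ at $\infty\in\fk R$ versus $\zeta$ at $0\in\fk R^*$, which is precisely the relation \eqref{eq85} defining $\mc Y^c$), to the adjoint operator $\Pi^\dagger=\Pi^{c*}$. Inserting $A$ via \eqref{eq61} and performing the $\Wbb_1\boxtimes\Wbb_2$-sewing then identifies $A\Uppsi\#_1\Uppsi^*$, up to a positive constant $\lambda_2$, with $\Pi^\dagger(w_1',z_1')A\Pi(w_1,z_1)$. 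The fusion relation \eqref{eq69} now asserts $Y_{\Wbb_2}\big(\Gamma_{\Wbb_1}(w_1',z_1'-z_1)w_1,z_1\big)=\Pi^\dagger(w_1',z_1')A\Pi(w_1,z_1)$, whence $\upomega_{\Wbb_1',\fk P}\#_1\upomega_{\Wbb_2,\fk Q}=\lambda\,A\Uppsi\#_1\Uppsi^*$ with $\lambda=\lambda_1/\lambda_2>0$, which is \eqref{eq76}.

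The main obstacle is the bookkeeping of local coordinates and arguments. The geometric blocks are normalized through the $\varpi$- and $2^{L_0}e^{-\im L_1}$-conventions attached to positive trinions, whereas $\Gamma_{\Wbb_1}$, $Y_{\Wbb_2}$, $\Pi$, $\Pi^\dagger$ are normalized in the standard coordinates $\zeta,\zeta-z,1/\zeta$; reconciling them via Prop. \ref{lb6} and Cor. \ref{lb29}, verifying that the accumulated proportionality factor is a single scalar (no $\Mbb$-dependence, since the left-hand sewing is over the single module $\Vbb$) and in particular that it is \emph{real and strictly positive} rather than merely nonzero, is the delicate point. The positivity is forced by the self-conjugacy of the basic conformal blocks (Thm. \ref{lb28}) together with Prop. \ref{lb32} and Cor. \ref{lb43}, which make each intermediate constant positive; a subsidiary uniqueness argument of the type in Prop. \ref{lb20} is used to pin down the identification of a sewing of two three-pointed blocks with the iterate of the corresponding intertwining operators. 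The proof of \eqref{eq79} is the evaluation of \eqref{eq76} on $w_1\otimes w_2\otimes\Co\wtd w_1\otimes\Co\wtd w_2$.
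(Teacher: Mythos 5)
Your strategy coincides with the paper's: reduce \eqref{eq76} to the fusion relation \eqref{eq69} by showing that the geometric sewing $\upomega_{\Wbb_1',\fk P}\#_1\upomega_{\Wbb_2,\fk Q}$ computes, up to a positive scalar, the iterate $Y_{\Wbb_2}\big(\Gamma_{\Wbb_1}(\cdot\,,s^{-1}\im-s\im)\,\cdot\,,s\im\big)$, while $A\Uppsi\#_1\Uppsi^*$ computes $\Pi^\dagger(\cdot\,,s^{-1}\im)A\Pi(\cdot\,,s\im)$. Your treatment of the right-hand side is essentially the paper's Part I (including the appearance of the factor $\mc U_0(\vartheta_{s^{-1}\im})^{-1}\Co$ coming from \eqref{eq85}), and your matching of the domain conditions \eqref{eq70}--\eqref{eq71} with \eqref{eq73} is correct.

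The gap is on the left-hand side, precisely at the step you compress into ``carrying out the $\Vbb$-sewing then identifies \dots with the iterate.'' Proportionality of the trinion blocks does not by itself transfer to proportionality of their sewing with the iterate, because the two contractions are performed with \emph{different local coordinates at the sewn punctures}: the geometric side sews along $1/\zeta$ (at $\infty\in\fk P$) and $\varpi$ (at $\im\in\fk Q$), whereas the iterate defining the left side of \eqref{eq69} contracts along the standard coordinates of standard trinions. The tools you cite do not repair this: Prop. \ref{lb6} and Cor. \ref{lb29} change coordinates at \emph{free} punctures of a fixed block, not at punctures being sewn, and Prop. \ref{lb20} cannot be applied directly on the sewn $4$-pointed surface, since none of its legs carries $\Vbb$ and its block space has dimension $>1$ in general (it factors over all intermediate irreducible channels, not just $\Vbb$). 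This is exactly where the paper invests its effort: it introduces the $2$-pointed ``transition'' block $\uptau$ on $\fk A=(\Pbb^1;s\im,-\im;\zeta-s\im,\varpi^{-1})$, proves $\uptau\#_1\upomega_{\Wbb_2,\fk Q}$ equals the $Y_{\Wbb_2}(\cdot,s\im)$-block by a Prop. \ref{lb20} uniqueness argument at the trinion level (Lem. \ref{lb55}), factors $\upomega_{\Wbb_1',\fk P}=\lambda_1\,\upkappa\#_1\uptau$ with $\lambda_1>0$ via Prop. \ref{lb32} (Lem. \ref{lb56}), and then---the step with no counterpart in your outline---proves absolute convergence of the double sewing $\upkappa\#_1\uptau\#_1\upomega_{\Wbb_2,\fk Q}$ (Lem. \ref{lb54}), which is what licenses re-associating the two infinite contractions so that \eqref{eq69} can be invoked. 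Without this interpolation-and-convergence mechanism, or an equivalent device, the identification of the geometric sewing with the iterate remains an assertion rather than a proof.
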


\begin{co}\label{lb72}
Assume that $\Wbb_1,\Wbb_2$ are unitary $\Vbb$-modules and $\Wbb_1\boxtimes\Wbb_2$ is a unitarizable $\Vbb$-module. Choose a unitary structure on $\Wbb_1\boxtimes\Wbb_2$. Then $A=A^*$. Moreover, $\Wbb_1\boxtimes\Wbb_2$ is geometrically positive if and only if it is algebraically positive. 
\end{co}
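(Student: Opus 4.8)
The plan is to deduce Cor. \ref{lb72} from the irreducible case, Thm. \ref{lb50}, by a reduction along orthogonal decompositions, the key point being that both the geometric object $\upomega_{\Wbb_1',\fk P}\#_1\upomega_{\Wbb_2,\fk Q}$ and the algebraic operator $A$ of \eqref{eq69} split compatibly. Fix orthogonal irreducible decompositions $\Wbb_1=\bigoplus_i\Wbb_{1,i}$ and $\Wbb_2=\bigoplus_\alpha\Wbb_{2,\alpha}$ (possible since $\Vbb$ is rational, so unitary modules are semisimple), so that $\Wbb_1\boxtimes\Wbb_2=\bigoplus_{i,\alpha}\Wbb_{1,i}\boxtimes\Wbb_{2,\alpha}$. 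On the geometric side, Rem. \ref{lb41} already records that $\upomega_{\Wbb_1',\fk P}\#_1\upomega_{\Wbb_2,\fk Q}$ restricted to $\Wbb_{1,i}'\otimes\Wbb_{1,j}\otimes\Wbb_{2,\alpha}\otimes\Wbb_{2,\beta}'$ vanishes unless $i=j,\alpha=\beta$, where it equals $\upomega_{\Wbb_{1,i}',\fk P}\#_1\upomega_{\Wbb_{2,\alpha},\fk Q}$, and that geometric positivity of $\Wbb_1\boxtimes\Wbb_2$ is equivalent to reflection positivity of each block $\upomega_{\Wbb_{1,i}',\fk P}\#_1\upomega_{\Wbb_{2,\alpha},\fk Q}$.

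Next I would establish the corresponding block decomposition $A=\bigoplus_{i,\alpha}A_{i,\alpha}$ with $A_{i,\alpha}\in\End_\Vbb(\Wbb_{1,i}\boxtimes\Wbb_{2,\alpha})$ the operator attached to the irreducible pair $(\Wbb_{1,i},\Wbb_{2,\alpha})$ by the same relation \eqref{eq69}. This follows by tracking how the ingredients of \eqref{eq69} behave under the decompositions: the annihilation operator $\Gamma_{\Wbb_1}\in\mc I{\Vbb\choose\Wbb_1'\Wbb_1}$ restricted to $\Wbb_{1,j}'\otimes\Wbb_{1,i}$ takes values in $\Vbb$ and is nonzero only for $i=j$ (the fusion rule $N_{\Wbb_{1,j}'\Wbb_{1,i}}^{\Vbb}$ forces $\Wbb_{1,i}\simeq\Wbb_{1,j}$), while $\Pi$ maps $\Wbb_{1,i}\otimes\Wbb_{2,\alpha}$ into $\ovl{\Wbb_{1,i}\boxtimes\Wbb_{2,\alpha}}$; evaluating \eqref{eq69} on the summand $\Wbb_{1,i}\otimes\Wbb_{2,\alpha}$ then recovers verbatim the defining relation for the pair $(\Wbb_{1,i},\Wbb_{2,\alpha})$, so by uniqueness of the solution of \eqref{eq69} (which rests on Prop. \ref{lb20}/Thm. \ref{lb21}) the $(i,\alpha)$-block of $A$ is exactly $A_{i,\alpha}$. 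In particular $A\geq0$ iff every $A_{i,\alpha}\geq0$, and $A=A^*$ iff every $A_{i,\alpha}=A_{i,\alpha}^*$.

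It then remains to treat each irreducible pair. For fixed $(i,\alpha)$, Thm. \ref{lb50} gives $\lambda_{i,\alpha}>0$ with $\upomega_{\Wbb_{1,i}',\fk P}\#_1\upomega_{\Wbb_{2,\alpha},\fk Q}=\lambda_{i,\alpha}\,A_{i,\alpha}\Uppsi_{i,\alpha}\#_1\Uppsi_{i,\alpha}^*$, and the hypotheses of Thm. \ref{lb45} and Lem. \ref{lb70} hold here ($C=\Pbb^1$, $\xi=1/\zeta$ is M\"obius, $\Wbb_{1,i}\boxtimes\Wbb_{2,\alpha}$ is semisimple, and the $\sigma$-density of $T_{\Uppsi_{i,\alpha}}$ is automatic from the universality of $\Pi$ via Thm. \ref{lb21}). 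Since $\upomega_{\Wbb_{1,i}',\fk P}$ and $\upomega_{\Wbb_{2,\alpha},\fk Q}$ are self-conjugate by Thm. \ref{lb28}, so is their sewing, hence $\lambda_{i,\alpha}A_{i,\alpha}\Uppsi_{i,\alpha}\#_1\Uppsi_{i,\alpha}^*$ is self-conjugate; Lem. \ref{lb70} then yields $A_{i,\alpha}=A_{i,\alpha}^*$, whence $A=A^*$. Finally, by Thm. \ref{lb45}, $A_{i,\alpha}\geq0$ iff $A_{i,\alpha}\Uppsi_{i,\alpha}\#_1\Uppsi_{i,\alpha}^*$ is reflection positive, and since $\lambda_{i,\alpha}>0$ this is equivalent to reflection positivity of $\upomega_{\Wbb_{1,i}',\fk P}\#_1\upomega_{\Wbb_{2,\alpha},\fk Q}$; combining with the block reductions above gives $A\geq0$ iff $\Wbb_1\boxtimes\Wbb_2$ is geometrically positive.

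The main obstacle I expect is the block-diagonalization of the algebraic operator $A$: unlike the geometric side, where vanishing of off-diagonal blocks is immediate from the explicit form of the basic conformal blocks in Thm. \ref{lb28}, showing that the single operator $A$ solving \eqref{eq69} restricts to the operators $A_{i,\alpha}$ of the component pairs requires carefully matching the restrictions of $\Gamma_{\Wbb_1}$, $\Pi$, and $\Pi^\dagger$ to the summands and invoking the uniqueness in \eqref{eq69}. Everything else is a formal consequence of Thm. \ref{lb50}, Thm. \ref{lb45}, Lem. \ref{lb70}, and Rem. \ref{lb41}.
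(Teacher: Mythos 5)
Your proposal is correct in substance and takes essentially the same route as the paper: the paper's proof of this corollary consists precisely of the one-sentence reduction ``it suffices to assume $\Wbb_1,\Wbb_2$ are irreducible'' followed by the chain you run on each block (Thm.~\ref{lb50}, self-conjugacy from Thm.~\ref{lb28}, then Lem.~\ref{lb70} and Thm.~\ref{lb21} for $A=A^*$, then Thm.~\ref{lb45} for the positivity equivalence). What you add is an explicit justification of that reduction, i.e.\ the block-diagonalization of $A$, which the paper leaves implicit.

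Two precision issues in that reduction, both caused by the possibility of repeated isomorphic summands in the orthogonal decompositions. First, your parenthetical argument that $\Gamma_{\Wbb_1}$ vanishes on $\Wbb_{1,j}'\otimes\Wbb_{1,i}$ for $i\neq j$ via fusion rules fails when $\Wbb_{1,i}\simeq\Wbb_{1,j}$ with $i\neq j$ (e.g.\ $\Wbb_1=\Xbb\oplus\Xbb$): there the relevant fusion rule equals $1$, not $0$, so it forces nothing. The correct argument uses the defining property \eqref{eq83}: for $w_1\in\Wbb_{1,i}$ and $w_1'\in\Wbb_{1,j}'$ with $i\neq j$, one has $\bk{w_1,e^{z^{-1}L_1}\mc U_0(\vartheta_z)w_1'}=0$ because the pairing between $\Wbb_{1,i}$ and $\Wbb_{1,j}'$ vanishes; since $\Vbb'$ is generated by the vacuum, Prop.~\ref{lb20} upgrades this to vanishing of the whole restriction of $\Gamma_{\Wbb_1}$. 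Second, since $A\in\End_\Vbb(\Wbb_1\boxtimes\Wbb_2)$ may a priori mix isomorphic summands $\Wbb_{1,i}\boxtimes\Wbb_{2,\alpha}\simeq\Wbb_{1,j}\boxtimes\Wbb_{2,\beta}$, identifying the diagonal blocks by uniqueness is not yet the asserted decomposition $A=\bigoplus_{i,\alpha}A_{i,\alpha}$: one must also show the off-diagonal blocks vanish. Evaluating \eqref{eq69} at mismatched indices makes the left-hand side zero, and one then needs injectivity of $B\mapsto\Pi_{j,\beta}^\dagger(\cdot)\,B\,\Pi_{i,\alpha}(\cdot)$ on $\Hom_\Vbb\big(\Wbb_{1,i}\boxtimes\Wbb_{2,\alpha},\Wbb_{1,j}\boxtimes\Wbb_{2,\beta}\big)$ (here $\Pi_{i,\alpha}$ is the canonical intertwining operator of the pair), which follows from the density machinery of Prop.~\ref{lb17}, Prop.~\ref{lb18} and Thm.~\ref{lb21} that you already invoke. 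With these two repairs, both using only tools you cite, your argument is complete and agrees with the paper's.
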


\begin{proof}
It suffices to assume that $\Wbb_1,\Wbb_2$ are irreducible. Fix an inner product on $\Wbb_1\boxtimes\Wbb_2$ so that $\Wbb_1\boxtimes\Wbb_2$ is a unitary $\Vbb$-module. Let $A$ be as in \eqref{eq69}. 

We first note that by Thm. \ref{lb21} and Rem. \ref{lb87}, the conformal block $\Uppsi$ satisfies the same density assumption on $\upphi$ in Thm. \ref{lb45} (with $\Mbb=(\Wbb_1\boxtimes\Wbb_2)'$). Now, by Thm. \ref{lb28}, $\upomega_{\Wbb_1',\fk P}$ and $\upomega_{\Wbb_2,\fk Q}$ are self-conjugate. So \eqref{eq79} defines a self-conjugate conformal block. Therefore, $A=A^*$ by Lem. \ref{lb70}. We set the $\upphi$ in Thm. \ref{lb45} to be $\Uppsi$. Then Thm. \ref{lb45} says that $\Wbb_1\boxtimes\Wbb_2$ is algebraically positive iff \eqref{eq76} is reflection positive. The reflection positivity of the left hand side of \eqref{eq76} means precisely that $\Wbb_1\boxtimes\Wbb_2$ is geometrically positive.
\end{proof}

\begin{df}\label{lb59}
Assume that $\Wbb_1\boxtimes\Wbb_2$ is a unitarizable $\Vbb$-module. If the fusion product $\Wbb_1\boxtimes \Wbb_2$ is either geometrically or algebraically positive, we say that $\Wbb_1\boxtimes \Wbb_2$ is \textbf{positive}.
\end{df}

\begin{thm}\label{lb60}
Let $\Wbb_1,\Wbb_2$ be unitary irreducible $\Vbb$-modules. Assume that $\Wbb_1\boxtimes\Wbb_2$ is an irreducible unitarizable $\Vbb$-module. Then the fusion $\Wbb_1\boxtimes\Wbb_2$ is positive.
\end{thm}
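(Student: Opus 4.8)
The plan is to combine the dichotomy forced by irreducibility with the limiting argument sketched in the introduction, in which the sewn sphere $\fk P\#_q\fk Q$ degenerates to a nodal curve as $q\to0$. By Cor. \ref{lb72} geometric and algebraic positivity are equivalent (and both are what Def. \ref{lb59} calls ``positive''), so it suffices to establish one of them, and I will aim for a contradiction.

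First I would fix a unitary structure on $\Wbb_1\boxtimes\Wbb_2$ and let $A\in\End_\Vbb(\Wbb_1\boxtimes\Wbb_2)$ be the operator appearing in the fusion relation \eqref{eq69}. By Cor. \ref{lb72} we have $A=A^*$, and $A$ is invertible by the rigidity of $\Mod(\Vbb)$ recalled just before Def. \ref{lb84}. Since $\Wbb_1\boxtimes\Wbb_2$ is assumed irreducible, $\End_\Vbb(\Wbb_1\boxtimes\Wbb_2)=\Cbb\id$ by Rem. \ref{lb83}; hence $A=c\,\id$ for a scalar $c\in\Cbb$ that is real (self-adjointness) and nonzero (invertibility). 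If $c>0$ then $A\geq0$, so $\Wbb_1\boxtimes\Wbb_2$ is algebraically positive and therefore positive. It remains to rule out $c<0$.

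So suppose $c<0$. Then, by Thm. \ref{lb50} together with Prop. \ref{lb17} (the dictionary between the algebraic operator $A$ and the geometric pairing), the diagonal geometric form equals $\lambda c$ times a squared norm with $\lambda>0$, hence is $\leq0$; and by Lem. \ref{lb40} this sign is independent of the choice of positive trinions and of the sewing parameter. I may therefore use the symmetric trinions of the introduction, $\fk P=\fk Q=\big(\Pbb^1;0,\infty,\im;\zeta,1/\zeta,\varpi\big)$ with $\varpi=\frac{\im(\zeta-\im)}{\zeta+\im}$, to obtain, for fixed nonzero $w\in\Wbb_1,\wtd w\in\Wbb_2$,
\[
\upomega_{\Wbb_1',\fk P}\#_q\upomega_{\Wbb_2,\fk Q}\big(\Co w\otimes w\otimes\wtd w\otimes\Co\wtd w\big)\leq0\qquad(0<q<1).
\]
Finally I would let $q\to0^+$. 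Since $L_0$ has spectrum in $\Nbb$ on $\Vbb$, the sewn block is a genuine power series in $q$, convergent by Thm. \ref{lb12}, and its value tends to the $q^0$-coefficient, which is obtained by contracting only the vacuum line $\Vbb(0)=\Cbb\id$ (here I use that $\Vbb$ is of CFT-type). That coefficient factors as $\upomega_{\Wbb_1',\fk P}(\Co w\otimes w\otimes\id)\cdot\upomega_{\Wbb_2,\fk Q}(\wtd w\otimes\Co\wtd w\otimes\id)$, and by Thm. \ref{lb28}(a) with $v=\id$, using $2^{L_0}e^{-\im L_1}\id=\id$ and $Y(\id,\im)=\id$, each factor is an inner product. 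Thus the limit equals $\bk{w|w}\cdot\bk{\wtd w|\wtd w}>0$, contradicting the displayed inequality. Hence $c>0$ and $\Wbb_1\boxtimes\Wbb_2$ is positive.

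The main obstacle is not the clean $q\to0$ punchline but the correct set-up of the dichotomy: one must know that irreducibility makes $A$ a scalar, that Cor. \ref{lb72} forces it to be real, and that rigidity forces it to be nonzero, and one must legitimately transport the sign statement from the algebraic operator $A$ to the geometric pairing for the \emph{specific} symmetric trinions. This transport is exactly what the equivalence Thm. \ref{lb50}/Cor. \ref{lb72} together with the invariance Lem. \ref{lb40} provides, so once those are in hand the argument reduces to the elementary degeneration computation above.
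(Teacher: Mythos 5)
Your proposal is correct and takes essentially the same route as the paper's proof: irreducibility together with Cor. \ref{lb72} and rigidity force $A=c\,\id$ with $c$ real and nonzero; one assumes $c<0$, transports the resulting negativity of $\upomega_{\Wbb_1',\fk P}\#_q\upomega_{\Wbb_2,\fk Q}$ to a convenient family of trinions with $0<q<1$, and lets $q\to0$ so that the block degenerates through the vacuum channel to $\bk{w|w}\cdot\bk{\wtd w|\wtd w}>0$, a contradiction. The one point to tighten is your appeal to Lem. \ref{lb40}: it is stated only for \emph{positivity}, so to transport the negative sign you must either observe that its proof (Prop. \ref{lb42}, Prop. \ref{lb44}, Cor. \ref{lb43}, Thm. \ref{lb28}-(b)) is sign-agnostic and applies verbatim to the negated block, or invoke the signature invariance of Rem. \ref{lb57}; this is precisely what the paper's Steps 1--2 carry out explicitly, keeping $\fk Q$ symmetric and absorbing $q$ into the third coordinate $1/q\zeta$ of $\wtd{\fk P}$ before applying Prop. \ref{lb42} and Thm. \ref{lb28}-(b).
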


\begin{proof}
Fix an inner product on $\Wbb_1\boxtimes\Wbb_2$ so that $\Wbb_1\boxtimes\Wbb_2$ is a unitary $\Vbb$-module. Since $\Wbb_1\boxtimes\Wbb_2$ is irreducible, the self-adjoint operator $A$ (cf. Cor. \ref{lb72}) in  \eqref{eq76} is a non-zero real number $a$. Assume that $a<0$. Let us find a contradiction.

Step 1. By Thm. \ref{lb50} and \ref{lb45}, for any $s$ satisfying \eqref{eq73} and any $\fk P,\fk Q$ defined as in \eqref{eq74}, $-\upomega_{\Wbb_1',\fk P}\#_1\upomega_{\Wbb_2,\fk Q}$ is reflection positive. Thus, if we define positive trinion
\begin{align*}
\fk X=(\Pbb^1;t\im,-t\im,\infty;\eta,\eta^*,1/\zeta)
\end{align*}
where $\eta$ is a local coordinate at $t\im$ and $\eta^*(z)=\ovl{\eta(\ovl z)}$, then by Prop. \ref{lb42} and Thm. \ref{lb28}-(b), $-\upomega_{\Wbb_1',\fk X}\#_1\upomega_{\Wbb_2,\fk Q}$ is reflection positive.

Step 2. Choose any $0<q<1$ and $\arg q=0$. Let us use Step 1 to show that $-\upomega_{\Wbb_1',\fk P}\#_q\upomega_{\Wbb_2,\fk Q}$ is reflection positive. Notice that
\begin{align*}
\wtd{\fk P}=\big(\Pbb^1;t\im,-t\im,\infty;\varpi+s\im,\varpi^{-1}-s\im,1/q\zeta\big)
\end{align*}
is a positive trinion and is equivalent (via the map $z\mapsto qz$) to
\begin{align*}
\wtd{\fk X}=(\Pbb^1;qt\im,-qt\im,\infty;\eta,\eta^*,1/\zeta)
\end{align*}
where $\displaystyle\eta(z)=\frac{\im(z-q\im)}{z+q\im}+s\im$. (Note that we have $0<qt<3-2\sqrt 2$, cf. Rem. \ref{lb58}.) Thus, by Step 1, $-\upomega_{\Wbb_1',\wtd{\fk P}}\#_1\upomega_{\Wbb_2,\fk Q}$ is reflection positive.

Clearly $\wtd{\fk P}\#_1\fk Q=\fk P\#_q\fk Q$. By Thm. \ref{lb28}-(b), the basic conformal block $\upomega_{\Wbb_1',\wtd{\fk P}}:\Wbb_1'\otimes\Wbb_1\otimes\Vbb\rightarrow\Cbb$ satisfies
\begin{align*}
\upomega_{\Wbb_1',\wtd{\fk P}}=\upomega_{\Wbb_1',\fk P}\circ(\id\otimes\id\otimes q^{L_0}).
\end{align*}
Therefore $-\upomega_{\Wbb_1',\fk P}\#_q\upomega_{\Wbb_2,\fk Q}$ equals $-\upomega_{\Wbb_1',\wtd{\fk P}}\#_1\upomega_{\Wbb_2,\fk Q}$, which is reflection positive.

Step 3. Choose nonzero vectors $w_1\in\Wbb_1,w_2\in\Wbb_2$. Then by Step 2, for each $0<q<1$ with $\arg q=0$ we have
\begin{align*}
&0\leq -\upomega_{\Wbb_1',\wtd{\fk P}}\#_1\upomega_{\Wbb_2,\fk Q}(\Co w_1\otimes w_1\otimes w_2\otimes \Co w_2)\\
=&- \sum_{n\in\Nbb}\wick{\upomega_{\Wbb_1',\wtd{\fk P}}(\Co w_1\otimes w_1\otimes P_n\c1-)\cdot \upomega_{\Wbb_2,\fk Q}(w_2\otimes \Co w_2\otimes P_n\c1-)}\\
=&- \sum_{n\in\Nbb}q^n\cdot \wick{\upomega_{\Wbb_1',\fk P}(\Co w_1\otimes w_1\otimes P_n\c1-)\cdot \upomega_{\Wbb_2,\fk Q}(w_2\otimes \Co w_2\otimes P_n\c1-)}
\end{align*}
Let $q\rightarrow 0$. We obtain
\begin{align*}
-\upomega_{\Wbb_1',\fk P}(\Co w_1\otimes w_1\otimes \id)\cdot \upomega_{\Wbb_2,\fk Q}(w_2\otimes \Co w_2\otimes \id)\geq0.
\end{align*}
By Prop. \ref{lb32}, $\upomega_{\Wbb_1',\fk P}(\Co w_1\otimes w_1\otimes \id)$ and $\upomega_{\Wbb_2,\fk Q}(w_2\otimes \Co w_2\otimes \id)$ are $>0$. This is impossible.
\end{proof}

\begin{rem}
In Thm. \ref{lb60}, the assumption on the irreducibility of $\Wbb_1\boxtimes\Wbb_2$ is essential. Without this assumption, the proof of Thm. \ref{lb60} will only imply that some (but not all) eigenvalues of $A$ are strictly positive. Namely, it will only imply that $-A\geq0$ does not hold.
\end{rem}

\section{Proof of Theorem \ref{lb50}}\label{lb68}

\subsection{Part I}

We prove Thm. \ref{lb50} by relating the two sides of \eqref{eq69} to those of \eqref{eq76}. We write for simplicity that
\begin{align*}
\Wbb_{12}=\Wbb_1\boxtimes\Wbb_2.
\end{align*}

First, we relate the RHS. We know that $\Uppsi$ is essentially just $\Pi(\cdot,s\im)$ with $\arg s\im=\pi/2$. So we need to relate the conjugate conformal block $\Uppsi^*:\Wbb_1'\otimes\Wbb_2'\otimes\Wbb_{12}\rightarrow\Cbb$ and the adjoint intertwining operator $\Pi^\dagger=\Pi^{c*}=\mc I{\Wbb_2\choose \Wbb_1'\Wbb_{12}}$. Notice that by \eqref{eq77} and \eqref{eq78},
\begin{align*}
\mc U_0(\vartheta_{s\im})=e^{s\im L_1}s^{-2L_0}\qquad(\arg s=0)
\end{align*} 
If we let $\arg(s^{-1}\im)=\pi/2$, then $\mc U_0(\vartheta_{s^{-1}\im})=e^{s^{-1}\im L_1}s^{2L_0}$ equals $s^{2L_0}e^{s\im L_1}$ by \eqref{eq80}. So
\begin{align*}
\mc U_0(\vartheta_{s^{-1}\im})^{-1}\Co=\Co \mc U_0(\vartheta_{s\im})
\end{align*}

For each $w_1\in\Wbb_1,w_2\in\Wbb_2,\nu\in\Wbb_{12}$, noticing \eqref{eq92}, we calculate
\begin{align}
&\bk{\Pi(w_1,s\im)w_2|\nu}=\bk{\Pi(w_1,s\im)w_2,\Co \nu}\xlongequal{\eqref{eq85}}\bk{w_2,\Pi^c(\mc U_0(\vartheta_{s\im})w_1,(s\im)^{-1})\Co \nu}\nonumber\\
=&\bk{\Pi^c(\mc U_0(\vartheta_{s\im})w_1,(s\im)^{-1})\Co \nu|\Co w_2}\xlongequal{\eqref{eq84}}\bk{w_2|\Pi^\dagger(\Co\mc U_0(\vartheta_{s\im}) w_1,s^{-1}\im)\nu}\nonumber\\
=&\bk{w_2|\Pi^\dagger(\mc U_0(\vartheta_{s^{-1}\im})^{-1} \Co w_1,s^{-1}\im)\nu}
\end{align}
Thus,
\begin{align*}
&\Psi^*(\Co w_1\otimes\Co w_2\otimes \nu)=\ovl{\Psi(w_1\otimes w_2\otimes \Co\nu)}=\ovl{\bk{\Pi(w_1,s\im)w_2|\nu}}\\
=&\bk{\Pi^\dagger(\mc U_0(\vartheta_{s^{-1}\im})^{-1} \Co w_1,s^{-1}\im)\nu|w_2}
\end{align*}
Therefore, for each $w_1,\wtd w_1\in\Wbb_1$ and $w_2,\wtd w_2\in\Wbb_2$,
\begin{align}
A\Uppsi\#_1\Uppsi^*(w_1\otimes w_2\otimes\Co\wtd w_1\otimes\Co\wtd w_2)=\bigbk{\Pi^\dagger(\mc U_0(\vartheta_{s^{-1}\im})^{-1} \Co \wtd w_1,s^{-1}\im) A\Pi(w_1,s\im)w_2\big|\wtd w_2}  \label{eq87}
\end{align}

\subsection{Part II}

Next, we compare the LHS. This is the most nontrivial part of the proof. Let
\begin{align*}
\fk A=(\Pbb^1;s\im,-\im;\zeta-s\im,\varpi^{-1})
\end{align*}
Since $\fk A\simeq(\Pbb^1;0,\infty;\zeta,\mu)$ for some M\"obius coordinate $\mu$ at $\infty$, using the change of coordinate formula \eqref{eq62}, we can find a unique $\uptau$ such that
\begin{gather*}
\uptau\in\scr T_{\fk A}^*(\Vbb\otimes\Vbb')\qquad \uptau(v\otimes v')=\bk{a^{L_0}e^{bL_1}v,v'}
\end{gather*}
for some complex numbers $a\neq 0$ and $b$. $\uptau$ enjoys the property that for all $v\in\Vbb$,
\begin{align}
\uptau(v\otimes\Co\id)=\bk{v|\id}\qquad\uptau(\id\otimes\Co v)=\bk{\id|v}  \label{eq82}
\end{align}

\begin{diss}\label{lb52}
We sew $\fk A$ and $\fk Q$ along $-\im\in\fk A$  and $\im\in\fk Q$  (with respect to their local coordinates $\varpi^{-1}$ and $\varpi$). Note that since $0<s<1$, one can choose open disks centered at $-\im\in\fk A$ and at $\im\in\fk Q$ satisfying Asmp. \ref{lb9} such that the product of their radii are greater than $q=1$. Then we have an equivalence
\begin{gather*}
\fk A\#_1\fk Q\simeq (\Pbb^1;s\im,0,\infty;\zeta-s\im,\zeta,1/\zeta)\\
\gamma_0\in \fk A\setminus\{-\im\}\simeq \gamma_0\\
\gamma_2\in  \fk Q\setminus\{\im\}\simeq  \gamma_2
\end{gather*}
Since $\upomega_{\Wbb_2,\fk Q}\in\scr T_{\fk Q}^*(\Wbb_2\otimes\Wbb_2'\otimes\Vbb)$, we obtain the corresponding sewing
\begin{gather*}
\uptau\#_1\upomega_{\Wbb_2,\fk Q}\in\scr T_{\fk A\#_1\fk Q}^*(\Vbb\otimes\Wbb_2\otimes\Wbb_2')\\
\uptau\#_1\upomega_{\Wbb_2,\fk Q}(v\otimes w_2\otimes\Co\wtd w_2)=\sum_{n\in\Nbb}\wick{\uptau(v\otimes P_n \c1-)\cdot \upomega_{\Wbb_2,\fk Q}(w_2\otimes\Co\wtd w_2\otimes P_n \c1 -) }
\end{gather*}
where the two $-$ are contracted. (Cf. Thm. \ref{lb12}.)
\end{diss}

\begin{lm}\label{lb55}
For each $v\in\Vbb,w_2,\wtd w_2\in\Wbb_2$,
\begin{align}
\uptau\#_1\upomega_{\Wbb_2,\fk Q}(v\otimes w_2\otimes\Co\wtd w_2)=\bk{Y_{\Wbb_2}(v,s\im)w_2,\Co\wtd w_2}  \label{eq81}
\end{align}
\end{lm}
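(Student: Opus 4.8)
The plan is to identify the sewn conformal block with a single vertex operator twisted by a $\Vbb$-module endomorphism, and then pin that endomorphism down as the identity by testing against the vacuum. First I would note that $\fk Q=\big(\Pbb^1;0,\infty,\im;\zeta,1/\zeta,\varpi\big)$ is literally the standard positive trinion $\fk P_\circ$ of \eqref{eq43}, so Thm. \ref{lb28}-(a) gives the explicit formula $\upomega_{\Wbb_2,\fk Q}(w_2\otimes\Co\wtd w_2\otimes v)=\bk{Y(2^{L_0}e^{-\im L_1}v,\im)w_2|\wtd w_2}$, which is \eqref{eq41}. By Thm. \ref{lb12} the series defining $\uptau\#_1\upomega_{\Wbb_2,\fk Q}$ converges and is a conformal block associated to the sewn surface, which by Disc. \ref{lb52} is $\fk A\#_1\fk Q\simeq(\Pbb^1;s\im,0,\infty;\zeta-s\im,\zeta,1/\zeta)$ carrying the modules $\Vbb,\Wbb_2,\Wbb_2'$ at $s\im,0,\infty$.

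Next, after permuting the marked points into the order $0,s\im,\infty$ (with modules $\Wbb_2,\Vbb,\Wbb_2'$, which merely permutes the tensor factors and so gives a canonically isomorphic space of blocks), this surface is exactly the one appearing in Exp. \ref{lb31}. Hence there is a unique $A\in\End_\Vbb(\Wbb_2)$ such that $\uptau\#_1\upomega_{\Wbb_2,\fk Q}(v\otimes w_2\otimes\Co\wtd w_2)=\bk{AY_{\Wbb_2}(v,s\im)w_2,\Co\wtd w_2}$, and the desired identity \eqref{eq81} is then equivalent to the assertion $A=\id$.

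To compute $A$ I would evaluate both expressions at the vacuum $v=\id$. The right-hand side collapses to $\bk{Aw_2,\Co\wtd w_2}$ since $Y_{\Wbb_2}(\id,s\im)=\id_{\Wbb_2}$. For the left-hand side I use the contraction formula in Disc. \ref{lb52} together with the normalization \eqref{eq82}: because $\uptau(\id\otimes\Co e)=\bk{\id|e}$ vanishes for every homogeneous basis vector $e$ of positive weight, and $\Vbb(0)=\Cbb\id$ (as $\Vbb$ is of CFT-type), the entire sum over the contracted orthonormal basis reduces to the single weight-zero term $\upomega_{\Wbb_2,\fk Q}(w_2\otimes\Co\wtd w_2\otimes\id)$. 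By the formula recalled above and the identities $L_1\id=0$, $L_0\id=0$ (so $2^{L_0}e^{-\im L_1}\id=\id$ and $Y(\id,\im)=\id$), this equals $\bk{w_2|\wtd w_2}=\bk{w_2,\Co\wtd w_2}$. Comparing the two sides yields $\bk{Aw_2,\Co\wtd w_2}=\bk{w_2,\Co\wtd w_2}$ for all $w_2,\wtd w_2$; since $\Co$ maps $\Wbb_2$ onto $\Wbb_2'$, this forces $A=\id$, and \eqref{eq81} follows.

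The surface identification and the convergence of the sewing are supplied directly by Disc. \ref{lb52} and Thm. \ref{lb12}, so they are routine. I expect the main obstacle to be the careful bookkeeping of the sewing contraction: correctly matching the contracted pairing between the $\Vbb'$-slot of $\uptau$ and the $\Vbb$-slot of $\upomega_{\Wbb_2,\fk Q}$ against the explicit expression \eqref{eq41}, and rigorously justifying that substituting the vacuum annihilates all contributions except the weight-zero one. Once this collapse is in place, the conclusion $A=\id$ is immediate.
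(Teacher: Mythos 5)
Your proof is correct and follows essentially the same route as the paper's: the decisive step in both is the vacuum evaluation $v=\id$, where the contraction collapses to the weight-zero term via \eqref{eq82}, giving $\upomega_{\Wbb_2,\fk Q}(w_2\otimes\Co\wtd w_2\otimes \id)=\bk{w_2|\wtd w_2}$ by Thm. \ref{lb28}-(a). The only cosmetic difference is how the case of general $v$ is deduced: you classify all conformal blocks on the sewn sphere via Exp. \ref{lb31} and then pin down the resulting endomorphism as $A=\id$, whereas the paper applies the uniqueness Prop. \ref{lb20} directly to the two sides of \eqref{eq81} (since $\id$ generates $\Vbb$); as Exp. \ref{lb31} is itself proved from Prop. \ref{lb20}, these are the same uniqueness principle in slightly different packaging.
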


\begin{proof}
Choose $w_2,\wtd w_2\in\Wbb_2$. Then $\bk{Y_{\Wbb_2}(\id,s\im)w_2,\Co \wtd w_2}=\bk{w_2|\wtd w_2}$. By \eqref{eq82},
\begin{align*}
&\uptau\#_1\upomega_{\Wbb_2,\fk Q}(\id \otimes w_2\otimes\Co\wtd w_2)=\sum_{n\in\Nbb}\wick{\uptau(\id\otimes P_n \c1 -) \cdot\upomega_{\Wbb_2,\fk Q}(w_2\otimes\Co\wtd w_2\otimes P_n \c1 -)}\\
=&\sum_{n\in\Nbb}\wick{\langle\id, P_n \c1 -\rangle \cdot\upomega_{\Wbb_2,\fk Q}(w_2\otimes\Co\wtd w_2\otimes P_n \c1 -)}=\upomega_{\Wbb_2,\fk Q}(w_2\otimes\Co\wtd w_2\otimes \id)
\end{align*}
which is $\bk{w_2|\wtd w_2}$ by Thm. \ref{lb28}-(a). So \eqref{eq81} is true when $v=\id$. Thus \eqref{eq81} is true for all $v\in\Vbb$ by Prop. \ref{lb20}, since both sides of \eqref{eq81} define elements of $\scr T_{\fk Q}^*(\Wbb_2\otimes\Wbb_2'\otimes\Vbb)$.
\end{proof}

Let
\begin{align*}
\fk B=\Big(\Pbb^1;s^{-1}\im-s\im,0,\infty;\frac 1{\zeta+s\im}+s\im,\zeta,1/\zeta\Big)
\end{align*}
Then we have
\begin{align*}
\frac 1{\zeta+s\im}+s\im=\vartheta_{s^{-1}\im}\circ(\zeta-(s^{-1}\im-s\im))
\end{align*}
Therefore, by Prop. \ref{lb6}, we have a conformal block satisfying ($\forall w_1,\wtd w_1\in\Wbb_1,v\in\Vbb$)
\begin{gather}
\upkappa\in\scr T_{\fk B}^*(\Wbb_1'\otimes\Wbb_1\otimes\Vbb')\nonumber\\
\upkappa(\Co\wtd w_1\otimes w_1\otimes \Co v)=\bk{\Gamma_{\Wbb_1}(\mc U_0(\vartheta_{s^{-1}\im})^{-1}\Co\wtd w_1,s^{-1}\im-s\im)w_1|v} \label{eq86}
\end{gather}
where we choose $\arg(s^{-1}\im-s\im)=\pi/2$ so that \eqref{eq71} is satisfied.

\begin{diss}\label{lb53}
We sew $\fk B$ and $\fk A$ along $\infty\in\fk B$ and $s\im\in\fk A$ (with respect to their local coordinates $1/\zeta$ and $\zeta-s\im$). Since $1/2<s<1$ by \eqref{eq73}, one can choose open disks centered at $\infty\in\fk B$ and $s\im\in\fk A$ satisfying Asmp. \ref{lb9} such that the product of their radii are greater than $q=1$. Then we have an equivalence
\begin{gather*}
\fk B\#_1\fk A\simeq \fk P=\big(\Pbb^1;t\im,-t\im,\infty;\varpi+s\im,\varpi^{-1}-s\im,1/\zeta\big)\\
\gamma_1\in\fk B\setminus\{\infty\}\simeq \varpi(\gamma_1+s\im)\in\fk P\\
\gamma_0\in\fk A\setminus\{s\im\}\simeq \varpi(\gamma_0)\in\fk P
\end{gather*}
With respect to this sewing, and using the canonical identification $\Co\Theta:\Vbb\xrightarrow{\simeq}\Vbb'$ (required in Def. \ref{lb51}), we have
\begin{gather*}
\upkappa\#_1\uptau\in\scr T_{\fk P}^*(\Wbb_1'\otimes\Wbb_1\otimes\Vbb)\\
\upkappa\#_1\uptau(\Co\wtd w_1\otimes w_1\otimes v)=\sum_{n\in\Nbb} \wick{\upkappa(\Co\wtd w_1\otimes w_1\otimes P_n\c1 -)\cdot \uptau(P_n\c1 -,v)}
\end{gather*}
\end{diss}

\begin{lm}\label{lb56}
There exists $\lambda_1>0$ such that
\begin{align}
\upomega_{\Wbb_1',\fk P}=\lambda_1 \cdot \upkappa\#_1\uptau
\end{align}
\end{lm}

\begin{proof}
Choose a non-zero lowest $L_0$-weight vector $w_1\in\Wbb_1$ (so that $L_1w_1=0$). Then by \eqref{eq82} and \eqref{eq83},
\begin{align*}
&\upkappa\#_1\uptau(w_1\otimes\Co w_1\otimes\id)=\sum_{n\in\Nbb}\wick{\langle \Gamma_{\Wbb_1}(\mc U_0(\vartheta_{s^{-1}\im})^{-1}\Co w_1,s^{-1}\im-s\im)w_1,P_n\c1 -\rangle\cdot \uptau(P_n\c1 -,\id)}\\
=&\sum_{n\in\Nbb}\wick{\langle \Gamma_{\Wbb_1}(\mc U_0(\vartheta_{s^{-1}\im})^{-1}\Co w_1,s^{-1}\im-s\im)w_1,P_n\c1 -\rangle\cdot \langle P_n\c1 -,\id\rangle}\\
=&\bk{\Gamma_{\Wbb_1}(\mc U_0(\vartheta_{s^{-1}\im})^{-1}\Co w_1,s^{-1}\im-s\im)w_1,\id}\\
=&\bk{w_1,e^{(s^{-1}\im-s\im)^{-1}L_1}\mc U_0(\vartheta_{s^{-1}\im-s\im})\cdot\mc U_0(\vartheta_{s^{-1}\im})^{-1}\Co w_1 ) }
\end{align*}
By $L_1w_1=0$ and \eqref{eq77}, the above expression equals
\begin{align*}
\bk{w_1,(s^{-1}-s)^{-2L_0}s^{-2L_0}\Co w_1}=(s^{-1}-s)^{-2d}s^{-2d}\lVert w_1\lVert^2> 0
\end{align*}
if we let $L_0w_1=dw_1$. Thus, Prop. \ref{lb32} proves the lemma.
\end{proof}

\subsection{Part III}

\begin{lm}\label{lb54}
The sewing $\upkappa\#_1\uptau\#_1\upomega_{\Wbb_2,\fk Q}$ converges absolutely. More precisely, for each $w_1,\wtd w_1\in\Wbb,w_2,\wtd w_2\in\Wbb_2$, the following double series converges absolutely:
\begin{align*}
&\upkappa\#_1\uptau\#_1\upomega_{\Wbb_2,\fk Q}(\Co\wtd w_1\otimes w_1\otimes w_2\otimes\Co\wtd w_2)\\
=&\sum_{n,k\in\Nbb} \wick{\upkappa(\Co\wtd w_1\otimes w_1\otimes P_n\c1 -)\cdot \uptau(P_n\c1 -,}\wick{P_k \c1 -)\cdot \upomega_{\Wbb_2,\fk Q}(w_2\otimes\Co\wtd w_2\otimes P_k \c1 -) }
\end{align*}
\end{lm}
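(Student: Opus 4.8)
The plan is to exhibit the double series as the value at $q_1=q_2=1$ of the two-variable normalized sewing $\Std_{q_1,q_2}(\upkappa\otimes\uptau\otimes\upomega_{\Wbb_2,\fk Q})$ and to read off absolute convergence from the convergence half of the Sewing Theorem \ref{lb12}. Concretely, I would form the disjoint union $\fk B\sqcup\fk A\sqcup\fk Q$ and sew it along the two pairs $(\infty\in\fk B,\,s\im\in\fk A)$ and $(-\im\in\fk A,\,\im\in\fk Q)$, with respect to the local coordinates $1/\zeta,\ \zeta-s\im$ for the first pair and $\varpi^{-1},\ \varpi$ for the second, with parameters $q_1$ and $q_2$ respectively. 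With these conventions the summation index $n$ is exactly the sewing index of the first pair (contracting the $\Vbb'$-slot of $\upkappa$ against the $\Vbb$-slot of $\uptau$) and $k$ that of the second (contracting the $\Vbb'$-slot of $\uptau$ against the $\Vbb$-slot of $\upomega_{\Wbb_2,\fk Q}$), so the series in the statement is $\Std_{q_1,q_2}(\upkappa\otimes\uptau\otimes\upomega_{\Wbb_2,\fk Q})(\Co\wtd w_1\otimes w_1\otimes w_2\otimes\Co\wtd w_2)$ at $q_1=q_2=1$.

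Since $\Vbb$ is $C_2$-cofinite, Theorem \ref{lb12}-(2) then gives absolute convergence on a polydisc $\Dbb_{r_1\rho_1}\times\Dbb_{r_2\rho_2}$, and what remains is to see that $(1,1)$ lies inside it, i.e. that the annuli may be chosen with $r_1\rho_1>1$ and $r_2\rho_2>1$. This is precisely the radius estimate already carried out, pair by pair, in Disc. \ref{lb53} for $(\infty\in\fk B,\,s\im\in\fk A)$ and in Disc. \ref{lb52} for $(-\im\in\fk A,\,\im\in\fk Q)$, both relying on $1/\sqrt2<s<1$. Because $s\im\ne-\im$ (as $s<1$), the two discs cut out of $\fk A$ can be taken disjoint, so the disjointness requirement of Asmp. \ref{lb9} is met for the two pairs at once; specializing the convergent two-variable series to $q_1=q_2=1$ yields the claim, and absolute convergence justifies summing in either order.

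The one genuine obstacle is that the middle component $\fk A$ has both of its marked points $s\im,-\im$ consumed by the sewing, so the standing hypothesis of Asmp. \ref{lb9} — that each connected component of the surface being sewn carries an unsewn marked point — fails, and Theorem \ref{lb12} is not literally applicable to $\fk B\sqcup\fk A\sqcup\fk Q$. I would remove this obstruction by propagation of conformal blocks (cf. the remark following Exp. \ref{lb74}): adjoin to $\fk A$ an auxiliary marked point $p$ carrying the vacuum module $\Vbb$ and replace $\uptau$ by its propagation $\uptau^+\in\scr T_{\fk A^+}^*(\Vbb\otimes\Vbb'\otimes\Vbb)$. Now every component of $\fk B\sqcup\fk A^+\sqcup\fk Q$ has an unsewn marked point, Asmp. \ref{lb9} holds, and Theorem \ref{lb12}-(2) applies to give absolute convergence of $\Std_{q_1,q_2}(\upkappa\otimes\uptau^+\otimes\upomega_{\Wbb_2,\fk Q})$; inserting the vacuum $\id$ at $p$ returns $\uptau$, and hence the desired double series. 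Equivalently, one may sew iteratively — first $\uptau\#_1\upomega_{\Wbb_2,\fk Q}$ (Disc. \ref{lb52}, Lem. \ref{lb55}) and then $\upkappa\#_1(\uptau\#_1\upomega_{\Wbb_2,\fk Q})$ (Disc. \ref{lb53}), each a single sewing satisfying Asmp. \ref{lb9} — and then upgrade the resulting iterated convergence to absolute double convergence using the triangular structure of $\uptau$: since $\uptau(v\otimes v')=\bk{a^{L_0}e^{bL_1}v,v'}$ and $e^{bL_1}$ only lowers $L_0$-weight, $\uptau(P_n\,\cdot\,,P_k\,\cdot\,)$ vanishes unless $0\le k\le n$, which together with the two one-variable absolute bounds supplies a summable majorant for $\sum_{n,k}$ and hence Fubini. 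Either route reduces the lemma to the already-established single-parameter convergence statements.
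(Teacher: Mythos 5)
Your main route is exactly the paper's: realize the double series as the simultaneous sewing of $\upkappa\otimes\uptau\otimes\upomega_{\Wbb_2,\fk Q}$ on $\fk B\sqcup\fk A\sqcup\fk Q$ along the two pairs of points and invoke Thm. \ref{lb12}-(2), the whole content being that the sewing discs can be chosen pairwise disjoint with $r_1\rho_1>1$ and $r_2\rho_2>1$. The paper carries this out quantitatively (discs $\{|z-s\im|<as\}$ with $\sqrt2\,as>1$ and $\{|\varpi^{-1}(z)|<b\}$ with $b>1$, disjoint because the closure of the first lies in the open upper half-plane while $\{|\varpi^{-1}|\le1\}$ is the closed lower half-plane); your appeal to Disc. \ref{lb52} and \ref{lb53} plus ``the two discs can be taken disjoint'' is the same point stated less explicitly --- note that disjointness alone is not enough, it must coexist with the radius bounds. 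Your observation about Asmp. \ref{lb9} is a genuine and valuable catch: the middle sphere $\fk A$ carries no unsewn marked point, so the first clause of Asmp. \ref{lb9} fails, whereas the paper's proof asserts that Asmp. \ref{lb9} holds while verifying only its disjointness clause. Your propagation patch (adjoin a vacuum point $p$ to $\fk A$ away from the two discs, apply Thm. \ref{lb12}-(2) to $\upkappa\otimes\uptau^+\otimes\upomega_{\Wbb_2,\fk Q}$, then insert $\id$ at $p$, which reproduces the double series term by term since $\uptau^+(\cdot\otimes\cdot\otimes\id)=\uptau$) is sound and is the standard way to remove this hypothesis.

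Your second, ``equivalent'' route, however, is not sound as stated. The triangularity of $\uptau$ is correct ($\uptau(P_n\,\cdot\,,P_k\,\cdot\,)=0$ unless $k\le n$, since $e^{bL_1}$ only lowers $L_0$-weight), so the inner sums are finite; but iterated absolute convergence $\sum_n\bigl|\sum_{k\le n}b_{n,k}\bigr|<\infty$ together with finiteness of each $\sum_{k\le n}|b_{n,k}|$ does \emph{not} supply a summable majorant for $\sum_{n,k}|b_{n,k}|$. For instance $b_{n,k}=(-1)^k/n^2$ for $0\le k\le n$ has convergent iterated sums and finite inner absolute sums, yet $\sum_{n,k}|b_{n,k}|=\sum_n(n+1)/n^2=\infty$. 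Upgrading iterated convergence on a polydisc to joint absolute convergence is a Hartogs-type problem (one would need uniform control, over $n$, of the coefficients of the polynomials $q_2\mapsto\sum_{k\le n}b_{n,k}q_2^k$, which pointwise convergence in $q_2$ does not give), not a Fubini argument. So the lemma should rest on the simultaneous-sewing route with the propagation patch, not on the iterated route.
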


\begin{proof}
This follows from Thm. \ref{lb12}, as long as we can check that Asmp. \ref{lb9} is satisfied, and that the $r_j,\rho_j$ as in Def. \ref{lb8} can be chosen such that $r_1\rho_1>1,r_2\rho_2>1$ (because the sewing parameters are $q_1=q_2=1$). Note that we are sewing $\fk B\sqcup\fk A\sqcup \fk Q$ simultaneously along two pairs of points: the first pair is $\infty\in\fk B$ and $s\im\in\fk A$, and the second one is $-\im\in\fk A$ and $\im\in\fk Q$. The local coordinates for these points are respectively $\xi_1=1/\zeta,\mu_1=\zeta-s\im,\xi_2=\varpi^{-1},\mu_2=\varpi$. (See Discussions \ref{lb53} and \ref{lb52}.)

As for the first pair of points,  we choose neighborhoods $W_1'\ni \infty$ and $W_1''\ni s\im$ as follows. Note that if $1/\sqrt 2<s<1$ (cf. \eqref{eq73}) then $0<(s^{-1}-s)<1/\sqrt 2$. Let $W_1'\subset\fk B$ and $W_1''\subset\fk A$ be
\begin{gather*}
W_1'=\eta_1^{-1}(\Dbb_{\sqrt 2})=\{z\in\Pbb^1:|1/z|<\sqrt 2\}\qquad\subset\fk B\\
W_1''=\mu_1^{-1}(\Dbb_{as})=\{z\in\Pbb^1:|z-s\im|<as\}\qquad\subset\fk A
\end{gather*}
where $0<a<1$ is such that $\sqrt 2as>1$. Such $a$ exists because $\sqrt 2s>1$. As for the second pair, we choose neighborhoods $W_2'\ni -\im,W_2''\ni \im$ to be
\begin{gather*}
W_2'=\eta_2^{-1}(\Dbb_b)=\{z\in\Pbb^1:|\varpi^{-1}(z)|<b\}\qquad\subset\fk A\\
W_2''=\mu_2^{-1}(\Dbb_1)=\{z\in\Pbb^1:|\varpi(z)|<1\}\qquad\subset\fk Q
\end{gather*}
where $b>1$ is such that $W_1''$ and $W_2'$ (which are both open discs in $\fk A$) do not intersect. Such $b$ exists because the closure of $W_1''$ (which is $\{z\in\Pbb^1:|z-s\im|\leq as\}$) does not intersect the closure of $\eta_2^{-1}(\Dbb_1)$ (which is $\{z\in\Pbb^1:\Imag(z)\leq 0\}$).

Now, Asmp. \ref{lb9} is satisfied (namely, the disks $W_1,W_1'',W_2',W_2''$ are mutually disjoint, and each disk contains only one one of the marked points of $\fk B\sqcup\fk A\sqcup\fk P$), and we have $r_1\rho_1>1,r_2\rho_2>1$ where $r_1=\sqrt 2,\rho_1=as,r_2=b,\rho_2=1$.
\end{proof}

\begin{proof}[\textbf{Proof of Thm. \ref{lb50}}]
By Lem. \ref{lb56}, for any $w_1,\wtd w_1\in\Wbb_1,w_2,\wtd w_2\in\Wbb_2$ we have
\begin{align*}
&\upomega_{\Wbb_1',\fk P}\#_1\upomega_{\Wbb_2,\fk Q}(\Co\wtd w_1\otimes w_1\otimes w_2\otimes\Co\wtd w_2)\\
=&\sum_{k\in\Nbb} \upomega_{\Wbb_1',\fk P}(\Co\wtd w_1\otimes w_1\otimes \wick{P_k \c1 -)\cdot \upomega_{\Wbb_2,\fk Q}(w_2\otimes \Co\wtd w_2\otimes P_k \c1-)}  \\
=&\lambda_1\sum_{n,k\in\Nbb} \wick{\upkappa(\Co\wtd w_1\otimes w_1\otimes P_n\c1 -)\cdot \uptau(P_n\c1 -,}\wick{P_k \c1 -)\cdot \upomega_{\Wbb_2,\fk Q}(w_2\otimes\Co\wtd w_2\otimes P_k \c1 -) }
\end{align*}
which converges absolutely as a double series by Lem. \ref{lb54}. (In particular, the two infinite sums commute.) By Lem. \ref{lb55} and \eqref{eq86}, this expression equals
\begin{align*}
&\lambda_1\sum_{n\in\Nbb} \wick{\upkappa(\Co\wtd w_1\otimes w_1\otimes P_n\c1 -)\cdot \langle Y_{\Wbb_2}(P_n \c1-,s\im)w_2,\Co\wtd w_2 \rangle}\\
=&\lambda_1\sum_{n\in\Nbb} \wick{\langle \Gamma_{\Wbb_1}(\mc U_0(\vartheta_{s^{-1}\im})^{-1}\Co\wtd w_1,s^{-1}\im-s\im)w_1, P_n \c1- \rangle\cdot \langle Y_{\Wbb_2}(P_n \c1-,s\im)w_2,\Co\wtd w_2 \rangle}\\
=&\lambda_1 \bigbk{Y_{\Wbb_2}\big( \Gamma_{\Wbb_1}(\mc U_0(\vartheta_{s^{-1}\im})^{-1}\Co\wtd w_1,s^{-1}\im-s\im)w_1,s\im\big)w_2\big|\wtd w_2}
\end{align*}
By \eqref{eq69}, this expression equals $\lambda_1$ times the RHS of $\eqref{eq87}$. The proof is completed.
\end{proof}

\section{Application to orbifold VOAs}

Recall that if  $\Vbb$ is CFT-type, self-dual (e.g. when $\Vbb$ is unitary), $C_2$-cofinite, and  rational (namely, if $\Vbb$ is \textbf{strongly rational}), then $\Mod(\Vbb)$ of $\Vbb$-modules is a modular tensor category \cite{Hua08}. 

\begin{df}
If $\Vbb$ is a CFT-type, unitary, $C_2$-cofinite, and rational VOA, we say that $\Vbb$ is \textbf{completely unitary} if all irreducible $\Vbb$-modules are unitarizable, and if for each pair of irreducible $\Vbb$-modules $\Wbb_1,\Wbb_2$, the fusion product $\Wbb_1\boxtimes\Wbb_2$ is positive (recall Def. \ref{lb50}).
\end{df}

\begin{rem}
Note that if $\Wbb$ is an irreducible $\Vbb$-module, then the unitary structures on $\Wbb$ are clearly unique up to positive scalar multiplications. Also, it was shown in \cite{Gui19b} that if $\Vbb$ is completely unitary then the category $\Mod^\uni(\Vbb)$ of unitary $\Vbb$-modules is a unitary modular tensor category.
\end{rem}

\subsection{General results}

In this subsection, we assume that $\Vbb$ is a CFT-type unitary $C_2$-cofinite and rational VOA. Let $G$ be a finite group of unitary automorphisms of $\Vbb$. (Thus, each $g\in G$ acts unitarily on $\Vbb$, and satisfies $gY(v,z)=Y(gv,z)g$ for all $v\in\Vbb$.) It is natural to ask whether the fixed point subalgebra $\Vbb^G=\{v\in V:gv=v,\forall g\in G\}$ is completely unitary. (Certainly $\Vbb^G$ is unitary.) 

Notice that by \cite{Miy15,CM16}, if $G$ is solvable, then $\Vbb^G$ is $C_2$-cofinite and rational. Then, by \cite{Hua09}, $\Mod(\Vbb)$ is a modular tensor category.

\begin{df}
We say that $\Vbb$ is a \textbf{unitary holomorphic VOA} if $\Vbb$ is CFT-type, unitary, $C_2$-cofinite, rational, and if every irreducible $\Vbb$-module is isomorphic to $\Vbb$.
\end{df}

\begin{thm}\label{lb61}
Let $\Vbb$ be a unitary holomorphic VOA, and let $G$ be a finite \emph{cyclic} group (i.e. $G\simeq\Zbb_n$ for some $n$) of unitary automorphisms of $\Vbb$. Assume that every irreducible $\Vbb^G$-module is unitarizable. Then $\Vbb^G$ is completely unitary.
\end{thm}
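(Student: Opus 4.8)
The plan is to deduce Theorem \ref{lb61} from the positivity criterion Thm. \ref{lb60}, combined with the structural fact that the cyclic orbifold of a holomorphic VOA has a \emph{pointed} representation category.

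First I would check that $\Vbb^G$ satisfies the standing hypotheses under which complete unitarity is defined and under which Thm. \ref{lb60} applies, namely that $\Vbb^G$ is CFT-type, unitary, $C_2$-cofinite, and rational. Unitarity of $\Vbb^G$ is immediate, since $\Vbb$ is unitary and $G$ acts by unitary automorphisms, so the inner product and the PCT operator restrict to $\Vbb^G$. CFT-type follows from $\Vbb(0)=\Cbb\id$ together with $g\id=\id$ for all $g\in G$, giving $\Vbb^G(0)=\Cbb\id$. Since $G$ is cyclic, hence solvable, $C_2$-cofiniteness and rationality of $\Vbb^G$ follow from \cite{Miy15,CM16}. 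Thus $\Vbb^G$ is strongly rational and unitary, and the notion of complete unitarity is meaningful for it.

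Next, unwinding the definition of complete unitarity, it suffices to show that for every pair of irreducible $\Vbb^G$-modules $\Wbb_1,\Wbb_2$ the fusion product $\Wbb_1\boxtimes\Wbb_2$ is positive (Def. \ref{lb59}). By hypothesis every irreducible $\Vbb^G$-module is unitarizable, so I may fix unitary structures on $\Wbb_1$ and $\Wbb_2$. The crucial input is that, because $\Vbb$ is holomorphic and $G$ is cyclic, every irreducible $\Vbb^G$-module is a \emph{simple current}; equivalently, the modular tensor category $\Mod(\Vbb^G)$ is pointed, its simple objects forming a group (isomorphic to $\Zbb_n\times\Zbb_n$) under $\boxtimes$. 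This is the theorem of van Ekeren--M\"oller--Scheithauer \cite{vEMS20}. In particular $\Wbb_1\boxtimes\Wbb_2$ is again irreducible, and being irreducible it is unitarizable by hypothesis.

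Finally, I would apply Thm. \ref{lb60} directly: $\Wbb_1,\Wbb_2$ are unitary irreducible $\Vbb^G$-modules whose fusion $\Wbb_1\boxtimes\Wbb_2$ is an irreducible unitarizable $\Vbb^G$-module, so the fusion is positive. As $\Wbb_1,\Wbb_2$ were arbitrary irreducibles, $\Vbb^G$ is completely unitary. The genuine mathematical difficulty of this whole circle of ideas lies entirely upstream, in Thm. \ref{lb60} and hence in the equivalence of geometric and algebraic positivity (Thm. \ref{lb50}), rather than in this assembly; within the present proof the only point requiring external input is the pointedness of $\Mod(\Vbb^G)$, imported from \cite{vEMS20}. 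The one step to handle with care is applying the simple-current property in its correct form, namely that $\Wbb\boxtimes\Xbb$ is irreducible for \emph{every} irreducible $\Xbb$ whenever $\Wbb$ is a simple current, so that $\Wbb_1\boxtimes\Wbb_2$ is seen to be irreducible for every choice of the two factors and not merely when one factor is the vacuum $\Vbb^G$.
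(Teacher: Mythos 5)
Your proof is correct and follows essentially the same route as the paper's: pointedness of $\Mod(\Vbb^G)$ via \cite{vEMS20}, unitarizability of the (irreducible) fusion product from the hypothesis, and then a direct application of Thm. \ref{lb60}. The extra verifications you include (unitarity and CFT-type of $\Vbb^G$, and its rationality/$C_2$-cofiniteness via \cite{Miy15,CM16}) are handled by the paper in the discussion surrounding the theorem rather than inside the proof, so they constitute added detail rather than a different argument.
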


We will discuss the non-cyclic or even non-abelian case in future works.

\begin{proof}
Since $G$ is cyclic, $\Mod(\Vbb)$ is pointed (namely, the fusion product of any two irreducible $\Vbb$-modules $\Wbb_1\boxtimes\Wbb_2$ is irreducible). This fact is due to \cite[Prop. 5.6]{vEMS20}. It also follows from the general fact that if $\Vbb$ is holomorphic and if $\Vbb^G$ is $C_2$-cofinite and rational (which is automatic when $G$ is solvable \cite{Miy15,CM16}) then $\Mod(\Vbb)$ is isomorphic to the twisted Drinfeld double $D^\omega(G)$ for some $\omega\in H^3(G,\Cbb^\times)$ (cf. \cite[Thm. 6.2]{DNR21},  or also \cite{Kir04,McR21} for a general discussion of the relationship between the categories of $\Vbb^G$-modules and twisted $\Vbb$-modules without assuming that $\Vbb$ is holomorphic), together with the fact that if $G$ is cyclic then $D^\omega(G)$ is pointed because $H^2(G,\Cbb^\times)$ is trivial (\cite[Cor. 3.6]{MN01}). Choose any unitary structures on $\Wbb_1$ and $\Wbb_2$. Then the fusion product $\Wbb_1\boxtimes\Wbb_2$ is positive by Thm. \ref{lb60}. So $\Vbb^G$ is completely unitary.
\end{proof}

In practice, one can show that all irreducible $\Vbb^G$-modules are unitarizable by showing that all irreducible $G$-twisted modules are unitarizable. For the readers' convenience, we recall the following definition of twisted modules when $\Vbb^G$ is $C_2$-cofinite. This definition can be easily translated to the tensor-categorical language as in \cite{Kir02,Kir04,McR21}. For the definition for general VOAs, see for instance \cite[Sec. 7]{DL96}.

\begin{df}
Let $g\in G$. A \textbf{$g$-twisted module} denotes $(\mc W,Y_{\mc W})$, where $\mc W$ is a $\Vbb^G$-module, and $Y_{\mc W}$ is type $\mc W\choose\Vbb~\mc W$-intertwining operator of $\Vbb^G$ satisfying the following conditions:
\begin{enumerate}[label=(\arabic*)]
\item For each $v_1,v_2\in\Vbb$ and $z_1,z_2\in\Cbb^\times$ satisfying $0<|z_1-z_2|<|z_2|<|z_1|$ and $\arg(z_1-z_2)=\arg z_2=\arg z_1$, we have the fusion relation
\begin{align}
Y_{\mc W}(v_1,z_1)Y_{\mc W}(v_2,z_2)=Y_{\mc W}\big(Y(v_1,z_1-z_2)v_2,z_2 \big)
\end{align}
understood in the same way as \eqref{eq69}, namely, for each $w\in\mc W,w'\in\mc W'$ we have
\begin{align*}
\sum_{r\in\Cbb}\bigbk{Y_{\mc W}(v_1,z_1)P_rY_{\mc W}(v_2,z_2)w,w'}=\sum_{n\in\Nbb}\bigbk{Y_{\mc W}\big(P_nY(v_1,z_1-z_2)v_2,z_2 \big)w,w'}
\end{align*}
where both sides converge absolutely by \cite{Hua05} or Thm. \ref{lb12}.
\item For each $v\in\Vbb$ and $z\in\Cbb^\times$ with chosen $\arg z$, if we let $e^{2\im\pi}z$ have argument $2\pi+\arg z$, then
\begin{align}
Y_{\mc W}(v,z)=Y_{\mc W}(gv,e^{2\im\pi}z)  \label{eq88}
\end{align}
\end{enumerate}
\end{df}

Similar to untwisted modules, we abbreviate $Y_{\mc W}$ to $Y$ when the context is clear.

\begin{rem}
The above definition of twisted modules is exactly the same as that in \cite{McR21}, and also applies to the general case that $G$ is a finite group of (non-necessarily unitary) automorphisms of a (non-necessarily unitary) VOA $\Vbb$ satisfying that $\Vbb^G$ is $C_2$-cofinite. It also agrees with the usual definition using (algebraic) Jacobi identity, which does not require $\Vbb^G$ to be $C_2$-cofinite.  See \cite{Hua10}. 
\end{rem}

\begin{rem}
The above definition depends only on the automorphism $g$ but not on the group $G$. In fact, if we let $H=\bk{g}$ be the finite cyclic group generated by $g$, then the definition of $g$-twisted modules using $H$ is clearly equal to the one using $G$.
\end{rem}

\begin{df}
Let $g$ be a unitary automorphism of $\Vbb$ with finite order. A $g$-twisted $\Vbb$-module $\mc W$, together with an inner product on $\mc W$, is called a \textbf{unitary $g$-twisted $\Vbb$-module}, if for each $v\in\Vbb,w_1,w_2\in\mc W$ and $z\in\Cbb^\times$ with chosen $\arg z$ we have
\begin{align}
\bk{Y_{\mc W}(v,z)w_1|w_2}=\bk{w_1|Y_{\mc W}(e^{\ovl zL_1}(-\ovl z^{-2})^{L_0}\Theta v,\ovl z^{-1})w_2  }
\end{align} 
Here, we assume $\arg\ovl z=-\arg z$.
\end{df}

By $G$-twisted $\Vbb$-module, we mean a $g$-twisted $\Vbb$-module where $g\in G$.

\begin{pp}\label{lb66}
Let $\Vbb$ be a CFT-type unitary VOA, and let $G$ be a finite unitary automorphism group of $\Vbb$ such that $\Vbb^G$ is an (automatically unitary) $C_2$-cofinite rational VOA. Suppose that every irreducible $G$-twisted $\Vbb$-module is unitarizable (i.e. admits a (necessarily unique up to scalar multiplications) unitary structure). Then every irreducible $\Vbb^G$-module is unitarizable.
\end{pp}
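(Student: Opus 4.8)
The plan is to reduce the unitarizability of irreducible $\Vbb^G$-modules to the hypothesis on twisted modules, using the standard orbifold correspondence between $\Vbb^G$-modules and $G$-twisted $\Vbb$-modules. The overall strategy is: every irreducible $\Vbb^G$-module sits inside some irreducible $g$-twisted $\Vbb$-module, which is unitarizable by assumption, and unitary structures descend to orthogonal submodules.

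First I would record a preliminary fact, that the PCT operator $\Theta$ of $\Vbb$ commutes with $G$ and hence preserves $\Vbb^G$. For a unitary automorphism $g$, the antiunitary map $g\Theta g^{-1}$ satisfies the defining relations of a PCT operator for the \emph{same} inner product on $\Vbb$ (a direct computation using $gY(u,z)=Y(gu,z)g$ and that $g$ preserves the inner product); since the PCT operator is uniquely determined by the inner product, $g\Theta g^{-1}=\Theta$. Consequently $\Theta(\Vbb^G)=\Vbb^G$, and $\Theta|_{\Vbb^G}$ is the PCT operator of the unitary VOA $\Vbb^G$.

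Next comes the restriction step: any unitary $g$-twisted $\Vbb$-module $\mc W$ is automatically a unitary $\Vbb^G$-module. For $v\in\Vbb^G$ the twisting relation \eqref{eq88} gives $Y_{\mc W}(v,z)=Y_{\mc W}(gv,e^{2\im\pi}z)=Y_{\mc W}(v,e^{2\im\pi}z)$, so $Y_{\mc W}(v,z)$ is single-valued and $\mc W$ is an ordinary $\Vbb^G$-module. Moreover $e^{\ovl z L_1}(-\ovl z^{-2})^{L_0}\Theta v$ is precisely the argument producing $Y'_{\mc W}(\Theta v,\ovl z)$ in the sense of \eqref{eq10}, so the unitarity axiom of the $g$-twisted module specializes, for $v\in\Vbb^G$, to $\bk{Y_{\mc W}(v,z)w_1|w_2}=\bk{w_1|Y'_{\mc W}(\Theta v,\ovl z)w_2}$, which is exactly \eqref{eq21} for $\Vbb^G$ (here using that $\Theta|_{\Vbb^G}$ is the PCT operator of $\Vbb^G$, from the preliminary step). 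Hence $\mc W$ is a unitary $\Vbb^G$-module. The final descent then uses that the orthogonal complement of any $\Vbb^G$-submodule of $\mc W$ is again a submodule: this follows from invariance of the inner product together with the fact that the modes of $Y'(u,z)=Y(e^{zL_1}(-z^{-2})^{L_0}u,z^{-1})$ for $u\in\Vbb^G$ are combinations of modes $Y(u')_m$ with $u'\in\Vbb^G$, since $L_0,L_1$ preserve $\Vbb^G$. Thus any irreducible $\Vbb^G$-submodule of $\mc W$ is an orthogonal direct summand and inherits a unitary structure.

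The hard part will be the structural input that I would invoke third: every irreducible $\Vbb^G$-module occurs as a $\Vbb^G$-submodule of some irreducible $g$-twisted $\Vbb$-module with $g\in G$. This is exactly where the orbifold theory enters, via the identification of $\Mod(\Vbb^G)$ with the equivariantization of the $G$-crossed braided category $\bigoplus_{g\in G}\Mod^g(\Vbb)$ of twisted modules \cite{Kir04,McR21}; under equivariantization every simple object of the equivariantization is a summand of (the restriction of) a simple object of the $G$-crossed category, and care is needed to quote this under the stated hypotheses (finiteness of $G$, and rationality and $C_2$-cofiniteness of $\Vbb^G$). Granting this, for an irreducible $\Vbb^G$-module $\Nbb$ I choose such an irreducible $g$-twisted $\mc W\supseteq\Nbb$, fix a unitary structure on $\mc W$ (which exists by hypothesis), make it a unitary $\Vbb^G$-module by the restriction step, and conclude by the orthogonal-summand argument that $\Nbb$ is unitarizable. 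Steps one and two are essentially bookkeeping, so the whole weight of the proof rests on correctly citing and applying the orbifold decomposition.
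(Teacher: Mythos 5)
Your proposal is correct and takes essentially the same route as the paper: the paper's entire proof is the citation (to \cite{DRX17} or \cite{McR21}) of exactly your structural input, namely that every irreducible $\Vbb^G$-module is a $\Vbb^G$-submodule of an irreducible $G$-twisted $\Vbb$-module, combined with the implicit observation that the unitary structure on that twisted module restricts to each irreducible $\Vbb^G$-summand. Your preliminary steps (that $\Theta$ commutes with $G$ and restricts to the PCT operator of $\Vbb^G$, that a unitary twisted module is a unitary $\Vbb^G$-module, and the orthogonal-complement descent) simply spell out the bookkeeping the paper leaves unstated.
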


Therefore, the assumption in Thm. \ref{lb61} on the unitarizable of irreducible $\Vbb^G$-modules is satisfied if every $G$-twisted $\Vbb$-module is unitarizable. 

\begin{proof}
This follows from the fact that every irreducible $\Vbb^G$-module is a submodule of a irreducible $G$-twisted $\Vbb$-module (considered as a $\Vbb^G$-module), cf. \cite{DRX17} or \cite{McR21}.
\end{proof}

\begin{rem}
Many examples satisfy the assumptions of Thm. \ref{lb61}. For instance, Lam showed in \cite{Lam23} that if $V$ is an even lattice VOA $V_L$, then for many finite-order unitary automorphisms $g$ of $V$ (including all standard lifts from the isometries of the lattice $L$), all $g^n$-twisted $\Vbb$-modules are unitary (where $n\in\Zbb$). Thus, if $L$ is unimodular (i.e. self-dual) so that $V_L$ is holomorphic, we know from Thm. \ref{lb61} that $V_L^{\bk{g}}$ is completely unitary.
\end{rem}

In the next subsection, we show that another large class of orbifold VOAs satisfy the assumptions of Thm. \ref{lb61}.

\subsection{Examples: permutation orbifold VOAs}

Let $\Vbb$ be unitary. Let $k\in\Zbb_+$. Choose a permutation $g\in\Aut\{1,2,\dots,k\}$. Then $g$ is naturally an automorphism of $\Vbb^{\otimes k}$:
\begin{align*}
g(v_1\otimes\cdots\otimes v_k)=v_{g^{-1}(1)}\otimes\cdots\otimes v_{g^{-1}(k)}
\end{align*}
Note that the PCT operator $\Theta$ on $\Vbb^{\otimes k}$ is $\Theta\otimes\cdots\otimes\Theta$.

We first consider the special case that $g=(12\dots k)$. Then irreducible $g$-twisted modules are classified in \cite{BDM02}. We briefly recall the construction. (See also \cite[Sec. 10]{Gui21}. )

Choose a $\Vbb$-module $\Wbb$. Then on the same vector space $\Wbb$ there is a canonical $g$-twisted module structure. To avoid confusions, we write $\Wbb$ as $\mc W$ when we consider it as a $g$-twisted modules. So $\Wbb$ and $\mc W$ are equal as vector spaces. We write the vertex operation of $\mc W$ as $Y_{\mc W}^g$ or simply $Y^g$. The $g$-twisted module $(\mc W,Y_{\mc W}^g)$ is uniquely determined by the fact that if $u\in\Vbb$ then
\begin{align*}
Y^g_{\mc W}(u\otimes\id\otimes\cdots\otimes \id,z)=Y_\Wbb(\mc U(\delta_{k,z})u,\sqrt[k]z)
\end{align*}
where $\arg\sqrt[k]z=\frac 1k\arg z$ and $\delta_{k,z}\in\Gbb$ is defined by
\begin{equation*}
\delta_{k,z}(t)=(z+t)^{\frac 1k}-z^{\frac 1k}
\end{equation*}
See \cite[Thm. 3.9]{BDM02}. 

\begin{rem}\label{lb63}
Since $\Vbb^{\otimes k}$ is generated by vectors of the form $u\otimes\id\otimes\cdots\otimes \id$ and their permutations by $g^j$ (where $j\in\Zbb$), the uniqueness mentioned above is an easy consequence of \eqref{eq88} and the Jacobi identity for twisted vertex operators (cf. \cite[(3.4)]{DLM98}, or \cite[Rem. 10.1]{Gui21} if one prefers to expand the delta functions). 
\end{rem}

\begin{lm}\label{lb64}
Let $g=(1,\dots,k)$. Assume that $\Wbb$ is a unitary $\Vbb$-module, and define the inner product on $\mc W$ to be the same as that of $\Wbb$. Then the $g$-twisted $\Vbb$-module $(\mc W,Y_{\mc W}^g)$ is unitary.
\end{lm}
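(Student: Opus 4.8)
The plan is to verify directly the defining identity of a unitary $g$-twisted module: that for all $v\in\Vbb^{\otimes k}$, $w_1,w_2\in\mc W$, and $z\in\Cbb^\times$ (with $\arg\ovl z=-\arg z$),
\begin{align*}
\bk{Y^g_{\mc W}(v,z)w_1|w_2}=\bk{w_1|Y^g_{\mc W}\big(e^{\ovl zL_1}(-\ovl z^{-2})^{L_0}\Theta^{\otimes k}v,\ovl z^{-1}\big)w_2},
\end{align*}
where $\Theta^{\otimes k}$ is the PCT operator of $\Vbb^{\otimes k}$ and $L_0,L_1$ are its Virasoro operators. Let $A\subset\Vbb^{\otimes k}$ be the set of $v$ for which this holds for all $w_1,w_2,z$. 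First I would reduce to a generating set. By Rem. \ref{lb63}, $\Vbb^{\otimes k}$ is generated by the vectors $u\otimes\id\otimes\cdots\otimes\id$ ($u\in\Vbb$) together with their images under $g^j$. The set $A$ contains the vacuum (where the identity is trivial), is $g$-invariant (since $g$ commutes with $\Theta^{\otimes k}$ and with all $L_n$, and the monodromy relation \eqref{eq88} relates the two sectors compatibly), and should be closed under all modes $Y(v_1)_n v_2$ with $v_1,v_2\in A$: this last fact follows from the twisted fusion relation (condition (1) in the definition of $g$-twisted module) applied on both sides of the adjoint identity, combined with the fact that $\Vbb^{\otimes k}$ is \emph{itself} a unitary VOA, which provides the required $\Vbb^{\otimes k}$-level identity matching $Y(v_1,z_1-z_2)v_2$ with the corresponding transform of the adjoints. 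This is the twisted analogue of the standard argument (cf. \cite{DL14,CKLW18}) that the invariance set of the inner product is a vertex subalgebra. Hence it suffices to verify the identity on the single generators $u\otimes\id\otimes\cdots\otimes\id$.

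For such a generator I would use the explicit formula $Y^g_{\mc W}(u\otimes\id\otimes\cdots\otimes\id,z)=Y_\Wbb(\mc U(\delta_{k,z})u,\sqrt[k]z)$ from \cite[Thm. 3.9]{BDM02}. Because $L_n\id=0$ for $n\geq-1$, we have $\Theta^{\otimes k}(u\otimes\id\otimes\cdots)=\Theta u\otimes\id\otimes\cdots$ and $e^{\ovl zL_1}(-\ovl z^{-2})^{L_0}$ acts only on the first factor, so the target right-hand vector is $(\Phi_{\ovl z}\Theta u)\otimes\id\otimes\cdots$ with $\Phi_w:=e^{wL_1}(-w^{-2})^{L_0}=\mc U_0(\vartheta_w)$ (Exp. \ref{lb3}). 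Setting $\zeta=\sqrt[k]z$ and applying the unitarity of $\Wbb$ as a $\Vbb$-module to $Y_\Wbb(\mc U(\delta_{k,z})u,\zeta)$, the whole statement reduces to the single operator identity on $\Vbb$,
\begin{align*}
\mc U(\vartheta_{\ovl\zeta})\,\Theta\,\mc U(\delta_{k,z})=\mc U(\delta_{k,\ovl z^{-1}})\,\mc U(\vartheta_{\ovl z})\,\Theta .
\end{align*}
Since $\Theta$ is antilinear and commutes with every $L_n$, one has $\Theta\,\mc U(\rho)=\mc U(\rho^*)\Theta$ with $\rho^*(t)=\ovl{\rho(\ovl t)}$; in particular $\delta_{k,z}^*=\delta_{k,\ovl z}$. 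Using this and Thm. \ref{lb1}, both sides become $\mc U(\,\cdot\,)\Theta$ applied to the two germs $\vartheta_{\ovl\zeta}\circ\delta_{k,\ovl z}$ and $\delta_{k,\ovl z^{-1}}\circ\vartheta_{\ovl z}$. Writing $w=\ovl z$ (so $\ovl\zeta=w^{1/k}$), a direct composition gives
\begin{align*}
\vartheta_{w^{1/k}}\big(\delta_{k,w}(t)\big)=(w+t)^{-1/k}-w^{-1/k}=\delta_{k,w^{-1}}\big(\vartheta_w(t)\big),
\end{align*}
using $w^{-1}-w^{-1}=0$ inside the radical on the right. Thus the two germs coincide, establishing the identity.

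The remaining bookkeeping is to check that all insertion points and arguments match: one confirms $\ovl{\sqrt[k]z}=\sqrt[k]{\ovl z}$ and $\ovl\zeta^{-1}=\sqrt[k]{\ovl z^{-1}}$ under the prescribed branch $\arg\sqrt[k]z=\tfrac1k\arg z$ and $\arg\ovl z=-\arg z$, and that $\Phi_{\ovl z}=\mc U_0(\vartheta_{\ovl z})$ with the argument convention of \eqref{eq78}. I expect the main obstacle to be not the germ computation above (which is clean) but the \emph{propagation step}: since the factorization/conformal-block machinery for twisted modules is not available here, rigorously showing that $A$ is closed under the vertex-algebra structure must be done by hand from the twisted associativity relation and the unitarity of $\Vbb^{\otimes k}$, with careful control of the domains of absolute convergence and of the branches of $\sqrt[k]{\,\cdot\,}$ and $(-\,\cdot\,)^{L_0}$ throughout. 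Once that reduction to generators is secured, the generator case reduces, as shown, to a purely one-variable identity of change-of-coordinate germs.
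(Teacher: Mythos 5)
Your verification on the generators $u\otimes\id\otimes\cdots\otimes\id$ is essentially identical to the paper's: the germ identity $\vartheta_{\ovl z^{1/k}}\circ\delta_{k,\ovl z}=\delta_{k,\ovl z^{-1}}\circ\vartheta_{\ovl z}$, combined with $\Theta\,\mc U(\delta_{k,z})=\mc U(\delta_{k,\ovl z})\,\Theta$ and the unitarity of $\Wbb$ as a $\Vbb$-module, is exactly how the paper proves \eqref{eq90} for such $\vbf$. The divergence is in the reduction to generators, and there your proposal has a genuine gap. You assert that the set $A$ of vectors satisfying the adjoint identity is closed under the modes $Y(v_1)_nv_2$, and you yourself flag this propagation step as the main obstacle and leave it unexecuted. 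It is not routine: taking the adjoint of a product $Y^g_{\mc W}(v_1,z_1)Y^g_{\mc W}(v_2,z_2)$ reverses the order of the factors, so after applying the twisted fusion relation to both sides you must compare an iterate inserted at $z_2$ with an iterate of the transformed vectors, in the opposite order, inserted at $\ovl{z_1}^{-1}$ rather than at $\ovl{z_2}^{-1}$. Reconciling the two requires, beyond the fusion relation, a $\Vbb^{\otimes k}$-level identity (unitarity of $\Vbb^{\otimes k}$ together with a skew-symmetry/change-of-insertion-point argument), the $L_{-1}$-translation property of twisted vertex operators, and justification for rearranging the resulting double series. None of this appears in the proposal, so the central reduction remains an unproved claim rather than a proof.

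The idea you are missing --- and what the paper does instead --- is to turn the adjoint identity into an equality of two $g$-twisted module structures, so that no inner-product bookkeeping enters the propagation at all. By \cite[Prop. 3.3]{Hua18}, the contragredient $\mc W'$ of a $g$-twisted module carries a $g^{-1}$-twisted structure $Y^{g^{-1}}_{\mc W'}$; consequently $\wtd Y^g_{\mc W}(\vbf,z):=\Co\,Y^{g^{-1}}_{\mc W'}(\Theta\vbf,\ovl z)\,\Co$ is again a $g$-twisted module structure on $\mc W$, and \eqref{eq90} is precisely the statement $Y^g_{\mc W}=\wtd Y^g_{\mc W}$. Since a $g$-twisted structure is determined by its values on the vectors $u\otimes\id\otimes\cdots\otimes\id$ (Rem. \ref{lb63}, a uniqueness statement that needs only the monodromy relation \eqref{eq88} and the twisted Jacobi identity, with no inner products involved), agreement on generators finishes the proof. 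So to complete your plan you should either import Huang's contragredient theorem and replace your closure-of-$A$ argument by this uniqueness, or else carry out the adjoint-closure argument in full detail; as it stands, the proposal establishes only the generator case.
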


\begin{proof}
We need to show that for each $\vbf\in\Vbb^{\otimes k}$ and $w_1,w_2\in\mc W$,
\begin{align}
\bk{Y_{\mc W}^g(\vbf,z)w_1|w_2}=\bk{w_1|Y_{\mc W}^g(e^{\ovl zL_1}(-\ovl z^{-2})^{L_0}\Theta \vbf,\ovl z^{-1})w_2}  \label{eq90}
\end{align}
Equivalently (cf. the end of Rem. \ref{lb62}), if we let $\Co:\Wbb\rightarrow\Wbb'$ denote the canonical antiunitary map, and denote its inverse also by $\Co$, then we need to prove
\begin{align}
Y_{\mc W}^g(\vbf,z)=\Co Y_{\mc W'}^{g^{-1}}(\Theta \vbf,\ovl z)\Co   \label{eq89}
\end{align}
where $Y_{\mc W'}^{g^{-1}}$ is the contragredient intertwining operator of the type $\mc W\choose\Vbb~\mc W$ intertwining operator $Y_{\mc W}^g$ of $\Vbb^{\bk g}$. By \cite[Prop. 3.3]{Hua18}, if we let $\mc W'$ be equal to $\Wbb'$ as vector spaces, then $(\mc W',Y_{\mc W'}^{g^{-1}})$ is a $g^{-1}$-twisted $\Vbb^{\otimes k}$-module. From this, it is easy to see that $(\mc W,\wtd Y_{\mc W}^g)$ is a $g$-twisted module if we define $\wtd Y_{\mc W}^g(\vbf,z)= \Co Y_{\mc W'}^{g^{-1}}(\Theta \vbf,\ovl z)\Co$. Therefore, since $g$-twisted vertex operations are determined by there values on $\vbf=u\otimes\id\otimes\cdots\otimes\id$ for all $u\in\Vbb$ (as discussed in Rem. \ref{lb63}), it suffices to prove \eqref{eq89} (equivalently, to prove \eqref{eq90}) whenever $\vbf=u\otimes\id\otimes\cdots\otimes\id$.

Let $\vbf=u\otimes\id\otimes\cdots\otimes\id$. Recall (cf. Exp. \ref{lb3}) that $\vartheta_z(t)=(z+t)^{-1}-z^{-1}$ and $\mc U(\vartheta_z)=e^{zL_1}(-z^{-2})^{L_0}$. Then
\begin{align*}
\delta_{k,z^{-1}}\circ\vartheta_z(t)=(z+t)^{-\frac 1k}-z^{-\frac 1k}=\vartheta_{z^{\frac 1k}}\circ\delta_{k,z}(t).
\end{align*}
Therefore, by Thm. \ref{lb1} we have $\mc U(\delta_{k,z^{-1}})\mc U(\vartheta_z)=\mc U(\vartheta_{z^{1/k}})\mc U(\delta_{k,z})$. So
\begin{align*}
Y_{\mc W}^g(e^{zL_1}(-z)^{L_0} \vbf, z^{-1})=Y_\Wbb(\mc U(\delta_{k,z^{-1}})\mc U(\vartheta_z)u,z^{-1/k})=Y_\Wbb(\mc U(\vartheta_{z^{1/k}})\mc U(\delta_{k,z})u,z^{-1/k}).
\end{align*}
Replace $z$ with $\ovl z$ and $u$ with $\Theta u$, we get (recall \eqref{eq21} and \eqref{eq10})
\begin{align*}
&\bk{w_1|Y_{\mc W}^g(e^{\ovl zL_1}(-\ovl z^{-2})^{L_0}\Theta \vbf,\ovl z^{-1})w_2}=\bk{w_1|Y_\Wbb(\mc U(\vartheta_{\ovl z^{1/k}})\mc U(\delta_{k,\ovl z})\Theta u,\ovl z^{-1/k})w_2}\\
=&\bk{w_1|Y_\Wbb(\mc U(\vartheta_{\ovl z^{1/k}})\Theta\mc U(\delta_{k,z}) u,\ovl z^{-1/k})w_2}=\bk{Y_\Wbb(\mc U(\delta_{k,z})u,z^{1/k})w_1|w_2}
\end{align*}
which equals the LHS of \eqref{eq90}. This proves \eqref{eq90}.
\end{proof}

Now consider an arbitrary permutation $g\in\Aut\{1,2,\dots,k\}$. Then $g$ is a product of disjoint cycles $g=g_1\cdots g_l$. Let $k_j$ be the order of $g_j$. So $k_1+\cdots+k_l=k$. Let $s_0=0$ and $s_j=k_1+\cdots+k_j$ if $1\leq j\leq l$. Without loss of generality, we may assume
\begin{align*}
g_j=(s_{j-1}+1,s_{j-1}+2,\dots,s_j)
\end{align*}
For each $1\leq j\leq l$, choose a $\Vbb$-module $\Wbb_j$. By \cite[Thm. 7.10]{BDM02}, we have a $g$-twisted $\Vbb^{\otimes k}$-module on $\mc W=\Wbb_1\otimes\cdots\otimes\Wbb_l$, where the vertex operation $Y_{\mc W}^g$ is determined by
\begin{align} \label{eq91}
\begin{aligned}
&Y_{\mc W}^g(v_1\otimes\cdots\otimes v_k,z)(w_1\otimes\cdots\otimes w_l)\\
=&\bigotimes_{j=1}^l Y_{\Wbb_j}^{g_j}(v_{s_{j-1}+1}\otimes v_{s_{j-1}+2}\otimes\cdots\otimes v_{s_j},z)w_j
\end{aligned}
\end{align}
By Lem. \ref{lb64}, one easily obtains:

\begin{pp}\label{lb65}
Let $g\in\Aut\{1,\dots,k\}$, and use the above notations. Assume that each $\Wbb_j$ is a unitary $\Vbb$-module, and define the inner product on $\mc W$ to be the natural one on $\Wbb_1\otimes\cdots\otimes\Wbb_l$. Then the $g$-twisted $\Vbb$-module $(\mc W,Y_{\mc W}^g)$ defined by \eqref{eq91} is unitary.
\end{pp}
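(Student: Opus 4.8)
The plan is to reduce Proposition \ref{lb65} to the single-cycle case already handled in Lemma \ref{lb64}. The vertex operation $Y^g_{\mc W}$ on $\mc W=\Wbb_1\otimes\cdots\otimes\Wbb_l$ is defined by \eqref{eq91} as a tensor product of the single-cycle twisted vertex operators $Y^{g_j}_{\Wbb_j}$. Since the inner product on $\mc W$ is declared to be the tensor product of the inner products on the $\Wbb_j$, and since the PCT operator $\Theta$ on $\Vbb^{\otimes k}$ factors as $\Theta\otimes\cdots\otimes\Theta$, the unitarity condition for $Y^g_{\mc W}$ should factor across the tensor product into the unitarity conditions for each $Y^{g_j}_{\Wbb_j}$.

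First I would write down explicitly what has to be proved: for each $\vbf\in\Vbb^{\otimes k}$ and $w,\wtd w\in\mc W$,
\begin{align}
\bk{Y^g_{\mc W}(\vbf,z)w|\wtd w}=\bk{w|Y^g_{\mc W}(e^{\ovl z L_1}(-\ovl z^{-2})^{L_0}\Theta\vbf,\ovl z^{-1})\wtd w}. \label{eqprop}
\end{align}
By linearity and the fact that $\Vbb^{\otimes k}$ is spanned by pure tensors, it suffices to verify \eqref{eqprop} when $\vbf=v_1\otimes\cdots\otimes v_k$ and $w=w_1\otimes\cdots\otimes w_l$, $\wtd w=\wtd w_1\otimes\cdots\otimes \wtd w_l$ are pure tensors. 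Grouping the factors $v_{s_{j-1}+1},\dots,v_{s_j}$ into a block $\vbf_j$ for each cycle $g_j$, formula \eqref{eq91} shows that both the Virasoro operators $L_n$ on $\mc W$ and the operator $e^{\ovl z L_1}(-\ovl z^{-2})^{L_0}$ respect this block decomposition (the conformal vector of $\Vbb^{\otimes k}$ is $\cbf\otimes\id\otimes\cdots+\cdots$, so $L_n$ acts diagonally across the $l$ blocks), and the PCT operator sends the block $\vbf_j$ to $\Theta\vbf_j$ within the $j$-th factor. Thus the left-hand side of \eqref{eqprop} factors as $\prod_{j=1}^l\bk{Y^{g_j}_{\Wbb_j}(\vbf_j,z)w_j|\wtd w_j}$ and the right-hand side as $\prod_{j=1}^l\bk{w_j|Y^{g_j}_{\Wbb_j}(e^{\ovl z L_1}(-\ovl z^{-2})^{L_0}\Theta\vbf_j,\ovl z^{-1})\wtd w_j}$.

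Finally I would invoke Lemma \ref{lb64} applied to each cycle $g_j=(s_{j-1}+1,\dots,s_j)$, which is precisely a single $k_j$-cycle acting on the tensor factor $\Vbb^{\otimes k_j}$ with the unitary $\Vbb$-module $\Wbb_j$. The lemma gives that each factor on the left equals the corresponding factor on the right, and taking the product over $j$ yields \eqref{eqprop}. One subtlety to handle carefully is the relationship between the variable $z$ (and its chosen argument) appearing globally in $Y^g_{\mc W}$ and the variables feeding into each $Y^{g_j}_{\Wbb_j}$: in \eqref{eq91} the same $z$ with the same argument is used in every block, and the conjugation variable $\ovl z^{-1}$ with $\arg\ovl z=-\arg z$ is likewise shared, so the argument conventions match block-by-block with those in Lemma \ref{lb64}. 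The main obstacle, if any, is bookkeeping the argument of $z$ consistently across the factorization rather than any genuine analytic difficulty; once the block-diagonal structure of $L_0,L_1,\Theta$ is established, the proof is a direct reduction.
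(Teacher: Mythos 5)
Your proposal is correct and is exactly the paper's route: the paper derives Proposition \ref{lb65} from Lemma \ref{lb64} by precisely this tensor-factorization argument (indeed it leaves the reduction implicit, saying only that one ``easily obtains'' the result), and your spelled-out verification that $L_0$, $L_1$, $\Theta$, and the inner product all respect the block decomposition coming from \eqref{eq91} is the content of that reduction.
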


\begin{thm}
Let $\Vbb$ be a unitary holomorphic VOA, let $k\in\Zbb_+$, and let $G$ be a cyclic abelian subgroup of $\Aut\{1,2,\dots,k\}$. Then the fixed point subalgebra $(V^{\otimes k})^G$ is completely unitary.
\end{thm}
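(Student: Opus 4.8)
The plan is to verify that the final theorem is an immediate application of Theorem \ref{lb61}, once we supply the two hypotheses of that theorem: namely that $\Vbb^{\otimes k}$ is a unitary holomorphic VOA, that $G$ is a finite cyclic group of unitary automorphisms of $\Vbb^{\otimes k}$, and that every irreducible $(\Vbb^{\otimes k})^G$-module is unitarizable. The first observation is that $\Vbb^{\otimes k}$ is again a unitary holomorphic VOA: tensor products of unitary VOAs are unitary (with PCT operator $\Theta^{\otimes k}$), tensor products of CFT-type VOAs are CFT-type, and $C_2$-cofiniteness, rationality, and holomorphicity are all preserved under tensor products (the irreducible modules of $\Vbb^{\otimes k}$ are the tensor products of irreducibles of $\Vbb$, so if $\Vbb$ has a unique irreducible then so does $\Vbb^{\otimes k}$). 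Moreover $G\subset\Aut\{1,\dots,k\}$ acts by permuting the tensor factors, and since each $g\in G$ merely permutes orthonormal tensor factors, it acts unitarily on $\Vbb^{\otimes k}$; thus $G$ is a finite cyclic group of unitary automorphisms.

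With these structural facts in place, the only substantive point left is the unitarizability of every irreducible $(\Vbb^{\otimes k})^G$-module. Here I would invoke Prop. \ref{lb66}: it suffices to show that every irreducible $G$-twisted $\Vbb^{\otimes k}$-module is unitarizable. Since $G$ is cyclic, say $G=\bk{g}$ for a permutation $g$, every element of $G$ is a power $g^m$, and a $g^m$-twisted module is the same as a $g^m$-twisted module in the sense relevant to Prop. \ref{lb65}. The classification of \cite{BDM02} (recalled in the excerpt) tells us that every irreducible $g$-twisted $\Vbb^{\otimes k}$-module arises, via the cycle decomposition $g=g_1\cdots g_l$, from a tensor product $\Wbb_1\otimes\cdots\otimes\Wbb_l$ of irreducible $\Vbb$-modules as in \eqref{eq91}. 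Because $\Vbb$ is holomorphic, its only irreducible module is $\Vbb$ itself, so in fact each $\Wbb_j\simeq\Vbb$, and in particular each $\Wbb_j$ is a unitary $\Vbb$-module. Then Prop. \ref{lb65} shows directly that the associated $g$-twisted module $(\mc W, Y^g_{\mc W})$ is unitary. The same argument applies verbatim to any power $g^m$ in place of $g$ (its cycle decomposition again produces unitary $\Vbb$-modules as inputs). Hence every irreducible $G$-twisted $\Vbb^{\otimes k}$-module is unitarizable.

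Combining these steps, Prop. \ref{lb66} yields that every irreducible $(\Vbb^{\otimes k})^G$-module is unitarizable, which is exactly the remaining hypothesis of Theorem \ref{lb61}. Applying Theorem \ref{lb61} with $\Vbb$ replaced by $\Vbb^{\otimes k}$ then gives that $(\Vbb^{\otimes k})^G$ is completely unitary, completing the proof. The main (and essentially only) obstacle is the careful bookkeeping in the second paragraph: one must confirm that the classification of twisted modules via \eqref{eq91} applies to every power $g^m$ generating a subgroup of $G$, and that holomorphicity of $\Vbb$ forces all the cyclic-factor inputs $\Wbb_j$ to be the (unitary) vacuum module. Once this is granted, Prop. \ref{lb65} does all the analytic work, and the rest is a direct chaining of the cited results. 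I would also remark that because $G$ is required to be cyclic, no genuinely new twisted-module analysis beyond Prop. \ref{lb65} is needed, which is precisely what makes the theorem fall out cleanly; the non-cyclic case is explicitly deferred to future work.
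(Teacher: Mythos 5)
Your proposal is correct and follows essentially the same route as the paper: invoke the \cite{BDM02} classification of permutation-twisted modules together with Prop. \ref{lb65} to get unitarizability of all irreducible $G$-twisted $\Vbb^{\otimes k}$-modules (holomorphicity forcing each cycle-factor input to be $\Vbb$ itself), then conclude via Prop. \ref{lb66} and Thm. \ref{lb61}. The extra bookkeeping you include (that $\Vbb^{\otimes k}$ is unitary holomorphic and $G$ acts unitarily) is left implicit in the paper but is a correct and harmless addition.
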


\begin{proof}
By \cite[Thm. 7.10]{BDM02}, every irreducible $G$-twisted $\Vbb^{\otimes k}$-module is of the form $(\mc W,Y_{\mc W}^g)$ (as defined in \eqref{eq91}, where $\Wbb_1=\dots=\Wbb_l=\Vbb$ because $\Vbb$ is holomorphic), which is unitarizable by Prop. \ref{lb65}. Thus, by Prop. \ref{lb66}, every irreducible $(\Vbb^{\otimes k})^G$-module is unitarizable. This finishes the proof, thanks to Thm. \ref{lb61}.
\end{proof}

\section{Application to simple current extensions}

\begin{df}
If $\Vbb$ is a VOA, then a \textbf{VOA extension} of $\Vbb$ is a VOA $\Ubb$ containing $\Vbb$ as a subspace such that the vertex operation of $\Ubb$ restricts to that of $\Vbb$, and that $\Ubb$ and $\Vbb$ share the same vacuum vector and conformal vector. 

If $\Vbb$ is a unitary VOA, a VOA extension $\Ubb$ of $\Vbb$ is called \textbf{preunitarizable} if $\Ubb$ is unitarizable as a $\Vbb$-module.  \hfill\qedsymbol
\end{df}

Given a preunitarizable VOA extension $\Ubb$ of a unitary VOA $\Vbb$, the following proposition clarifies the relationship between CFT-type, simpleness, and Haploidness (in case $\Mod(\Vbb)$ is good enough such that $\Ubb$ is an algebra object in $\Mod(\Vbb)$ in the sense of \cite{KO02,CKM17}) of $\Ubb$.

\begin{pp}\label{lb79}
Let $\Vbb$ be a unitary VOA. Let $\Ubb$ be a preunitarizable VOA extension of $\Vbb$. The following are true.
\begin{enumerate}[label=(\arabic*)]
\item $\Ubb$ is of CFT-type if and only if $\dim\Hom_\Vbb(\Vbb,\Ubb)=1$.
\item If $\Ubb$ is a simple VOA, then $\Ubb$ is of CFT-type. 
\item Assume that $\Ubb$ is a completely reducible $\Ubb$-module (e.g. when $\Ubb$ is unitary, or when $\Ubb$ is rational). If $\Ubb$ is of CFT-type, then $\Ubb$ is a simple VOA.
\end{enumerate}
\end{pp}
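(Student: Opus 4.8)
The three parts all revolve around the weight-zero space $\Ubb_{(0)}$ and the endomorphism algebra $\End_\Ubb(\Ubb)$, and the unifying input is that $\Ubb$, being preunitarizable, is a (M\"obius) unitarizable $\Vbb$-module, so Thm. \ref{lb75} applies to it. Since $\Ubb$ and $\Vbb$ share the same conformal vector, the operators $L_0,L_{\pm 1}$ coming from the $\Vbb$-module structure coincide with those of the VOA $\Ubb$; in particular $\Ubb(0)=\Ubb_{(0)}$, and $\Ubb$ is $\Nbb$-graded with $\id$ of lowest weight $0$. For part (1) the plan is to simply invoke the isomorphism $\Hom_\Vbb(\Vbb,\Ubb)\xlongrightarrow{\simeq}\Ubb_{(0)}$, $A\mapsto A\id$, of Thm. \ref{lb75}. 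Then $\dim\Hom_\Vbb(\Vbb,\Ubb)=\dim\Ubb(0)$, and since $\id\in\Ubb(0)$ this equals $1$ exactly when $\Ubb(0)=\Cbb\id$, i.e. exactly when $\Ubb$ is of CFT-type.

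Part (2) is the substantive one. First I would fix a unitary $\Vbb$-module structure on $\Ubb$. Applying Thm. \ref{lb75} gives $\Sp(L_0)\subset[0,+\infty)$ and $L_1\Ubb_{(1)}=0$; since $L_1|_{\Ubb_{(1)}}$ and $L_{-1}|_{\Ubb_{(0)}}$ are mutually adjoint under the inner product, the latter vanishes too, so $L_{-1}u=0$ for every $u\in\Ubb(0)$. The point is to exploit this vanishing. By translation covariance $\tfrac{d}{dz}Y(u,z)=Y(L_{-1}u,z)=0$, so $Y(u,z)=Y(u)_{-1}$ is a constant field, i.e. $Y(u)_j=0$ for all $j\neq-1$. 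Feeding $Y(u)_i=0$ $(i\geq 0)$ into the commutator formula $[Y(u)_{-1},Y(a)_n]=\sum_{i\geq 0}\binom{-1}{i}Y(Y(u)_i a)_{n-1-i}$ makes every term vanish, so $Y(u)_{-1}$ commutes with all modes of all $a\in\Ubb$; hence $Y(u)_{-1}\in\End_\Ubb(\Ubb)$. If $\Ubb$ is simple, then $\Ubb$ is irreducible as a module over itself, so by the graded Schur argument of Rem. \ref{lb83} we get $Y(u)_{-1}=\lambda\cdot\id$ for some $\lambda\in\Cbb$, whence $u=Y(u)_{-1}\id=\lambda\id$. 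Thus $\Ubb(0)=\Cbb\id$ and $\Ubb$ is of CFT-type.

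For part (3) I would run a short converse. Assume $\Ubb$ is of CFT-type, so $\Ubb(0)=\Cbb\id$. Any $A\in\End_\Ubb(\Ubb)$ commutes with $L_0$, hence preserves weights, so $A\id\in\Ubb(0)=\Cbb\id$, say $A\id=\lambda\id$. Using creativity $a=Y(a)_{-1}\id$ and the fact that $A$ commutes with the mode $Y(a)_{-1}$, one gets $Aa=Y(a)_{-1}A\id=\lambda a$ for all $a\in\Ubb$, so $\End_\Ubb(\Ubb)=\Cbb$. Combined with the hypothesis that $\Ubb$ is a completely reducible $\Ubb$-module, the ``conversely'' half of Rem. \ref{lb83} forces $\Ubb$ to be irreducible, i.e. a simple VOA.

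The only genuinely delicate point is the vanishing $L_{-1}\Ubb_{(0)}=0$ used in part (2): this is precisely where preunitarizability is indispensable, since for a general (non-unitary) VOA the weight-zero space can be a nontrivial commutative algebra on which $L_{-1}$ acts nonzero, and the ``constant field'' mechanism breaks down. Once that vanishing is secured via Thm. \ref{lb75}, the remaining arguments are routine manipulations with the commutator formula, the creativity axiom, and Schur's lemma, and I expect no further obstacles.
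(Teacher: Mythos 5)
Your proposal is correct, and part (1) is exactly the paper's argument; but for parts (2) and (3) the paper takes a shorter route. Since $\Ubb$ and $\Vbb$ share the same conformal vector, the Virasoro operators of the $\Vbb$-module structure on $\Ubb$ coincide with those of the VOA $\Ubb$, so the inner product witnessing preunitarizability also makes $\Ubb$ a M\"obius unitarizable module \emph{over itself}; Thm. \ref{lb75} can therefore be applied a second time with $(\Ubb,\Ubb)$ in place of $(\Vbb,\Wbb)$, giving at once
\begin{align*}
\dim\End_\Ubb(\Ubb)=\dim\Hom_\Vbb(\Vbb,\Ubb)=\dim\Ubb(0),
\end{align*}
after which all three statements are dimension counts combined with Rem. \ref{lb83}: simple $\Rightarrow\dim\End_\Ubb(\Ubb)=1\Rightarrow$ CFT-type, and CFT-type plus complete reducibility $\Rightarrow\dim\End_\Ubb(\Ubb)=1\Rightarrow$ simple. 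What you do in (2) is essentially to inline the proof of this correspondence rather than cite it: you extract $L_{-1}\Ubb_{(0)}=0$ from the $L_1\Ubb_{(1)}=0$ part of Thm. \ref{lb75}, note that such $u$ are vacuum-like in Li's sense, so that $Y(u,z)=Y(u)_{-1}$ is a constant field whose mode commutes with everything by the commutator formula, and finish with Schur; this is correct and has the merit of exposing the mechanism that Exp. \ref{lb73}, Exp. \ref{lb74} and Thm. \ref{lb75} package abstractly. Your part (3) is also correct and is in fact slightly stronger than the paper's version: the argument $A\id=\lambda\id$ together with creativity $a=Y(a)_{-1}\id$ gives $\End_\Ubb(\Ubb)=\Cbb$ with no unitarity hypothesis at all, whereas the paper again routes this through Thm. \ref{lb75}. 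The trade-off is brevity versus transparency: the paper's proof is three lines and makes all three parts manifestly the same dimension count, while yours is longer but self-contained modulo standard VOA identities and isolates exactly where preunitarizability is indispensable (parts (1) and (2) only).
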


Note that we are not assuming $\Vbb$ to be of CFT-type. When $\Ubb=\Vbb$, (2) and (3) are \cite[Prop. 5.3]{CKLW18}.

\begin{proof}
Thm. \ref{lb75} implies
\begin{align}
\dim\End_\Ubb(\Ubb)=\dim\Hom_\Vbb(\Vbb,\Ubb)=\dim\Ubb(0)
\end{align}
So $\Ubb(0)=\Cbb\id$ iff $\dim\Hom_\Vbb(\Vbb,\Ubb)=1$. This proves (1). If $\Ubb$ is a simple VOA, then $\dim\End_\Ubb(\Ubb)=1$. This proves (2). Assume that $\Ubb$ is a completely reducible $\Ubb$-module. If $\Ubb$ is of CFT-type, then $\dim\End_\Ubb(\Ubb)=1$. So $\Ubb$ is simple (cf. Rem. \ref{lb83}). This proves (3).
\end{proof}

Now we assume that $\Vbb$ is CFT-type, self-dual, $C_2$-cofinite, and rational (i.e. $\Vbb$ is \textbf{strongly-rational}). In particular, $\Vbb$ is simple \cite[Prop. 4.6-(iv)]{CKLW18}. Then $\Mod(\Vbb)$ is rigid modular, and so every $\Vbb$-module $\Wbb$ has a categorical dual, which is isomorphic to the contragredient $\Wbb'$, cf. \cite{Hua08}. Recall that a \textbf{simple current} of $\Vbb$ is a $\Vbb$-module $\Wbb$ which is invertible, i.e. there is a $\Vbb$-module $\Mbb$ such that $\Wbb\boxtimes\Mbb\simeq\Vbb$. In that case $\Mbb$ must be the categorical dual of $\Wbb$, and hence $\Mbb\simeq\Wbb'$. Simple currents are clearly irreducible modules. The class of simple currents is clearly closed under taking $\boxtimes$ and taking contragredient.

A (finite) \textbf{simple current extension} of $\Vbb$ is a VOA extension $\Ubb$ which is a simple VOA and which, as a $\Vbb$-module, is a finite direct sum of simple currents of $\Vbb$. 

\begin{rem}\label{lb82}
Let $\Ubb$ be a simple current extension of $\Vbb$. Let $\mc G$ be the set of  equivalence classes of all these currents. Then it is well-known that $\mc G$ is closed under taking $\boxtimes$ and taking contragredient. (Quick proof: The vertex operation $Y^\Ubb$ of $\Ubb$ is a type $\Ubb\choose \Ubb\Ubb$ intertwining operator of $\Vbb$. Choose simple currents $W_1,W_2$ appearing in $\Vbb$. Then $Y^\Ubb$ restricts to a type $\Ubb\choose \Wbb_1\Wbb_2$ intertwining operator which is non-zero by Prop. \ref{lb20} and that $\Ubb$ is simple. So it restricts to a type $\Wbb_3\choose \Wbb_1\Wbb_2$ one for some simple current $\Wbb_3$ inside $\Ubb$. Hence $\Wbb_1\boxtimes\Wbb_2\simeq\Wbb_3$ is in $\Ubb$. Similarly, if $\Wbb$ is a simple current in $\Ubb$, then $Y^\Ubb$ restricts to a non-zero type $\Vbb\choose \Wbb\Ubb$ intertwining operator, and hence restricts to a non-zero type $\Vbb\choose \Wbb\Mbb$ one where $\Mbb$ is a simple current in $\Ubb$. Then $\Wbb\boxtimes\Mbb\simeq\Vbb$ and hence $\Wbb'\simeq\Mbb$ is in $\Ubb$.)
\end{rem}

Note that if $\Vbb$ is also unitary, then by Prop. \ref{lb79}, a preunitarizable simple current extension of $\Vbb$ is of CFT-type.

We need an auxiliary result:

\begin{pp}\label{lb81}
Let $\Vbb$ be a CFT-type, unitary, $C_2$-cofinite, and rational VOA. Let $\mc C$ be a full abelian $C^*$-subcategory of the $C^*$-category of unitary $\Vbb$-modules. For each $\Wbb_1,\Wbb_2\in\mc C$, assume that the $\Vbb$-modules $\Wbb_1\boxtimes\Wbb_2$ and $\Wbb_1'$ are  isomorphic to some objects in $\mc C$, assume that the fusion product $\Wbb_1\boxtimes\Wbb_2$ is positive, and choose the unitary $\Vbb$-module structure on $\Wbb_1\boxtimes\Wbb_2$ under which the operator $A$ in \eqref{eq69} is $\id$. Then $\mc C$, together with $\boxtimes$ and the associators and the the braiding $\ss$ and the unitors of $\Mod(\Vbb)$, is a unitary ribbon fusion category.
\end{pp}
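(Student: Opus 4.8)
The plan is to assemble Proposition \ref{lb81} from the machinery already developed, reducing the unitarity of the ribbon fusion structure on $\mc C$ to three separate compatibility checks: that $\boxtimes$ is a $*$-bifunctor with the chosen inner products, that the associators and unitors are unitary, and that the braiding is unitary. The foundational fact I would invoke is \cite[Thm. 7.8]{Gui19b} (referenced at the start of Subsec. \ref{lb47}), which already establishes that whenever the relevant fusion products are positive, the canonical Hermitian structure makes the braided tensor category unitary. So the bulk of the work is to verify that $\mc C$ is closed under the operations making it a fusion category, and that the inner products singled out here (the ones for which $A = \id$ in \eqref{eq69}) are precisely the ones for which that positivity/unitarity theorem applies.

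First I would establish that $\mc C$ is a fusion category: it is abelian and $C^*$ by hypothesis, closure under $\boxtimes$ and under contragredient $\Wbb \mapsto \Wbb'$ is assumed, and since $\Vbb$ is CFT-type the unit object $\Vbb$ is simple (by \cite[Prop. 4.6-(iv)]{CKLW18}, or Prop. \ref{lb79}). Semisimplicity and finitely many simple objects come from rationality of $\Vbb$ together with the fact that $\mc C$ is a full subcategory of $\Mod^\uni(\Vbb)$; rigidity is inherited from the rigidity of $\Mod(\Vbb)$ proved in \cite{Hua08}, using that the categorical dual of $\Wbb$ is $\Wbb'$, which lies in $\mc C$. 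The key point to confirm is that the duality morphisms (evaluation and coevaluation) restrict to $\mc C$ and are compatible with the $*$-structure.

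Next I would pin down the inner products. The hypothesis that $\Wbb_1 \boxtimes \Wbb_2$ is positive means (via Cor. \ref{lb72} and Thm. \ref{lb45}) that the operator $A$ in the fusion relation \eqref{eq69} is positive; choosing the unitary structure on $\Wbb_1\boxtimes\Wbb_2$ so that $A = \id$ is exactly normalizing the inner products on the multiplicity spaces $\mc I{\Mbb \choose \Wbb_1 \Wbb_2}$ so that \eqref{eq68} holds. This is the canonical Hermitian structure of \cite{Gui19b}, now known to be positive on all of $\mc C$. With this choice, the content of \cite[Thm. 7.8]{Gui19b} is that the associativity and braiding isomorphisms are unitary and $\boxtimes$ is a unitary bifunctor. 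I would explicitly check the unitors separately (they are essentially the identity at the level of inner products since $\Vbb \boxtimes \Wbb \simeq \Wbb$ canonically and $A = \id$), and invoke the ribbon/twist structure coming from $e^{2\pi\im L_0}$, whose unitarity is immediate because $L_0$ is self-adjoint on each unitary module.

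The main obstacle I anticipate is \emph{verifying that the chosen normalization of inner products globally glues into a consistent $*$-structure on the whole category} — that is, that the same choice of $A=\id$ made independently for each pair $(\Wbb_1,\Wbb_2)$ is simultaneously compatible with the associators, the braiding, and rigidity, rather than merely pairwise. This is precisely where one must lean on the coherence already proved in \cite[Thm. 7.8]{Gui19b}: the theorem guarantees that the $\boxtimes$-bifunctor built from these transport-matrix-normalized inner products is itself unitary and coherent, so no further independent compatibility computation is needed. The subtlety is ensuring that the full subcategory $\mc C$, which need not be all of $\Mod^\uni(\Vbb)$, inherits this structure — but since unitarity of associators and braidings is a property of individual morphisms that live in $\mc C$ whenever their source and target objects do, closure of $\mc C$ under $\boxtimes$ and duals suffices to transport the result from the ambient category to $\mc C$. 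I would close by remarking that this is exactly the setup needed for the application to simple current extensions (Thm. \ref{lb80}), where $\mc C$ is the category of direct sums of the simple currents appearing in $\Ubb$.
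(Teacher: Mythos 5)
Your proposal is correct and takes essentially the same approach as the paper, whose entire proof is a two-line reduction to \cite{Gui19b}: when $\mc C$ is all of $\Mod^\uni(\Vbb)$ the proposition is \cite[Thm. 7.9]{Gui19b}, and in general the proof that $\Mod^{\mathrm u}_{\mc F^\boxtimes}(V)$ is a unitary ribbon fusion category in \cite[Thm. 7.8]{Gui19b} — a theorem formulated precisely for a fusion-closed family of unitary modules with positive fusion products, i.e. your setting — applies verbatim, which is what your normalization $A=\id$ plus invocation of that theorem amounts to. One caution: your resolution of the ``main obstacle'' by transporting unitarity from the ambient category should be dropped, since $\Mod^\uni(\Vbb)$ is not known to carry a unitary ribbon structure in this generality (not all modules need be unitarizable, nor all fusions positive); the subcategory issue is handled not by restriction from an ambient unitary structure but by the fact that \cite[Thm. 7.8]{Gui19b} is already stated and proved for such subfamilies, exactly as your earlier, correct invocation of it does.
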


Roughly speaking, by saying $\mc C$ is a full abelian $C^*$-subcategory, we mean that $\mc C$ is closed under taking direct sums and unitary submodules, and that the morphims between objects are homomorphisms of $\Vbb$-modules.

\begin{proof}
If all $\Vbb$-modules are unitarizable, and if $\mc C$ is the category of all unitary $\Vbb$-modules, then this proposition is \cite[Thm. 7.9]{Gui19b}. In the general case, the proof is the same as that the $\Mod^{\mathrm u}_{\mc F^\boxtimes}(V)$ in \cite[Thm. 7.8]{Gui19b} is a unitary ribbon fusion category.
\end{proof}

\begin{thm}\label{lb80}
Let $\Vbb$ be a CFT-type, unitary, $C_2$-cofinite, and rational VOA. Let $\Ubb$ be a preunitarizable simple current extension of $\Vbb$. Then the inner product on $\Vbb$ can be extended uniquely to an inner product $\bk{\cdot|\cdot}$ on $\Ubb$ such that $\Ubb$ is a unitary VOA.
\end{thm}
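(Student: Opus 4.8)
The plan is to deduce the unitarity of $\Ubb$ from the unitarity of a \emph{small} sub-fusion-category of $\Mod(\Vbb)$, namely the one generated by the simple currents occurring in $\Ubb$, which can be shown to be unitary even though $\Vbb$ is not assumed completely unitary. First I would decompose $\Ubb=\bigoplus_{g\in\mc G}\Wbb_g$ as a $\Vbb$-module, where $\mc G$ indexes the finitely many equivalence classes of simple currents appearing in $\Ubb$, with $\Wbb_e=\Vbb$ the vacuum component. By Rem. \ref{lb82} the set $\mc G$ is closed under $\boxtimes$ and under contragredient. Since $\Ubb$ is preunitarizable, each irreducible $\Wbb_g$ is unitarizable, carrying a unitary structure that is unique up to a positive scalar. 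Let $\mc C$ be the full $C^*$-subcategory of unitary $\Vbb$-modules whose objects are the finite direct sums of the $\Wbb_g$; it is then closed under $\boxtimes$, contragredient, direct sums, and unitary subobjects.

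Next I would establish that $\mc C$ is a unitary ribbon (in fact pointed) fusion category. The key geometric input is Thm. \ref{lb60}: for any two simple currents $\Wbb_1,\Wbb_2\in\mc C$, the fusion $\Wbb_1\boxtimes\Wbb_2$ is an invertible object, hence an irreducible $\Vbb$-module, and it lies in $\mc G$ so it is unitarizable; therefore $\Wbb_1\boxtimes\Wbb_2$ is positive. With this positivity in hand, Prop. \ref{lb81} applies to $\mc C$, and on each $\Wbb_1\boxtimes\Wbb_2$ I would fix the unitary structure normalizing the operator $A$ of \eqref{eq69} to the identity. This endows $\mc C$ with the structure of a unitary ribbon fusion category, with its associators, braidings, and unitors inherited from $\Mod(\Vbb)$.

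I would then realize $\Ubb$ as an algebra object inside $\mc C$. By the standard dictionary \cite{KO02,CKM17}, a VOA extension of $\Vbb$ is a commutative associative algebra in $\Mod(\Vbb)$ with unit $\id$; since every $\Vbb$-submodule of $\Ubb$ lies in $\mc C$, this algebra is an object of $\mc C$. Because $\Ubb$ is a simple VOA, Prop. \ref{lb79}(2) gives that $\Ubb$ is of CFT-type, whence Prop. \ref{lb79}(1) gives $\dim\Hom_\Vbb(\Vbb,\Ubb)=1$, i.e. the algebra is haploid; being an object of a fusion category, it is rigid. Invoking the result of \cite{CGGH23} quoted in the introduction, that every haploid rigid algebra in a unitary ribbon fusion category is unitary, I obtain a normalized $C^*$-Frobenius (Q-system) structure on $\Ubb$ in $\mc C$. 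By the correspondence of \cite{Gui22,Gui19b} between such unitary algebra structures and unitary VOA structures, this $C^*$-structure is precisely an inner product $\bk{\cdot|\cdot}$ on $\Ubb$, restricting to the given one on $\Vbb$ (so $c_e=1$) and normalized by $\bk{\id|\id}=1$, under which $\Ubb$ satisfies Def. \ref{lb78} and is a unitary VOA.

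For uniqueness I would argue that the unitary (Q-system) structure on a haploid commutative rigid algebra in a unitary fusion category is unique once the unit is normalized; concretely, on each irreducible component $\Wbb_g$ the inner product is fixed up to a positive scalar $c_g$ with $c_e=1$, and the requirement that the vertex operation $Y^\Ubb$ be a unitary algebra multiplication forces each nonzero intertwiner $\Wbb_g\boxtimes\Wbb_h\to\Wbb_{gh}$ to be compatible with the $c$'s, which pins them down uniquely. The step I expect to be the main obstacle is the reduction itself: one must verify carefully that $\mc C$ genuinely satisfies the hypotheses of Prop. \ref{lb81} and that $\Ubb$ really is a haploid rigid algebra \emph{in} $\mc C$, so that the general unitarization theorem of \cite{CGGH23} can be applied locally to $\mc C$. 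The crucial conceptual point is that the unitarity of the small category $\mc C$, supplied by Thm. \ref{lb60}, serves as a substitute for the complete unitarity of $\Vbb$, which is not assumed here.
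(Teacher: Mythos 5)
Your proposal is correct and takes essentially the same route as the paper: the same category $\mc C$ of finite direct sums of the simple currents occurring in $\Ubb$, made into a unitary ribbon fusion category via Rem. \ref{lb82}, Thm. \ref{lb60} and Prop. \ref{lb81}, followed by the unitarization of $\Ubb$ as a haploid rigid algebra using \cite{CGGH23}. One small correction: rigidity of $\Ubb$ as an \emph{algebra} does not follow merely from $\Ubb$ being an object of a fusion category (algebra rigidity in the sense of \cite{KO02} is a condition on the multiplication and pairing, not on the underlying object); the paper instead deduces it from \cite[Lem. 1.20]{KO02}, using that $\Ubb$ is haploid and has strictly positive quantum dimension because $\mc C$ is a unitary fusion category.
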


We call such $(\Ubb,\bk{\cdot|\cdot})$ (satisfying the last sentence of Thm. \ref{lb80}) a \textbf{unitary VOA extension} of $\Vbb$. In the special case that $\Ubb$ is a direct sum of two simple currents, this theorem was proved by \cite[Thm. 3.3]{DL14} under some small additional assumption; see also \cite[Thm. 3.11]{CGH23} for a related result in the case of vertex operator superalgebras.

\begin{proof}
Let $\mc G$ be as in Rem. \ref{lb82}. Let $\mc C$ be the $C^*$-category of unitary $\Vbb$-modules that are (equivalent to) finite direct sums of unitary irreducible $\Vbb$-modules in $\mc G$. If the $*$-structure is forgotten, $\mc C$ is a full abelian subcategory of $\Mod(\Vbb)$. Moreover, Rem. \ref{lb82} shows that $\mc C$ is closed under $\boxtimes$ and taking contragredient modules. Therefore, by Thm. \ref{lb60}, $\mc C$ satisfies the assumptions in Prop. \ref{lb81} and hence is a unitary ribbon fusion category. Thus, we can use the same argument as in the proof of \cite[Thm. 4.7]{CGGH23} to prove that $\Ubb$ is uniquely a unitary VOA extension by invoking Thm. 3.2 and Thm. 3.9 of \cite{CGGH23}. (Note that one needs the fact that $\Ubb$, as a Haploid algebra in $\mc C$, is rigid. This follows from \cite[Lem. 1.20]{KO02} because $\Ubb$ as an object in $\mc C$ has strictly positive quantum dimension as $\mc C$ is a unitary fusion category.)
\end{proof}

\newpage

\printindex

\subsection*{Declarations}
Conflict of interest: The author has no competing interests to declare that are relevant to the content of this article.

\noindent {\small \sc Yau Mathematical Sciences Center, Tsinghua University, Beijing, China.}

\noindent {\textit{E-mail}}: binguimath@gmail.com\qquad bingui@tsinghua.edu.cn

\begin{thebibliography}{999999}
		\footnotesize	

\bibitem[AN03]{AN03}
Abe, T., \& Nagatomo, K.  Finiteness of conformal blocks over compact Riemann surfaces. Osaka J. Math. 40 (2003), 375–39

\bibitem[BDH17]{BDH17}
Bartels, A., Douglas, C. L., \& Henriques, A. (2017). Conformal nets II: Conformal blocks. Communications in Mathematical Physics, 354, 393-458.



\bibitem[BDM02]{BDM02}
Barron, K., Dong, C., \& Mason, G. (2002). Twisted sectors for tensor product vertex operator algebras associated to permutation groups. Communications in mathematical physics, 227, 349-384.


\bibitem[Bel12]{Bel12}
Belkale, P. (2012). Unitarity of the KZ/Hitchin connection on conformal blocks in genus 0 for arbitrary Lie algebras. Journal de mathématiques pures et appliquées, 98(4), 367-389.




\bibitem[CGGH23]{CGGH23}
Carpi, S., Gaudio, T., Giorgetti, L., \& Hillier, R. (2023). Haploid algebras in $ C^*$-tensor categories and the Schellekens list. Communications in Mathematical Physics, 1-44.

\bibitem[CGH23]{CGH23}
Carpi, S., Gaudio, T., \& Hillier, R. (2023). From vertex operator superalgebras to graded-local conformal nets and back. arXiv preprint arXiv:2304.14263.



\bibitem[CKLW18]{CKLW18}
Carpi, S., Kawahigashi, Y., Longo, R. and Weiner, M., 2018. From vertex operator algebras to conformal nets and back (Vol. 254, No. 1213). Memoirs of the American Mathematical Society


\bibitem[CKM17]{CKM17}
Creutzig, T., Kanade, S., \& McRae, R. (2017). Tensor categories for vertex operator superalgebra extensions. arXiv preprint arXiv:1705.05017.

\bibitem[CM16]{CM16}
Carnahan, S., \& Miyamoto, M. (2016). Regularity of fixed-point vertex operator subalgebras. arXiv preprint arXiv:1603.05645.

\bibitem[CT23]{CT23}
Carpi, S., \& Tomassini, L. (2023). Energy bounds for vertex operator algebra extensions. Letters in Mathematical Physics, 113(3), 59.


\bibitem[CWX]{CWX}
S. Carpi, M. Weiner and F. Xu. From vertex operator algebra modules to representations of conformal nets. In preparation


\bibitem[Con94]{Con94}
A. Connes. Noncommutative geometry. Academic Press Inc., San Diego, CA, 1994.



\bibitem[DL14]{DL14}
Dong, C. and Lin, X., 2014. Unitary vertex operator algebras. Journal of algebra, 397, pp.252-27



\bibitem[DLM97]{DLM97}
Dong, C., Li, H., \& Mason, G. (1997). Regularity of Rational Vertex Operator Algebras. Advances in Mathematics, 132(1), 148-166.

\bibitem[DLM98]{DLM98}
Dong, C., Li, H., \& Mason, G. (1998). Twisted representations of vertex operator algebras. Mathematische Annalen, 3(310), 571-600.

\bibitem[DLM00]{DLM00}
Dong, C., Li, H., \& Mason, G. (2000). Modular-Invariance of Trace Functions in Orbifold Theory and Generalized Moonshine. Communications in Mathematical Physics, 214, 1-56.




\bibitem[DGT23]{DGT23}
Damiolini, C., Gibney, A., and Tarasca, N.. On Factorization and vector bundles of conformal blocks from vertex algebras. To appear in Annales scientifiques de l’École normale supérieure.


\bibitem[DL96]{DL96}
Dong, C., \& Lepowsky, J. (1996). The algebraic structure of relative twisted vertex operators. Journal of Pure and Applied Algebra, 110(3), 259-295.

\bibitem[DL12]{DL12}
Dong, C., \& Lepowsky, J. (2012). Generalized vertex algebras and relative vertex operators (Vol. 112). Springer Science \& Business Media.


\bibitem[DNR21]{DNR21}
Dong, C., Ng, S. H., \& Ren, L. (2021). Orbifolds and minimal modular extensions. arXiv preprint arXiv:2108.05225.


\bibitem[DRX17]{DRX17}
Dong, C., Ren, L., \& Xu, F. (2017). On orbifold theory. Advances in Mathematics, 321, 1-30.







\bibitem[FB04]{FB04}
E. Frenkel and D. Ben-Zvi, Vertex Algebras And Algebraic Curves, Mathematical Surveys And Monographs 88, second edition (American Mathematical Society, 2004).


\bibitem[FHL93]{FHL93}
Frenkel, I., Huang, Y. Z., \& Lepowsky, J. (1993). On axiomatic approaches to vertex operator algebras and modules (Vol. 494). American Mathematical Soc..


\bibitem[Gui19a]{Gui19a}
Gui, B., 2019. Unitarity of the modular tensor categories associated to unitary vertex operator algebras, I, Comm. Math. Phys., 366(1), pp.333-396.

\bibitem[Gui19b]{Gui19b}
Gui, B., 2019. Unitarity of the modular tensor categories associated to unitary vertex operator algebras, II. Communications in Mathematical Physics, 372(3), pp.893-950.


\bibitem[Gui19c]{Gui19c}
Gui, B. (2019). Energy bounds condition for intertwining operators of types $B$, $C$, and $G_2$ unitary affine vertex operator algebras. Transactions of the American Mathematical Society, 372(10), 7371-7424.



\bibitem[Gui20]{Gui20}
Gui, B. (2020). Unbounded field operators in categorical extensions of conformal nets. arXiv preprint arXiv:2001.03095.




\bibitem[Gui21]{Gui21}
Gui, B. (2021). Sewing and Propagation of Conformal Blocks. arXiv preprint arXiv:2110.04774. 


\bibitem[Gui22]{Gui22}
Gui, B. (2022). Q-systems and extensions of completely unitary vertex operator algebras. International Mathematics Research Notices, 2022(10), 7550-7614.




\bibitem[Gui23a]{Gui23a}
B. Gui, Convergence of Sewing Conformal Blocks, to appear in Communications in Contemporary Mathematics. arXiv:2011.07450




\bibitem[Gui23b]{Gui23b}
B. Gui,  Lectures on vertex operator algebras and conformal blocks. arXiv:2305.03822v1, 2023


\bibitem[HL95a]{HL95a}
Huang, Y.Z. and Lepowsky, J., 1995. A theory of tensor products for module categories
for a vertex operator algebra, I. Selecta Mathematica, 1(4), p.699.

\bibitem[HL95b]{HL95b}
Huang, Y.Z. and Lepowsky, J., 1995. A theory of tensor products for module categories
for a vertex operator algebra, II. Selecta Mathematica, 1(4), p.757.

\bibitem[HL95c]{HL95c}
Huang, Y.Z. and Lepowsky, J., 1995. A theory of tensor products for module categories
for a vertex operator algebra, III. Journal of Pure and Applied Algebra, 100(1-3), pp.141-171.

\bibitem[Hua95]{Hua95}
Huang, Y.Z., 1995. A theory of tensor products for module categories for a vertex operator algebra, IV. Journal of Pure and Applied Algebra, 100(1-3), pp.173-216.



\bibitem[Hua97]{Hua97}
Huang, Y.Z., 1997. Two-dimensional conformal geometry and vertex operator algebras (Vol. 148). Springer Science \& Business Media.


\bibitem[Hua05]{Hua05}
Huang, Y. Z. (2005). Differential equations and intertwining operators. Communications in Contemporary Mathematics, 7(03), 375-400.


\bibitem[Hua08]{Hua08}
Huang, Y. Z. (2008). Rigidity and modularity of vertex tensor categories. Communications in contemporary mathematics, 10(supp01), 871-911.


\bibitem[Hua09]{Hua09}
Huang, Y. Z. (2009). Cofiniteness conditions, projective covers and the logarithmic tensor product theory. Journal of Pure and Applied Algebra, 213(4), 458-475.

\bibitem[Hua10]{Hua10}
Huang, Y. Z. (2010). Generalized twisted modules associated to general automorphisms of a vertex operator algebra. Communications in Mathematical Physics, 298(1), 265-292.

\bibitem[Hua18]{Hua18}
Huang, Y. Z. (2018). Intertwining operators among twisted modules associated to not-necessarily-commuting automorphisms. Journal of Algebra, 493, 346-380.

\bibitem[Jon83]{Jon83}
Jones, V. F. (1983). Index for subfactors. Inventiones mathematicae, 72(1), 1-25.



\bibitem[KO02]{KO02}
Kirillov Jr, A., \& Ostrik, V. (2002). On a q-analogue of the McKay correspondence and the ADE classification of sl2 conformal field theories. Advances in Mathematics, 171(2), 183-227.


\bibitem[Kir98]{Kir98}
Kirillov, A. A. (1998). On inner product in modular tensor categories II: Inner product on conformal blocks and affine inner product identities. Advances in Theoretical and Mathematical Physics, 2(1), 155-180.


\bibitem[Kir02]{Kir02}
Kirillov, Jr, A. (2002). Modular categories and orbifold models. Communications in mathematical physics, 229, 309-335.

\bibitem[Kir04]{Kir04}
Kirillov Jr, A. (2004). On $G$-equivariant modular categories. arXiv preprint math/0401119.


\bibitem[Lam23]{Lam23}
Lam, C. H. (2023). Unitary forms for holomorphic vertex operator algebras of central charge 24. Letters in Mathematical Physics, 113(2), 28.

\bibitem[Li94]{Li94}
Li, H. S. (1994). Symmetric invariant bilinear forms on vertex operator algebras. Journal of Pure and Applied Algebra, 96(3), 279-297.


\bibitem[Loo09]{Loo09}
Looijenga, E. (2009). Unitarity of SL (2)-conformal blocks in genus zero. Journal of Geometry and Physics, 59(5), 654-662.


\bibitem[Loo21]{Loo21}
Looijenga, E. (2021). Conformal blocks and the cohomology of configuration spaces of curves. arXiv preprint arXiv:2112.03169.


\bibitem[MN01]{MN01}
Mason, G., \& Ng, S. H. (2001). Group cohomology and gauge equivalence of some twisted quantum doubles. Transactions of the American Mathematical Society, 353(9), 3465-3509.



\bibitem[McR21]{McR21}
McRae, R. (2021). Twisted modules and G-equivariantization in logarithmic conformal field theory. Communications in Mathematical Physics, 383(3), 1939-2019.


\bibitem[Miy15]{Miy15}
Miyamoto, M. (2015). C 2-cofiniteness of cyclic-orbifold models. Communications in Mathematical Physics, 335(3), 1279-1286.



\bibitem[NT05]{NT05}
Nagatomo, K., \& Tsuchiya, A. (2005). Conformal field theories associated to regular chiral vertex operator algebras, I: Theories over the projective line.


\bibitem[Ram09]{Ram09}
Ramadas, T. R. (2009). The" Harder-Narasimhan trace" and unitarity of the KZ/Hitchin connection: genus 0. Annals of mathematics, 1-39.



\bibitem[Sau83]{Sau83}
Sauvageot, J. L. (1983). Sur le produit tensoriel relatif d'espaces de Hilbert. Journal of Operator Theory, 237-252.


\bibitem[Seg04]{Seg04}
Graeme Segal. The definition of conformal field theory. In Topology, geometry and quantum field theory. Proceedings of the 2002 Oxford symposium in honour of the 60th birthday of Graeme Segal, Oxford, UK, June 24–29, 2002, pages 421–577. Cambridge: Cambridge University Press, 2004.


\bibitem[Tak]{Tak}
Takesaki, M. (2003). Theory of operator algebras II (Vol. 125). Berlin: Springer.

\bibitem[Ten17]{Ten17}
Tener, J. E. (2017). Construction of the unitary free fermion Segal CFT. Communications in Mathematical Physics, 355, 463-518.

\bibitem[Ten19a]{Ten19a}
Tener, J. E. (2019). Geometric realization of algebraic conformal field theories. Advances in Mathematics, 349, 488-563.

\bibitem[Ten19b]{Ten19b}
Tener, J. E. (2019). Representation theory in chiral conformal field theory: from fields to observables. Selecta Mathematica, 25(5), 76.




\bibitem[Ten19c]{Ten19c}
Tener, J. E. (2019). Fusion and positivity in chiral conformal field theory. arXiv preprint arXiv:1910.08257.

\bibitem[vEMS20]{vEMS20}
van Ekeren, J., Möller, S., \& Scheithauer, N. R. (2020). Construction and classification of holomorphic vertex operator algebras. Journal für die reine und angewandte Mathematik (Crelles Journal), 2020(759), 61-99.

\bibitem[Was98]{Was98}
Wassermann, A. (1998). Operator algebras and conformal field theory III. Fusion of positive energy representations of $LSU (N)$ using bounded operators. arXiv preprint math/9806031.


\bibitem[Zhu94]{Zhu94}
Zhu, Y. (1994). Global vertex operators on Riemann surfaces. Comm. Math. Phys., 165(3):485-531, 1994.



\bibitem[Zhu96]{Zhu96}
Zhu, Y. (1996). Modular invariance of characters of vertex operator algebras. Journal of the American Mathematical Society, 9(1), 237-302.


		
\end{thebibliography}
\end{document}